%% file: main.tex
\pdfoutput=1
\documentclass[hidelinks,onefignum,onetabnum]{siamart251216}

\usepackage{epsfig} 
\usepackage{times} 
\usepackage{amsmath} 
\usepackage{amssymb}  
\usepackage{amsfonts}
\usepackage{mathptmx} 
\usepackage[cal=cm]{mathalfa}
\usepackage{mathtools}
\usepackage{tabularx}
\usepackage{epstopdf}
\usepackage{textcomp}
\usepackage{float}
\usepackage{epic,color}
\usepackage{mathrsfs}
\usepackage{dsfont}
\usepackage{algorithm}
\usepackage{algorithmic}
\usepackage{multirow} 
\usepackage[T1]{fontenc}
\usepackage{booktabs}

\usepackage{tikz}
\usetikzlibrary{arrows, positioning, calc}
\usetikzlibrary{arrows.meta}
\usetikzlibrary{fit}
\usepackage{pgfplots}
\usepgfplotslibrary{groupplots}

\headers{Duality Theory for Filter Stability}{P. G. Mehta}

\title{Duality Theory for Filter Stability in Hidden Markov
  Models\thanks{This work was funded by the AFOSR award
		FA9550-23-1-0060 and the NSF award 2336137.}}

\author{Prashant G. Mehta\thanks{Coordinated Science Laboratory, University of Illinois
    Urbana-Champaign, Urbana, IL 
  (\email{mehtapg@illinois.edu}, \url{https://mehta.mechse.illinois.edu/}).}}

\usepackage{amsopn}

\input{_definitions}
\input{jin_thesis_macros}

\ifpdf
\hypersetup{
  pdftitle={Duality Theory for Filter Stability},
  pdfauthor={P. G. Mehta}
}
\fi

\begin{document}

\maketitle

\begin{abstract}
This article is concerned with the problem of nonlinear (stochastic) filter
stability for a discrete time hidden Markov model (HMM) with finite state and
observation spaces.  For this purpose, a comprehensive duality theory is
developed whereby the dual control system is a backward stochastic difference
equation (BS$\Delta$E).  The duality is expressed in two inter-related forms.
In its \emph{variational} form, the minimum mean squared error of optimal
filtering equals the value of a dual optimal control problem.  In its
\emph{structural} form, controllability (resp.\ stabilizability) of the
BS$\Delta$E and observability (resp.\ detectability) of the HMM are dual
properties.

The dual optimal control problem supplies the Lyapunov function and the
dissipation: the former is the value function (a variance), the latter the
optimal running cost accumulated over the horizon.  The main result is an
invariance principle: under any one of three model properties---ergodicity of
the Markov matrix, or observability or detectability of the HMM---zero
dissipation over a finite horizon forces the Lyapunov function to vanish.  A
converse is also obtained: detectability is necessary.

Two applications are given.  The first is filter stability in
$\chi^2$-divergence: the invariance principle is applied in the manner of
LaSalle, to limits of trajectories along which the dissipation vanishes
asymptotically, the role of the model properties being to render the
zero-dissipation set trivial.  The second is a quantitative form of the
invariance principle, through a Poincar\'e constant, from which convergence
rates follow.  Both the model properties that govern stability and the
Lyapunov function that certifies it are thus obtained from a dual optimal
control problem, as in the classical theory for the linear Gaussian model.
\end{abstract}

\begin{keywords}
  nonlinear filtering, filter stability, hidden Markov model, duality,
  observability, detectability, backward stochastic difference equation,
  Poincar\'e inequality.
\end{keywords}

\begin{MSCcodes}
  93E11, 93E15, 93E20, 93B07, 93B05, 60J10.
\end{MSCcodes}

\section{Introduction}\label{sec:intro}

It has long been an aspiration to view estimation as a problem of
optimal control, going back to Kalman's work~\cite{kalman1960general}. In the linear Gaussian
setting, this connection is realized: both the model properties that
govern stability of the Kalman filter and its Lyapunov function can be
derived from a dual linear quadratic optimal control problem.
Beyond
the linear Gaussian model, the natural test of the duality is the problem
of \emph{filter stability}: whether the nonlinear filter forgets an
incorrect prior.  Whether Kalman's duality extends to this setting,
and if so in what form, is far from obvious.

The aspiration has been taken up repeatedly---most notably in the
minimum-energy dualities of Bryson--Frazier, Mortensen, and Hijab, and in
their stochastic formulation by Fleming and Mitter and by Mitter and
Newton~\cite{bryson1963smoothing,mortensen1968,hijab1980minimum,fleming1982optimal,mitter2003}.
These
dualities, however, are not Kalman's: their constraint is a copy of
the signal model rather than the dual system, and detectability does
not arise as the dual of stabilizability~\cite[Sec.~7.3.1 and
7.3.2]{bensoussan2018estimation}. Todorov concluded that Kalman's
duality does not extend beyond the linear Gaussian
setting~\cite{todorov2008general}. Van Handel similarly observed, in
his survey of nonlinear filter stability, that proofs of nonlinear
filter stability must ``start from scratch''~\cite{van2010nonlinear}. 

Yet the possibility of a deeper connection was recognized.  In 
his PhD thesis, van Handel listed the connections between filter stability and
stochastic control as meriting further investigation, observing that
the relationship, in the linear case, between filter stability and
controllability and observability~\cite{ocone1996asymptotic} ``makes it
tempting to look for such connections in the nonlinear
case''~\cite[p.~17]{van2006filtering}.  The suggestion remained a direction for
future work.  The
subsequent development of observability and detectability for nonlinear
filters, and the stability proofs based on them, proceeded instead by
probabilistic
methods~\cite{van2009observability,van2009uniform,chigansky2009intrinsic,van2010nonlinear},
now known as the \emph{intrinsic approach} to filter stability.

At a minimum, a duality theory should do for the nonlinear filter what it does for the Kalman filter:
\begin{itemize}
\item[(a)] identify the model properties that govern stability, and
\item[(b)] provide a Lyapunov function that certifies it.
\end{itemize}
For the hidden Markov model (HMM) in its basic form---discrete time
with finite state and observation spaces---this paper does exactly
that: both the model properties and the Lyapunov function are derived
from a dual optimal control problem. 

The focus on the finite-state discrete-time HMM is motivated by two
considerations.  First, it is the simplest setting in which the essential
difficulties of nonlinear filtering already appear: even for an ergodic
finite-state chain, the filter need not forget an incorrect prior
(Example~\ref{ex:counter-example}, due to Kaijser~\cite{kaijser1975limit}).  It is therefore the
natural setting in which the duality theory should first be understood, free of the
technical overhead of the general case.  Second, it is the standard model for
reinforcement learning in partially observed Markov decision processes (POMDPs).
There, filter stability is a key ingredient in learning near-optimal policies
from a finite window of past observations~\cite{kara2023convergence,kara2022near},
and in bounding the loss incurred by controlling with an incorrect
prior~\cite{mcdonald2022robustness}.

\subsection{Approach and contributions of this paper}
While the intrinsic approach is based on the forward map
$\gamma_0\mapsto\gamma_T$ between the Radon--Nikodym derivatives at
time $t=0$ and $t=T$ (\Sec{sec:gamma_T_defn}), the dual approach of this paper is based on a
\emph{backward map} $\gamma_T\mapsto y_0$ (Eq.~\eqref{eq:bmap}).
Filter stability in the sense of $\chi^2$-divergence follows from a
variance decay property of the backward map
(\Prop{prop:chisq_filter_stability}).  To show this property, the
backward map is embedded in a backward process
(Eq.~\eqref{eq:Y_and_M}), which is the solution of the dual optimal control system.

The contributions are as follows.

\begin{enumerate}
\item The main result is an \emph{invariance principle},
  \Lemma{lemma:Lasalle}, for the backward process.  The principle relates the model properties that
  govern stability and the Lyapunov function that certifies it. 
  \begin{itemize}
  \item[(a)] The three model properties are ergodicity of the Markov
    matrix, and observability and detectability of the HMM, the first
    and third under a non-degeneracy condition on the emission
    matrix (Assumption~\ref{ass:non-degeneracy}).  
  \item[(b)] The Lyapunov function is the variance of a backward
      process (\Prop{Prop:Lyapunov}).  It is non-increasing in the backward direction of
      time, and the dissipation---the amount by which it decreases---is identified
      explicitly, as the running cost of the dual optimal control problem.  Filter
      stability follows if the Lyapunov function at the initial time decays to zero as
      $T\to\infty$.
  \end{itemize}
  The invariance principle is the statement that, under any one of the three model
  properties, zero dissipation over a finite horizon forces zero
  variance.  Conversely, if this implication holds for every horizon,
  prior, and terminal condition, then the HMM is detectable
  (\Prop{thm:necessity}).  Therefore, detectability is necessary for
  the invariance principle to hold in this generality. 
  
\item \emph{Duality theory for the HMM}.  The dual control system is a backward
  stochastic difference equation (BS$\Delta$E).  As in the classical setting, duality
  is expressed in two inter-related manners:
  \begin{itemize}
   \item[(i)] \emph{Variational (minimum-variance) duality}:
     \Prop{prop:MU-martingale} and \Corollary{cor:U-opt-zero} show that the minimum mean squared error
    (MMSE) of optimal filtering equals the value of the dual optimal control
    problem. Moreover, it identifies the Lyapunov
    function with that value function. 
  \item[(ii)] \emph{Structural duality}: \Prop{prop:duality} shows
    that controllability
    (resp.\ stabilizability) of the BS$\Delta$E and observability (resp.\ detectability)
    of HMM are dual properties.
  \end{itemize}
The development parallels linear systems theory throughout: the
controllable subspace, its recursion, the stabilization index, and the
annihilator pairing with the nonobservable subspace all appear in
their familiar form.

\item \emph{Filter stability} (\Thm{thm:filter-stability}).  Assuming
  equivalent priors and any one of the three model properties, the
  filter is stable in $\chi^2$-divergence. The proof is a stability
  argument in the classical sense: 
  The invariance principle (\Lemma{lemma:Lasalle}) is applied in the manner
  of LaSalle, to limits of trajectories along which dissipation vanishes
  asymptotically.  
  The role of model properties is to render the zero-dissipation set
  trivial.

\item \emph{Rates}.  The conclusion of \Lemma{lemma:Lasalle} is
  made quantitative through a Poincar\'e constant $\kappa(\rho)$  (Defn.~\ref{def:poincare}).  This yields a decay
  bound along the filter's own trajectory (\Prop{prop:var_contractive}) and a dichotomy
  (\Thm{thm:dichotomy}): either the filter is stable in $\chi^2$-divergence, or its
  skeleton converges to the set on which $\kappa$ degenerates.
\end{enumerate}

\subsection{Literature} 
For the linear Gaussian model, filter stability is classical, and is understood through
Kalman's duality between filtering and
control~\cite{kalman1961},~\cite[Sec.~4.4]{kwakernaak1969linear},~\cite[Sec.~2]{ocone1996asymptotic}.
The model property for stability of the Kalman filter is detectability of the linear
Gaussian model, which by an elementary duality is equivalent to stabilizability of the
dual control system.  A standard proof technique for the asymptotic convergence of the
Kalman filter error covariance is based on the dual linear quadratic (LQ) problem: a
stabilizing control bounds the cost uniformly in time, and the value function serves as a
Lyapunov function~\cite[Lemma~2.2]{ocone1996asymptotic},~\cite[Ch.~9]{xiong2008introduction},%
~\cite[Proof of Thm.~23.2]{brockett2015finite}.

These optimal control foundations extend far beyond the LQ case, and without them it is
difficult to imagine the modern stability theory for deterministic nonlinear systems.
Input-to-state stability (ISS) has its roots in dissipativity theory, with general
(non-positive-definite) running costs~\cite{sontag2008input}, and the dual notion of
detectability~\cite{sontag1997output} is central to the stability of model predictive
control~\cite{Teel_MPC_2005,Grune_2022}.  A primary motivation for introducing incremental
input/output-to-state stability (i-IOSS) is its use in deducing the asymptotic stability of
the minimum-energy estimator (MEE)~\cite{krener2003convergence},~\cite[Defn.~4.1 and
Prop.~4.9]{rawlings2017model}.  No comparable theory exists for the nonlinear filter---one
in which both the model properties and the Lyapunov function come from a dual control
problem.

The literature on nonlinear filter stability distinguishes two
cases~\cite{chigansky2006stability,chigansky2009intrinsic}:
\begin{itemize}
\item[(i)] The \emph{ergodic signal case}, where the hidden Markov process forgets its
  initial condition and the filter inherits the property.
\item[(ii)] The \emph{nonergodic signal case}, where the observations provide enough
  information about the hidden state for the filter to correct an erroneous prior.
\end{itemize}
For the ergodic signal case, early work is based on contraction analysis of the random
matrix products that arise from recursive application of Bayes'
formula~\cite{atar1997lyapunov,legland2000projective,legland2000exponential},
\cite[Sec.~4.3]{Moulines2006inference},~\cite{delmoral2001stability,douc2009forgetting},
and, for the Zakai equation, on formulae for the Lyapunov
exponents~\cite{atar1997exponential,Atar1999,baxendale2004asymptotic}.
The analysis
requires a non-degeneracy assumption
(Assumption~\ref{ass:non-degeneracy}), satisfied by the white noise observations; without it, the filter need not be
stable even when the signal is ergodic~\cite{kaijser1975limit,delyon1988lyapunov} (see
Example~\ref{ex:counter-example}).  For the nonergodic signal case, notable early
contributions are~\cite{clark1999relative}, where the relative entropy is shown to be a
Lyapunov function for the filter, and~\cite{chigansky2006role}, where the filter is shown to
be stable for functions that predict the observations.  For a class of
diffusions, filter stability results of this type are obtained by PDE methods
in~\cite{stannat2005stability,stannat2006}.  Definitions of observability and
detectability, and their relationship to filter stability, first appear
in~\cite{van2009observability,van2009uniform}.  In both cases the proofs directly exploit
the representation of the filter as a conditional expectation, through classical
probabilistic techniques---the intrinsic
approach~\cite{chigansky2009intrinsic}.

McDonald and Y\"uksel~\cite{mcdonald2018stability,mcdonald2023stochastic}
introduce a different, explicitly stated notion of observability for
nonlinear systems, and show that it yields stability of the predictor, with
filter stability following under additional conditions.  They also
establish a set of implications among the stability criteria
themselves---weak merging, total variation merging, and merging in
relative entropy---which complements the relative entropy bounds
of~\cite{clark1999relative}.

This paper builds on our prior
work~\cite{kim2019duality,duality_jrnl_paper_I,duality_jrnl_paper_II}, where duality
for nonlinear filtering was first introduced for the continuous-time
HMM with white noise observations.  In this prior work, a backward
stochastic differential equation (BSDE) was introduced as the dual
control system, and observability of the HMM and controllability of
the BSDE were shown to be dual
properties~\cite{duality_jrnl_paper_I,JinPhDthesis}. The use of this
duality to prove filter stability was begun
in~\cite{kim2021ergodic,kim2021detectable,kim2024variance,kim2024backward}, 
based on a backward map and 
a variance decay property, both of which are central here as well, but
stability results were possible only in special
cases~\cite[Thm.~3]{kim2024variance}.

Relative to our prior work, the contributions of the present paper are the
following.  Our previous results were limited to white noise observations,
while the present work concerns the HMM in settings of most practical
relevance.  In some ways, duality theory for the finite setting is easier,
being based entirely on linear algebra, which also makes the connection to
Kalman's duality transparent.  In other ways it is more delicate,
requiring careful track of the time horizons: those for the three
model properties, and the horizon at which the Poincar\'e constant is
evaluated.  These have no counterpart in continuous time. Apart
from these differences, the following are novel contributions
of this paper:
\begin{enumerate}
\item The main result---the invariance principle
  (\Lemma{lemma:Lasalle})---is new.  So is its converse (\Prop{thm:necessity}).
  \item The proof of filter stability is new. To the best of our
  knowledge, the proof given here 
  is the first variational, or control-theoretic, proof of nonlinear filter
  stability under conditions on the model alone (\Thm{thm:filter-stability}).
\item The regularity theory for the Poincar\'e constant
  is new.  The analysis of the constant
  as a function on the probability simplex is new (\Prop{prop:N-strata} and \Lemma{prop:beta-regularity}).  
 \end{enumerate}

It is fair to ask what is gained, given that the intrinsic approach has
settled filter stability under necessary and sufficient conditions.  Our
answer:
\begin{enumerate}
\item The finite setting is treated comprehensively.  The three model
  properties reduce to conditions of linear algebra, as transparent as in the
  linear case, and the indices that mark the horizons at which they take effect
  are identified explicitly.  One of these is related to the classical index of
  primitivity~\cite{legland2000exponential}; the other two, and the role of the
  indices in filter stability, appear to be new.
\item The model properties for stability are not assumed but derived.  Asking what is
  required for the variance to decay, the theory returns stabilizability of
  the dual control system, and the converse (\Prop{thm:necessity})
  shows that stabilizability is also necessary.
\item The same theory delivers the rates, through the Poincar\'e constant,
  which for a given model is computed by solving a finite-dimensional
  eigenvalue problem. This makes the theory useful in practice and
  informs the future work.  
\end{enumerate}

We close by returning to the aspiration with which we began.  For the
finite-state HMM the answer is affirmative: Kalman's duality does extend to
the purposes of filter stability, and it extends in both respects---it
identifies the properties of the model responsible for stability, and it
supplies the Lyapunov function with which stability is proved. 

\subsection{Paper outline} \Sec{sec:hmm-prelim} contains mathematical
preliminaries.  \Sec{sec:bmap} introduces the backward map and states
the main result (\Lemma{lemma:Lasalle}).  \Sec{sec:imp_ass} presents a proof of
the main result for the ergodic case.  \Sec{sec:duality} develops the duality
theory for the HMM, which is used in \Sec{sec:observable_proof} to prove the
main result for the observable and detectable cases, and to establish its
converse.  The main result is applied in \Sec{sec:filter_stability} to prove
filter stability, and in \Sec{sec:rates} to derive rates.  The appendices
contain the technical proofs.

A reader may wish to review \Defn{defn:likelihood_ratio} of $\gamma_T$ and
\Defn{defn:Z_spaces} of $\clZfo{T}(\rho)$, and then proceed to
\Sec{sec:bmap}, returning to \Sec{sec:hmm-prelim} as needed.  For easy
reference, Table~\ref{tab:n_hmmx} collects the notation.

\section{Preliminaries}\label{sec:hmm-prelim}

\subsection{Hidden Markov model (HMM)}
Throughout this paper, we consider discrete-time stochastic processes
on a finite time-horizon $\mathbb{T}=\{0,1,2,\hdots,T\}$ with
$T<\infty$. Fix a measurable space $(\Omega,\clF_T)$ along with a
filtration $\{\clF_t:t\in\mathbb{T}\}$ with respect to which all the
stochastic processes below are adapted. The basic objects of the model
are as follows (see~\cite{elliott2008hidden,Moulines2006inference}):
\begin{enumerate}
\item The {state-space} $\bS=\{1,2,\hdots,d\}$ is finite.  The
  state-process $X=\{X_t:\Omega\to\bS, \; t\in\mathbb{T}\}$ is $\bS$-valued.  
\item The {observation-space} $\bO=\{0,1,2,\hdots,m\}$ is finite of
  cardinality $|\bO|=m+1$.  The observation process
  $Z=\{Z_t:\Omega\to\bO,\; t=1,2,\hdots,T\}$ is $\bO$-valued.  
\item The state transition matrix $A$ is a row-stochastic $d\times d$
  matrix: for each $x\in\bS$, $A(x,\cdot)$ is a probability vector on
  $\bS$.
\item The emission matrix $H$ is a row-stochastic $d\times(m+1)$
  matrix: for each $x\in\bS$, $H(x,\cdot)$ is a probability vector on
  $\bO$.
\end{enumerate}
Let $\mathbb{T}^{-}:=\{0,1,2,\hdots,T-1\}$.  
The filtrations are as follows:
\begin{align*}
\clF_t&:=\sigma(X_0,Z_1,X_1,\hdots,Z_t,X_t),\quad t\in\mathbb{T},\\
        \clZ_t&:=\sigma(Z_1,Z_2,\hdots,Z_t), \quad t=1,2,\hdots, T,
\\    \clG_{t}&:=\clF_t\vee \clZ_{t+1},\quad t\in\mathbb{T}^-,
\end{align*}
with $\clZ_0:=\{\emptyset,\Omega\}$. $\clF=\{\clF_t:t\in\mathbb{T}\}$ is
referred to as the canonical filtration with respect to which all
processes are adapted.

For each $\rho\in\clP(\bS)$, define a probability measure $\sP^\rho$
on $(\Omega,\clF_T)$ over the space of trajectories of $(X,Z)$ as
follows (see~\cite[Defn.~2.2.1]{Moulines2006inference},~\cite[Sec.~2.2]{clark1999relative}):
\begin{align*}
\sP^\rho (X_0=\xi_0,\hdots,X_T = \xi_T,&\;
Z_1=\zeta_1,\hdots,Z_T = \zeta_T)  := \rho(\xi_0) \Pi_{t=1}^T
  A(\xi_{t-1},\xi_t) H(\xi_{t-1}, \zeta_t),\\
 & \forall \; (\xi_0,\xi_1,\hdots,\xi_T) \in
   \bS^{T+1},\quad (\zeta_1,\hdots,\zeta_T) \in \bO^T.
\end{align*}
The associated expectation operator is denoted by
$\E^\rho(\cdot)$. This construction gives 
\begin{align*}
\sP^\rho(X_0=x) &= \rho(x), \quad x\in \bS  \\
\sP^\rho(X_{t+1}=x' \mid X_{t}=x) &= A(x,x'),  \quad x,x'\in\bS,\quad
  t\in\mathbb{T}^-. \\
\sP^\rho(Z_{t+1} = z \mid X_{t}=x) &= H(x,z),  \quad x\in\bS,\;\;z\in\bO,\quad t\in\mathbb{T}^-.
\end{align*}
recovering the usual conditional specification of an HMM as a
consequence of the construction rather than as a standing
assumption. We denote the model as HMM $(A,H)$.  With $m=1$, there are
only two observations, $\bO=\{0,1\}$. Such a model is referred to as
binary-valued HMM.  The graphical model of the HMM is depicted in
\Fig{fig:hmm}.

\begin{figure}
  \begin{center}
    \include{figure/hmm.tex}
  \end{center}
  \vspace{-4em}
  \caption{Graphical model for the HMM.
  }
  \label{fig:hmm}
  \vspace*{-10pt}
\end{figure}

Two choices of $\rho$ are of particular interest throughout the
paper:
\begin{itemize}
\item $\rho=\mu$.
 The measure $\mu$ has the meaning of the true prior that is used to
 generate the observations.   
\item  $\rho=\nu$.  
The measure $\nu$ has
  the meaning of the incorrect prior that is used to initialize the
  nonlinear filter. 
\end{itemize}
Suppose $\mu\ll \nu$.  Then the explicit construction shows that $\sP^\mu\ll\sP^\nu$ on $(\Omega,\clF_T)$, with
\[
\frac{\ud\sP^\mu}{\ud\sP^\nu} = \frac{\mu(X_0)}{\nu(X_0)}, \quad \sP^\nu\text{-a.s.}
\] 
The notation $\E^x$ is used to denote the case where the prior
$\rho=\delta_x$ (i.e., $X_0=x$ w.p. $1$).

\newP{Notation}
The space of functions and measures on $\bS$ are both identified with
$\Re^d$: a real-valued function $f$ (resp., a measure $\rho$) is
identified with a column vector in $\Re^d$ where the $x^{\text{th}}$
element of the vector represents $f(x)$ (resp., $\rho(x)$), for
$x\in\bS$, and $\rho(f) := \rho^\tp f$. The probability simplex in
$\Re^d$ is denoted by $\clP(\bS)$.  
Lower case symbol, e.g., $f$, is used to denote a
deterministic function while an upper case symbol, e.g.,
$F$, is used to denote a random function.  For a vector $f\in\Re^d$, $\text{diag}(f)$ is $d\times
d$ diagonal matrix with entries given by the components of $f$. A product of two functions $f$ and $g$ is denoted by
$fg$ ($(fg)(x)=f(x)g(x)$ for $x\in\bS$), and $f^2=ff = \text{diag}(f)f$.
The unity function is denoted by $\ones$ (e.g., as a function on $\bS$, $\ones (x)=1$ for all
$x\in\bS$). For $\rho,\tilde\rho\in\clP(\bS)$,
the total variation distance $\|\rho-\tilde\rho\|_{\text{TV}} := \half
\sum_{x\in\bS} |\rho(x) - \tilde{\rho}(x)|$.

\subsection{Nonlinear filter}\label{sec:nonlinear-filter}
Fix $\rho\in\clP(\bS)$.  In the theory of nonlinear filtering, there
are closely related objects of interest:
\begin{align*}
\text{(un-normalized filter)} \quad   \sigma^\rho_t(x) &:= \sP^\rho\big(Z_1, Z_2,\hdots,Z_t, X_t = x\big),
  \qquad x\in\bS, \quad t\in\mathbb{T}. \\
\text{(Normalized filter)} \quad  \pi_t^\rho(x)
                                                       &:=\sP^\rho\big([X_t=x] \mid
                                                  \clZ_t\big),\quad x \in \bS,\quad t\in\mathbb{T}. 
\end{align*}
Both $\sigma^\rho_t$ and $\pi^\rho_t$ are $\clZ_t$-measurable measures on
$\bS$, the latter a probability measure.  The un-normalized measure is
well-defined for all paths 
$(Z_1=\zeta_1,\ldots,Z_t=\zeta_t)$, for
$(\zeta_1,\ldots,\zeta_t)\in\bO^t$, and satisfies the
recursion
\[
\sigma^\rho_{t+1}(f) = \sigma_{t}^\rho(\text{diag}(H(:,Z_{t+1})) Af),\quad
f\in \Re^d, \quad t\in \mathbb{T}^-,\quad \sigma_0^\rho = \rho,
\]
where note $\sigma_t^\rho(f) = \sum_{x\in \bS} \sigma_t^\rho(x)
f(x)$.  The normalized filter is called as such because
\[
\pi_{t}^\rho(f) =
\frac{\sigma_{t}^\rho(f)}{\sigma_{t}^\rho(\ones)},\quad
\sP^\rho\text{-a.s.},\quad 
f\in\Re^d,\quad t\in \mathbb{T}.  
\]
The $\sP^\rho\text{-a.s.}$ qualifier is necessary because of the
possibility that the denominator is zero.  Noting $\sigma_t^\rho(1) =
\sP^\rho(Z_1,Z_2,\ldots,Z_t)$, these are the paths which are
emitted with zero probability.

A recursive formula for
$\pi_t^\rho\mapsto\pi_{t+1}^\rho$ is also obtained based on the
recursive formula for the unnormalized counterpart.  The formula is
referred to as the nonlinear filter, also called the forward algorithm.   

The filter stability problem is concerned with asymptotic
properties of the normalized filter with respect to different initializations, $\mu,\nu\in\clP(\bS)$. We use the superscripts $\pi_t^\mu$ and $\pi_t^\nu$ to
refer to the particular cases $\rho=\mu$ and $\rho=\nu$; note these
are conditional measures with respect to $\sP^\mu$ and $\sP^\nu$,
respectively.

\subsection{Likelihood ratio}\label{sec:gamma_T_defn}
We define the Radon--Nikodym (RN) derivative $\gamma_T$ of $\pi^\mu_T$ with
respect to $\pi^\nu_T$.  We begin with an assumption.

\begin{assumption}[Equivalent priors]\label{ass:equivalent-priors}
  Let $\bS'=\supp(\nu)$.  
The measures $\mu\sim\nu$ (are equivalent) with 
\[
0< \gamma_0^{\min{}} := \min_{x\in\bS'} \gamma_0(x) \leq  \max_{x\in\bS'} \gamma_0(x) =: \gamma_0^{\max{}} < \infty.
\]
Let $\alpha := \gamma_0^{\min{}} /\gamma_0^{\max{}}$\footnote{Because $S'$ is
finite, $\mu\sim\nu$ already implies the two bounds. The assumption serves to
name $\gamma^{\min}_0$, $\gamma^{\max}_0$ and $\alpha$, which appear in the
estimates of \Sec{sec:filter_stability} and \Sec{sec:rates}.}.
\end{assumption}

Since $\mu=\gamma_0\nu$ and $\rho\mapsto\sigma_T^\rho$ is linear,
\[
\gamma_0^{\min{}}\,\sigma_T^\nu(x) \;\leq\; \sigma_T^\mu(x) \;\leq\;
\gamma_0^{\max{}}\,\sigma_T^\nu(x), \qquad \forall\,x\in\bS.
\]
Summing over $x\in\bS$, we have 
$\gamma_0^{\min{}}\,\sigma_T^\nu(\ones) \;\leq\; \sigma_T^\mu(\ones) \;\leq\;
\gamma_0^{\max{}}\,\sigma_T^\nu(\ones)$. 
We have two cases to consider:
\begin{enumerate}
\item If $\sigma_T^\nu(x)>0$. Then
  $\sigma_T^\mu(\ones)\geq\gamma_0^{\min{}}\sigma_T^\nu(\ones)>0$,
  both filters are well defined.  The RN derivative
\[
  \gamma_T(x) \;:=\;
  \frac{\pi_T^\mu(x)}{\pi_T^\nu(x)}
\;=\; \frac{\sigma_T^\mu(x)}{\sigma_T^\nu(x)}\cdot
      \frac{\sigma_T^\nu(\ones)}{\sigma_T^\mu(\ones)}
\;\in\; \Big[\frac{\gamma_0^{\min{}}}{\gamma_0^{\max{}}},\,
             \frac{\gamma_0^{\max{}}}{\gamma_0^{\min{}}}\Big]
             \;=\; [\alpha,\alpha^{-1}].
\]
\item If $\sigma_T^\nu(x)=0$, then
  $\sigma_T^\mu(x)\leq\gamma_0^{\max{}}\sigma_T^\nu(x) =0 $. In this
  case, 
 \begin{equation}
\label{eq:convention}\tag{convention} \gamma_T(x):=1.
\end{equation}
 \end{enumerate}

 Based on this convention, we adopt the following definition:
 \begin{definition}\label{defn:likelihood_ratio}Let $\mu,\nu\in\clP(\bS)$ with $\mu\sim\nu$.  Then
\[
  \gamma_T(x) := \begin{cases} \frac{\pi_T^\mu(x)}{\pi_T^\nu(x)}, &
    \sigma_T^\nu(x)>0 \\[5pt]
    1, & \sigma_T^\nu(x)=0    
\end{cases},  \qquad
x\in\bS.
\]
\end{definition}
While $\gamma_0$ is a deterministic function on $\bS$, $\gamma_T$ is a
$\clZ_T$-measurable random function on $\bS$.
By~\eqref{eq:convention}, $\gamma_0(x)=1$, for all $x\notin\supp(\nu)$.   

\subsection{Filter stability}\label{sec:filter_stability_defn}
Filter
stability is the assertion that $\gamma_T\to\ones$, in a suitable
sense, as $T\to\infty$.  The $\chi^2$-divergence between $\pi_T^\mu$
and $\pi_T^\nu$ is defined as
\[
\chisq(\pi^\mu_T\mid\pi^\nu_T) :=
\pi^\nu_T\big((\gamma_T-1)^2\big),\quad \sP^\nu\text{-a.s.}
\]
Note the right hand side $\pi^\nu_T\big((\gamma_T-1)^2\big) =
\sum_{x\in \bS} \pi^\nu_T(x) (\gamma_T(x)-1)^2$. So, the choice of
$\gamma_T(x)=1$ is a matter of convention.  Another choice yields
identical $\chi^2$-divergence.  With this particular choice,
$\gamma_T(x) \in [\alpha,\alpha^{-1}]$, for all $x\in \bS$.  This
holds surely---for all sample paths, even those with zero probability.  
\begin{definition}\label{def:filter-stability}
Let $\mu,\nu\in\clP(\bS)$ with $\mu\sim\nu$.  The nonlinear filter is \emph{stable} in the sense of $\chi^2$ if
\[
  \E^\mu\big(\chisq(\pi_T^\mu\mid\pi_T^\nu)\big) \stackrel{(T\to
  \infty)}{\longrightarrow} 0.
\]
(The expectation is
with respect to $\sP^\mu$ because $\mu$ is the correct prior.)
\end{definition}

Stability in the sense of $\chi^2$ is the strongest of the standard
notions: by well-known inequalities between $f$-divergences, it
implies the corresponding statements with $\chisq$ replaced by the
Kullback--Leibler divergence or the total variation distance, and
hence also stability in the $L^2$ sense \cite[Sec.~7.1]{JinPhDthesis}.  The latter two are
the definitions of historical interest~\cite{ocone1996asymptotic}.

      \subsection{Model properties} \label{sec:model_props}
      These are of two types:
\begin{enumerate}
\item Properties related to the Markov chain $X$ that depend only on
  matrix $A$.   
\item Properties related to the HMM that depend upon both $(A,H)$.  
\end{enumerate}
We begin with some definitions relevant to the first case.  By
Perron--Frobenius theory applied to each recurrent class, the eigenvalues of $A$ with unit modulus are
semisimple roots of unity~\cite[Cor.~8.4.10]{horn_johnson_1985}.  Let $p$ denote the least common multiple
of their orders, so that $\lambda^p = 1$ for every eigenvalue
$\lambda$ of $A$ with $|\lambda| = 1$. 

\begin{definition}[Peripheral eigenspace]\label{def:peripheral}
The \emph{peripheral eigenspace} of the Markov matrix $A$ is
\[
  \mathrm{E}^{\mathrm{PER}}(A) \;:=\; \ker\big(A^p - I\big),
\]
the span of the real and imaginary parts of the eigenvectors of $A$
whose eigenvalues have unit modulus. 
\end{definition}

Since $A\ones=\ones$, the peripheral spectrum is nonempty and
$\ones \in \mathrm{E}^{\mathrm{PER}}(A)$.

\begin{definition}
The Markov matrix $A$ is \emph{ergodic} if 
\[
\mathrm{E}^{\mathrm{PER}} (A) = \mathrm{span}\{ \ones\}.
\]
In this case, $p=1$. 
\end{definition}
Let $\bS^{\text{\tiny RECC}} \subseteq \bS$ denote the set of
recurrent states.  In general, $\bS^{\text{\tiny RECC}}$ may contain more than one
recurrent class.  
For $x \in \bS$ and $t \ge 1$, let
\[
  {\mathcal R}_t(x) \;:=\; \{\, x' \in \bS : A^t(x,x') > 0 \,\}
\]
denote the set of states reachable from $x$ in exactly $t$ steps.

\begin{subequations}\label{eq:indices}
We have two cases as follows:
\begin{enumerate}
\item If $A$ is ergodic, define 
\begin{equation}\label{eq:tau0}
\tau_0(A) := \min\Big\{t\ge 1 \;:\; A^t(x,x') > 0,\;\;
\forall\, x\in\bS,\;\; \forall\, x'\in \bS^{\text{\tiny RECC}} \Big\}.
\end{equation}
Wielandt's bound shows $\tau_0(A) \leq (d-|\bS^{\text{\tiny RECC}}|)
+ (|\bS^{\text{\tiny RECC}}|-1)^2 + 1$~\cite[p.~241]{dulmage1962exponent}. 
\item More generally, define
\begin{equation}\label{eq:tau1}
  \tau_1(A) \;:=\; \min\big\{\, t \ge 1 \;:\;
    {\mathcal R}_t(x) \cap \bS^{\text{\tiny RECC}}
    = {\mathcal R}_{t+p}(x) \cap \bS^{\text{\tiny RECC}} ,
    \ \ \forall\, x \in \bS \,\big\} .
  \end{equation}
This is finite for every $A$: within each recurrent class of period
$p_j$, the sets ${\mathcal R}_t(x) \cap \bS^{\text{\tiny RECC}}$ are
eventually periodic in $t$ with period $p_j$, and $p$ is a common
multiple of the $p_j$~\cite[Sec.~1.4]{kemeny-snell},~\cite[Sec.~5.4]{meyn2009markov}.  When $A$ is ergodic, $p=1$ and
$\tau_1(A) = \tau_0(A)$. 
\end{enumerate}

\begin{example}[Counter-example of filtering theory]
\label{ex:counter-example}
	Consider an HMM on state-space
	$\bS=\{1,2,3,4\}$ and observation-space $\bO=\{0,1\}$, with 
	\[
	A = \begin{pmatrix}
		\half & \half  & 0 & 0\\
		0 & \half  & \half  & 0\\
		0 & 0 & \half  & \half \\
		\half  & 0 & 0 & \half 
              \end{pmatrix},\qquad H = \begin{pmatrix} 1 & 0 \\ 0 & 1 \\1 & 0 \\ 0 & 1
              \end{pmatrix}.
            \]
       The matrix $A$ is ergodic
       (indeed irreducible and aperiodic), yet the filter is not
       stable: 
       \[
         \|\pi_t^\mu - \pi_t^\nu \|_\tv = 
         \begin{cases}
          | \frac{\mu(1)}{\mu(1) +
           \mu(3)} - \frac{\nu(1)}{\nu(1) +
           \nu(3)}|,\quad \text{if} \;\; Z_1=0,\\[5pt]
         | \frac{\mu(2)}{\mu(2) +
           \mu(4)} - \frac{\nu(2)}{\nu(2) +
           \nu(4)} |,\quad \text{if} \;\; Z_1=1,
          \end{cases}\qquad t\geq 1.
        \]
The total variation distance between the two filters does not depend
on $t$, and so does not vanish for generic $\mu\neq\nu$. This example, originally due to
Kaijser~\cite{kaijser1975limit}, shows that ergodicity of the
underlying Markov chain does not by itself guarantee filter
stability. The obstruction is fragile, however: perturbing the model
so that $H(x,z)>0$ for all $(x,z)\in\bS\times\bO$ is already enough
to restore filter stability~\cite[Sec.~II-C]{van2010nonlinear},~\cite{baxendale2004asymptotic}. 
       \end{example}

Observability is a model property related to both $A$ and $H$.   Its
definition is based on the marginal probability measure, 
 \[
\sP^\rho\mid_{\clZ_T}:=\sP^\rho
(Z_1=\zeta_1,Z_2=\zeta_2,\hdots,Z_T=\zeta_T),\qquad (\zeta_1,\zeta_2,\hdots,\zeta_T)\in\bO^T.
\]

\begin{definition}[Defn.~2 in~\cite{van2009observability}]\label{def:observability}
	The HMM $(A,H)$ is \emph{observable} if 
\begin{equation*}
	\quad \sP^\mu|_{\clZ_T} = 
	\sP^\nu|_{\clZ_T} \quad \forall\, T>0 \; \Longrightarrow\; \mu = \nu,\qquad \mu,\nu\in\clP(\bS).
\end{equation*}
\end{definition}
Let $\mu \sim_T \nu$ denote the equivalence relation $\sP^\mu|_{\clZ_T} =
\sP^\nu|_{\clZ_T}$ (so that Definition~\ref{def:observability}
reads: observable iff $\mu\sim_T\nu \implies \mu=\nu$ for all $T>0$). Define 
\[
\clN_T := \mathrm{span}\{\mu-\nu \in \Re^d \;:\; \mu,\nu\in\clP(\bS),\;
\mu\sim_T\nu\} \;\subset\; \Re^d.
\]
The subspace of \emph{nonobservable measures} $\clN:= \bigcap_{T>0}
\clN_T$.  The \emph{observability index} is
\begin{equation}\label{eq:tau2}
\tau_2(A,H) := \min\big\{\tau : \clN_\tau = \clN\big\}.
\end{equation}
\end{subequations}
In finite state-space settings of this paper, $\tau_2(A,H) \leq d-1$ (this
is shown in~\Prop{prop:CT-recursion}(3)).

The definition of detectability is as follows:
\begin{definition}[Defn.~6 and Lemma~10 in~\cite{van2009observability}]
\label{def:detectability}
The HMM $(A,H)$ is \emph{detectable} if
$A^\transpose|_{\clN}$ is Schur stable, i.e., the spectral radius of
$A^\transpose|_{\clN}$ is less than 1.

(As part of the proof of~\Prop{prop:duality} in this paper, it is shown that
$\clN$ is $A^\transpose$-invariant, so the restriction $A^\transpose|_{\clN}$
is well defined.  If the HMM is observable then $\clN=\{0\}$ and the Schur
condition holds vacuously.  In particular, an observable HMM is detectable.)
\end{definition}

While more can be done to relate the probabilistic definition above to
the HMM parameters $(A,H)$, we defer this program to
\Sec{sec:dual_control_system}.  Before moving to the next part of this
section, we introduce the non-degeneracy assumption of the filter
stability theory.

\begin{assumption}[Non-degeneracy condition]\label{ass:non-degeneracy}
  Each entry of the emission matrix is positive.  In particular, let $H_{\min{}} =
  \min \{H(x,z):x\in \bS,\;z\in \bO\}$.  Then $H_{\min{}}>0$.  
\end{assumption}

\begin{remark}\label{rem:surely}
  The non-degeneracy condition implies that
\begin{align*}
  \sP^\rho
(Z_1=\zeta_1,Z_2=\zeta_2,\hdots,Z_t=\zeta_t)&>0,\quad
\forall\;(\zeta_1,\zeta_2,\hdots,\zeta_t)\in\bO^t,\quad \rho\in
                                              \clP(\bS),\quad t\in \mathbb{T},\\
  \sigma_t^\rho(\ones) \geq H_{\min{}}^t & > 0, \quad \text{surely}, \quad \forall
  \;\rho\in \clP(\bS), \quad t\in \mathbb{T}.
\end{align*}
The latter means that $\pi_t^\rho(f)=\sigma_{t}^\rho(f) /\sigma_{t}^\rho(\ones)$ is well-defined for all sample
paths (the denominator $\sigma_t^\rho(\ones)$ is always
positive). Moreover, this holds for all $\rho\in \clP(\bS)$.   
\end{remark}

We introduce the assumption here for expository convenience; only a
subset of the results actually need it, and we flag each such use
explicitly.  Also note that the HMM in
Example~\ref{ex:counter-example} does not satisfy the non-degeneracy
assumption.

\subsection{Martingale increment processes}
A vector-valued mapping $e:\bO\to \Re^m$ is defined as,
\[
e(1) := \begin{bmatrix} 1 \\ 0 \\ \vdots \\ 0 \end{bmatrix}_{m\times 1}, e(2)
:= \begin{bmatrix} 0 \\ 1\\ \vdots\\ 0 \end{bmatrix}_{m\times 1}, \hdots
e(m)
:= \begin{bmatrix} 0 \\ 0 \\ \vdots \\ 1 \end{bmatrix}_{m\times 1}, \;\; e(0) :=
-e(1) -e(2) - \hdots -e(m).
\]
(Recall here that the cardinality $|\bO| = m+1$.)      
Define
\[
h^k(x):=H(x,k)-H(x,0), \quad x\in \bS ,\;\; k=1,2,\hdots,m, \quad
\text{and} \quad 
h(x):= \begin{bmatrix} h^1(x) \\ h^2(x) \\ \vdots \\
  h^m(x) \end{bmatrix}_{m\times 1},\quad x\in \bS.
\]
For each fixed $x\in \bS$, $h(x)$ is a $m\times 1$ vector.
The notation is useful to introduce the two $\clF$-martingale increment processes as follows:
\begin{align*}
  B_{t+1}(f) &:= f(X_{t+1}) - (Af)(X_t), \quad f\in \Re^d, \quad t\in\mathbb{T}^-,\\
  W_{t+1} &:= e(Z_{t+1}) - h(X_t), \quad  t\in\mathbb{T}^-.
\end{align*}
The time indexing is used so that the processes are adapted with
respect to $\clF$ (e.g., $W_{t} $ and $B_{t}$ are $\clF_{t}$-measurable). For these processes, the formulae for conditional moments
are given in the following proposition from~\cite{chang2025dual}:
\begin{proposition}[Prop.~8 in \cite{chang2025dual}]\label{prop:B_W_Gamma_R}
  Consider $B$ and $W$. Then
  \begin{align*}
    \E^\rho (B_{t+1}(f) \mid \clG_{t}) &=0, \quad
                                                        \E^\rho
                                         (|B_{t+1}(f)|^2\mid\clG_{t})
                                         = (\Gamma f)(X_t),
                                        \quad  \sP^\rho \text{-a.s.}, \;
                                         f\in\Re^d , \;  t\in\mathbb{T}^-,\\[5pt]
    \E^\rho (W_{t+1} \mid \clF_t) &=0,\quad \E^\rho (W_{t+1} W_{t+1}^\transpose \mid \clF_t) = R(X_t) ,
                                        \quad  \sP^\rho \text{-a.s.}, \; \quad \quad  t\in\mathbb{T}^-,
  \end{align*}
where
  \begin{align*}
    (\Gamma f)(x) &:= \sum_{y\in\bS} A(x,y) f^2(y) - (Af)^2(x),\quad x\in\bS, \\
    R(x) &:=  \mathrm{diag}(h(x)) + H(x,0) (I+\ones \ones^\transpose)  - h(x) h^\transpose (x), \quad x\in\bS.
  \end{align*}
  Suppose the non-degeneracy Assumption~\ref{ass:non-degeneracy} holds.
Then
\[
    R(x) \;\succeq\; H_{\min{}}\, I \;\succ\; 0 ,
    \qquad \forall\, x \in \bS .
  \]
\end{proposition}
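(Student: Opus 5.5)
Each identity follows from the explicit form of $\sP^\rho$ by conditioning on the relevant $\sigma$-algebra. The lower bound on $R(x)$ is then a short linear-algebra computation.

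\emph{The $B$ identities.} From the product form of $\sP^\rho$, conditionally on $\clG_t=\clF_t\vee\clZ_{t+1}$ the law of $X_{t+1}$ is $A(X_t,\cdot)$. The factor containing $\xi_{t+1}$ is $A(\xi_t,\xi_{t+1})$ times later terms, and these sum to one over the future. The emission $H(\xi_t,\zeta_{t+1})$ depends only on $\xi_t$, so it does not bias $X_{t+1}$. Hence
\[
\E^\rho(f(X_{t+1})\mid\clG_t)=(Af)(X_t),
\]
which gives the first identity. For the second, expand $|B_{t+1}(f)|^2=f^2(X_{t+1})-2f(X_{t+1})(Af)(X_t)+(Af)^2(X_t)$ and take conditional expectations. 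This yields $(Af^2)(X_t)-(Af)^2(X_t)=(\Gamma f)(X_t)$.

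\emph{The $W$ identities.} Similarly, conditionally on $\clF_t$ the variable $Z_{t+1}$ has law $H(X_t,\cdot)$. Then
\[
\E^\rho(e(Z_{t+1})\mid\clF_t)=\sum_k H(X_t,k)\,e(k).
\]
Its $k$-th component is $H(X_t,k)-H(X_t,0)=h^k(X_t)$, so $\E^\rho(W_{t+1}\mid\clF_t)=0$.

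For the second moment, $\E^\rho(WW^\transpose\mid\clF_t)=\sum_z H(x,z)e(z)e(z)^\transpose-h(x)h(x)^\transpose$ with $x=X_t$. Since $e(0)e(0)^\transpose=\ones\ones^\transpose$,
\[
\sum_z H(x,z)e(z)e(z)^\transpose=\mathrm{diag}(H(x,1{:}m))+H(x,0)\ones\ones^\transpose .
\]
Writing $\mathrm{diag}(H(x,1{:}m))=\mathrm{diag}(h(x))+H(x,0)I$ gives the stated formula for $R(x)$.

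\emph{Positivity of $R(x)$.} The main step is the bound $R(x)\succeq H_{\min}I$. Fix $v\in\Re^m$ and define $a_z:=v^\transpose e(z)$, so $a_k=v_k$ for $k\geq 1$ and $a_0=-\sum_k v_k$. Then $v^\transpose R(x)v$ is the variance of $a_{Z}$ under $H(x,\cdot)$:
\[
v^\transpose R(x)v=\sum_z H(x,z)a_z^2-\Big(\sum_z H(x,z)a_z\Big)^2 .
\]
A variance is at least $\min_z H(x,z)$ times the unweighted centered sum, which is at least
\[
H_{\min}\sum_z (a_z-\bar a)^2=H_{\min}\,\big\|(I-\tfrac{1}{m+1}\ones\ones^\transpose)a\big\|^2,
\]
where $\bar a$ is the unweighted mean of the $a_z$. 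This holds because $\sum_z H(x,z)(a_z-c)^2$ is minimized at the weighted mean, so it dominates $H_{\min}\min_c\sum_z(a_z-c)^2$.

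Since $\sum_z a_z=0$, we have $\bar a=0$, and the bound becomes $H_{\min}\sum_z a_z^2=H_{\min}\big(\|v\|^2+(\sum_k v_k)^2\big)\geq H_{\min}\|v\|^2$. This gives $R(x)\succeq H_{\min}I$, which is the only nonroutine step. Once $R$ is recognized as the covariance of $e(Z)$, the bound reduces to the elementary variance inequality above.
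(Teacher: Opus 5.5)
Your proposal is correct. For the one part the paper actually proves, the bound $R(x)\succeq H_{\min}I$, it is the same argument: $v^\tp R(x)v$ is the variance of $(v^\tp e(z))_{z\in\bO}$ under $H(x,\cdot)$, which dominates $H_{\min}$ times the unweighted centered sum, and that sum equals $H_{\min}\sum_z a_z^2\ge H_{\min}|v|^2$ because $\sum_z a_z=0$; the paper cites \cite{chang2025dual} for the conditional moment identities, and your direct derivation of them from the product form of $\sP^\rho$ is correct.
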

\begin{proof}
Appendix~\ref{appdx:prelim}. 
\end{proof}

The operator $\Gamma$ is the \emph{carr\'e du champ} associated with $A$: by the
formula above, $(\Gamma f)(X_t)$ is the conditional variance of $f(X_{t+1})$
given $X_t$.

\begin{example}[m=1]
  Consider the binary-valued HMM with $\bO=\{0,1\}$. 
Then $e(1)=1$ and $e(0)=-1$.  
  For this model, $h(x)$ and $R(x)$ are both scalars given by
  \begin{equation*}
    h(x)=H(x,1) - H(x,0),\quad R(x) = (1-h^2(x)),\quad x \in\bS.
  \end{equation*}
\end{example}

For easy reference, the nomenclature for HMM-related parameters is given in Table~\ref{tab:n_hmmx}.

\begin{table}[t]
  \begin{tabularx}{\textwidth}{l l}
    \toprule
    Symbol & Description \\
    \midrule
    Spaces & \\
    $\mathbb{T}$ & Time-horizon: $\{0,1,2,\hdots,T\}$ with $T<\infty$. \\
    $\bS$ & State space: Finite set $\{1,2,\hdots,d\}$. \\
    $\bO$ & Observation space: Finite set $\{0,1,2,\hdots,m\}$. \\
    \midrule 
    Model parameters & \\
    $\rho$ & Prior ($d\times 1$ probability vector). \\
    $A$ & Markov transition matrix (row stochastic $d\times d$ matrix). \\
    $H$ & Emission matrix (row stochastic $d\times (m+1)$ matrix). \\
      \midrule 
    Other parameters & \\
    $h$ & $h(x)$ is a $m\times 1$ vector for each $x\in\bS$ (it is obtained
          from $H$). \\
    $\Gamma$ & \emph{carr\'e du champ} $\Gamma:\Re^d\to\Re^d$ (it is obtained
          from $A$).\\
    $R$ & $R(x)$ is a $m\times m$ matrix for each $x\in\bS$  (it is obtained
          from $H$).\\
      \midrule 
    Indices & \\
$\tau_0(A)$   & Mixing index of an ergodic $A$ (Eq.~\eqref{eq:tau0}).\\
$\tau_1(A)$   & Index at which the reachable sets become $p$-periodic (Eq.~\eqref{eq:tau1}).\\
    $\tau_2(A,H)$ & Observability index (Eq.~\eqref{eq:tau2}). \\
    \bottomrule
  \end{tabularx}
  \caption{Nomenclature for the HMM.}
  \label{tab:n_hmmx}
\end{table}

\subsection{Function spaces}  \label{sec:function_spaces}

Define 
\begin{align*}
  \clY_T &:= \big\{\, Y : \bS \times \mathbb{T} \to \Re
     \;\big|\; Y_t(x) \;\text{is}\; \clZ_t\text{-measurable}, \
           \forall \ x\in\bS,\ t \in \mathbb{T} \,\big\},\\
  \clV_T &:= \big\{\, V : \bS \times \mathbb{T}^- \to \Re^{m}
     \;\big|\; V_t(x) \;\text{is}\; \clZ_t\text{-measurable}, \
           \forall \ x\in\bS,\ t \in \mathbb{T}^-  \,\big\},\\
  \clU_T &:= \big\{\, U : \mathbb{T}^- \to \Re^{m}
     \;\big|\; U_t \;\text{is}\; \clZ_t\text{-measurable}, \ \forall\ t \in \mathbb{T}^-  \,\big\}.
\end{align*}
$\clY_T$ and $\clV_T$ are spaces of $\clZ$-adapted stochastic
processes: at time $t$, $Y_t$ is a
random real-valued function on $\bS$, and $V_t$ is a random
$\Re^{m}$-valued function on $\bS$.  When matrix notation is
convenient, we identify $V_t$ with an element of $\Re^{m \times d}$
whose $x$-th column is $V_t(x)$.  $\clU_T$ is referred to as the space
of admissible control inputs.

One further space is needed, of random functions at the terminal time; these
serve as terminal conditions throughout.

\begin{definition}[Function space for terminal condition]\label{defn:Z_spaces}
Define
\begin{align*}
  \clZf{T} &:= \big\{F : \bS\to\Re \;\big|\; F(x) \text{ is } \clZ_T\text{-measurable},\ \forall \,x\in\bS\big\},
  \qquad\\
  \clZfo{T}(\rho) &:= \big\{F \in \clZf{T} \;\big|\; \sigma^\rho_T(F) = 0\big\}.
\end{align*}
\end{definition}

Let $F \in \clZf{T}$.  From Doob-Dynkin lemma, there is a unique
deterministic function $s:\bS \times \bO^T \to \Re$ such that
\begin{equation*}
F(x)= s(x;Z_1,\ldots,Z_T),\quad x\in \bS.  
\end{equation*}
The sup norm is
\[
\|F\|_\infty := \max\big\{ |s(x;\zeta_1,\ldots,\zeta_T)| \;:\;
 x\in\bS,\;\zeta\in\bO^T\big\} ,
\]
the maximum being over all paths, including those of zero probability.

\begin{example}
Recall the Defn.~\ref{defn:likelihood_ratio} for the R-N
derivative $\gamma_T$.  Based on this, $\gamma_T$ is well-defined as a
$\clZ_T$-measurable random function on $\bS$, and
furthermore $\gamma_T-1 \in \clZfo{T}(\nu)$. 
\end{example}

\begin{example}[$T=0$]
$\clZf{0}$ is the space of deterministic functions on $\bS$, which is denoted
as $\Re^d$ in this paper. For each fixed $\rho\in \clP(\bS)$,
$\clZfo{0}(\rho)$ is the co-dimension 1 subspace of functions with zero
mean, i.e., $\rho(f) = \sum \rho(x)f(x)=0$.     
\end{example}

More generally, $\clZfo{T}(\rho)$ is a subspace of the finite-dimensional
space $\clZf{T}$; the dimensions of both are given, for later use,
in~\Prop{Prop:rank-is-constant}.  For this purpose,
define the maps:
\begin{equation}\label{eq:r_rho_s_rho}
\begin{aligned}
\mathfrak{p}^\rho_T&: \bO^T\times \bS \to [0,1] \quad \text{by} \quad 
\mathfrak{p}^\rho_T(\zeta,x) :=
\sP^\rho(Z_1=\zeta_1,\ldots,Z_T=\zeta_T,X_T=x),  \\[2pt]
\mathfrak{q}^\rho_T&: \bO^T\to [0,1] \quad \text{by} \quad 
\mathfrak{q}^\rho_T(\zeta)= \sum_{x\in\bS} ^\rho \mathfrak{p}_T(\zeta,x) .
\end{aligned}
\end{equation}
Then $(\mathfrak{q}^\rho_T)^{-1}(0)$ is the set of sequences in $\bO^T$ with zero
probability under $\sP^\rho$.
Because $\rho\mapsto \mathfrak{q}^\rho_T$ is linear,
$(\mathfrak{q}^\rho_T)^{-1}(0)= \cap_{x\in\supp(\rho)}
(\mathfrak{q}^x_T)^{-1}(0)$.   Note this set is empty under the
non-degeneracy Assumption~\ref{ass:non-degeneracy}.

\begin{proposition}\label{Prop:rank-is-constant}
The space $\clZf{T} \cong \Re^{d(m+1)^T}$. The dimension of subspace
$\clZfo{T}(\rho)$ depends only on $\supp(\rho)$ given by,
\[
\mathrm{dim}\big(\clZfo{T}(\rho)\big) = (d-1)(m+1)^T +
|(\mathfrak{q}^\rho_T)^{-1}(0)|,\quad \forall\, 
\rho \in \clP(\bS).
\]
\end{proposition}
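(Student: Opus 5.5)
The argument is linear algebra: identify $\clZf{T}$ with a coordinate space and then count the rank of the single linear constraint that defines $\clZfo{T}(\rho)$, one constraint per observation path.

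For the first claim, the Doob--Dynkin representation $F(x)=s(x;Z_1,\ldots,Z_T)$ gives a linear bijection between $\clZf{T}$ and the space of all functions $s:\bS\times\bO^T\to\Re$. The representation is unique because $s$ is defined on every path, including those of zero probability, which matches how the paper defines $\|F\|_\infty$. Since $|\bS\times\bO^T|=d(m+1)^T$, this gives $\clZf{T}\cong\Re^{d(m+1)^T}$.

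For the subspace, I would read the condition $\sigma^\rho_T(F)=0$ as holding surely, as a random variable, so path by path. On the path $\zeta\in\bO^T$,
\[
\sigma^\rho_T(F)(\zeta)=\sum_{x\in\bS}\mathfrak{p}^\rho_T(\zeta,x)\,s(x;\zeta).
\]
The constraint therefore decouples into one linear equation for each $\zeta$, and each equation involves only the block $s(\cdot;\zeta)\in\Re^d$. So $\clZfo{T}(\rho)$ is a direct product over $\zeta\in\bO^T$ of the subspaces $\{v\in\Re^d: \mathfrak{p}^\rho_T(\zeta,\cdot)^\tp v=0\}$.

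Each factor has dimension $d-1$ if the vector $\mathfrak{p}^\rho_T(\zeta,\cdot)$ is nonzero, and dimension $d$ if it is zero. Because the entries $\mathfrak{p}^\rho_T(\zeta,x)$ are nonnegative, the vector is zero exactly when $\mathfrak{q}^\rho_T(\zeta)=0$. Summing over paths,
\[
\dim\clZfo{T}(\rho)=(d-1)\big((m+1)^T-|(\mathfrak{q}^\rho_T)^{-1}(0)|\big)+d\,|(\mathfrak{q}^\rho_T)^{-1}(0)| = (d-1)(m+1)^T+|(\mathfrak{q}^\rho_T)^{-1}(0)|.
\]

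It remains to show that this count depends only on $\supp(\rho)$. The map $\rho\mapsto\mathfrak{q}^\rho_T=\sum_{x}\rho(x)\mathfrak{q}^{x}_T$ is linear with nonnegative terms, so $(\mathfrak{q}^\rho_T)^{-1}(0)=\bigcap_{x\in\supp(\rho)}(\mathfrak{q}^x_T)^{-1}(0)$, which depends only on the support.

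The step that needs the most care is interpretive: the constraint $\sigma^\rho_T(F)=0$ has to be imposed on every path, including null ones. On a null path the constraint is vacuous, and that is exactly the source of the extra dimensions counted by $|(\mathfrak{q}^\rho_T)^{-1}(0)|$.
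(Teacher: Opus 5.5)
Your proposal is correct and follows the paper's proof: Doob--Dynkin coordinates indexed by $(x,\zeta)$, one linear constraint per path $\zeta$ that involves only the block $s(\cdot;\zeta)$ and is active exactly when $\mathfrak{q}^\rho_T(\zeta)>0$, independence of the active constraints from disjointness of the blocks, and dependence on the support via $(\mathfrak{q}^\rho_T)^{-1}(0)=\bigcap_{x\in\supp(\rho)}(\mathfrak{q}^x_T)^{-1}(0)$, which the paper notes just before the statement. The interpretive worry you flag is harmless: on a null path $\sigma^\rho_T(F)$ vanishes identically, so imposing the constraint surely or $\sP^\rho$-a.s.\ gives the same subspace, and the extra dimensions come only from $F$ being coordinatized on null paths as well.
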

\begin{proof}
Appendix~\ref{appdx:prelim}. 
\end{proof}

\subsection{Notation}  
For $F \in \clZf{T}$, we denote
\[
  \dvar^\rho\big(F(X_T)\big) \;:=\; \E^\rho\big(|F(X_T) - \pi_T^\rho(F)|^2\big).
\]
In particular, for $F \in \clZfo{T}(\rho)$ (so that $\pi_T^\rho(F) =
0$, $\sP^\rho$-a.s.),
\[
  \dvar^\rho\big(F(X_T)\big) \;=\; \E^\rho\big(|F(X_T)|^2\big).
\]
The same definition applies verbatim with $T$ replaced by any $t$.  In particular, at $t=0$, $\dvar^\rho(f(X_0)) = \rho(f^2)-\rho(f)^2$, for $f\in\Re^d$.

\section{Backward map and the variance decay property}
\label{sec:bmap}

\subsection{Backward map}
A key original concept introduced in this paper is the \emph{backward
map} $\gamma_T\mapsto y_0$, defined as
follows\footnote{The backward map was first introduced in our prior
work, in the continuous-time setting of the model~\cite{kim2024backward,kim2024variance}.
Its adaptation to the discrete-time, finite state-space setting of
this paper is new.}:
\begin{equation}\label{eq:bmap}
	y_{0}(x) :=  \E^x (\gamma_T(X_T)),\quad x\in\bS.
      \end{equation}
The function $y_0:\bS\to \Re$ is deterministic and non-negative. Note
that the
function $y_0$ depends on $T$ and on the pair $(\mu,\nu)$, because $\gamma_T$ does.  

Let
$\rho\in\clP(\bS)$ then
\[
y_0(X_0) = \E^\rho (\gamma_T(X_T) \mid X_0),\quad \sP^\rho\text{-a.s.}
\] 
Setting $\rho=\mu$ (resp.,~$\rho=\nu$), upon taking expectations,
\begin{align*}
  \nu(y_0) &=  \E^\nu (\gamma_T(X_T)) = \E^\nu\big(\E^\nu(\gamma_T(X_T)|\clZ_T)\big) = 1, \\
  \mu(y_0) &= \E^\mu\big(\gamma_T(X_T)\big) = \E^\mu\big(\E^\mu(\gamma_T(X_T)|\clZ_T)\big) = \E^\mu\big(\pi_T^\mu(\gamma_T)\big).
\end{align*}
Now, $\pi_T^\mu(\gamma_T)=\pi_T^\nu(\gamma_T^2)$, and therefore,
\[
\mu(y_0)=\E^\mu\big(\pi_T^\nu(\gamma_T^2)\big).
\]
Noting $\pi_T^\nu(\gamma_T^2)-1 = \chisq\big(\pi_T^\mu\mid\pi_T^\nu\big)$ is the $\chisq$-divergence,
\[
\E^\mu\big(\chisq\big(\pi_T^\mu\mid\pi_T^\nu\big)\big) = \mu(y_0) -
\nu(y_0).
\]
A straightforward consequence of this identity is the following
result. 
\begin{proposition}\label{prop:chisq_filter_stability}
Let $\mu,\nu\in\clP(\bS)$ with $\mu\sim\nu$. 
Suppose the following holds:
  \begin{equation}\label{eq:y0_approaches_constant}
    y_0(x) \stackrel{(T\to
  \infty)}{\longrightarrow} 1 \quad \forall \; x \in
\mathrm{supp}(\nu).
\tag{variance decay property}
\end{equation}
Then $\E^\mu(\chisq(\pi_T^\mu\mid\pi_T^\nu))\to0$.
\end{proposition}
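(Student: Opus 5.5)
The plan is to start from the identity established just above the statement,
\[
\E^\mu\big(\chisq(\pi_T^\mu\mid\pi_T^\nu)\big) = \mu(y_0) - \nu(y_0) = \sum_{x\in\bS} \big(\mu(x)-\nu(x)\big)\, y_0(x),
\]
and pass to the limit $T\to\infty$ term by term. The sum is over finitely many states, so pointwise convergence of $y_0$ on the relevant states is enough. No uniformity in $x$ is needed beyond finiteness.

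First I will restrict the sum to $\supp(\nu)$. Under the equivalence $\mu\sim\nu$ we have $\supp(\mu)=\supp(\nu)=\bS'$, so both $\mu(x)$ and $\nu(x)$ vanish for $x\notin\bS'$. Hence
\[
\mu(y_0)-\nu(y_0) = \sum_{x\in\bS'} \big(\mu(x)-\nu(x)\big)\, y_0(x).
\]
This step matters because the variance decay property is only assumed on $\supp(\nu)$. The values of $y_0$ off the support, which might not converge, never enter.

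Next I will apply the hypothesis \eqref{eq:y0_approaches_constant}. For each $x\in\bS'$, $y_0(x)\to 1$ as $T\to\infty$. Since $\bS'$ is finite, the sum converges to $\sum_{x\in\bS'}(\mu(x)-\nu(x)) = 1-1 = 0$. Therefore $\E^\mu(\chisq(\pi_T^\mu\mid\pi_T^\nu))\to 0$, which is filter stability in the sense of Definition~\ref{def:filter-stability}.

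As a side check, $y_0(x)\in[\alpha,\alpha^{-1}]$ because $\gamma_T$ takes values in $[\alpha,\alpha^{-1}]$ surely. Boundedness is therefore automatic, though finiteness of $\bS$ already makes it unnecessary.

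There is no real obstacle. The only point requiring care is the support restriction, which uses $\mu\ll\nu$ together with the identity $\E^\mu(\chisq)=\mu(y_0)-\nu(y_0)$ derived before the statement. That derivation relied on $\nu(y_0)=1$ and $\mu(y_0)=\E^\mu(\pi_T^\nu(\gamma_T^2))$, and both are taken as given.
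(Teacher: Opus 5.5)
Your proposal is correct and follows essentially the same route as the paper: restrict the identity $\E^\mu(\chisq(\pi_T^\mu\mid\pi_T^\nu))=\mu(y_0)-\nu(y_0)$ to the common support $\bS'$ and pass to the limit term by term in a finite sum. The only cosmetic difference is that the paper uses $\nu(y_0)=1$ to write the sum as $\sum_{x\in\supp(\mu)}\mu(x)\,(y_0(x)-1)$, whereas you keep $\sum_{x\in\bS'}(\mu(x)-\nu(x))\,y_0(x)$ and use $\mu(\bS')=\nu(\bS')=1$ in the limit.
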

\begin{proof}
Since $\mu \sim \nu$, 
$\mathrm{supp}(\mu)=\mathrm{supp}(\nu)$. Then
\[
\mu(y_0) - \nu(y_0) = \sum_{x\in\mathrm{supp}(\mu)} \mu(x)\,(y_0(x)-1),
\]
a finite sum. If $y_0(x)\to1$ for all $x\in\mathrm{supp}(\nu)$, each
term vanishes as $T\to\infty$, so $\E^\mu(\chisq(\pi_T^\mu\mid\pi_T^\nu))\to0$.
\end{proof}

The terminology ``~\eqref{eq:y0_approaches_constant}''
is motivated by the following contraction property of the backward map.
Indeed, by~\eqref{eq:bmap} and Jensen's inequality,
\begin{equation}\label{eq:jensens_ineq}
	\dvar^\nu\big(y_0(X_0)\big)
	\le
	\dvar^\nu\big(\gamma_T(X_T)\big).\tag{Jensen's inequality}
\end{equation}
Thus, the backward map $\gamma_T\mapsto y_0$ is non-expansive with
respect to the variance functional: it cannot increase the variance of
its argument.
Filter stability requires that this non-expansive behavior
be strengthened, asymptotically, into decay of the variance to zero as
$T\to\infty$.

\subsection{Lyapunov equation and dissipation}\label{sec:dissipation}
Let $\rho\in\clP(\bS)$ and $T\geq 1$.  For $F\in \clZf{T}$, define
\begin{equation}\label{eq:Y_and_M}
\begin{aligned}
\text{(backward process)}& \qquad Y_t(x) := \E^\rho\big(F(X_T)\mid\clZ_t\vee[X_t=x]\big),
  \quad x\in\bS, \quad t\in \mathbb{T},\\
\text{(martingale)}& \qquad M_t := Y_t(X_t), \quad t\in \mathbb{T}.
\end{aligned}
\end{equation}
By construction, $Y_t\in\clZf{t}$ for each $t\in\mathbb{T}$, with $Y_T=F$.
Since $F$ is $\clZ_T$-measurable, the Markov property gives
\[
M_t = \E^\rho\big(F(X_T)\mid\clF_t\big), \qquad \sP^\rho\text{-a.s.},
\quad t\in\mathbb{T}.
\]
That is, $M$ is an $(\clF,\sP^\rho)$-martingale, with $M_T=F(X_T)$ and
$M_0=Y_0(X_0)$.

\begin{figure}[t]
  \centering
  \include{figure/variance_decay.tex}
\caption{The backward map is embedded in a backward process for which the
    variance is a Lyapunov function, and the dissipation is the amount by
    which it decreases.}
\label{fig:lyapunov}
\end{figure}

The motivation behind this construction is as follows:  
For the special case where $\rho=\nu$ and 
$F=\gamma_T$, we have $Y_0(x)=y_0(x)$,
for each $x\in \supp(\rho)$.  In effect, for this special case, the process
$Y=\{Y_t:t\in \mathbb{T}\}$ `embeds' the two end points of the
backward map: $Y_T=F=\gamma_T$ at the terminal time and $Y_0=y_0$ at
the initial time.  The exact nature of the embedding is made precise
in the following section where the BS$\Delta$E is introduced.  The
embedding is useful because of the following result (see~\Fig{fig:lyapunov}). 

\begin{proposition}\label{Prop:Lyapunov}
Let $\rho\in\clP(\bS)$, $c\in\Re$, $F-c\ones \in \clZfo{T}(\rho)$.  Then 
\begin{equation}\label{eq:Lyapunov_equation}
\dvar^\rho\big(Y_0(X_0)\big) +
\E^\rho (\langle M\rangle_t) = \dvar^\rho\big(Y_t(X_t)\big), \quad  t\in \mathbb{T}. \tag{Lyapunov equation}
\end{equation}
where $\langle M\rangle=\{\langle M\rangle_t:t\in \mathbb{T}\}$ is the predictable quadratic variation of
$M$.
\end{proposition}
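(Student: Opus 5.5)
The identity \eqref{eq:Lyapunov_equation} is the orthogonality of martingale increments, applied to the $(\clF,\sP^\rho)$-martingale $M$ of \eqref{eq:Y_and_M}. The idea is to express both variances through the second moments of $M$, and then to verify that the centering terms coincide.

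First I reduce to $c=0$. Replacing $F$ by $F-c\ones$ replaces $Y_t$ by $Y_t-c\ones$ and $M_t$ by $M_t-c$. This leaves $\dvar^\rho(Y_t(X_t))$ unchanged, because the conditional centering $\pi_t^\rho(Y_t)$ shifts by the same constant $c$ (using $\pi_t^\rho(\ones)=1$ $\sP^\rho$-a.s.). It also leaves the increments of $M$, and hence $\langle M\rangle$, unchanged. So assume $F\in\clZfo{T}(\rho)$.

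Next I compute the centerings. The tower property gives
\[
\pi_t^\rho(Y_t)=\E^\rho(Y_t(X_t)\mid\clZ_t)=\E^\rho\big(\E^\rho(F(X_T)\mid\clF_t\vee\clZ_t)\mid\clZ_t\big)=\E^\rho(F(X_T)\mid\clZ_t).
\]
Here I use that $M_t=Y_t(X_t)=\E^\rho(F(X_T)\mid\clF_t)$ $\sP^\rho$-a.s., as recorded after \eqref{eq:Y_and_M}, and that $\clZ_t\subset\clF_t$. Thus $N_t:=\pi_t^\rho(Y_t)$ is the $\clZ$-martingale $\E^\rho(F(X_T)\mid\clZ_t)$.

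The main subtlety is that this is not zero in general for $t<T$. The statement therefore has to be read with $\dvar^\rho(Y_t(X_t))=\E^\rho(|M_t-N_t|^2)=\E^\rho(M_t^2)-\E^\rho(N_t^2)$, where the cross term equals $\E^\rho(N_t^2)$ by conditioning on $\clZ_t$.

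To get the identity I must check $\E^\rho(N_t^2)=\E^\rho(N_0^2)$ for all $t$. At $t=0$ we have $N_0=\E^\rho(F(X_T))=\E^\rho(\pi_T^\rho(F))=\E^\rho(\sigma_T^\rho(F)/\sigma_T^\rho(\ones))$. The hypothesis $F\in\clZfo{T}(\rho)$ gives $\sigma_T^\rho(F)=0$, hence $\pi_T^\rho(F)=0$ $\sP^\rho$-a.s. Therefore $N_T=0$, and by the martingale property $N_t=\E^\rho(N_T\mid\clZ_t)=0$ for every $t$. So the centering terms vanish identically, and
\[
\dvar^\rho(Y_t(X_t))=\E^\rho(M_t^2), \qquad \dvar^\rho(Y_0(X_0))=\E^\rho(M_0^2).
\]
This step is where the hypothesis $F-c\ones\in\clZfo{T}(\rho)$ is used. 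It is the only place needing care, namely handling paths of zero probability via the $\sP^\rho$-a.s.\ qualifiers.

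Finally, since $M$ is square integrable ($F$ is bounded on a finite space), the predictable quadratic variation is
\[
\langle M\rangle_t=\sum_{s=0}^{t-1}\E^\rho\big((M_{s+1}-M_s)^2\mid\clF_s\big).
\]
Orthogonality of the increments gives $\E^\rho(M_t^2)=\E^\rho(M_0^2)+\E^\rho(\langle M\rangle_t)$; the cross terms $\E^\rho(M_s(M_{s+1}-M_s))$ vanish by conditioning on $\clF_s$. Substituting the two variance identities yields \eqref{eq:Lyapunov_equation} for each $t\in\mathbb{T}$.
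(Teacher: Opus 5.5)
Your proof is correct and follows the paper's argument. Like you, the paper uses the tower property and the centering hypothesis to get $\pi_t^\rho(Y_t)=c$ $\sP^\rho$-a.s.\ for every $t$, and combines this with $\E^\rho(\langle M\rangle_t)=\E^\rho(M_t^2)-\E^\rho(M_0^2)$, obtained via $\E^\rho(M_tM_0)=\E^\rho(M_0^2)$, which is your orthogonality of increments; your reduction to $c=0$ is only cosmetic, and your remark that $\pi_t^\rho(Y_t)$ ``is not zero in general'' holds only without the hypothesis, under which you then show it vanishes.
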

\begin{proof}
Appendix~\Sec{sec:prop:Lyapunov_proof}.  
\end{proof}

The middle term, 
$\E^\rho (\langle M\rangle_t)$, is referred to as the dissipation up to
time $t$.  Since $t\mapsto \E^\rho(\langle M\rangle_t)$ is
non-decreasing, the \eqref{eq:Lyapunov_equation} shows that
$t\mapsto\var^\rho(Y_t(X_t))$ is non-decreasing.  The variance is
thus a Lyapunov function and $\E^\rho(\langle M\rangle_t)$ is
exactly the dissipation---amount by which the Lyapunov function
decreases---over $[0,t]$.  Note that the decrease is in the backward
direction of time.

We are now ready to state the main result of this paper.  The main
result provides a characterization of the \emph{zero-dissipation set}
on which the dissipation vanishes identically.
\begin{lemma}[Invariance principle]\label{lemma:Lasalle}
  Let $\rho\in\clP(\bS)$, $c\in\Re$, and $F-c\ones \in \clZfo{T}(\rho)$.
Suppose any
one of the following conditions hold:
\begin{enumerate}
\item[(i)]  $A$ is ergodic, 
  Assumption~\ref{ass:non-degeneracy} holds, and $T\geq \tau_0(A)=:\tau$; or 
\item[(ii)]  The HMM is observable and $T \geq \tau_2(A,H) =:\tau$; or
\item[(iii)]  The HMM is detectable, 
  Assumption~\ref{ass:non-degeneracy} holds, and $T \geq  \max
  \{\tau_1(A), \tau_2(A,H)\}=:\tau$.  
\end{enumerate}
Then 
\[
\E^\rho (\langle M \rangle_\tau) = 0 \quad \implies \quad Y_0(x)=c,\quad \forall \; x\in \mathrm{supp}(\rho).
\]
\end{lemma}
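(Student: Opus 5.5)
Zero dissipation will be turned into a pathwise identity for the backward process, and that identity will then be read as a linear-algebraic statement about $Y_0$. Since $M$ is an $(\clF,\sP^\rho)$-martingale with $\langle M\rangle_\tau=\sum_{t<\tau}\E^\rho(|M_{t+1}-M_t|^2\mid\clF_t)$, the hypothesis $\E^\rho(\langle M\rangle_\tau)=0$ gives $M_{t+1}=M_t$, $\sP^\rho$-a.s., for $t<\tau$. Hence $Y_{t+1}(X_{t+1})=Y_t(X_t)$ a.s.

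I will unpack this using the one-step structure. Conditioning on $\clZ_t\vee[X_t=x]$ and $Z_{t+1}$, it says that for every $x$ reachable at time $t$ with positive probability (in the sense of $\sP^\rho$ given the observed path), for every $z$ with $H(x,z)>0$ and every $x'$ with $A(x,x')>0$,
\[
Y_{t+1}(x';Z_{1:t},z)=Y_t(x;Z_{1:t}).
\]
Equivalently, $B_{t+1}(Y_{t+1})=0$ and the $W$-component vanishes. In the BS$\Delta$E language this means the control/martingale terms $V_t$ vanish, so $Y_t=AY_{t+1}$ with $Y_{t+1}$ not depending on $Z_{t+1}$ along the support. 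Iterating from $t=0$ gives the key fact: $Y_0(x)=Y_\tau(x';\zeta)$ for every path $\zeta$ of positive probability from $x$ and every $x'$ reachable in $\tau$ steps along it. Finally I use $\sigma^\rho_T(F-c\ones)=0$; martingale consistency gives $\rho(Y_0)=c$, and in fact $\pi^\rho_t(Y_t)=c$ for each $t$.

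\emph{Ergodic case (i).} Under Assumption~\ref{ass:non-degeneracy} every observation path has positive probability. Fix a path $\zeta\in\bO^\tau$. By the definition of $\tau_0(A)$, for any $x,\tilde x\in\supp(\rho)$ there is a common recurrent state $x'$ reachable from both in exactly $\tau$ steps. The identity then gives $Y_0(x)=Y_\tau(x';\zeta)=Y_0(\tilde x)$. So $Y_0$ is constant on $\supp(\rho)$, and $\rho(Y_0)=c$ forces the constant to be $c$.

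\emph{Observable case (ii).} Here I cannot use reachability overlap, so I would argue by duality. The zero-dissipation relations say the deterministic function $g:=Y_0-c\ones$ is propagated with zero control. Its pairing with the unnormalized filter is then $\sigma^{\rho'}_t(Y_t)$, constant along every path, for every $\rho'$ supported in $\supp(\rho)$. I would show this forces $\sP^{\rho'}|_{\clZ_\tau}$-type functionals $\rho'\mapsto\rho'(g)$ to depend only on the law of the observations up to $\tau$, i.e.\ $g$ annihilates $\clN_\tau=\clN$. Observability gives $\clN=\{0\}$ restricted to differences of measures on $\supp(\rho)$, so $g$ is constant on $\supp(\rho)$ and equal to $c$. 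Concretely, $\mu'\sim_\tau\nu'$ should imply $\mu'(Y_0)=\nu'(Y_0)$, and $\clN_\tau^\perp$ is spanned by the functions $x\mapsto\sP^x(Z_{1:\tau}=\zeta)$. So I need to show $Y_0$ lies in that span plus constants; the path-identity expresses $Y_0$ through these likelihoods.

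\emph{Detectable case (iii).} I would combine the two arguments. On the observable directions (annihilator of $\clN$) the argument of (ii) applies. On $\clN$, which is $A^\transpose$-invariant and Schur stable, $(A^\transpose)^k$ restricted to $\clN$ has no peripheral part. After $\tau_1(A)$ steps the reachable sets are $p$-periodic, so the identity $Y_0(x)=Y_\tau(x')$ holds for all $x'$ in the stabilized reachable recurrent sets, and these are invariant under shifts by $p$. The plan is to show that the residual freedom of $g$ lies in the dual of the peripheral eigenspace intersected with $\clN$, which is trivial by detectability.

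I expect this last step, the interaction between periodic recurrent classes and the nonobservable subspace, to be the main obstacle. I would first try to prove it by showing that $g$ restricted to recurrent states is $A^p$-invariant, hence in $\mathrm{E}^{\mathrm{PER}}(A)$, and then pair it against $\clN$.
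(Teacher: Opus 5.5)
Your case (i) is correct and is essentially the paper's argument. The gap is in (ii), and (iii) inherits it: you put the annihilator pairing on the wrong side. You aim to show that the \emph{function} $g=Y_0-c\ones$ annihilates $\clN_\tau$, i.e.\ $Y_0\in\clN_\tau^\perp=\clC_\tau=\mathrm{span}\{x\mapsto\sP^x(Z_{1:\tau}=\zeta):\zeta\in\bO^\tau\}$. Under observability and $\tau\ge\tau_2(A,H)$ one has $\clC_\tau=\Re^d$. So this holds for every function, uses nothing about zero dissipation, and cannot yield ``$g$ constant on $\supp(\rho)$''. What must be shown is that the \emph{signed measure} $w(x):=\rho(x)\big(Y_0(x)-c\big)$ lies in $\clN_\tau$. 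Equivalently, $\rho\big((Y_0-c)\tilde f\big)=0$ for all $\tilde f\in\clC_\tau$, which is the paper's identity~\eqref{eq:obs_identity}; then $\clN=\{0\}$ gives $w=0$. Two intermediate claims also fail. First, $\sigma^{\rho'}_t(Y_t)$ is not constant along paths: for $F=c\ones$ one gets $\sigma^{\rho'}_1(Y_1)=c\,\sP^{\rho'}(Z_1=\zeta_1)$. Second, ``$V_t$ vanish'' needs Assumption~\ref{ass:non-degeneracy}, which (ii) does not assume; only $V_t^\tp(X_t)R(X_t)V_t(X_t)=0$ holds.

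The missing step is short given your pathwise identity. Zero dissipation gives $Y_\tau(X_\tau)=Y_0(X_0)$, $\sP^\rho$-a.s., so for every $\zeta\in\bO^\tau$,
\[
0=\sigma^\rho_\tau(Y_\tau-c\ones)\big|_{[Z_{1:\tau}=\zeta]}=\E^\rho\big((Y_0(X_0)-c)\ones_{[Z_{1:\tau}=\zeta]}\big)=\sum_{x\in\bS}\rho(x)\big(Y_0(x)-c\big)\sP^x(Z_{1:\tau}=\zeta).
\]
The first equality is \Prop{prop:technical}(4) and the second is the zero-dissipation identity. Hence $w\perp\clC_\tau$, i.e.\ $w\in\clN_\tau=\clN$. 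This is more direct than the paper, which obtains~\eqref{eq:obs_identity} by perturbing the optimal control $U=0$ by $\epsilon\tilde U$ and using the optimality principle and Kunita--Watanabe.

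For (iii) you then still need the step you left open: some $\tilde f\in\mathrm{E}^{\mathrm{PER}}(A)$ that agrees with $Y_0$ on $\supp(\rho)$. Restricting $g$ to recurrent states is not the right object, since $\supp(\rho)$ may contain transient states. The paper obtains $\tilde f$ as the Ces\`aro limit $\Pi A^\tau\E^\rho(Y_\tau)$, using $\tau\ge\tau_1(A)$ and non-degeneracy (\Prop{prop:part-i-implication}). Detectability puts $\tilde f\in\clC=\clN^\perp$, so $0=w(\tilde f)=\rho\big((Y_0-c)Y_0\big)=\dvar^\rho\big(Y_0(X_0)\big)$. No splitting into observable directions and $\clN$ is needed.
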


\begin{remark}[Relationship to the backward map]
  Let $\rho=\nu$, $F=\gamma_T$.  Because $\gamma_T - \ones \in
  \clZfo{T}(\nu)$, Prop.~\ref{Prop:Lyapunov}
  applies with $c=1$.  Then,
  \[
    Y_0(x) =\E^\nu[F(X_T) \mid X_0=x ] = \E^\nu[\gamma_T(X_T) \mid
    X_0=x] = y_0(x),\quad x\in \supp(\nu), 
  \]
  where $y_0$ is the output of the backward
  map~\eqref{eq:bmap}.  We note the significance of the three results:
  \begin{enumerate}
     \item From \Prop{prop:chisq_filter_stability}, filter stability
       is the assertion that the variance decays to zero.
      \item \Prop{Prop:Lyapunov} shows that variance is a
      Lyapunov function and identifies the dissipation.  
     \item \Lemma{lemma:Lasalle} is the invariance principle.  Under
     appropriate conditions on the HMM, it shows that dissipation = 0
     implies variance = 0.  
  \end{enumerate}
\end{remark}

The outline of the remainder of this paper is as follows:
\begin{enumerate}
\item The remainder of this section embeds the backward map in a BS$\Delta$E,
which yields an explicit formula for the dissipation $\E^\rho(\langle M\rangle_\tau)$.
All that follows rests on this formula.
\item \Sec{sec:imp_ass} proves part (i), where $A$ is ergodic.  The argument
gives more: \Prop{prop:part-i-implication} characterizes the zero-dissipation
set under Assumption~\ref{ass:non-degeneracy} and $\tau\geq \tau_1(A)$, with none of the three
conditions of \Lemma{lemma:Lasalle}.  It is used in the proof of part (iii).
\item Parts (ii) and (iii) require a duality theory for the HMM, developed in
\Sec{sec:duality}: the BS$\Delta$E is a linear control system, and
observability and detectability of the HMM are the controllability and
stabilizability of its dual.
\item \Sec{sec:observable_proof} proves parts (ii) and (iii); notably, part
(ii) needs no assumption beyond observability.  The section closes with a
converse: if the conclusion of \Lemma{lemma:Lasalle} holds for every $\tau$,
$\rho$, $c$ and $F$, then the HMM is detectable.  
\item In the final two sections, we discuss two applications of
  \Lemma{lemma:Lasalle}:
  \begin{itemize}
  \item \Sec{sec:filter_stability} presents a novel proof of filter
    stability.  For this purpose, \Lemma{lemma:Lasalle} is applied in the manner of LaSalle, to limits of
    trajectories along which the dissipation vanishes  asymptotically.
  \item \Sec{sec:rates} makes the conclusion of \Lemma{lemma:Lasalle}
    quantitative, based on identifying the largest
constant $\beta(\rho)$ for which
\begin{equation*}
  \E^\rho (\langle M \rangle_\tau) \;\ge\; \beta(\rho)\,
    \dvar^\rho (Y_0(X_0)),
  \qquad \forall\, F -c\ones \in\clZfo{\tau}(\rho),\quad c\in \Re.
\end{equation*}
The constant $\beta(\rho)$ is used to define the Poincar\'e constant which provides
a framework for convergence rates.
\end{itemize}
\end{enumerate}

We begin with the preliminaries to introduce the BS$\Delta$E and the
associated notation.

\subsection{Embedding the backward map in a BS$\Delta$E}
The backward stochastic difference equation (BS$\Delta$E), over a
time horizon $[0,T]$, is defined as follows\footnote{With a slight
  abuse of notation, we use the notation for the backward process ($Y$) to also denote the solution of the BS$\Delta$E. This is by
  intention based on the probabilistic characterization given in \Prop{prop:backward-map-and-bsde}.}
\begin{subequations}\label{eq:dual_BSDE}
\begin{align}
Y_t(x) &= (AY_{t+1})(x) + h^\tp(x) (U_t+V_t(x)) - V_t^\transpose(x)
         e(Z_{t+1}),
  \quad x\in\bS,\quad t\in \mathbb{T}^-, \label{eq:dual_BSDE_a}\\
Y_T(x)  & = F(x),\quad x\in\bS.  \label{eq:dual_BSDE_b}
\end{align}
\end{subequations}
The process $U:=\{U_t:t\in\mathbb{T}^-\}$ is referred to as the
control input, $U\in\clU_T$, and $F$ is referred to as the terminal
condition, $F\in\clZf{T}$.
For a given $(U,F)\in\clU_T\times\clZf{T}$, a solution of the
BS$\Delta$E is a pair $(Y,V)\in\clY_T\times\clV_T$. Concerning
these, we have the following well-posedness result:

\begin{proposition}\label{prop:existence_BSDE}
Suppose $U \in \clU_T$ and $F\in \clZf{T}$.  Then there exists a
unique $(Y,V) \in \clY_T \times \clV_T$ that
solves~\eqref{eq:dual_BSDE}.  
\end{proposition}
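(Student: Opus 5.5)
The statement to prove is the well-posedness of the BS$\Delta$E~\eqref{eq:dual_BSDE}, Proposition~\ref{prop:existence_BSDE}. The plan is to solve it backward in time, one step at a time, by a finite-dimensional martingale-representation argument. Proceed by backward induction on $t$: $Y_T=F\in\clZf{T}$ is given. Suppose $Y_{t+1}\in\clZf{t+1}$ is known. By Doob--Dynkin, $Y_{t+1}(x')=s(x';Z_1,\ldots,Z_t,Z_{t+1})$, so for each fixed $x$ the quantity $G_t(x):=(AY_{t+1})(x)$ is a $\clZ_{t+1}$-measurable random variable. Equivalently, conditionally on $\clZ_t$, it is a function $g_x:\bO\to\Re$ of $Z_{t+1}$ alone, with coefficients that are $\clZ_t$-measurable. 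Equation~\eqref{eq:dual_BSDE_a} at time $t$ then asks for $\clZ_t$-measurable $Y_t(x)\in\Re$ and $V_t(x)\in\Re^m$ such that
\[
g_x(z) + h^\tp(x)(U_t+V_t(x)) - V_t^\tp(x)e(z) = Y_t(x), \qquad \forall\, z\in\bO .
\]
This is a pathwise identity in $z$, which is needed because $Y_t$ must be $\clZ_t$-measurable while the left side depends on $Z_{t+1}$.

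The key step is the linear-algebra fact that the $m+1$ functions $\{\ones, e^1,\ldots,e^m\}$ on $\bO$, where $e^k(z)$ is the $k$th component of $e(z)$, form a basis of $\Re^{\bO}\cong\Re^{m+1}$. This follows because $e(k)$ is the $k$th unit vector for $k\ge1$ and $e(0)=-\sum_k e(k)$. The $(m+1)\times(m+1)$ matrix with rows $(1,e^\tp(z))$, $z\in\bO$, is therefore invertible: subtracting the sum of the rows for $z=1,\ldots,m$ from the row for $z=0$ gives $(1-m, -2\cdot\ones^\tp)$ plus a nonzero pivot structure, and a direct determinant check confirms nonsingularity. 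Hence there are unique scalars $a_x$ and a unique vector $b_x\in\Re^m$ with $g_x(z)=a_x+b_x^\tp e(z)$ for all $z\in\bO$. Both are linear functions of $(g_x(z))_{z\in\bO}$, hence $\clZ_t$-measurable. Matching coefficients forces
\[
V_t(x)=b_x,\qquad Y_t(x)=a_x+h^\tp(x)(U_t+b_x),
\]
which exist and are unique. They are also $\clZ_t$-measurable, since $U_t\in\clU_T$ is $\clZ_t$-measurable. Note that at $t=0$, $\clZ_0$ is trivial, so everything is deterministic, consistently with the construction.

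Iterating from $t=T-1$ down to $0$ produces $(Y,V)\in\clY_T\times\clV_T$. Uniqueness holds at each step by the uniqueness of the basis coefficients, so uniqueness holds globally by induction. The only point requiring care is the interpretation of the equation: it must hold for every path $(\zeta_1,\ldots,\zeta_{t+1})\in\bO^{t+1}$, including paths of zero probability, rather than only $\sP^\rho$-a.s. Working with the Doob--Dynkin representatives $s$ makes this surely-defined. I expect this to be the main subtlety; the rest is routine. In particular, no non-degeneracy assumption is needed, because the representation uses only the basis of $\Re^{\bO}$ and not $R(x)$.
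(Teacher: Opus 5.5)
Your proof is correct and is essentially the paper's argument. Both use backward induction driven by the unique decomposition $s^z=\bar s+\tilde s^\tp e(z)$ of functions on $\bO$ (Prop.~\ref{prop:linear_algebra}); the paper decomposes the terminal condition branchwise and then applies $A$, while you decompose $(AY_{t+1})(x)$ directly, which by linearity gives the same $V_t(x)=(A\tilde Y_{t+1})(x)$ and $Y_t(x)=(A\bar Y_{t+1})(x)+h^\tp(x)(U_t+V_t(x))$.

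One small repair concerns your row reduction, which does not show invertibility as written: subtracting gives $(1-m,-2\ones^\tp)$, which proves nothing by itself. Instead, add the rows $z=1,\ldots,m$ to the row $z=0$ to obtain $(m+1,0,\ldots,0)$, or use the explicit inverse $\bar s=\frac{1}{m+1}\sum_{z}s^z$, $\tilde s^k=s^k-\bar s$.
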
 
\begin{proof}
  Appendix~\Sec{sec:prop:existence_BSDE}.
\end{proof}

\newP{Notation} 
By \Prop{prop:existence_BSDE}, each pair
$(F,U)\in\clZf{T}\times\clU_T $ determines a unique solution of the
BS$\Delta$E~\eqref{eq:dual_BSDE}, denoted henceforth as
\[
(Y,V) = \mathrm{BS\Delta E}_T[F,U].
\]
Based on this solution, define
\[
\sfL_T : \clZf{T}\times\clU_T\to\Re^d \quad \text{by} \quad \sfL_T[F,U] := Y_0.
\]
A special case of interest is when $U=0$ where we drop the second
argument and write
\[
\text{(notation with}\;U=0\text{)} \qquad (Y,V) = \mathrm{BS\Delta E}_T[F],\qquad Y_0=\sfL_T[F].
\]
Finally, define $\ell:\Re^d\times\Re^{m\times d}\times\Re^m\times\bS\to\Re$ by
\begin{equation*}\label{eq:running_cost_formula}
  \ell(y,v,u;x):= (\Gamma y)(x) + (u+v(x))^\transpose R(x) (u+v(x)),
  \quad y\in\Re^d,\;v\in\Re^{m\times d},\;u\in\Re^m,\;x\in\bS,
\end{equation*}
where $v(x)$ denotes the $x$-th column of the $m\times d$ matrix
$v=\begin{bmatrix}v(1) & \hdots & v(d)\end{bmatrix}$.


\begin{proposition}
  \label{prop:backward-map-and-bsde}
  Let $F\in \clZf{T}$ and consider $(Y,V) = \mathrm{BS\Delta
    E}_T [F]$. Two equivalent characterizations
  of the solution are as follows:
  \begin{enumerate}
  \item An analytical characterization, obtained by solving~\eqref{eq:dual_BSDE} one step at a time,
    \[
    Y_t(x) =
    \sum_{z\in\bO}\big(\mathrm{diag}(H(\cdot,z))\,A\,Y_{t+1}^z\big)(x),
    \quad Y_{t+1}^z(x):=Y_{t+1}(x)\big|_{[Z_{t+1}=z]}, \quad x\in\bS,
    \]
    for $t\in \mathbb{T}^-$, with $Y_T(x)=F(x)$.
  \item A probabilistic characterization,
    \[
    Y_t(x) = \E^\rho\big(F(X_T)\mid\clZ_t\vee[X_t=x]\big),
    \quad x\in\bS, \quad t\in \mathbb{T}, \quad \sP^\rho\text{-a.s.}
    \]
    The representation holds for arbitrary choice of $\rho\in \clP(\bS)$. 
  \end{enumerate}
\end{proposition}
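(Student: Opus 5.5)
The plan is to prove the two characterizations in order. The analytical one comes directly from the BS$\Delta$E~\eqref{eq:dual_BSDE} with $U=0$, by exploiting measurability. The probabilistic one then follows by showing that the conditional expectation on the right-hand side satisfies the same backward recursion with the same terminal condition, so that it coincides with $Y$ by uniqueness of that recursion.

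\emph{Analytical characterization.} By the Doob--Dynkin lemma, $Y_t$ and $V_t$ are deterministic functions of $(Z_1,\ldots,Z_t)$, and $Y_{t+1}$ is a deterministic function of $(Z_1,\ldots,Z_{t+1})$. Fix a path $(\zeta_1,\ldots,\zeta_t)$; as in Prop.~\ref{prop:existence_BSDE}, the equation is understood surely, i.e.\ for all paths. Equation~\eqref{eq:dual_BSDE_a} with $U_t=0$ then holds for every value $z\in\bO$ of $Z_{t+1}$:
\[
Y_t(x) = (AY^z_{t+1})(x) + V_t^\transpose(x)\big(h(x)-e(z)\big),\qquad z\in\bO,\ x\in\bS .
\]
Multiply by $H(x,z)$ and sum over $z$. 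Since $\sum_z H(x,z)=1$, the left side is unchanged. The key identity is
\[
\sum_{z\in\bO} H(x,z)\,e(z) = \sum_{k=1}^m \big(H(x,k)-H(x,0)\big)e(k) = h(x),
\]
which follows from $e(0)=-\sum_k e(k)$. Using it, the $V_t$ term drops out and we are left with $Y_t(x)=\sum_z H(x,z)(AY^z_{t+1})(x)$, which is the stated formula. Conversely, the recursion together with $Y_T=F$ determines $Y$ uniquely backward in time, consistent with Prop.~\ref{prop:existence_BSDE}.

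\emph{Probabilistic characterization.} Define $\tilde Y_t(x):=\E^\rho\big(F(X_T)\mid\clZ_t\vee[X_t=x]\big)$; clearly $\tilde Y_T=F$. From the product form of $\sP^\rho$, conditional on $(Z_1,\ldots,Z_t)$ and $X_t=x$, the pair $(Z_{t+1},X_{t+1})$ has law $H(x,z)A(x,x')$. This law does not depend on $\rho$ or on the past, and the future $(Z_{t+2},\ldots,X_T)$ depends on the past only through $(Z_{1:t+1},X_{t+1})$. Applying the tower property by conditioning on $\clZ_{t+1}\vee[X_{t+1}=x']$ then gives
\[
\tilde Y_t(x)=\sum_{z\in\bO}\sum_{x'\in\bS} H(x,z)A(x,x')\,\tilde Y^z_{t+1}(x'),
\]
which is the same recursion as in part~1. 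Backward induction from $t=T$ then yields $\tilde Y_t=Y_t$, $\sP^\rho$-a.s. Because the transition kernel in the recursion does not involve $\rho$, the representation holds for every $\rho\in\clP(\bS)$.

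The main obstacle is the careful treatment of null events. The conditioning event $\{Z_{1:t}=\zeta,\,X_t=x\}$ may have zero $\sP^\rho$-probability, which is why the identity holds only $\sP^\rho$-a.s. Concretely, I would verify the Markov step on the events where $\mathfrak{p}^\rho_t(\zeta,x)>0$. On such an event, any $(z,x')$ with $H(x,z)A(x,x')>0$ yields a positive-probability event at time $t+1$, so the induction hypothesis applies there. Terms with $H(x,z)A(x,x')=0$ contribute nothing regardless of the version chosen. Off these events, the version of the conditional expectation can be taken to be $Y_t$ itself.
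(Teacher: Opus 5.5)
Your argument is correct, and it differs from the paper's mainly in part~2.

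For part~1, the paper reads the recursion off the explicit construction in the existence proof: it decomposes $F$ branchwise via \Prop{prop:linear_algebra}, sets $V_{T-1}=A\tilde F_{T-1}$, and simplifies to \eqref{eq:Y_t_soln_formula_b}. Your averaging step instead weights the branchwise identity by $H(x,z)$ and uses $\sum_z H(x,z)e(z)=h(x)$ to eliminate $V_t$. This gets the same formula from the equation alone, without needing to know what $V$ is.

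For part~2, the paper never shows that the conditional expectation satisfies the recursion, and it does not use part~1 at all. It specializes the martingale increment identity \eqref{eq:martingale_identity_appdx} from the proof of \Prop{prop:MU-martingale} to $U=0$, namely $Y_{t+1}(X_{t+1})=Y_t(X_t)+N_{t+1}$ with $N_{t+1}=V_t^\tp(X_t)W_{t+1}+B_{t+1}(Y_{t+1})$. It then telescopes to $F(X_T)=Y_t(X_t)+\sum_{s\ge t}N_{s+1}$ and conditions on $\clZ_t\vee[X_t=x]$.

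What the paper's route buys: it reuses a computation it needs anyway for the quadratic-variation formula, and it shows directly that $M_t=Y_t(X_t)=\E^\rho(F(X_T)\mid\clF_t)$ is a martingale, with $V$ as the integrand against $W$. The cost is that it relies on the conditional-moment properties of $B$ and $W$, including $\E^\rho(B_{t+1}(Y_{t+1})\mid\clG_t)=0$ for the random $Y_{t+1}$. It also leaves the null-set bookkeeping implicit.

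What your route buys: it is more elementary and self-contained, needing only the one-step kernel $H(x,z)A(x,x')$ from the product form of $\sP^\rho$ and backward induction. Your explicit handling of the atoms with $\mathfrak{p}^\rho_t(\zeta,x)>0$ makes both the ``$\sP^\rho$-a.s.'' qualifier and the $\rho$-independence transparent. The cost is that it discards $V$ and does not produce the martingale structure the paper uses later.
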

\begin{proof}
Appendix~\Sec{sec:prop:backward-map-and-bsde}.  
\end{proof}

Either characterization of \Prop{prop:backward-map-and-bsde} could
serve as a starting point for studying the backward map~\eqref{eq:bmap}. The
probabilistic characterization, in particular, was already used for
this purpose in the
preceding \Sec{sec:dissipation}.  Henceforth, we work with the 
BS$\Delta$E, in part because its solution yields a crisp formula for the
predictable quadratic variation of the martingale.  

\begin{proposition}\label{prop:M0-martingale}
Let $\rho\in\clP(\bS)$, $F\in \clZf{T}$, and $(Y,V) = \mathrm{BS\Delta
  E}_T[F]$.  Then 
  \begin{align*}
  M_t & := \E^\rho (F(X_T) \mid \clF_t) = Y_t(X_t),\quad 
        \sP^\rho\text{-a.s.}, \quad
    t\in \mathbb{T},\\
    \langle M \rangle_t & = \sum_{s=0}^{t-1} \E^\rho\big(
                          \ell(Y_{s+1},V_s,0;X_s) \mid \clF_s\big), \quad 
        \sP^\rho\text{-a.s.}, \quad
    t\in \mathbb{T}.
  \end{align*}
\end{proposition}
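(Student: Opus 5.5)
The first claim will follow from the probabilistic characterization in \Prop{prop:backward-map-and-bsde}(2), with $t$ in place of $T$. We have $Y_t(x)=\E^\rho(F(X_T)\mid\clZ_t\vee[X_t=x])$. Since $F$ is $\clZ_T$-measurable and $(X,Z)$ is Markov with respect to $\clF$, the conditional law of $(X_{t+1},Z_{t+1},\ldots,X_T,Z_T)$ given $\clF_t$ depends only on $X_t$. It follows that $\E^\rho(F(X_T)\mid\clF_t)$ equals a $\clZ_t$-measurable function of $X_t$, namely $Y_t(X_t)$, $\sP^\rho$-a.s. (this is exactly the remark made after~\eqref{eq:Y_and_M}).

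For the quadratic variation we use $\langle M\rangle_t=\sum_{s=0}^{t-1}\E^\rho(|M_{s+1}-M_s|^2\mid\clF_s)$. The plan is to compute the increment explicitly from the BS$\Delta$E with $U=0$. Evaluating~\eqref{eq:dual_BSDE_a} at $x=X_s$ gives
\[
M_{s+1}-M_s = Y_{s+1}(X_{s+1})-(AY_{s+1})(X_s) - h^\tp(X_s)V_s(X_s)+V_s^\tp(X_s)e(Z_{s+1}).
\]
We then split this as
\[
M_{s+1}-M_s = \big[Y_{s+1}(X_{s+1})-(AY_{s+1})(X_s)\big] + V_s^\tp(X_s)W_{s+1},
\]
using $W_{s+1}=e(Z_{s+1})-h(X_s)$.

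The first bracket is $B_{s+1}(Y_{s+1})$, but with a random, $\clZ_{s+1}$-measurable argument. This is the main obstacle: \Prop{prop:B_W_Gamma_R} is stated for deterministic $f$. It is resolved by conditioning on $\clG_s=\clF_s\vee\clZ_{s+1}$, with respect to which $Y_{s+1}$ is measurable. Since $Y_{s+1}$ takes finitely many values indexed by $Z_{s+1}$, we write $Y_{s+1}=\sum_z \mathbf{1}[Z_{s+1}=z]Y^z_{s+1}$ with each $Y^z_{s+1}$ being $\clZ_s$-measurable. We then apply \Prop{prop:B_W_Gamma_R} termwise, pulling the $\clG_s$-measurable indicators and coefficients out of the conditional expectation. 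This gives
\[
\E^\rho(B_{s+1}(Y_{s+1})\mid\clG_s)=0,\qquad \E^\rho(|B_{s+1}(Y_{s+1})|^2\mid\clG_s)=(\Gamma Y_{s+1})(X_s).
\]

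The second term, $V_s^\tp(X_s)W_{s+1}$, is $\clG_s$-measurable because $V_s$ is $\clZ_s$-measurable. The cross term therefore vanishes after conditioning on $\clG_s$, by the zero conditional mean of $B$. For the squared second term, $V_s(X_s)$ is $\clF_s$-measurable, so \Prop{prop:B_W_Gamma_R} gives
\[
\E^\rho\big(|V_s^\tp(X_s)W_{s+1}|^2\mid\clF_s\big)=V_s^\tp(X_s)R(X_s)V_s(X_s).
\]
Combining the three pieces by the tower property $\clF_s\subset\clG_s$ yields
\[
\E^\rho(|M_{s+1}-M_s|^2\mid\clF_s)=\E^\rho\big((\Gamma Y_{s+1})(X_s)\mid\clF_s\big)+V_s^\tp(X_s)R(X_s)V_s(X_s).
\]
This is exactly $\E^\rho(\ell(Y_{s+1},V_s,0;X_s)\mid\clF_s)$, and summing over $s$ gives the stated formula for $\langle M\rangle_t$.
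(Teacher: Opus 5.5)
Your proposal is correct and follows essentially the paper's route. The paper obtains this result as the case $U=0$ of \Prop{prop:MU-martingale}, whose proof uses the same increment decomposition $M_{s+1}-M_s=B_{s+1}(Y_{s+1})+V_s^\tp(X_s)W_{s+1}$ and the same conditioning on $\clG_s$; your expansion over the values of $Z_{s+1}$ makes explicit the application of \Prop{prop:B_W_Gamma_R} to a random, $\clG_s$-measurable argument, which the paper uses tacitly. The only difference is in the first claim: the paper reads off $Y_t(X_t)=\E^\rho(F(X_T)\mid\clF_t)$ from the martingale property of $Y_t(X_t)$ together with $Y_T=F$, while you invoke the probabilistic characterization of \Prop{prop:backward-map-and-bsde} and the Markov property, and both are valid.
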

\begin{proof}
Appendix~\Sec{sec:martingale_proofs_M0}.  
\end{proof}

Some technical results regarding the solution are noted here for later
use.
\begin{proposition}\label{prop:technical}
Consider the notation and hypotheses of \Prop{prop:M0-martingale}.
Then
\begin{enumerate}
\item (Continuity).  The maps 
\[
(F,\rho) \mapsto \E^\rho\big(\langle M\rangle_T\big),
\qquad
(F,\rho) \mapsto \dvar^\rho\big(Y_0(X_0)\big)
\]
are polynomial: the first is linear in $\rho$ and quadratic in $F$, and the
second is quadratic in each.  In particular both are jointly continuous on
$\clZf{T}\times\clP(\bS)$.
\item (Boundedness).  For every $t\in\mathbb T$,
  \[
    \|Y_t\|_\infty \le \|F\|_\infty, \qquad
    \dvar^\rho\big(Y_t(X_t)\big) \le \|F\|_\infty^2, \qquad
    \E^\rho\big(\langle M\rangle_t\big) \le \|F\|_\infty^2 .
  \]
\item (Restriction).  Let $\tau\le T$ and consider
  $(\tilde{Y},\tilde{V})=\mathrm{BS\Delta E}_\tau[Y_\tau]$.  Then
  \[
    (\tilde{Y},\tilde{V}) 
      = \big\{ (Y_t, V_t)\,:\, 0\le t\le\tau\big\}.
  \]
\item (Centering).  If $F-c\ones\in\clZfo{T}(\rho)$ for some
  $c\in\Re$, then $Y_\tau-c\ones\in\clZfo{\tau}(\rho)$ for every
  $0\le\tau\le T$.
\end{enumerate}
\end{proposition}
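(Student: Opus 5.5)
Proposition~\ref{prop:technical} has four parts, and the plan is to derive all of them from the two characterizations in \Prop{prop:backward-map-and-bsde}, together with the martingale identity of \Prop{prop:M0-martingale}. The basic observation is the one-step formula
\[
Y_t(x)=\sum_{z\in\bO}\sum_{y\in\bS}H(x,z)A(x,y)\,Y^z_{t+1}(y).
\]
Two facts follow from it directly. First, $F\mapsto Y_t$ is linear, path by path in $\zeta\in\bO^T$; in particular $Y_0=\sfL_T[F]$ is a linear function of $F$ and does not depend on $\rho$. Second, each $Y_t(x)$ is a convex combination of the values $Y_{t+1}^z(y)$, because the weights $H(x,z)A(x,y)$ are nonnegative and sum to one.

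\emph{Boundedness (part 2).} The convex-combination structure holds on every path, including paths of zero probability. Backward induction from $Y_T=F$ therefore gives $\|Y_t\|_\infty\le\|F\|_\infty$. It follows that $\dvar^\rho(Y_t(X_t))\le\E^\rho(|Y_t(X_t)|^2)\le\|F\|_\infty^2$. For the dissipation I use the martingale property $M_t=Y_t(X_t)$ with orthogonal increments:
\[
\E^\rho(\langle M\rangle_t)=\E^\rho(M_t^2)-\E^\rho(M_0^2)\le\E^\rho(M_t^2)\le\|F\|_\infty^2 .
\]

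\emph{Continuity (part 1).} The same identity at $t=T$ gives
\[
\E^\rho(\langle M\rangle_T)=\E^\rho(F(X_T)^2)-\rho(Y_0^2).
\]
The first term equals $\sum_{\zeta,x}\mathfrak{p}^\rho_T(\zeta,x)\,s(x;\zeta)^2$. This is linear in $\rho$, since $\rho\mapsto\mathfrak{p}^\rho_T$ is linear by the explicit construction of $\sP^\rho$, and it is quadratic in $F$. The second term $\rho(Y_0^2)$ is linear in $\rho$ and quadratic in $F$, because $Y_0$ is linear in $F$. For the variance I write
\[
\dvar^\rho(Y_0(X_0))=\rho(Y_0^2)-\rho(Y_0)^2 ,
\]
which is quadratic in $\rho$ and quadratic in $F$. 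Polynomials on a finite-dimensional space are jointly continuous, so continuity on $\clZf{T}\times\clP(\bS)$ follows.

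\emph{Restriction (part 3).} Since $Y_\tau\in\clZf{\tau}$, the pair $\{(Y_t,V_t):0\le t\le\tau\}$ lies in $\clY_\tau\times\clV_\tau$. It satisfies \eqref{eq:dual_BSDE_a} for $t<\tau$ with $U=0$, and it has terminal value $Y_\tau$. The uniqueness statement of \Prop{prop:existence_BSDE} on the horizon $[0,\tau]$ then identifies this pair with $\mathrm{BS\Delta E}_\tau[Y_\tau]$.

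\emph{Centering (part 4).} This is the only step that needs a small computation, and it is the one I expect to require care. I first check that $(Y-c\ones,V)$ solves the BS$\Delta$E with terminal condition $F-c\ones$. This holds because $A\ones=\ones$ and the rows of $H$ sum to one, so constants propagate unchanged through the one-step formula. By uniqueness it therefore suffices to treat $c=0$. Next I use the unnormalized filter recursion $\sigma_{t+1}^\rho(f)=\sigma_t^\rho(\mathrm{diag}(H(:,z))Af)$ on the event $Z_{t+1}=z$. Combined with the analytical characterization, this gives, pathwise in $(\zeta_1,\dots,\zeta_t)$,
\[
\sigma^\rho_t(Y_t)=\sum_{z\in\bO}\sigma^\rho_{t+1}(Y_{t+1})\big|_{[Z_{t+1}=z]} .
\]
The hypothesis $\sigma^\rho_T(F)=0$ holds as an identity on every path in $\bO^T$. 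Iterating the display backward from $T$ to $\tau$ then gives $\sigma_\tau^\rho(Y_\tau)=0$ on every path, that is, $Y_\tau\in\clZfo{\tau}(\rho)$. The point to watch is that membership in $\clZfo{T}(\rho)$ is a surely-defined, path-by-path condition. For that reason the argument uses the analytical characterization rather than conditional expectations, which are only defined $\sP^\rho$-a.s.
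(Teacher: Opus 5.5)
Your proof is correct. Parts 2 and 3 match the paper: the same convex-combination induction from the analytical characterization, the same identity $\E^\rho(\langle M\rangle_t)=\E^\rho(M_t^2)-\E^\rho(M_0^2)$, and the same appeal to uniqueness on the shorter horizon.

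Parts 1 and 4 take genuinely different routes.

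\emph{Continuity.} The paper never writes out the polynomial structure. It proves Lipschitz-type estimates instead. Pointwise in $x$ it uses Jensen, $|Y_0(x)-\tilde Y_0(x)|^2\le\E^x(|F(X_\tau)-\tilde F(X_\tau)|^2)$, together with a Cauchy--Schwarz bound on the difference of $\E^x(\langle M\rangle_\tau)$. It then averages over $\rho$, obtaining bounds linear in $\|\rho-\tilde\rho\|_{\mathrm{TV}}$ and $\|F-\tilde F\|_\infty$ with constants depending on $\max\{\|F\|_\infty,\|\tilde F\|_\infty\}$. Your argument writes $\E^\rho(F(X_T)^2)=\sum_{\zeta,x}\mathfrak{p}^\rho_T(\zeta,x)s(x;\zeta)^2$ and uses linearity of $F\mapsto\sfL_T[F]$. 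It is shorter, and it actually verifies the ``linear/quadratic'' claim in the statement, which also gives local Lipschitz continuity for free. The paper's route buys explicit constants.

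\emph{Centering.} The paper argues probabilistically. It shows $\pi^\rho_t(Y_t)=\E^\rho(F(X_T)\mid\clZ_t)=c$ $\sP^\rho$-a.s.\ by the tower property. It then upgrades this to the sure statement $\sigma^\rho_t(Y_t-c\ones)=0$ by splitting paths into those with $\sigma^\rho_t(\ones)>0$, which have positive probability, and those with $\sigma^\rho_t\equiv0$. You instead pair the forward unnormalized-filter recursion with the backward one-step formula, giving the pathwise identity $\sigma^\rho_t(Y_t)=\sum_{z}\sigma^\rho_{t+1}(Y_{t+1})|_{[Z_{t+1}=z]}$. This needs neither the probabilistic characterization nor the case split, and it makes the forward--backward duality visible on every path, including null ones. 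Your reduction to $c=0$ is valid, since $A\ones=\ones$ and $\sum_z H(\cdot,z)=\ones$.
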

\begin{proof}
Appendix~\Sec{sec:prop:technical}.  
\end{proof}


\section{Implications of Assumption~\ref{ass:non-degeneracy}}
\label{sec:imp_ass}
Even
though we begin with a proof of \Lemma{lemma:Lasalle}, part (i), the
main result of this section is \Prop{prop:part-i-implication}.  While
part (i) concerns the ergodic signal model,
\Prop{prop:part-i-implication} provides a characterization of
the zero dissipation set more generally.  
\subsection{Proof of \Lemma{lemma:Lasalle}, part (i)}
We provide a proof for the ergodic signal case (part
(i)). The proofs for parts (ii) and (iii) are given in the next 
\Sec{sec:duality}.

\begin{proof}[Proof of part (i)]
Let $(Y,V) = \mathrm{BS\Delta E}_T[F]$.  Since $\E^\rho (\langle
M\rangle_\tau) = 0$, we have $ \langle M
  \rangle_\tau = 0$, $\sP^\rho\text{-a.s.}$ which implies
  \[
    (\Gamma
  Y_{t+1})(X_t) = 0,\quad  V_t^\tp (X_t) R(X_t) V_t(X_t) =0, \quad 0\leq t\leq \tau-1, \quad \sP^\rho\text{-a.s.}.
  \]
  Because of Assumption~\ref{ass:non-degeneracy},
  \[
    V_t^\tp (X_t) R(X_t) V_t(X_t) =0 \implies V_t(X_t) = 0, \quad 0\leq t\leq \tau-1, \quad \sP^\rho\text{-a.s.}.
  \]
  Therefore, along the trajectory,
  \[
Y_t(X_t) = (AY_{t+1})(X_t),\quad  0\leq t\leq \tau-1, \quad \sP^\rho\text{-a.s.},
\]
and because $(\Gamma
Y_{t+1})(X_t) = 0$, $\sP^\rho\text{-a.s.}$, we have
\[
Y_t(X_t) = Y_{t+1}(X_{t+1}),\quad  0\leq t\leq \tau-1, \quad \sP^\rho\text{-a.s.}.
\]
Therefore, for each fixed $x\in\mathrm{supp}(\rho)$, 
\begin{align*}
  Y_0(x) &= Y_1(x'), \quad \forall \; x'\in \mathcal R_1(x) \quad
           (\text{i.e.},\; x'\in \bS \;\;\text{s.t.} \;\; A(x,x')>0),
\end{align*}
where we have dropped the qualifier ``$\sP^\rho\text{-a.s.}$''
because of Assumption~\ref{ass:non-degeneracy}
(see Remark~\ref{rem:surely}).  Therefore,
restricted to points $x'\in \mathcal R_1(x) \subset \bS$, $Y_1(\cdot)$ is a
deterministic function that moreover takes a constant value ($=Y_0(x)$).  Similarly, 
\begin{align*}
  Y_0(x) &= Y_2(x'), \quad \forall \; x'\in \mathcal R_2(x)  \quad
           (\text{i.e.},\; x'\in \bS \;\;\text{s.t.} \;\; A^2(x,x')>0),
  \\
  \vdots \quad &= \quad \vdots \\
  Y_0(x) &= Y_\tau(x'), \quad \forall \; x'\in \mathcal R_\tau(x)  \quad
           (\text{i.e.},\; x'\in \bS \;\;\text{s.t.} \;\; A^\tau(x,x')>0).
\end{align*}
Because of ergodicity hypothesis, $\bS^{\text{\tiny RECC}} \subset
R_\tau(x)$ for all $x\in \bS$. 
Therefore, the function $Y_\tau(\cdot)$ is deterministic and constant
on $\bS^{\text{\tiny RECC}}$.  Pick any $\bar{x}\in \bS^{\text{\tiny
    RECC}}$. Then 
\[
  Y_0(x) = Y_\tau(\bar{x}),\quad \forall x\in\mathrm{supp}(\rho).
\]
That is, $Y_0$ is a constant on the support of $\rho$.  The constant
is given by 
$\rho(Y_0) = \E^\rho (F(X_T)) = c$.  The final step is the {\em only} place
in the proof 
where the assumption of $F-c\ones\in\clZfo{T}(\rho)$ is used. 
\end{proof}

Based on the proof of part (i), the zero dissipation set admits a
sharper characterization: $Y_0$ coincides,
on $\supp(\rho)$, with a peripheral eigenvector of $A$.  Notably, the characterization requires none of the three conditions
of \Lemma{lemma:Lasalle}.  It needs only the non-degeneracy
Assumption~\ref{ass:non-degeneracy} and a horizon $\tau\ge\tau_1(A)$.   The
characterization is of independent interest and is used in the proof
of part (iii).

\begin{proposition}[Zero dissipation set under
  Assumption~\ref{ass:non-degeneracy}]\label{prop:part-i-implication}
Consider the notation and hypotheses of~\Prop{prop:M0-martingale}.  Suppose (i) the
non-degeneracy Assumption~\ref{ass:non-degeneracy}, (ii) the zero
dissipation hypothesis, 
$\E^\rho(\langle M\rangle_\tau)=0$, and (iii) $\tau\ge\tau_1(A)$.  Then
\begin{enumerate}
\item $Y_\tau(x') = Y_0(x)$ for every $x'\in\mathcal R_\tau(x)$ and
  $x\in\supp(\rho)$; and
\item there exists $\tilde f\in\mathrm E^{\mathrm{PER}}(A)$ such that
  $\tilde f(x)=Y_0(x)$ for every $x\in\supp(\rho)$.
\end{enumerate}
\end{proposition}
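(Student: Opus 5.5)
Part 1 repeats the first half of the proof of part (i) verbatim, since that argument never used ergodicity. Zero dissipation gives $\langle M\rangle_\tau=0$, $\sP^\rho$-a.s. By \Prop{prop:M0-martingale} and $R\succeq H_{\min}I$ from \Prop{prop:B_W_Gamma_R}, this forces $V_t(X_t)=0$ and $(\Gamma Y_{t+1})(X_t)=0$ for $0\le t\le\tau-1$. Hence $Y_t(X_t)=Y_{t+1}(X_{t+1})$ a.s. By Remark~\ref{rem:surely}, every observation path has positive probability, so the qualifier can be dropped. Induction on $t$ then gives $Y_t(x')=Y_0(x)$ surely, for all $x'\in\mathcal R_t(x)$, $x\in\supp(\rho)$ and $t\le\tau$. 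In particular $Y_t$ restricted to $\mathcal R_t(x)$ is deterministic. Taking $t=\tau$ gives item 1.

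For item 2, I will construct $\tilde f$ on the recurrent states. Write $g:=Y_\tau$, evaluated along any fixed observation path. Define $\tilde f$ on $\bS^{\text{\tiny RECC}}$ by $\tilde f(x'):=Y_0(x)$ for $x'\in\mathcal R_\tau(x)\cap\bS^{\text{\tiny RECC}}$, $x\in\supp(\rho)$, and set it to $0$ on any other recurrent point. This is consistent because $g(x')$ is a single number that equals $Y_0(x)$ for every such $x$.

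The key structural fact comes from \eqref{eq:tau1}. Decompose each recurrent class $C_j$ of period $p_j$ into its cyclic classes $C_j^0,\dots,C_j^{p_j-1}$, with $A$ mapping $C_j^k$ into $C_j^{k+1}$. For $\tau\ge\tau_1(A)$, the set $\mathcal R_\tau(x)\cap C_j$ is, I expect, a full union of cyclic classes; this is what eventual periodicity plus irreducibility within $C_j$ gives. Moreover, any two points of $\mathcal R_\tau(x)$ take the same value $Y_0(x)$. So on each cyclic class reached from $\supp(\rho)$, $\tilde f$ is constant.

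Next I extend $\tilde f$ so that it lies in $\mathrm E^{\mathrm{PER}}(A)=\ker(A^p-I)$. Let $r_j^k$ be the common value on $C_j^k$, with $0$ on unreached classes. Set $\tilde f$ on the recurrent states to be the function that is constant on each cyclic class with these values. Such a function $\phi$ satisfies $A^p\phi=\phi$ on recurrent states, because $p_j\mid p$ and $A^{p}$ maps each cyclic class to itself. Extend to transient states by $\tilde f(x):=(A^{np}\phi)(x)$ for large $n$, or more precisely by the limit of $A^{np}\phi$; this limit exists since transient mass is absorbed into recurrent classes, and $A^{np}$ restricted to cyclic class indicators is stationary. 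The extension is then in $\ker(A^p-I)$.

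The main obstacle is verifying $\tilde f(x)=Y_0(x)$ at a possibly transient $x\in\supp(\rho)$. I plan to use the harmonic-type identity obtained from $V_t=0$ and $\Gamma Y_{t+1}=0$ along the chain. This gives $Y_0(x)=(A^{np}\tilde\phi)(x)$, where each step $Y_t(x)=(AY_{t+1})(x)$ holds on reachable points, and $Y_\tau=\phi$ on $\mathcal R_\tau(x)\cap\bS^{\text{\tiny RECC}}$. From here I would take $n\to\infty$ using time-$p$ periodicity of the reached cyclic classes, via the characterization of $\tau_1$. This is the step requiring care: I must show that the values on the cyclic classes reached at times $\tau$ and $\tau+np$ coincide, which is exactly the property $\mathcal R_\tau\cap\bS^{\text{\tiny RECC}}=\mathcal R_{\tau+p}\cap\bS^{\text{\tiny RECC}}$. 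Everything else should be bookkeeping.
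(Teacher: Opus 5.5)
Your item 1 is the paper's argument and is fine. The gap is in item 2: the function you construct sits at the wrong phase. You put the value $Y_0(x)$ on the recurrent states reached from $x$ at time $\tau$, and then set $\tilde f=\lim_n A^{np}\phi$, which is $\phi$ itself on $\bS^{\text{\tiny RECC}}$. Hence $\tilde f(x)=\lim_n\E^x\big(\phi(X_{np})\big)$ samples $\phi$ on the cyclic classes occupied at times $np$. These are shifted by $\tau$ steps relative to the classes on which $\phi=Y_0(x)$. This already fails at recurrent points of $\supp(\rho)$, so the obstacle is not the transient points.

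Here is a counterexample. Take $d=2$, $A=\begin{pmatrix}0&1\\1&0\end{pmatrix}$, and any $H$ with positive entries, so $p=2$ and $\tau_1(A)=1$. Let $\tau=T=1$, $\rho=\delta_1$, $F(2)\equiv a$ deterministic and $F(1)$ arbitrary. Then $V_0(1)=0$ and $(\Gamma Y_1)(1)=0$, so the dissipation vanishes and $Y_0(1)=a$. Your recipe gives $\phi(2)=a$, $\phi(1)=0$ and $A^{2n}=I$, hence $\tilde f(1)=0\neq a$. The identity you appeal to at the end, $Y_0(x)=(A^{np}\tilde\phi)(x)$, is not what zero dissipation yields. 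What it yields is $Y_0(x)=(A^\tau g)(x)$ for any deterministic $g$ equal to $Y_0(x)$ on $\mathcal R_\tau(x)$, and your construction drops exactly this $A^\tau$.

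The repair is to shift before projecting: set $\tilde f:=\Pi A^\tau g$, where $\Pi:=\lim_n\frac1n\sum_{k<n}A^{kp}$ is the Ces\`aro projection onto $\ker(A^p-I)=\mathrm E^{\mathrm{PER}}(A)$. This is exactly the paper's construction, with $g:=\E^\rho\big(Y_\tau(\cdot)\big)$. Membership in $\mathrm E^{\mathrm{PER}}(A)$ is then automatic, and for every $x\in\supp(\rho)$
\[
\tilde f(x)=\lim_{n\to\infty}\frac1n\sum_{k=0}^{n-1}\E^x\big(g(X_{\tau+kp})\big)=Y_0(x).
\]
This holds for three reasons:
\begin{itemize}
\item $X_{\tau+kp}\in\mathcal R_{\tau+kp}(x)$, $\sP^x$-a.s.;
\item $\mathcal R_{\tau+kp}(x)\cap\bS^{\text{\tiny RECC}}=\mathcal R_\tau(x)\cap\bS^{\text{\tiny RECC}}$;
\item $\sP^x\big(X_{\tau+kp}\notin\bS^{\text{\tiny RECC}}\big)\to0$.
\end{itemize}

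The definition of $\tau_1(A)$ supplies the set identity only at $t=\tau_1(A)$ and for a single step $p$. You must still propagate it to all $t\ge\tau_1(A)$, for example through $\mathcal R_{t+1}(x)\cap\bS^{\text{\tiny RECC}}=\bigcup_{y\in\mathcal R_1(x)}\mathcal R_t(y)\cap\bS^{\text{\tiny RECC}}$, and then iterate at $t=\tau,\tau+p,\dots$.

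Along this route you no longer need the cyclic-class decomposition, nor your claim that $\mathcal R_\tau(x)\cap C_j$ is a union of cyclic classes (true, but left unproved). The values of $g$ off $\mathcal R_\tau(x)\cap\bS^{\text{\tiny RECC}}$ do not affect $\tilde f(x)$, and transient and recurrent $x$ are treated identically.
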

\begin{proof}
Appendix~\ref{appdx:prop:part-i-implication}.
\end{proof}

The characterization provides another proof of part (i).  If $A$ is
ergodic then $\mathrm E^{\mathrm{PER}}(A)=\mathrm{span}\{\ones\}$, so
$\tilde f = \kappa\ones$ for some $\kappa\in\Re$ and hence
$Y_0(x)=\kappa$ for $x\in\supp(\rho)$.  Under the additional
hypothesis $F-c\ones\in\clZfo{T}(\rho)$ of \Lemma{lemma:Lasalle},
$\kappa = \rho(Y_0) = \E^\rho(F(X_T)) = c$.


\begin{example}[Continued from Example~\ref{ex:counter-example}]
If $\E^\rho(\langle M\rangle_T)=0$, then $M_T=M_0$,
$\sP^\rho\text{-a.s.}$; that is,
\[
\gamma_T(X_T) = Y_0(X_0),\quad \sP^\rho\text{-a.s.}
\]
For the counter-example, however, $R(x)=1-h^2(x)=0$ for every
$x\in\bS$, so $\E^\rho(\langle M \rangle_T)=0$ no longer forces
$V_t(X_t)=0$.
\end{example}

We now turn to proofs of parts (ii) and (iii).  These proofs are based
on optimal control techniques which are described in the following section.

\section{Duality theory for HMM}\label{sec:duality} The framework
described in this section generalizes the classical 
duality theory for linear time-invariant systems.
\begin{enumerate}
\item \emph{Structural duality}. For the HMM, we have already introduced the notions of
  observability and detectability (\Sec{sec:model_props}). The dual
  notions are presented in \Sec{sec:dual_control_system}.
\item \emph{Variational duality}. The foundational link between optimal filtering and optimal
  control is the duality principle presented in
  \Sec{sec:duality_principle}.  This is followed by the formulation
  and solution of the dual optimal control problem in
  \Sec{sec:optimal_control}. 
\end{enumerate}
The presentation closely mirrors that of linear systems theory.

\subsection{Structural duality}\label{sec:dual_control_system}
The BS$\Delta$E~\eqref{eq:dual_BSDE} is referred to as the \emph{dual control
system}, with $U$ as the control input.  Since the BS$\Delta$E is linear,
controllability is characterized by the range of the input-to-state map,
$(c\ones,U)\mapsto Y_0$, with terminal condition $F=c\ones$.  Recall that, in linear
systems theory, the terminal condition is set to zero when defining
controllability~\cite[Defn.~11.2]{hespanha2018linear}.  Here the constant $c$ is retained because
$\sfL_T[\ones,0]=\ones$.  

\begin{definition}[Controllability and stabilizability]\label{def:controllable}
  The \emph{controllable subspace} of the dual control system~\eqref{eq:dual_BSDE} is
\[
\clC_T := \big\{ \sfL_T[c\ones, U] \;:\; c\in\Re,\ U\in\clU_T \big\}.
  \]
Set $\clC:=\bigcup_{T>0}\clC_T$ (the union is well-defined as a
subspace based on a nestedness property shown in
\Prop{prop:CT-recursion}(2) below). 
The dual control system~\eqref{eq:dual_BSDE} is \emph{controllable} if
$\clC = \Re^d$, and \emph{stabilizable} if
$\mathrm E^{\mathrm{PER}}(A) \subseteq \clC$.
\end{definition}

The following two propositions characterize $\clC_T$ and 
$\clC$.

\begin{proposition}\label{prop:CT-recursion}
The sequence $\{\clC_T:T\geq 0\}$ satisfies the following:
\begin{enumerate}
\item (Recursion formula) For every $T\ge1$,
\[
\clC_T = A\clC_{T-1} + \sum_{k=1}^m \big(\mathrm{diag}(h^k)\,A\big)\clC_{T-1}, \qquad \clC_0:=\mathrm{span}\{\ones\}.
\]
\item (Monotonicity) $\clC_{T-1}\subseteq\clC_T$ for every $T\ge1$.
\item (Stabilization) $\clC_T = \clC$ for every
  $T \ge \tau$, where $\tau:=  \min\{T : \clC_{T+1} = \clC_T\} \le
  d-1$.
\end{enumerate}
\end{proposition}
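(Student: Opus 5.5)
The plan is to obtain the recursion from the one-step structure of the BS$\Delta$E, and then deduce monotonicity and stabilization by standard linear-algebra arguments, exactly as for the controllable subspace of a linear system.

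\emph{Step 1 (recursion).} Fix $T\ge1$, $c\in\Re$ and $U\in\clU_T$, and let $(Y,V)=\mathrm{BS\Delta E}_T[c\ones,U]$. Use the restriction property (\Prop{prop:technical}(3)), applied with the time shifted by one, to view $\{Y_t:1\le t\le T\}$ as the solution on $[1,T]$. Conditioned on $Z_1=z$, the time-shifted problem is a BS$\Delta$E of horizon $T-1$ with terminal condition $c\ones$ and a control $U^z$ that is $\clZ$-adapted. Hence $Y_1^z:=Y_1|_{[Z_1=z]}\in\clC_{T-1}$ for each $z\in\bO$. This uses time-homogeneity of the model.

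At $t=0$, the equation~\eqref{eq:dual_BSDE_a} with deterministic $U_0,V_0$ is
\[
Y_1^z$-consistency:\quad Y_0=AY_1^z+h^\tp(U_0+V_0)-V_0^\tp e(z)\quad\forall z\in\bO .
\]
Solving this one step at a time, as in \Prop{prop:backward-map-and-bsde}(1), gives
\[
Y_0=\sum_{z}\mathrm{diag}(H(\cdot,z))AY_1^z+\mathrm{diag}(h^\tp U_0)\ones .
\]
Writing $H(\cdot,z)=H(\cdot,0)+h^z$ for $z\ge1$, together with $\sum_z H(\cdot,z)=\ones$, rearranges this into
\[
A Y_1^0+\sum_k \mathrm{diag}(h^k)A(Y_1^k-Y_1^0)+\sum_k U_0^k\,\mathrm{diag}(h^k)A\ones .
\]
Since $A\ones=\ones\in\clC_{T-1}$, the term in $U_0$ lies in $\sum_k\mathrm{diag}(h^k)A\clC_{T-1}$. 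This gives the inclusion $\subseteq$.

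For the reverse inclusion, take arbitrary $w_0,w_1,\dots,w_m\in\clC_{T-1}$. Set $Y_1^0=w_0$ and $Y_1^k=w_0+w_k$, which lie in $\clC_{T-1}$ because it is a subspace. This requires care: the realizations must share a common constant $c$, but $c$ may be absorbed by adding a multiple of $\ones=\sfL[\ones,0]$ together with a control realizing the difference. Each realization is by some control on $[1,T]$, and these controls can be concatenated across the values of $Z_1$ into a single $\clZ$-adapted control. Choosing $U_0=0$ then gives the reverse inclusion.

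The main obstacle is exactly this bookkeeping: handling the common terminal constant $c$ and verifying that a free choice of $U_0$ and of $V_0$ reproduces all combinations. Linearity of $\sfL$ and $\ones\in\clC_{T-1}$ should resolve it.

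\emph{Step 2 (monotonicity).} Argue by induction. For the base case, $\clC_0=\mathrm{span}\{\ones\}\subseteq\clC_1$ because $A\ones=\ones$. For the inductive step, if $\clC_{T-2}\subseteq\clC_{T-1}$, then applying the recursion (whose right-hand side is monotone in its argument) gives $\clC_{T-1}\subseteq\clC_T$.

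\emph{Step 3 (stabilization).} The recursion has the form $\clC_T=\Phi(\clC_{T-1})$ for a fixed monotone map $\Phi$ on subspaces. Once $\clC_{\tau+1}=\clC_\tau$, induction gives $\clC_T=\clC_\tau$ for all $T\ge\tau$. Before stabilization the dimensions strictly increase, starting from $\dim\clC_0=1$ and bounded by $d$, so $\tau\le d-1$.
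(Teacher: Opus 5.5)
Steps 2 and 3 are fine. Deriving monotonicity from the recursion by induction is a legitimate reordering; the paper instead proves monotonicity first, by taking $U_{T-1}=0$ so that $Y_{T-1}=c\ones$. Step 1, however, has two concrete errors, both in the part you flagged as the main obstacle.

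\textbf{The rearranged one-step identity is false.} Since $H(\cdot,0)=\tfrac{1}{m+1}\big(\ones-\sum_k h^k\big)$, the correct formula is
\[
Y_0 = A\bar Y_1+\sum_{k=1}^m \mathrm{diag}(h^k)A\big(Y_1^k-\bar Y_1\big)+h^\tp U_0,\qquad \bar Y_1:=\tfrac{1}{m+1}\textstyle\sum_{z\in\bO}Y_1^z ,
\]
which is \eqref{eq:Y_t_soln_formula_a}. The centering reflects $e(0)=-\sum_k e(k)$. Your expression, with $Y_1^0$ in place of $\bar Y_1$, is what one gets with $H(\cdot,k)$ in place of $h^k$. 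For instance, take $m=1$ and $H(\cdot,0)=H(\cdot,1)=\tfrac12\ones$, so $h=0$. The true value is $\tfrac12 A(Y_1^0+Y_1^1)$, while your formula gives $AY_1^0$. The inclusion $\subseteq$ survives the correction. Your branch assignment for $\supseteq$, however, does not produce $Aw_0+\sum_k\mathrm{diag}(h^k)Aw_k$. You need $Y_1^k=w_0+w_k$ and $Y_1^0=w_0-\sum_k w_k$, so that $\bar Y_1=w_0$ and $Y_1^k-\bar Y_1=w_k$.

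\textbf{The common-constant issue is not resolved as you propose, and ``choosing $U_0=0$'' fails.} The branches of a horizon-$T$ solution with terminal condition $c\ones$ are $c\ones+g_z$ with $g_z\in\sfL_{T-1}[0,\clU_{T-1}]$. Since $\ones$ need not lie in this range, you cannot move constants between branches by a control. At $T=1$ there is no control on $[1,1]$ at all: every branch equals $c\ones$, so $U_0=0$ yields only $Y_0=c\ones$, whereas $h^k\in\clC_1$.

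The repair is to use $U_0$. Note that $V_0$ is not free; it is determined by $Y_1$.
\begin{itemize}
\item Write $w_i=c_i\ones+g_i$ with $g_i\in\sfL_{T-1}[0,\clU_{T-1}]$.
\item Take terminal condition $c_0\ones$.
\item Take branches $Y_1^k=c_0\ones+g_0+g_k$ and $Y_1^0=c_0\ones+g_0-\sum_k g_k$.
\item Take $U_0=(c_1,\ldots,c_m)^\tp$.
\end{itemize}
Since $\mathrm{diag}(h^k)A(c_k\ones)=c_kh^k$, this gives exactly $Aw_0+\sum_k\mathrm{diag}(h^k)Aw_k$.

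The paper avoids this bookkeeping differently. It proves monotonicity first, which gives $\ones,h^k\in\clC_T$. It then realizes each generator $Af$ and $\mathrm{diag}(h^k)Af$ separately with zero terminal condition, and absorbs the leftover $-c\ones$ and $-c\,h^k$ into $\clC_T$.
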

\begin{proof}
Appendix~\Sec{sec:proof:controllability_main}. 
\end{proof}

\begin{corollary}\label{cor:controllable_space}
$\clC$ is the smallest subspace in $\Re^d$ that satisfies:
\begin{enumerate}
\item The constant function $\ones\in\clC$; and
\item If $g\in\clC$, then $Ag\in\clC$ and $h^k \, (Ag)\in\clC$
  for $k=1,\ldots,m$.
\end{enumerate}
\end{corollary}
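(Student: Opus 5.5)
The plan is to deduce the corollary from the recursion and stabilization statements of \Prop{prop:CT-recursion}. It suffices to prove two things: that $\clC$ satisfies properties 1 and 2, and that $\clC$ is contained in every subspace $\clS\subseteq\Re^d$ satisfying properties 1 and 2.

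For the first part, property 1 is immediate, since $\clC_0=\mathrm{span}\{\ones\}\subseteq\clC$ by monotonicity (\Prop{prop:CT-recursion}(2)). Equivalently, $\ones=\sfL_T[\ones,0]$. For property 2, fix $\tau$ as in \Prop{prop:CT-recursion}(3), so that $\clC=\clC_\tau=\clC_{\tau+1}$. Given $g\in\clC=\clC_\tau$, the recursion formula \Prop{prop:CT-recursion}(1) with $T=\tau+1$ places $Ag$ and $\mathrm{diag}(h^k)Ag=h^k(Ag)$ in $\clC_{\tau+1}=\clC$. Because $\clC$ is a union of a nested sequence of subspaces that stabilizes, it is itself a subspace, equal to $\clC_\tau$, and no further argument is needed on that point.

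For the second part, let $\clS$ be any subspace satisfying properties 1 and 2. We show $\clC_T\subseteq\clS$ by induction on $T$. The base case is $\clC_0=\mathrm{span}\{\ones\}\subseteq\clS$, which holds by property 1 and linearity of $\clS$. For the inductive step, suppose $\clC_{T-1}\subseteq\clS$. Property 2 gives $A\clC_{T-1}\subseteq\clS$ and $(\mathrm{diag}(h^k)A)\clC_{T-1}\subseteq\clS$ for each $k$. Since $\clS$ is closed under sums, the recursion formula then gives $\clC_T\subseteq\clS$. Taking the union over $T$ yields $\clC\subseteq\clS$, which proves minimality.

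There is essentially no obstacle here. The only points needing care are noting that the identity $h^k\,(Ag)=\mathrm{diag}(h^k)Ag$ matches the recursion formula exactly, and that the stabilization in \Prop{prop:CT-recursion}(3) lets us treat $\clC$ as the single subspace $\clC_\tau$ when checking closure.
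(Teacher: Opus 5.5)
Your proposal is correct and follows essentially the same route as the paper. Minimality is proved by induction on $T$ using the recursion formula of \Prop{prop:CT-recursion}(1), and closure of $\clC$ under the two properties comes from the stabilization $\clC_{\tau+1}=\clC_\tau=\clC$; your explicit check that $\ones\in\clC$ via $\clC_0\subseteq\clC$ makes precise a point the paper leaves implicit.
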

\begin{proof}
Appendix~\Sec{sec:proof:controllability_main}. 
\end{proof}

The following proposition relates these control-theoretic properties
to the corresponding properties of the HMM introduced in
Sec.~\ref{sec:hmm-prelim}.

\begin{proposition}\label{prop:duality}
$\clN_T = \clC_T^{\perp}$ for every $T > 0$.  In particular, the
  two indices coincide:
  \[\min\{T : \clC_T = \clC\} = \min\{T : \clN_T = \clN\}
    = \tau_2(A,H).
  \]
  Consequently, the dual control system is
  controllable (resp.\ stabilizable) if and only if the
  HMM $(A,H)$ is observable (resp.\ detectable).
\end{proposition}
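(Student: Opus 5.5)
The plan is to prove $\clN_T=\clC_T^\perp$ by expressing both subspaces through the same family of linear functionals on measures. The index equality and the controllability/stabilizability dualities then follow by linear algebra.

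\emph{Step 1: a linear description of $\sim_T$.} For $\rho\in\clP(\bS)$ and a path $\zeta=(\zeta_1,\ldots,\zeta_T)\in\bO^T$, the recursion for $\sigma^\rho_t$ gives
\[
\sP^\rho(Z_1=\zeta_1,\ldots,Z_T=\zeta_T)=\rho^\transpose D_{\zeta_1}A D_{\zeta_2}A\cdots D_{\zeta_T}A\ones,
\qquad D_z:=\mathrm{diag}(H(\cdot,z)).
\]
Hence $\mu\sim_T\nu$ if and only if $(\mu-\nu)^\transpose g=0$ for every $g\in\clW_T:=\mathrm{span}\{D_{\zeta_1}A\cdots D_{\zeta_T}A\ones:\zeta\in\bO^T\}$.

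Since the functionals $\rho\mapsto\rho^\transpose g$ are linear, we get $\clN_T\subseteq\clW_T^\perp$ directly. For the reverse inclusion, take $n\in\clW_T^\perp$. Note that $\ones\in\clW_T$, because $\sum_{\zeta}D_{\zeta_1}A\cdots D_{\zeta_T}A\ones=\ones$. Hence $n^\transpose\ones=0$, so $n=a(\mu-\nu)$ with $a\ge0$ and $\mu,\nu\in\clP(\bS)$ obtained by splitting $n$ into its positive and negative parts. Then $\mu\sim_T\nu$, and therefore $\clN_T=\clW_T^\perp$.

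\emph{Step 2: $\clW_T=\clC_T$.} I would show by induction on $T$ that $\clW_T$ satisfies the recursion of \Prop{prop:CT-recursion}(1). We have $\clW_0=\mathrm{span}\{\ones\}=\clC_0$, and $\clW_T=\sum_{z\in\bO}D_zA\,\clW_{T-1}$. The change of basis $D_0=D_0$, $D_k=D_0+\mathrm{diag}(h^k)$ for $k\ge1$, together with $\sum_zD_z=I$, shows that $\mathrm{span}\{D_zA w\}_{z\in\bO}=\mathrm{span}\{Aw,\ \mathrm{diag}(h^k)Aw\}_{k=1}^m$ for each $w$. To see this, write $Aw=\sum_z D_zAw$ and $\mathrm{diag}(h^k)Aw=D_kAw-D_0Aw$. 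Conversely, $D_0Aw=\frac{1}{m+1}\big(Aw-\sum_k\mathrm{diag}(h^k)Aw\big)$, and then $D_kAw=D_0Aw+\mathrm{diag}(h^k)Aw$. Thus $\clW_T=A\clW_{T-1}+\sum_k\mathrm{diag}(h^k)A\clW_{T-1}$, which matches the recursion for $\clC_T$, and so $\clW_T=\clC_T$. Combined with Step 1, this gives $\clN_T=\clC_T^\perp$. Alternatively, one can go through \Prop{prop:backward-map-and-bsde}(1): with $F=c\ones$ and a control $U$, the analytic one-step formula places $\sfL_T[c\ones,U]$ in the same span. I prefer the direct recursion because \Prop{prop:CT-recursion}(1) is already available.

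\emph{Step 3: consequences.} By monotonicity (\Prop{prop:CT-recursion}(2)), the spaces $\clN_T=\clC_T^\perp$ are nonincreasing. Therefore $\clN=\bigcap_T\clC_T^\perp=(\bigcup_T\clC_T)^\perp=\clC^\perp$, and since orthogonal complementation is injective on subspaces, $\clN_T=\clN$ if and only if $\clC_T=\clC$. This gives the equality of the indices, and $\tau_2\le d-1$ follows from \Prop{prop:CT-recursion}(3). Controllability is then immediate: $\clC=\Re^d$ if and only if $\clN=\{0\}$, and the latter is observability by the span characterization in Step 1.

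\emph{Step 4: stabilizability versus detectability.} This is the step I expect to be the main obstacle. First, $\clC$ is $A$-invariant by \Cor{cor:controllable_space}, so $\clN=\clC^\perp$ is $A^\transpose$-invariant. Next, $\mathrm E^{\mathrm{PER}}(A)\subseteq\clC$ is equivalent to $\clC^\perp\cap\ker\big((A^\transpose)^p-I\big)^{\perp\text{-dual}}$ being trivial, in the following precise sense. The spectrum of $A^\transpose|_{\clN}$ equals the spectrum of the quotient map induced by $A$ on $\Re^d/\clC$. The peripheral eigenvalues of $A$ are semisimple, so $\Re^d=\mathrm E^{\mathrm{PER}}(A)\oplus\mathrm E^{\mathrm s}$, where $\mathrm E^{\mathrm s}$ is the $A$-invariant subspace on which $A$ is Schur stable. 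If $\mathrm E^{\mathrm{PER}}\subseteq\clC$, then the quotient map on $\Re^d/\clC$ is a quotient of $A|_{\mathrm E^{\mathrm s}}$, hence Schur stable, and this is detectability. Conversely, if the quotient is Schur stable, then every unit-modulus eigenvector $v$ of $A$ projects to an eigenvector of the quotient with a unit-modulus eigenvalue, which is impossible unless the projection is zero. Hence $v\in\clC$ (after complexification, and then taking real and imaginary parts), giving $\mathrm E^{\mathrm{PER}}\subseteq\clC$. The point that needs care is this quotient and semisimplicity argument; in particular one must check that a unit-modulus eigenvector has nonzero projection unless it already lies in $\clC$.
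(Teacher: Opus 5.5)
Your proof is correct. For the central identity $\clN_T=\clC_T^{\perp}$, however, it takes a genuinely different route from the paper.

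The paper never writes down path probabilities. It uses the martingale $M^U$ of \Prop{prop:MU-martingale} to obtain $\rho(\sfL_T[c\ones,U])=\E^\rho(S_T)$, where $S_T=c+\sum_{t=0}^{T-1}U_t^\tp e(Z_{t+1})$. It then invokes the bijection $(c,U)\leftrightarrow S_T$ of Remark~\ref{rem:ST-bijection}: $\mu-\nu\in\clC_T^\perp$ if and only if $\E^\mu(S)=\E^\nu(S)$ for every $\clZ_T$-measurable $S$, i.e.\ $\mu\sim_T\nu$. The positive/negative splitting for the reverse inclusion is the same as yours.

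In effect the paper shows $\clC_T=\{x\mapsto\E^x(S): S \text{ is } \clZ_T\text{-measurable}\}$. This is your span $\mathcal W_T$ (take $S$ to be indicators of paths), so both arguments establish $\clC_T=\mathcal W_T$ by different means. You derive it from the product formula for the un-normalized filter and the recursion of \Prop{prop:CT-recursion}(1), via $D_z=\mathrm{diag}(H(\cdot,z))$, $D_k=D_0+\mathrm{diag}(h^k)$ and $\sum_z D_z=I$. The paper reads it off the control system in one step, without the recursion.

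What each route buys:
\begin{itemize}
\item Yours is pure linear algebra and makes the parallel with the Kalman observability matrix explicit. The spanning vectors $D_{\zeta_1}A\cdots D_{\zeta_T}A\ones$ are exactly the likelihoods $x\mapsto\sP^x(Z_1=\zeta_1,\ldots,Z_T=\zeta_T)$. The cost is that it rests on the appendix proof of the recursion, which itself needs explicit control constructions.
\item The paper's route presents structural duality as the same estimator--control correspondence that underlies the duality principle. It is also the kind of argument that carries over to the continuous-time BSDE setting, where no finite product formula is available.
\end{itemize}

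Your Step~4 is the coordinate-free form of the paper's block-triangular argument. The map induced by $A$ on $\Re^d/\clC$ is the paper's $A^\perp$. You replace the count of unit-modulus eigenvalues by the splitting $\Re^d=\mathrm{E}^{\mathrm{PER}}(A)\oplus\mathrm{E}^{\mathrm{s}}$ in one direction (this is where semisimplicity enters) and by a direct eigenvector argument in the other.

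Two small clean-ups.
\begin{itemize}
\item The sentence involving ``$\ker((A^\transpose)^p-I)^{\perp\text{-dual}}$'' is not meaningful as written and should be deleted. The quotient argument that follows it is the actual proof.
\item The point you flag as the main obstacle is immediate. The class $v+\clC_{\mathbb C}$ vanishes exactly when $v\in\clC_{\mathbb C}$, so a peripheral eigenvector outside $\clC_{\mathbb C}$ gives a genuine unit-modulus eigenvector of the quotient.
\end{itemize}
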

\begin{proof}
Appendix~\Sec{sec:proof:controllability_duality}.
\end{proof}

\subsection{Variational duality}\label{sec:duality_principle}

Let
$U\in\clU_T$, $F\in\clZf{T}$, and $(Y,V) = \mathrm{BS\Delta E}_T[F,U]$
and define
\[
\sJ_T^\rho(U;F):= \dvar^\rho\big(Y_0(X_0)\big) +
\E^\rho\Big(\sum_{s=0}^{T-1} \ell\big(Y_{s+1},V_s,U_s;X_s\big)\Big).
\]
The following proposition states the duality principle.  Part 1
generalizes \Prop{prop:M0-martingale} 
from the special case $U = 0$ to an arbitrary control $U\in\clU_T$. 

\begin{proposition}\label{prop:MU-martingale}
Let $\rho\in\clP(\bS)$, $U\in\clU_T$, $F\in\clZf{T}$, and
$(Y,V)=\mathrm{BS\Delta E}_T[F,U]$.
\begin{enumerate}
\item Define the $\clF$-adapted process
$M^U:=\{M^U_t:0\leq t\leq T\}$ by
\[
M_t^U:= Y_t(X_t) + \sum_{s=0}^{t-1} U_s^\tp e(Z_{s+1}),\quad 0\leq t\leq T.
\]
Then $M^U$ is a $(\clF,\sP^\rho)$-martingale, with quadratic variation
\[
\langle M^U \rangle_t = \sum_{s=0}^{t-1} \E^\rho\big(\ell(Y_{s+1},V_{s},U_{s};X_{s}) \mid \clF_s\big),
\quad 0\leq t\leq T,\quad \sP^\rho\text{-a.s.}.
\]
Setting $U=0$ recovers the martingale $M$ of
\Prop{prop:M0-martingale} as a special case. 
\item Consider an estimator
  \[
    S_T := \rho(Y_0) - \sum_{t=0}^{T-1}
    U_{t}^\transpose e(Z_{t+1}).
\]
    Then
\begin{equation}\label{eq:duality-principle}
\E^\rho\big(|F(X_T)-S_T|^2\big) = \sJ_T^\rho(U;F).\tag{duality principle}
\end{equation}
\end{enumerate}
\end{proposition}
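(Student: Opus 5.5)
The plan is to compute the one-step conditional increment of $M^U$ directly from the BS$\Delta$E~\eqref{eq:dual_BSDE_a}, exactly as in \Prop{prop:M0-martingale} but with the extra control terms. Fix $t\in\mathbb{T}^-$ and write
\[
M^U_{t+1}-M^U_t = Y_{t+1}(X_{t+1}) - Y_t(X_t) + U_t^\tp e(Z_{t+1}).
\]
Substituting $Y_t(X_t) = (AY_{t+1})(X_t) + h^\tp(X_t)(U_t+V_t(X_t)) - V_t^\tp(X_t)e(Z_{t+1})$ and using $e(Z_{t+1}) = h(X_t)+W_{t+1}$, the increment becomes
\[
\big[Y_{t+1}(X_{t+1}) - (AY_{t+1})(X_t)\big] + (U_t+V_t(X_t))^\tp W_{t+1}.
\]
The first bracket is $B_{t+1}(Y_{t+1})$ interpreted with the random function $Y_{t+1}$, which is $\clZ_{t+1}\subset\clG_t$-measurable. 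Hence by \Prop{prop:B_W_Gamma_R}, conditioning first on $\clG_t$ (where $Y_{t+1}$ is frozen) gives zero mean and conditional variance $(\Gamma Y_{t+1})(X_t)$. The second term is $\clF_t$-measurable in its coefficient $U_t+V_t(X_t)$ (since $U_t,V_t$ are $\clZ_t$-measurable), so it has zero $\clF_t$-conditional mean and conditional variance $(U_t+V_t(X_t))^\tp R(X_t)(U_t+V_t(X_t))$. This establishes the martingale property; Part 1's quadratic-variation formula follows once the cross term is shown to vanish.

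The cross term $\E^\rho\big(B_{t+1}(Y_{t+1})\,W_{t+1}^\tp\mid\clF_t\big)$ is the main obstacle, because $Y_{t+1}$ depends on $Z_{t+1}$. I would handle it by conditioning on $\clG_t=\clF_t\vee\clZ_{t+1}$: given $\clG_t$, $W_{t+1}$ is measurable while $B_{t+1}(Y_{t+1})$ has zero conditional mean. The latter uses the model structure that $X_{t+1}$ and $Z_{t+1}$ are conditionally independent given $X_t$ (from the construction of $\sP^\rho$, where both depend only on $X_{t-1}$-indices in the product). Therefore the cross term is zero. Summing the conditional variances then gives $\langle M^U\rangle_t=\sum_{s<t}\E^\rho(\ell(Y_{s+1},V_s,U_s;X_s)\mid\clF_s)$, with the $\E(\cdot\mid\clF_s)$ absorbing the fact that $\Gamma Y_{s+1}$ is random through $Z_{s+1}$. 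Setting $U=0$ recovers \Prop{prop:M0-martingale}.

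For Part 2, I note that $F(X_T)-S_T = M^U_T - \rho(Y_0)$, since $M^U_T=F(X_T)+\sum_{s<T}U_s^\tp e(Z_{s+1})$. I would then write
\[
F(X_T)-S_T = \big(Y_0(X_0)-\rho(Y_0)\big) + (M^U_T-M^U_0).
\]
The first term is $\clF_0$-measurable, and the martingale increment is orthogonal to it. Hence
\[
\E^\rho|F(X_T)-S_T|^2=\dvar^\rho(Y_0(X_0))+\E^\rho\langle M^U\rangle_T,
\]
which is $\sJ^\rho_T(U;F)$ by Part 1 and the tower property. Here $\dvar^\rho(Y_0(X_0))=\E^\rho|Y_0(X_0)-\rho(Y_0)|^2$, because $\pi_0^\rho=\rho$ and $Y_0$ is deterministic.
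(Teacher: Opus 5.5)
Your proposal is correct and follows essentially the same route as the paper. It writes the increment as $B_{t+1}(Y_{t+1}) + (U_t+V_t(X_t))^\tp W_{t+1}$, conditions on $\clG_t$ to get zero mean, the $\Gamma$ term and a vanishing cross term, and uses the orthogonal split $F(X_T)-S_T = \big(Y_0(X_0)-\rho(Y_0)\big) + (M^U_T - M^U_0)$ to obtain the duality principle.
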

\begin{proof}
Appendix~\ref{sec:martingale_proofs}.
\end{proof}

\begin{remark}\label{rem:estimator-bijection} 
  Taking expectations in (part~1) at $t = T$ gives,
  \begin{align*}
   \E^\rho (F(X_T)) &= \E^\rho(M_T^U) - \sum_{t=0}^{T-1}
    \E^\rho \big( U_{t}^\transpose e(Z_{t+1}) \big) \\
    & = \E^\rho(M_0^U) - \sum_{t=0}^{T-1}
    \E^\rho \big( U_{t}^\transpose e(Z_{t+1}) \big) =
      \E^\rho(S_T),\quad \forall \,U\in \clU_T.
  \end{align*}
Therefore, $S_T$ is an unbiased estimator of $F(X_T)$.  Moreover,
since unbiasedness determines the constant in the decomposition of
\Corollary{prop:existence_uniqueness_ST_U}, the map $U \mapsto S_T$ is
a \emph{bijection} from $\clU_T$ onto the set of all unbiased $\clZ_T$-measurable
estimators of $F(X_T)$. Consequently, minimizing
the control objective $\sJ^\rho_T(\,\cdot\,;F)$ over $\clU_T$ is
equivalent to minimizing the mean-squared error over all such
estimators, whose solution is the conditional expectation
$\pi^\rho_T(F)$.  This is the sense in which
\eqref{eq:duality-principle} is an expression of duality between
optimal filtering and optimal control.
\end{remark}

\subsection{Dual optimal control problem}\label{sec:optimal_control}
Let $\rho\in\clP(\bS)$
and $F\in\clZ_T$.  The dual optimal control problem is given by,
\begin{equation}\label{eq:dual-optimal-control}
  \min_{U \in \clU_T} \; \sJ_T^\rho(U;F) 
  \quad \text{subject to} \quad (Y,V)=\mathrm{BS\Delta E}_T[F,U].\tag{OCP}
\end{equation}
An explicit solution of~\eqref{eq:dual-optimal-control} is given 
in our prior paper~\cite[Thm.~12]{chang2025dual}.  The hypotheses of
\Lemma{lemma:Lasalle} requires only a consideration of functions
$F-c\ones\in\clZfo{T}(\rho)$.  For such
functions, the solution is described in the following corollary.  A
self-contained proof is provided.

\begin{corollary}\label{cor:U-opt-zero}
  Let $c\in\Re$.  Consider the~\eqref{eq:dual-optimal-control} with $F-c\ones
  \in\clZfo{T}(\rho)$. Then
  \begin{enumerate}
  \item For every $U\in\clU_T$,
      \begin{equation}\label{eq:jensens_ineq_stronger_OCP_general}
\dvar^\rho\big(F(X_T)\big) \leq \dvar^\rho\big(Y_0(X_0)\big) +
\E^\rho(\langle M^U\rangle_T).\tag{optimality principle}
\end{equation}
\item An optimal control is $U^\opt=0$ and
  the inequality holds as an
  equality at $U=U^\opt$.  
\end{enumerate}
\end{corollary}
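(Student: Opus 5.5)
The plan is to combine the duality principle of \Prop{prop:MU-martingale} with the fact that the conditional expectation is the minimum mean-squared error estimator. Fix $U\in\clU_T$ and let $(Y,V)=\mathrm{BS\Delta E}_T[F,U]$. By \Prop{prop:MU-martingale}(1), the last term of $\sJ^\rho_T(U;F)$ equals $\E^\rho(\langle M^U\rangle_T)$, so $\sJ^\rho_T(U;F)=\dvar^\rho(Y_0(X_0))+\E^\rho(\langle M^U\rangle_T)$. By \eqref{eq:duality-principle}, this equals $\E^\rho(|F(X_T)-S_T|^2)$ with $S_T$ a $\clZ_T$-measurable random variable. Since $\pi_T^\rho(F)=\E^\rho(F(X_T)\mid\clZ_T)$, the usual orthogonal decomposition gives
\[
\E^\rho\big(|F(X_T)-S_T|^2\big)=\E^\rho\big(|F(X_T)-\pi^\rho_T(F)|^2\big)+\E^\rho\big(|\pi^\rho_T(F)-S_T|^2\big)\ \ge\ \dvar^\rho\big(F(X_T)\big).
\]
This is the \eqref{eq:jensens_ineq_stronger_OCP_general}, which proves part~1.

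For part~2, I would check that $U=0$ attains equality, i.e.\ that $S_T=\pi_T^\rho(F)$, $\sP^\rho$-a.s., when $U=0$. With $U=0$, $S_T=\rho(Y_0)$ is a deterministic constant. By \Prop{prop:backward-map-and-bsde}, $\rho(Y_0)=\E^\rho(F(X_T))$. The centering hypothesis $F-c\ones\in\clZfo{T}(\rho)$ means $\sigma_T^\rho(F)=c\,\sigma_T^\rho(\ones)$. Hence $\pi_T^\rho(F)=c$ $\sP^\rho$-a.s., where $\sigma_T^\rho(\ones)>0$, which is the full-measure set. Taking expectations gives $\rho(Y_0)=c$, so $S_T=c=\pi^\rho_T(F)$ a.s. The second term in the decomposition therefore vanishes, and $U=0$ achieves the lower bound. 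Because part~1 shows the bound holds for every $U$, $U^\opt=0$ is optimal.

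As a consistency check, the equality at $U=0$ should agree with the \eqref{eq:Lyapunov_equation} of \Prop{Prop:Lyapunov} at $t=T$, since $M^0=M$ and $Y_T=F$.

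The only delicate point is the a.s.\ identification $\pi^\rho_T(F)=c$. It has to be done on the set where $\sigma_T^\rho(\ones)>0$, without invoking Assumption~\ref{ass:non-degeneracy}. This is harmless because the complement has $\sP^\rho$-probability zero. The remaining step, the orthogonality of $F(X_T)-\pi_T^\rho(F)$ to $\clZ_T$-measurable square-integrable variables, is routine since all quantities are bounded on a finite space.
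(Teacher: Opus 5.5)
Your proposal is correct and follows essentially the same route as the paper: the duality principle of Prop.~\ref{prop:MU-martingale} identifies $\sJ_T^\rho(U;F)$ with the mean-squared error of the $\clZ_T$-measurable estimator $S_T$, the MMSE property of $\pi_T^\rho(F)$ gives the lower bound, and at $U=0$ one has $S_T=\rho(Y_0)=\E^\rho(F(X_T))=c=\pi_T^\rho(F)$. The only difference is cosmetic: your orthogonal decomposition about $\pi_T^\rho(F)$ proves part~1 without the centering hypothesis, whereas the paper already uses it there to write $\dvar^\rho(F(X_T))=\E^\rho(|F(X_T)-c|^2)$.
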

\begin{proof}
The MMSE estimator of $F(X_T)$ among $\clZ_T$-measurable
estimators is the conditional expectation,
\[
  \E^\rho (F(X_T)\mid\clZ_T) = \pi_T^\rho(F) = c,\quad \sP^\rho\text{-a.s.}.
\]
Fix $U\in\clU_T$, and let $(Y,V)=\mathrm{BS\Delta E}_T[F,U]$ and
$S_T=\rho(Y_0)-\sum_{t=0}^{T-1}U_t^\transpose e(Z_{t+1})$, which is
$\clZ_T$-measurable. By the MMSE property and the duality principle,
\[
  \var^\rho\big(F(X_T)\big) = \E^\rho\big(|F(X_T)-c|^2\big)
  \leq \E^\rho\big(|F(X_T) - S_T|^2\big) = \sJ_T^\rho (U;F),
\]
proving~\eqref{eq:jensens_ineq_stronger_OCP_general} for every $U\in\clU$.
At $U=0$, using the probabilistic solution characterization in
\Prop{prop:backward-map-and-bsde}, $S_T=\rho(Y_0)=\E^\rho(F(X_T))=c$,
attaining equality above. Hence, 
$U=0$ is an optimal control\footnote{Under the non-degeneracy Assumption~\ref{ass:non-degeneracy}, the minimizer
of $\sJ_T^\rho(U;F)$ is in fact unique~\cite[Thm.~12]{chang2025dual}, so equality
in~\eqref{eq:jensens_ineq_stronger_OCP_general} holds iff 
$U=U^\opt=0$. Uniqueness is not needed for the purposes of this paper.}.
\end{proof}

There is a more elegant way to frame
the~\eqref{eq:jensens_ineq_stronger_OCP_general}: as the statement
that a certain process is a supermartingale, with equality (a true
martingale) at the optimal control.  The result is used for 
the proof of \Prop{prop:var_contractive} in this paper.   

\begin{proposition}\label{cor:U-opt-zero_elegant}
Let $c\in\Re$.  Consider the~\eqref{eq:dual-optimal-control} with $F-c\ones
  \in\clZfo{T}(\rho)$. 
Let $U\in\clU_T$. Then the process $\Theta^U := \{\Theta^U_t
: 0 \le t \le T\}$ defined as follows
\[
\Theta_t^U := \pi_t^\rho\big((Y_t-\pi_t^\rho(Y_t))^2\big) -
\sum_{s=0}^{t-1} \E^\rho \big( \ell(Y_{s+1},V_s,U_s;X_s) \mid \clZ_s\big), \qquad t\in\mathbb{T}
\]
is a $(\clZ,\sP^\rho)$-supermartingale. It is a martingale for
$U = U^\opt$. 
\end{proposition}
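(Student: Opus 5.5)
The plan is to compute the one-step conditional increment of $\Theta^U$ and show it is non-positive, with equality at $U=0$. Set $P_t := \pi_t^\rho\big((Y_t-\pi_t^\rho(Y_t))^2\big)$, the conditional variance of $Y_t(X_t)$ given $\clZ_t$. We must show
\[
\E^\rho\big(P_{t+1}\mid\clZ_t\big) - P_t \le \E^\rho\big(\ell(Y_{t+1},V_t,U_t;X_t)\mid\clZ_t\big), \qquad \sP^\rho\text{-a.s.},
\]
with equality when $U=U^\opt=0$.

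The key idea is to apply the duality principle over a single step, conditioned on $\clZ_t$, rather than over the whole horizon. Part~1 of \Prop{prop:MU-martingale} gives
\[
M^U_{t+1}-M^U_t = Y_{t+1}(X_{t+1}) + U_t^\transpose e(Z_{t+1}) - Y_t(X_t),
\]
with conditional second moment $\E^\rho\big(\ell(Y_{t+1},V_t,U_t;X_t)\mid\clF_t\big)$. Taking $\E^\rho(\cdot\mid\clZ_t)$ and using the tower property ($\clZ_t\subset\clF_t$) yields
\[
\E^\rho\big(|Y_{t+1}(X_{t+1}) - (Y_t(X_t) - U_t^\transpose e(Z_{t+1}))|^2 \mid\clZ_t\big) = \E^\rho\big(\ell(Y_{t+1},V_t,U_t;X_t)\mid\clZ_t\big).
\]
We then expand around the filter means. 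Write $a := Y_{t+1}(X_{t+1})-\pi_{t+1}^\rho(Y_{t+1})$, $b := Y_t(X_t)-\pi_t^\rho(Y_t)$, and let $D := \pi_{t+1}^\rho(Y_{t+1}) - \pi_t^\rho(Y_t) + U_t^\transpose e(Z_{t+1})$, which is $\clZ_{t+1}$-measurable. The left side becomes $\E^\rho\big(|a - b + D|^2\mid\clZ_t\big)$. Conditional on $\clZ_{t+1}$, the variable $a$ has mean zero, so its cross terms with $D$ vanish. This leaves
\[
\E^\rho(P_{t+1}\mid\clZ_t) + P_t + \E^\rho(D^2\mid\clZ_t) - 2\,\E^\rho(ab\mid\clZ_t) + 2\,\E^\rho(bD\mid\clZ_t).
\]
The main obstacle is controlling the two cross terms $\E^\rho(ab\mid\clZ_t)$ and $\E^\rho(bD\mid\clZ_t)$. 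Since $b$ is $\clF_t$-measurable, the plan is to use the martingale identity $Y_t(X_t) = \E^\rho(M^U_{t+1}\mid\clF_t) - 0$, i.e.
\[
\E^\rho\big(Y_{t+1}(X_{t+1}) + U_t^\transpose e(Z_{t+1}) \mid\clF_t\big) = Y_t(X_t).
\]
This gives $\E^\rho(a + D\mid\clF_t) = b$, so $\E^\rho(ab+bD\mid\clZ_t) = \E^\rho(b^2\mid\clZ_t) = P_t$. Substituting, the expansion collapses to
\[
\E^\rho(P_{t+1}\mid\clZ_t) - P_t + \E^\rho(D^2\mid\clZ_t) - 2\,\E^\rho(bD\mid\clZ_t) + 2\,\E^\rho(bD\mid\clZ_t) \cdot 0 .
\]
I still need to verify that $\E^\rho(bD\mid\clZ_t)$ cancels correctly. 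The route is to write $\E^\rho(ab\mid\clZ_t) = P_t - \E^\rho(bD\mid\clZ_t)$, so that the total becomes $\E^\rho(P_{t+1}\mid\clZ_t) - P_t + \E^\rho(D^2\mid\clZ_t) + 4\,\E^\rho(bD\mid\clZ_t)$, and then handle the remaining $bD$ term. To do this I would condition on $\clZ_{t+1}$ instead and use the forward filter, so that $\E^\rho(b\mid\clZ_{t+1})$ is expressed through $\pi_{t+1}$-type quantities. I expect that the correct bookkeeping gives
\[
\text{(one-step cost)} = \E^\rho(P_{t+1}\mid\clZ_t) - P_t + \E^\rho(D^2\mid\clZ_t),
\]
and this is the step I would check most carefully.

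Granting that identity, the supermartingale inequality follows immediately, since $\E^\rho(D^2\mid\clZ_t)\ge 0$. For the equality case at $U=0$: by the centering part of \Prop{prop:technical}, $Y_t - c\ones\in\clZfo{t}(\rho)$, so $\pi^\rho_t(Y_t)=c$ and $\pi^\rho_{t+1}(Y_{t+1})=c$. Hence $D = U_t^\transpose e(Z_{t+1}) = 0$, and $\Theta^0$ is a martingale. As a consistency check, summing the identity over $t$ and taking expectations reproduces \Corollary{cor:U-opt-zero}: $P_T$ has expectation $\dvar^\rho(F(X_T))$ and $P_0 = \dvar^\rho(Y_0(X_0))$.
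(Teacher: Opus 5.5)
Your route is the paper's route, but as written the proof has a gap at its one essential step. In expanding $\E^\rho(|a-b+D|^2\mid\clZ_t)$, the cross term between $-b$ and $D$ is $-2\,\E^\rho(bD\mid\clZ_t)$, not $+2\,\E^\rho(bD\mid\clZ_t)$. Because of this sign slip your algebra leaves a residual $4\,\E^\rho(bD\mid\clZ_t)$, and you then grant the key identity instead of proving it.

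Your proposed remedy, conditioning on $\clZ_{t+1}$ and using the forward filter, cannot remove this residual, because $\E^\rho(bD\mid\clZ_t)$ is generally nonzero: $Z_{t+1}$ is correlated with $X_t$. For instance, with $T=1$, $F=c\ones$ and $U_0=u$, one has $b=u^\transpose(h(X_0)-\rho(h))$ and $D=u^\transpose(e(Z_1)-\rho(h))$. Hence $\E^\rho(bD)=\dvar^\rho(u^\transpose h(X_0))$, which is nonzero whenever $u^\transpose h$ is nonconstant on $\supp(\rho)$. The fix is simply the correct sign, combined with the identity you already derived, $\E^\rho(a+D\mid\clF_t)=b$, which gives
\[
-2\,\E^\rho(ab\mid\clZ_t)-2\,\E^\rho(bD\mid\clZ_t) = -2\,\E^\rho\big(b\,\E^\rho(a+D\mid\clF_t)\mid\clZ_t\big) = -2P_t .
\]
The expansion then collapses exactly to $\E^\rho(\ell(Y_{t+1},V_t,U_t;X_t)\mid\clZ_t)=\E^\rho(P_{t+1}\mid\clZ_t)-P_t+\E^\rho(D^2\mid\clZ_t)$, with no leftover term. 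The rest of your argument goes through: $\E^\rho(D^2\mid\clZ_t)\ge 0$ gives the supermartingale property, and at $U=0$ centering gives $D=0$.

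For comparison, the paper reaches the same identity without the expansion. Since $M^U_t-Y_t(X_t)$ is $\clZ_t$-measurable, $P_t$ is the conditional variance of $M^U_t$ given $\clZ_t$. The quadratic-variation formula then gives $\E^\rho((M^U_{t+1})^2\mid\clZ_t)=\E^\rho((M^U_t)^2\mid\clZ_t)+\E^\rho(\ell(Y_{t+1},V_t,U_t;X_t)\mid\clZ_t)$. The remaining defect is the Jensen gap $\E^\rho(m_{t+1}^2\mid\clZ_t)-m_t^2$ of the $\clZ$-martingale $m_t:=\E^\rho(M^U_t\mid\clZ_t)$. Your $D$ is exactly $m_{t+1}-m_t$, so once the sign is corrected the two computations coincide.
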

\begin{proof}
Appendix~\ref{appendix:OCP_supermg}.  Using the martingale characterization,
the~\eqref{eq:jensens_ineq_stronger_OCP_general} is a one-line proof:
\[
\dvar^\rho\big(F(X_T)\big) - \sum_{s=0}^{T-1} \E^\rho \big(
\ell(Y_{s+1},V_s,U_s;X_s)
\big) =\E^\rho(\Theta_T^U) \leq
\E^\rho(\Theta_0^U) = \dvar^\rho\big(Y_0(X_0)\big)
\]
with equality holding at $U=U^\opt=0$.   
\end{proof}

We are now ready to complete
the proof of parts (ii) and (iii) of the main result described in
\Lemma{lemma:Lasalle}.  

\section{Proof of \Lemma{lemma:Lasalle}, parts (ii) and (iii)}
\label{sec:observable_proof}

\begin{proof}[Proof of part (ii)]  We show that
\begin{equation}\label{eq:obs_identity}
\rho\big((Y_0-\rho (Y_0))\tilde f\big) = 0, \qquad \forall\,\tilde f\in\clC.
\end{equation}
The identity is used also in the proof of part (iii).

We work over the time-horizon $[0,\tau]$.  For this purpose, consider 
\[
Y_\tau(x) := \E^\rho\big(F(X_T) \mid \clZ_\tau \vee [X_\tau =x] \big),\quad
x\in \bS. 
\]
From~\Prop{prop:technical}(4),
\[
F - c\ones \in \clZfo{T}(\rho)\quad \implies \quad Y_\tau - c\ones \in
\clZfo{\tau}(\rho).
\]

Next, since $\tau\ge\tau_2(A,H)$, \Prop{prop:duality} gives
$\clC_\tau=\clC$.  Therefore, for each $\tilde f\in\clC$ there exist
$\tilde U\in\clU_\tau$ and $\tilde c\in\Re$ such that 
  \[
\sfL_\tau[\tilde c \ones, \tilde U]=\tilde f \quad \implies \quad \sfL_\tau[0, \tilde U] =
\tilde f - \tilde c\ones.
\]

Consider the solutions of the BS$\Delta$Es, and their respective
martingales, 
\begin{align*}
(Y,V) &= \mathrm{BS\Delta E}_\tau[Y_\tau,0],
& M_t &= Y_t(X_t), 
\quad & 0\le t\le \tau,\\
(\tilde Y,\tilde V) &= \mathrm{BS\Delta E}_\tau[0,\tilde U],
&\tilde M_t &= \tilde Y_t(X_t) + \sum_{s=0}^{t-1}\tilde U_s^\tp e(Z_{s+1}) ,
              \quad & 0\le t\le \tau, \\
              (\bar Y,\bar V) &= \mathrm{BS\Delta E}_\tau[Y_\tau,\epsilon\tilde U], \qquad
&\bar M_t &= \bar Y_t(X_t) + \epsilon\sum_{s=0}^{t-1}\tilde U_s^\tp e(Z_{s+1}),
\quad & 0\le t\le \tau,
\end{align*}
where $\epsilon\in\Re$ is arbitrary.  From
the~\eqref{eq:jensens_ineq_stronger_OCP_general}, because $ \bar
Y_\tau= Y_\tau$, 
\begin{align*}
\var^\rho\big(Y_\tau (X_\tau)\big) = \var^\rho\big(\bar Y_\tau (X_\tau)\big)& \le \var^\rho\big(\bar Y_0(X_0)\big) +
\E^\rho (\langle \bar M\rangle_\tau).
\end{align*}

By linearity of the BS$\Delta$E, $\bar Y= Y+\epsilon\tilde Y$ and
$\bar M = M+\epsilon\tilde M$. Consequently, because $\tilde Y_0 =
\tilde f - \tilde c\ones$, the two terms on the right hand side are
given by,
\begin{align*}
\var^\rho\big(\bar Y_0(X_0)\big) &= \var^\rho\big(Y_0(X_0)\big) +
2\epsilon\,\rho\big((Y_0-\rho (Y_0))\tilde f\big) +
\epsilon^2\var^\rho\big(\tilde f(X_0)\big), \\
\E^\rho\big(\langle \bar M\rangle_\tau \big) &= \E^\rho\big(\langle
M\rangle_\tau\big) + 2\epsilon\,\E^\rho\big(\langle M,\tilde
M\rangle_\tau\big) + \epsilon^2\,\E^\rho\big(\langle\tilde M\rangle_\tau\big).
\end{align*}
We have the following:
\begin{enumerate}
\item $O(1)$ term: By the zero-dissipation hypothesis, $\E^\rho\big(\langle
  M\rangle_\tau\big) =0$, which implies
  \[
\var^\rho\big(Y_0(X_0)\big) = \var^\rho\big(Y_\tau(X_\tau)\big).
\]
\item  $O(\epsilon)$ term: Since $\langle M\rangle_\tau \ge 0$, the
  hypothesis gives $\langle M\rangle_\tau = 0$, $\sP^\rho$-a.s.  The
  Kunita--Watanabe inequality (see~\cite[p.~88]{le2016brownian}) then yields
  \[
    \big|\langle M,\tilde M\rangle_\tau\big|^2
      \le \langle M\rangle_\tau \, \langle \tilde M\rangle_\tau = 0,
    \qquad \sP^\rho\text{-a.s.},
  \]
  and therefore $\E^\rho\big(\langle M,\tilde M\rangle_\tau\big)=0$.
\item $O(\epsilon^2)$ term: For the fixed $\tilde U$,
  \[
    C := \var^\rho(\tilde f(X_0)) + \E^\rho(\langle\tilde M\rangle_\tau)
    < \infty,
  \]
  because $\bS$ and $\bO$ are finite and the sum defining
  $\langle\tilde M\rangle_\tau$ has finitely many ($\tau$) terms.
\end{enumerate}
Collecting these terms together, 
\[
0 \le 2\epsilon\,\rho\big((Y_0-\rho (Y_0))\tilde f\big) +
\epsilon^2 C. \qquad \forall\,\epsilon\in\Re.
\]
As the $O(\epsilon^2)$ coefficient is finite and $\epsilon$ ranges
over all of $\Re$, the coefficient of $\epsilon$ must vanish.  This
proves~\eqref{eq:obs_identity}.

We now use the hypothesis of part (ii), that the HMM is observable.
By \Prop{prop:duality}, this is equivalent to $\clC=\Re^d$, so
\eqref{eq:obs_identity} holds for every $\tilde f\in\Re^d$.  Taking
$\tilde f=\ones_{\{x\}}$ gives
$\rho(x)\big(Y_0(x)-\rho(Y_0)\big)=0$, and therefore
\[
  Y_0(x) = \rho(Y_0) = \E^\rho\big(F(X_T)\big) = c,
  \qquad \forall\, x\in\supp(\rho),
\]
the last equality because $F-c\ones\in\clZfo{T}(\rho)$.

\end{proof}

\begin{proof}[Proof of part (iii)]
From \Prop{prop:part-i-implication}, there exists $\tilde f \in \mathrm{E}^{\mathrm{PER}}(A)$ such
that
\[
  \tilde f(x) = Y_0(x), \qquad \forall\, x \in \supp(\rho).
\]
Detectability gives $\mathrm{E}^{\mathrm{PER}}(A)\subseteq\clC$
(\Prop{prop:duality}).  So, $\tilde f$ is admissible
in~\eqref{eq:obs_identity}.  Since $\tilde f$ and $Y_0$ agree on
$\supp(\rho)$,
\[
  0 = \rho\big((Y_0-\rho(Y_0))\tilde f\big)
    = \rho\big((Y_0-\rho(Y_0))Y_0\big)
    = \var^\rho\big(Y_0(X_0)\big),
\]
and therefore $Y_0(x) = \rho(Y_0) = \E^\rho(F(X_T)) = c$ for every
$x\in\supp(\rho)$. 
\end{proof}

\subsection{Converse of the invariance principle}\label{sec:necessity}

Lemma~\ref{lemma:Lasalle}\,(iii) asserts that detectability is
sufficient for the invariance principle.  A converse is given in the
following (note it does not need Assumption~\ref{ass:non-degeneracy}).   

\begin{proposition}[Necessity of detectability]\label{thm:necessity}
  Consider~\eqref{eq:Y_and_M} with $T=\tau$.  
Suppose for every $\tau >0$, $\rho\in\clP(\bS)$, $c\in\Re$, and  $F -
c\ones \in \clZfo{\tau}(\rho)$, 
\[
\E^\rho \big( \langle M \rangle_\tau \big) = 0 \quad\Longrightarrow\quad
Y_0(x) = c, \qquad \forall\, x\in\supp(\rho).
\]
Then the $\mathrm{HMM}(A,H)$ is detectable.
\end{proposition}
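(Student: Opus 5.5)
We argue by contraposition. Suppose the HMM is not detectable. We will exhibit $\tau$, $\rho$, $c$ and $F$ with $F-c\ones\in\clZfo{\tau}(\rho)$ and zero dissipation $\E^\rho(\langle M\rangle_\tau)=0$, yet with $Y_0$ non-constant on $\supp(\rho)$.

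\textbf{Step 1: a periodic nonobservable measure.} By \Prop{prop:duality}, $\clN=\clC^\perp$ is $A^\transpose$-invariant and $\ones\in\clC$, so every $\eta\in\clN$ satisfies $\eta(\ones)=0$. Failure of detectability means $A^\transpose|_{\clN}$ has an eigenvalue $\lambda$ with $|\lambda|\ge 1$. Since $A^\transpose$ is a Markov operator on measures, its spectrum lies in the closed unit disc, so $|\lambda|=1$ and $\lambda^p=1$. Hence $(A^\transpose)^p|_{\clN}$ has eigenvalue $1$, and we may pick a real $\eta\in\clN\setminus\{0\}$ with $(A^\transpose)^p\eta=\eta$. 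Write $\eta=\kappa(\mu-\nu)$ with $\mu,\nu\in\clP(\bS)$ the normalized positive and negative parts. Then $\mu\neq\nu$, $\mu\sim_T\nu$ for all $T$, and $(A^\transpose)^{kp}(\mu-\nu)=\mu-\nu$ for all $k$.

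\textbf{Step 2: the test problem.} Set $\rho=\tfrac12(\mu+\nu)$, so that $\mu,\nu\ll\rho$, and take $\tau=kp$ with $k$ large (at least $\tau_2(A,H)$). Let $F:=\gamma^{\mu|\rho}_\tau$ be the Radon--Nikodym derivative $\pi^\mu_\tau/\pi^\rho_\tau$ with the paper's convention, and $c=1$; then $F-\ones\in\clZfo{\tau}(\rho)$. The output $Y_0(x)=\E^x(F(X_\tau))$ is the backward map.

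\textbf{Step 3: two facts and the conclusion.} We need two facts.
\begin{enumerate}
\item[(a)] $Y_0$ is not constant on $\supp(\rho)$.
\item[(b)] $\dvar^\rho(Y_0(X_0))=\dvar^\rho(F(X_\tau))$.
\end{enumerate}
Given (b), the \eqref{eq:Lyapunov_equation} of \Prop{Prop:Lyapunov} forces $\E^\rho(\langle M\rangle_\tau)=0$, while (a) violates the conclusion of the hypothesis, giving the contradiction.

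For (a): since $\sP^\mu|_{\clZ_\tau}=\sP^\nu|_{\clZ_\tau}=\sP^\rho|_{\clZ_\tau}$, we have $\sigma^\mu_\tau(\ones)=\sigma^\rho_\tau(\ones)$. Hence $F$ is the ratio of unnormalized filters, and $\rho(Y_0 g)$ can be computed for indicators $g$. In particular, $\mu(Y_0)-\nu(Y_0)$ should reduce to $\E^\rho$ of a quantity driven by $(A^\transpose)^\tau(\mu-\nu)=\mu-\nu\neq0$, so $Y_0$ separates $\mu$ from $\nu$.

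For (b): the plan is to use the variational characterization, \Corollary{cor:U-opt-zero}. In the variational form, $\dvar^\rho(Y_0)$ equals the MMSE minus the dissipation, and the chi-square identities of \Sec{sec:bmap} give $\E^\rho\,\chisq(\pi^\mu_\tau\mid\pi^\rho_\tau)=\mu(Y_0)-1$. We then want the Cauchy--Schwarz inequality
\[
(\mu(Y_0)-1)^2\le \chisq(\mu\mid\rho)\,\dvar^\rho(Y_0(X_0))
\]
to be an equality. Since $\mu-\rho=\tfrac12(\mu-\nu)$ is nonobservable and $A^{\transpose}$-periodic, the divergence $\chisq(\mu\mid\rho)$ should be transported without loss from time $0$ to time $\tau$.

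\textbf{Expected obstacle.} Fact (b) is the crux. The worry is that the periodic direction $\eta$ is preserved by $A^\transpose$ and invisible to the observations, but the ratio $F$ may still lose variance through the $\Gamma$-term. If the likelihood-ratio choice fails, the fallback is a deterministic $F=\tilde f$ chosen so that $\tilde f(X_\tau)=Y_0(X_0)$ almost surely. Such an $F$ would be built from a peripheral eigenvector lying in $\mathrm E^{\mathrm{PER}}(A)\setminus\clC$, with both sides of the Lyapunov equation then computed directly.
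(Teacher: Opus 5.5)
Your Step 1 and the setup of Step 2 are correct, but fact (b) carries the whole content of the proposition, and you leave it as a heuristic. The Cauchy--Schwarz reduction is sound. With $r:=\mu/\rho$ on $\supp(\rho)$ one has $\dvar^\rho(F(X_\tau))=\E^\rho\,\chisq(\pi^\mu_\tau\mid\pi^\rho_\tau)=\mu(Y_0)-1=\rho\big((r-1)(Y_0-1)\big)$. So (b) follows, together with Jensen's inequality, as soon as there is no loss of divergence, $\E^\rho\,\chisq(\pi^\mu_\tau\mid\pi^\rho_\tau)=\chisq(\mu\mid\rho)$. That no-loss identity is exactly what must be proved. ``$\mu-\rho$ is nonobservable and $(A^\transpose)^p$-invariant'' does not by itself supply a mechanism for it, and your own expected obstacle (loss through the $\Gamma$-term) is the unresolved point.

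Fact (a) is likewise only gestured at, though it is easy to finish. Since $\sP^\mu|_{\clZ_\tau}=\sP^\rho|_{\clZ_\tau}$, you get $\E^\rho\big(\pi^\mu_\tau-\pi^\rho_\tau\big)=(A^\transpose)^\tau(\mu-\rho)=\tfrac12(\mu-\nu)\neq0$. Hence the filters differ with positive probability, $\mu(Y_0)>1=\rho(Y_0)$, and $Y_0$ is not constant on $\supp(\rho)$.

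Your fallback has a hole in a different place. For a deterministic $\tilde f\in\mathrm E^{\mathrm{PER}}(A)$ and $\rho$ carried by recurrent states, zero dissipation is easy. The real difficulty is the centering $\tilde f-c\ones\in\clZfo{\tau}(\rho)$, which fails for generic $(\rho,\tilde f)$. The paper handles it by choosing $\rho=\frac1J\sum_j\rho_j$ and $f$ constant on the cyclic subclasses with $\rho(fg)=0$ for all $g\in\clC$. It then deduces $\pi^\rho_\tau(f)=\rho(f)=0$ from the optimality of $U=0$ in the dual control problem and uniqueness of the MMSE estimator.

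The missing idea is structural. An $(A^\transpose)^p$-invariant signed measure lies in $\mathrm{span}\{\rho_j\}$, where $\rho_j$ is the invariant probability of $A^p$ on its $j$-th recurrent class $\bS_j$ (a cyclic subclass of a recurrent class of $A$); Ces\`aro-average $(A^\transpose)^{kp}\eta=\eta$ to see this. The $\bS_j$ are disjoint, so your $\mu$ and $\nu$ are mixtures of the $\rho_j$ over disjoint index sets. Hence $S_+:=\supp(\mu)$ and $S_-:=\supp(\nu)$ are disjoint unions of $A^p$-closed classes, and $\mu=2\rho\ones_{S_+}$.

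For $\tau\in p\mathbb{N}$, $X_\tau$ stays in the class of $X_0$, so $\sigma^\mu_\tau=2\,\ones_{S_+}\sigma^\rho_\tau$. Combined with $\sigma^\mu_\tau(\ones)=\sigma^\rho_\tau(\ones)$, this gives $F=2\ones_{S_+}$ wherever $\sigma^\rho_\tau(x)>0$. Thus $F(X_\tau)=2\ones_{S_+}(X_0)$ $\sP^\rho$-a.s., $Y_0=2\ones_{S_+}$ on $\supp(\rho)$, and $M_\tau=M_0$. This yields (a) and (b) at once, and indeed $\chisq(\mu\mid\rho)=1=\E^\rho\chisq(\pi^\mu_\tau\mid\pi^\rho_\tau)$, with no need for $\tau\ge\tau_2(A,H)$.

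With this step supplied, your route becomes a legitimate variant of the paper's. Your test function $2\ones_{S_+}-\ones$ is constant on cyclic subclasses and satisfies $\rho\big((2\ones_{S_+}-\ones)g\big)=\tfrac12(\mu-\nu)(g)=0$ for $g\in\clC$, exactly as the paper's $f$ does. But because $F$ is a likelihood ratio between observationally equivalent priors, the centering $F-\ones\in\clZfo{\tau}(\rho)$ is automatic, so you bypass the paper's optimal-control argument.
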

\begin{proof}
Appendix~\ref{appdx:thm:necessity}.  
\end{proof}

\subsection{Dissipation and variance}  At this point, it is useful to
introduce some additional notation which makes the dependencies
explicit.  Specifically, consider~\eqref{eq:Y_and_M}.  Note that
$M_t=Y_t(X_t)$ and, by the Markov property of $X$, $Y_t$ does not depend
upon $\rho$.  Define, 
\[
  \text{(energy)} \qquad  \clE_T : \clZf{T}\to\Re^d
    \quad\text{by}\quad (\clE_T[F])(x)
      := \E^x\big(\langle M\rangle_T\big), \quad x\in\bS .
    \]
For each $x$, $F\mapsto(\clE_T[F])(x)$ is a non-negative quadratic
form.  Notably, the energy does not depend upon $\rho$.  The other
object of interest is $Y_0$ which also depends \emph{only} upon $F$, 
given by $Y_0 = \sfL_T[F]$. Then for each $\rho\in \clP(\bS)$, 
\begin{align*}
  \text{(dissipation)} \qquad 
\E^\rho \big( \langle M \rangle_T \big) = \sum_{x\in
  \bS} \rho(x)  \E^x \big( \langle M \rangle_T \big) =
  \rho(\clE_T[F]),\\
  \text{(variance)} \qquad \dvar^\rho(Y_0(X_0)) = \rho(
(\sfL_T[F])^2) - \big( \rho(\sfL_T[F]) \big)^2.
\end{align*}
The notation is useful because it makes the dependencies on
$F$ and $\rho$ explicit.  The content of \Lemma{lemma:Lasalle} and \Prop{thm:necessity} is a
relationship between these two objects.

\section{Filter stability}\label{sec:filter_stability}

Consider the BS$\Delta$E solution and the associated martingale,
\[
(Y, V) = \mathrm{BS\Delta E}_T[\gamma_T],\qquad M_t = Y_t(X_t),\quad
t\in\mathbb{T}. 
\]
Our goal is to apply \Lemma{lemma:Lasalle} to show
that the variance decay property holds, i.e.,
\[
\dvar^\nu(Y_0(X_0)) \to 0,\quad \text{as}\quad T\to\infty.
\]
Filter stability then follows from using
\Prop{prop:chisq_filter_stability}.  We begin by describing a uniform
bound on the BS$\Delta$E solution.

\begin{proposition}[Uniform bound under equivalent
  priors]\label{lem:uniform_ratio_bound} Suppose 
  Assumption~\ref{ass:equivalent-priors} holds s.t. $
  \gamma_0^{\min{}} \leq \gamma_0(x) \leq \gamma_0^{\max{}}$, 
  $\forall\,x\in \supp(\nu)$.  
  Then, under the Defn.~\ref{defn:likelihood_ratio} for $\gamma_T$,
\[
0<\alpha \;\leq\; Y_t(x) \;\leq\; \alpha^{-1} < \infty,\quad\forall\, x \in \bS,\quad t\in \mathbb{T}.
\]
where recall $\alpha = \gamma_0^{\min{}} /\gamma_0^{\max{}}$.
\end{proposition}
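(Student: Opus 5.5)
The plan is to use the analytical characterization of the BS$\Delta$E solution in \Prop{prop:backward-map-and-bsde}(1) and argue by backward induction on $t$, from $t=T$ down to $t=0$. The key observation is that the one-step map
\[
Y_{t+1}\mapsto Y_t(x)=\sum_{z\in\bO}\big(\mathrm{diag}(H(\cdot,z))\,A\,Y_{t+1}^z\big)(x)=\sum_{z\in\bO}\sum_{x'\in\bS}H(x,z)A(x,x')\,Y^z_{t+1}(x')
\]
is, for each fixed $x$ and each fixed observation history $(\zeta_1,\ldots,\zeta_t)$, an average of the values $Y_{t+1}^z(x')$ with nonnegative weights $H(x,z)A(x,x')$. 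These weights sum to one because $A$ and $H$ are row-stochastic. Hence $Y_t(x)$ lies in the convex hull of the values of $Y_{t+1}$, taken over all states and all continuations $z$, and any interval containing all values of $Y_{t+1}$ contains all values of $Y_t$.

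First I will establish the base case $t=T$. Here $Y_T=\gamma_T$, and the bound $\alpha\le\gamma_T(x)\le\alpha^{-1}$ must hold surely, that is, for every $x\in\bS$ and every path $\zeta\in\bO^T$, including zero-probability paths. This was essentially verified in \Sec{sec:gamma_T_defn}, and I will make it explicit in two cases.

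\textbf{Case $\sigma_T^\nu(x)>0$.} By linearity of $\rho\mapsto\sigma_T^\rho$ and $\mu=\gamma_0\nu$, with $\gamma_0\in[\gamma_0^{\min},\gamma_0^{\max}]$ on $\supp(\nu)=\supp(\mu)$, we have $\gamma_0^{\min}\sigma^\nu_T\le\sigma^\mu_T\le\gamma_0^{\max}\sigma^\nu_T$ pointwise, and the same for the totals. Then $\gamma_T(x)=\frac{\sigma^\mu_T(x)}{\sigma^\nu_T(x)}\frac{\sigma^\nu_T(\ones)}{\sigma^\mu_T(\ones)}\in[\alpha,\alpha^{-1}]$.

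\textbf{Case $\sigma_T^\nu(x)=0$.} Here $\gamma_T(x)=1$ by~\eqref{eq:convention}, and $1\in[\alpha,\alpha^{-1}]$ since $\alpha\le 1$.

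Next comes the inductive step. Suppose $\alpha\le Y_{t+1}(x')\le\alpha^{-1}$ for all $x'$ and all paths in $\bO^{t+1}$. Then for every path $(\zeta_1,\ldots,\zeta_t)$ each $Y^z_{t+1}(x')$ lies in $[\alpha,\alpha^{-1}]$, and the convex-combination identity above gives $Y_t(x)\in[\alpha,\alpha^{-1}]$ for all $x$. Iterating down to $t=0$ completes the proof. This is just the pathwise maximum principle underlying \Prop{prop:technical}(2), refined to two-sided bounds.

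The only delicate point is that the bounds must be \emph{sure} rather than almost sure. Without the non-degeneracy assumption, some continuations $z$ may have zero probability, and $\gamma_T$ there is fixed by convention. The analytical recursion averages over all $z$, including those, so the convention is exactly what is needed to keep every terminal value inside $[\alpha,\alpha^{-1}]$.

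I also need the weights to sum to one. For each $x$,
\[
\sum_{z}H(x,z)\sum_{x'}A(x,x')=1,
\]
so the averaging is a genuine convex combination and no normalization issue arises.
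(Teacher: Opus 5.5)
Your proof is correct and follows essentially the same route as the paper. The base case is the bound on $\gamma_T$ from Sec.~\ref{sec:gamma_T_defn}, including the convention $\gamma_T(x)=1$ where $\sigma_T^\nu(x)=0$; the inductive step uses the analytical characterization in Prop.~\ref{prop:backward-map-and-bsde} to write $Y_t(x)$ as a convex combination of the values of $Y_{t+1}$, since $A$ and $H$ are row-stochastic, and your emphasis that the bounds hold surely on every path makes explicit what the paper leaves implicit.
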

\begin{proof}
  Appendix~\ref{appdx:lem:uniform_ratio_bound}.
\end{proof}

We are now ready to state the main result on filter stability.  

\begin{theorem}\label{thm:filter-stability}
Suppose $\mu\sim\nu$ and at least one of the three conditions in
Lemma~\ref{lemma:Lasalle} holds. Then
$
  \E^\mu\big(\chisq(\pi_T^\mu\mid\pi_T^\nu)\big) \stackrel{(T\to
  \infty)}{\longrightarrow} 0$.
\end{theorem}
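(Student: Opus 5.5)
The plan is to run a LaSalle argument on sliding windows of length $\tau$, where $\tau$ is the index from whichever condition of Lemma~\ref{lemma:Lasalle} holds. Write $Y^T$ for the solution of $\mathrm{BS\Delta E}_T[\gamma_T]$ and set
\[
a_T:=\dvar^\nu\big(\gamma_T(X_T)\big)=\E^\nu\big(\chisq(\pi_T^\nu\text{-}\mu\text{ divergence})\big)=\E^\nu\big(\pi^\nu_T((\gamma_T-1)^2)\big).
\]
By \eqref{eq:jensens_ineq}, $\dvar^\nu(y_0(X_0))\le a_T$. Once $a_T\to0$, the function $y_0$ (which has $\nu(y_0)=1$) tends to $1$ on $\supp(\nu)$, and \Prop{prop:chisq_filter_stability} finishes the proof. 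So the whole task is to show $a_T\to0$.

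\emph{Step 1: monotonicity with explicit decrement.} By the restriction property \Prop{prop:technical}(3) and the \eqref{eq:Lyapunov_equation} applied on $[T-\tau,T]$,
\[
a_T=\dvar^\nu\big(Y^T_{T-\tau}(X_{T-\tau})\big)+\E^\nu\big(\langle M\rangle_T-\langle M\rangle_{T-\tau}\big).
\]
I then want $\dvar^\nu(Y^T_{T-\tau}(X_{T-\tau}))\le a_{T-\tau}$. The approach is the change-of-measure identity with $L=\mu(X_0)/\nu(X_0)$:
\[
\gamma_t(X_t)=\frac{\E^\nu(L\mid\clZ_t,X_t)}{\E^\nu(L\mid\clZ_t)},
\]
combined with the optimality principle (\Corollary{cor:U-opt-zero}), which says the $U=0$ solution has the smallest cost. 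Equivalently, one shows that $t\mapsto\E^\nu(\chisq)$ computed via the backward process is dominated by $a_t$. I expect this comparison to be the main technical obstacle, and would try Jensen conditioned on $\clZ_{T-\tau}\vee\sigma(X_{T-\tau})$ first. Granting it, $a_T$ is nonincreasing and bounded below by $0$. Summing over $T$ then shows that the window dissipations $D_T:=\E^\nu(\langle M\rangle_T-\langle M\rangle_{T-\tau})$ tend to $0$.

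\emph{Step 2: conditioning and compactness.} Conditional on $\clZ_{T-\tau}$, the window problem on $[T-\tau,T]$ is, by the Markov property and time-homogeneity, a BS$\Delta$E of horizon $\tau$ with:
\begin{itemize}
\item prior $\rho_T:=\pi^\nu_{T-\tau}$;
\item terminal function $F_T$, given by $\gamma_T$ viewed as a function of $(Z_{T-\tau+1},\dots,Z_T)$;
\item centering $F_T-c_T\ones\in\clZfo{\tau}(\rho_T)$ with $c_T=\pi^\nu_{T-\tau}(Y^T_{T-\tau})$.
\end{itemize}
By \Prop{lem:uniform_ratio_bound}, $F_T$ lies in the compact set $K:=\{\|F\|_\infty\le\alpha^{-1}\}$, and the constraint set $\{(\rho,F,c):F-c\ones\in\clZfo{\tau}(\rho)\}$ is closed. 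Define on this compact set the continuous functions (\Prop{prop:technical}(1))
\[
g(\rho,F):=\dvar^\rho\big(\sfL_\tau[F](X_0)\big),\qquad h(\rho,F):=\rho(\clE_\tau[F]).
\]
Lemma~\ref{lemma:Lasalle} says $h=0\Rightarrow g=0$. A standard compactness argument then produces a nondecreasing modulus $\omega$ with $\omega(0^+)=0$ such that $g\le\omega(h)$; we may take $\omega$ concave. This is the LaSalle step: limit points of the data along which the dissipation vanishes lie in the zero-dissipation set, and there the variance vanishes.

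\emph{Step 3: conclusion.} Taking expectations and using Jensen for the concave $\omega$,
\[
\E^\nu\big[\pi^\nu_{T-\tau}\big((Y^T_{T-\tau}-c_T)^2\big)\big]\le\omega(D_T)\to0.
\]
Next I need to show that $\dvar^\nu(Y^T_{T-\tau}(X_{T-\tau}))$ is also small. Its decomposition splits into the term above plus the variance of $c_T$. To control $c_T$, I would iterate the argument one more window back, or use the identity $\E^\nu(c_T)=1$ together with the martingale structure of $\Theta$ from \Prop{cor:U-opt-zero_elegant}. I flag this as the second delicate point: turning conditional centering into unconditional smallness. Once it is done, Step 1 gives $a_T=\dvar^\nu(Y^T_{T-\tau}(X_{T-\tau}))+D_T\to0$, and the first paragraph concludes.
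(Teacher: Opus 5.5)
Your Step 1 is where the argument breaks. You need the dissipation $D_T$ on the \emph{last} window $[T-\tau,T]$ to vanish, and you try to get it from monotonicity of $a_T=\E^\nu\big(\chisq(\pi^\mu_T\mid\pi^\nu_T)\big)$. This fails for two reasons.

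\emph{The inequality points the wrong way.} Combining your proposed $\dvar^\nu(Y^T_{T-\tau}(X_{T-\tau}))\le a_{T-\tau}$ with $a_T=\dvar^\nu(Y^T_{T-\tau}(X_{T-\tau}))+D_T$ only yields $a_T\le a_{T-\tau}+D_T$. That neither makes $a_T$ nonincreasing nor telescopes into $\sum_T D_T<\infty$.

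\emph{The monotonicity is false in general.} For $A=I$ one has $\gamma_t=\gamma_0/\pi^\nu_t(\gamma_0)$, so $a_t+1=\E^\nu\big(\phi(\pi^\nu_t)\big)$ with $\phi(\pi)=\pi(\gamma_0^2)/\pi(\gamma_0)^2$, which is not concave. Take $d=2$, $\gamma_0\propto(1,3)$, $\nu=(0.1,0.9)$ and emission rows $(0.6,0.4)$, $(0.4,0.6)$. This is an observable, non-degenerate model with $\tau=1$, and one computes $a_1\approx0.0461>a_0\approx0.0459$.

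The structural point is that the Lyapunov monotonicity of \Prop{Prop:Lyapunov} holds along a \emph{single} backward process with a fixed terminal time. $Y^T_{T-\tau}$ and $\gamma_{T-\tau}$ belong to different backward problems, and the cross-horizon comparison you need does not hold. Without $D_T\to0$ (true only a posteriori), the closing identity of Step 3 has nothing to rest on. Your ``second delicate point'', by contrast, is empty: $c_T\equiv1$, because $\pi^\nu_t(Y^T_t)=\E^\nu\big(\pi^\nu_T(\gamma_T)\mid\clZ_t\big)=1$; cf.\ \Prop{prop:technical}(4).

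The repair has three parts.
\begin{enumerate}
\item Aim at $\dvar^\nu(Y^T_0(X_0))$ rather than $a_T$. This suffices by \Prop{prop:chisq_filter_stability}, since $\nu(y_0)=1$.
\item Place the window where the dissipation is small, not at the end. For fixed $T$, the Lyapunov equation and \Prop{lem:uniform_ratio_bound} bound the total dissipation, $\E^\nu(\langle M\rangle_T)\le a_T\le C$. Hence some window $[t,t+\tau]\subseteq[0,T]$ carries dissipation at most $C/\lfloor T/\tau\rfloor$.
\item Apply your Step 2 modulus on the compact set $\{(\rho,F):\|F\|_\infty\le\alpha^{-1},\ F-\ones\in\clZfo{\tau}(\rho)\}$, through the shifted solution and Jensen for the concave $\omega$. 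This gives $\dvar^\nu(Y^T_t(X_t))\le\omega\big(C/\lfloor T/\tau\rfloor\big)$. The Lyapunov equation for the \emph{same} backward process then gives $\dvar^\nu(Y^T_0(X_0))\le\dvar^\nu(Y^T_t(X_t))$.
\end{enumerate}
This is the paper's argument. The paper runs it by contradiction, replacing your modulus by a Prohorov limit of the laws of $(\pi^\nu_{t_n},\tilde F_n)$ on the same compact set. Your modulus version is equivalent (the modulus itself is sound) and gives a direct, non-contradiction proof.
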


Throughout the remainder of this section,
Assumption~\ref{ass:equivalent-priors}
is in force and $\tau$ is fixed according to
one of the three conditions in
Lemma~\ref{lemma:Lasalle}.

\subsection{Proof strategy and technical issues}
By the uniform bound in \Prop{lem:uniform_ratio_bound},
\[
\dvar^\nu\big(\gamma_{T}(X_{T})\big) < C, \quad\forall\;T.
\]
From~\eqref{eq:Lyapunov_equation},
\[
\dvar^\nu\big(Y_0(X_0)\big) +
\E^\nu(\langle M \rangle_T) = \dvar^\nu\big(\gamma_T(X_T)\big) .\
\]
Using formula for the quadratic variation in
\Prop{prop:M0-martingale}, 
\[
\E^\nu\big(\langle M \rangle_{T}\big) = \sum_{s=0}^{T-1} \E^\nu( \ell(Y_{s+1},V_s,0;X_s))
\leq \dvar^\nu\big(\gamma_{T}(X_{T})\big) < C, \quad \forall \; T.
\] 
For every $\epsilon>0$, there exists $T_\epsilon<\infty$ such that,
for every $T> T_\epsilon$, there exists a time $
\bar t=\bar t(T,\epsilon)\in[0,T-\tau]
$ such that
\[
\sum_{s=\bar t}^{\bar t+\tau-1}  \E^\nu( \ell(Y_{s+1},V_s,0;X_s))< \epsilon.
\]
Therefore, we have a time interval $[\bar t, \bar t + \tau]$ over
which the solution $\{(Y_s, V_s) : \bar t \leq s \leq \bar t + \tau\}$
has small dissipation. There are two issues with a direct application
of \Lemma{lemma:Lasalle}:
\begin{enumerate}
\item The lemma only asserts that dissipation $=0$ implies variance
  $=0$. One therefore needs to pass to the limit $\epsilon \to 0$.
\item Passing to this limit is complicated by the fact that the
  interval $[\bar t, \bar t + \tau]$ over which the dissipation is
  small depends on $T$. In particular, $\bar t = \bar t(T, \epsilon)$
  may grow unboundedly as $\epsilon \to 0$ and $T \to \infty$.
\end{enumerate}
To address the second issue, we construct a \emph{shifted solution} of
a related BS$\Delta$E on the fixed interval $[0, \tau]$.

\subsection{Construction of a shifted solution} The shifted solution
is obtained on an auxiliary measurable space.  The notation for the
same is fixed with the following definition.

\begin{definition}[Auxiliary measurable space]
The \emph{auxiliary measurable space} $(\tilde\Omega,\tilde\clF_\tau)$ is
defined with an independent copy of $(\tilde X,\tilde Z) =
\{\tilde X_0,(\tilde X_t,\tilde Z_t):1\leq t\leq\tau\}$.  For each fixed
$\rho\in\clP(\bS)$, $\tilde\sP^\rho$ is the probability measure on
$(\tilde\Omega,\tilde\clF_\tau)$ corresponding to the
$\mathrm{HMM}(A,H)$ with prior $\tilde X_0\sim\rho$, and $\tilde\E^\rho$
is the associated expectation.
\end{definition}

The construction of the shifted solution is described next.  For both
the proposition and the corollary that follows, we fix $T$ and $\bar t
\in [0, T-\tau]$, and let $\{(Y_s, V_s) : \bar t \leq s \leq \bar
t+\tau\}$ denote the BS$\Delta$E solution restricted to the time
interval $[\bar t, \bar t+\tau]$.  The key step in the construction of
the shifted solution is the following Markov property of the
BS$\Delta$E.

\begin{proposition}[Markov property of the BS$\Delta$E]
  \label{prop:Markov_BSDE}
 There exists a deterministic function
$\shift:\clP(\bS)^2\times \bO^\tau \times \bS \to\Re$ such that
\[
 Y_{\bar t + \tau}(x) =
\shift(\pi_{\bar t}^\nu,\pi_{\bar t}^\mu,Z_{\bar t+1},Z_{\bar
  t+2},\hdots,Z_{\bar t + \tau};x),\quad x\in \bS.
\]
The function $\shift (\cdot)$ depends upon $\bar t, T$, and the HMM parameters but
not upon $(Z_1,Z_2,\hdots,Z_{\bar t})$.
\end{proposition}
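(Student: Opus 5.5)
The plan is to combine two facts. First, $Y_{\bar t+\tau}$ is produced from the terminal condition $Y_T=\gamma_T$ by the deterministic backward recursion of \Prop{prop:backward-map-and-bsde}(1). Second, $\gamma_T$ depends on $(Z_1,\dots,Z_{\bar t})$ only through the pair of filters at time $\bar t$. So the proof has two steps: write $\gamma_T$ as a function of $(\pi^\nu_{\bar t},\pi^\mu_{\bar t})$ and the future observations, then push that representation back through the recursion.

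\emph{Step 1 (forward part).} Write $D_z:=\mathrm{diag}(H(\cdot,z))$. Iterating the unnormalized filter recursion from $\bar t$ to $T$ gives
\[
\sigma^\rho_T = \big(\sigma^\rho_{\bar t}\big)^\tp D_{Z_{\bar t+1}}A\cdots D_{Z_T}A ,\qquad \rho\in\{\mu,\nu\}.
\]
For a prior $\pi\in\clP(\bS)$ and a word $\zeta=(\zeta_{\bar t+1},\dots,\zeta_T)$, set $s(\pi;\zeta)^\tp:=\pi^\tp D_{\zeta_{\bar t+1}}A\cdots D_{\zeta_T}A$. Define $G(\pi^\nu,\pi^\mu,\zeta;x)$ by the formula in \Defn{defn:likelihood_ratio}:
\begin{itemize}
\item $G:=\dfrac{s(\pi^\mu;\zeta)(x)}{s(\pi^\nu;\zeta)(x)}\cdot\dfrac{s(\pi^\nu;\zeta)(\ones)}{s(\pi^\mu;\zeta)(\ones)}$ if $s(\pi^\nu;\zeta)(x)>0$;
\item $G:=1$ otherwise.
\end{itemize}
On paths with $\sigma^\nu_{\bar t}(\ones)>0$ we have $\sigma^\rho_{\bar t}=\sigma^\rho_{\bar t}(\ones)\,\pi^\rho_{\bar t}$, and $\sigma^\mu_{\bar t}(\ones)>0$ by the sandwich bound. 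The expression for $\gamma_T$ is invariant under separate positive rescalings of $\sigma^\mu$ and $\sigma^\nu$, and the positivity test $\sigma^\nu_T(x)>0$ is also unaffected. Hence
\[
\gamma_T(x)=G(\pi^\nu_{\bar t},\pi^\mu_{\bar t},Z_{\bar t+1},\dots,Z_T;x).
\]

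\emph{Step 2 (backward part).} By \Prop{prop:technical}(3) and the analytical characterization in \Prop{prop:backward-map-and-bsde}(1), apply the one-step map $Y_s(x)=\sum_z (D_zAY^z_{s+1})(x)$ from $s=T-1$ down to $s=\bar t+\tau$. Unrolling it gives
\[
Y_{\bar t+\tau}(x)=\sum_{\zeta_{\bar t+\tau+1},\dots,\zeta_T}\big(D_{\zeta_{\bar t+\tau+1}}A\cdots D_{\zeta_T}A\,G(\pi^\nu_{\bar t},\pi^\mu_{\bar t},Z_{\bar t+1},\dots,Z_{\bar t+\tau},\zeta_{\bar t+\tau+1},\dots,\zeta_T;\cdot)\big)(x).
\]
Equivalently, this is an expectation of $G$ over a fresh copy of the HMM run for $T-\bar t-\tau$ steps from $x$. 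The right-hand side defines $\shift(\pi^\nu,\pi^\mu,z_1,\dots,z_\tau;x)$. It depends on $T$ and $\bar t$ only through the number of summed steps, and on $(A,H)$, but not on $Z_1,\dots,Z_{\bar t}$.

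The main obstacle is the bookkeeping on null paths, where $\sigma^\nu_{\bar t}(\ones)=0$ and $\pi^\nu_{\bar t}$ is defined only a.s. I will handle it as follows:
\begin{itemize}
\item Under Assumption~\ref{ass:non-degeneracy} there are no such paths (Remark~\ref{rem:surely}).
\item In general, on such paths every $\sigma^\nu_T(x)=0$, so $\gamma_T\equiv 1$ and hence $Y_{\bar t+\tau}\equiv\ones$ by the recursion.
\item Adopting the normalized-filter convention there (any fixed choice of $\pi_{\bar t}$), the identity then holds $\sP^\nu$-a.s. This suffices, because the shifted solution is only used under $\sP^\nu$.
\end{itemize}
I would also check that the positivity test $s(\pi^\nu;\zeta)(x)>0$ agrees with $\sigma^\nu_T(x)>0$ exactly when $\sigma^\nu_{\bar t}(\ones)>0$; this is immediate from the proportionality in Step 1.
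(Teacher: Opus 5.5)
Your proposal is correct and follows essentially the same route as the paper. Linearity of the unnormalized filter recursion, together with the scale invariance of the likelihood-ratio formula and of its positivity test, expresses $\gamma_T$ through $(\pi^\nu_{\bar t},\pi^\mu_{\bar t})$ and the future observations; the analytical characterization in \Prop{prop:backward-map-and-bsde} then carries this to $Y_{\bar t+\tau}$, with the identity holding on $[\sigma^\nu_{\bar t}(\ones)>0]$, and your explicit unrolled formula for $\shift$ and null-path bookkeeping just spell out what the paper states more briefly.
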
  
\begin{proof}
See Appendix~\ref{sec:markov_property}.
\end{proof}

\begin{definition}[Shifted solution]\label{defn:shifted_solution}
  On the auxiliary space $(\tilde\Omega,\tilde\clF_\tau)$, set
  \begin{subequations}\label{eq:construction}
\begin{equation}
\tilde F(x)\big|_{[\tilde Z_1=\zeta_1,\ldots,\tilde Z_\tau=\zeta_\tau]}
:= \shift\bigl(\pi_{\bar t}^\nu,\pi_{\bar t}^\mu,
\zeta_1,\ldots,\zeta_\tau;x\bigr), \quad x\in\bS, \quad
(\zeta_1,\ldots,\zeta_\tau)\in\bO^\tau.
\end{equation}
The \emph{shifted solution} $\{(\tilde Y_t,\tilde V_t):0\leq
t\leq\tau\}$ and the associated martingale are defined by,
\begin{equation}
(\tilde Y,\tilde V)=\mathrm{BS\Delta E}_\tau[\tilde F],\qquad \tilde
M_t := \tilde Y_t(\tilde X_t),\quad 0\leq t\leq\tau.
\end{equation}
The auxiliary space is equipped with a random probability measure
$\tilde\sP^\rho$, defined as follows: for any
$\tilde\clF_\tau$-measurable $\Psi:\tilde\Omega\to\Re$,
\begin{equation}
\tilde\E^\rho(\Psi)(\omega) := \tilde\E^{\rho(\omega)}(\Psi),
\qquad\omega\in\Omega,
\qquad\text{where } \rho = \pi_{\bar t}^\nu .
\end{equation}
\end{subequations}
The subspace $\clZfo{\tau}(\rho)$ is likewise a random
subspace, and $\tilde F$, $(\tilde Y,\tilde V)$ and $\tilde M$ are random
elements, all read pointwise in $\omega\in\Omega$.   Because $\rho=\pi_{\bar t}^\nu$ is defined
only on $[\sigma_{\bar t}^\nu(\ones)>0]$, the construction is defined
$\sP^\nu$-a.s. and all statements concerning it are understood in this
sense.  Under Assumption~\ref{ass:non-degeneracy}, the construction
and the statements hold surely.  
\end{definition}

The following corollary relates the two solutions. 

\begin{corollary}\label{corollary:prop:Markov_BSDE}
Consider~\eqref{eq:construction}.  
Then 
\begin{enumerate}
\item $\tilde F-1 \in
  \clZfo{\tau}(\rho)$, and 
\[
  0 < \alpha \le \tilde F(x) \le
  \alpha^{-1} < \infty, \quad \sP^\nu\text{-a.s.},
\quad x\in\bS.
\]
\item For the shifted solution, the variance and the dissipation
\begin{align*}
  \rho\big((\tilde{Y}_0-1)^2\big) &= \E^\nu\big((Y_{\bar
    t}(X_{\bar t})-1)^2 \mid \clZ_{\bar t}\big), \quad \sP^\nu\text{-a.s.}
,\\
 \tilde\E^\rho(\langle \tilde M\rangle_\tau) &= \sum_{s=\bar
   t}^{\bar t+\tau-1} \E^\nu\big( \ell(Y_{s+1},V_s,0;X_s)\mid \clZ_{\bar t}\big), \quad \sP^\nu\text{-a.s.}
\end{align*}
\end{enumerate}
\end{corollary}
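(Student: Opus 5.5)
The plan is to reduce everything to the Markov property of Prop.~\ref{prop:Markov_BSDE} combined with the probabilistic characterization of Prop.~\ref{prop:backward-map-and-bsde}, applied on the original space over the window $[\bar t,\bar t+\tau]$ and on the auxiliary space over $[0,\tau]$.

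\emph{Part 1.} The bounds on $\tilde F$ come from Prop.~\ref{lem:uniform_ratio_bound}. Every observation string $\zeta\in\bO^\tau$ appearing in the definition of $\tilde F$ is realised by $Y_{\bar t+\tau}$ along the path $(Z_1,\ldots,Z_{\bar t},\zeta)$, and $Y_{\bar t+\tau}\in[\alpha,\alpha^{-1}]$ holds surely. Therefore $\tilde F\in[\alpha,\alpha^{-1}]$ whenever $\pi^\nu_{\bar t}$ is defined.

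For centering, Prop.~\ref{prop:technical}(4) with $\rho=\nu$ gives $Y_{\bar t+\tau}-\ones\in\clZfo{\bar t+\tau}(\nu)$, that is, $\sigma^\nu_{\bar t+\tau}(Y_{\bar t+\tau}-\ones)=0$ for every path. Conditioning on $\clZ_{\bar t}$ and using the recursion for $\sigma$, one has $\sigma^\nu_{\bar t+\tau}=\sigma^\nu_{\bar t}(\ones)\,\tilde\sigma^{\rho}_\tau$. Here $\tilde\sigma^{\rho}_\tau$ is the unnormalized filter started from $\rho=\pi^\nu_{\bar t}$ and driven by $(Z_{\bar t+1},\ldots,Z_{\bar t+\tau})$; this holds because $\rho\mapsto\sigma$ is linear and the recursion is time-homogeneous. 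On $[\sigma^\nu_{\bar t}(\ones)>0]$ we can divide, which yields $\tilde\sigma^\rho_\tau(\tilde F-\ones)=0$, i.e.\ $\tilde F-1\in\clZfo{\tau}(\rho)$.

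\emph{Part 2.} First I will check that the BS$\Delta$E is time-homogeneous. By the analytic characterization, the backward recursion from $\tilde F$ over $\tau$ steps coincides with the recursion from $Y_{\bar t+\tau}$ over $[\bar t,\bar t+\tau]$ once $\tilde Z_t$ is identified with $Z_{\bar t+t}$. Consequently $\tilde Y_t=Y_{\bar t+t}$ and $\tilde V_t=V_{\bar t+t}$ as functions of the observation string, with $(\pi^\nu_{\bar t},\pi^\mu_{\bar t})$ frozen. The two identities then follow from the Markov property of the HMM under $\sP^\nu$. Conditionally on $\clZ_{\bar t}$, the process $(X_{\bar t+s},Z_{\bar t+s+1})_{s\ge0}$ is an HMM$(A,H)$ with prior $\pi^\nu_{\bar t}$. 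For any functional $\Phi$ of $(X_{\bar t},\ldots,X_{\bar t+\tau},Z_{\bar t+1},\ldots,Z_{\bar t+\tau})$ and of $\clZ_{\bar t}$-measurable parameters, this gives
\[
\E^\nu(\Phi\mid\clZ_{\bar t})=\tilde\E^{\rho}(\tilde\Phi),\qquad \rho=\pi^\nu_{\bar t}.
\]
For the variance identity, take $\Phi=(Y_{\bar t}(X_{\bar t})-1)^2$. Since $Y_{\bar t}$ is $\clZ_{\bar t}$-measurable, the left side equals $\pi^\nu_{\bar t}((Y_{\bar t}-1)^2)=\rho((\tilde Y_0-1)^2)$. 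For the dissipation identity, take $\Phi=\sum_{s=\bar t}^{\bar t+\tau-1}\ell(Y_{s+1},V_s,0;X_s)$, and use Prop.~\ref{prop:M0-martingale} on the auxiliary space to write $\tilde\E^\rho(\langle\tilde M\rangle_\tau)=\sum_{t=0}^{\tau-1}\tilde\E^\rho(\ell(\tilde Y_{t+1},\tilde V_t,0;\tilde X_t))$.

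\emph{Main obstacle.} The delicate step is making the conditional-law identity rigorous with random, $\clZ_{\bar t}$-measurable parameters and a prior that is defined only $\sP^\nu$-a.s. I would handle this by fixing a path $(\zeta_1,\ldots,\zeta_{\bar t})$ of positive probability and computing the conditional joint law explicitly from the product formula for $\sP^\nu$. The factor $\sigma^\nu_{\bar t}(x)/\sigma^\nu_{\bar t}(\ones)=\pi^\nu_{\bar t}(x)$ emerges as the new prior, and the remaining product is exactly $\tilde\sP^{\rho}$. A second point is that $Y_{\bar t+\tau}$ must be the same function of the future observations for every past in the same conditioning class; this is precisely what Prop.~\ref{prop:Markov_BSDE} guarantees.
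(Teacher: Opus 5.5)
Your proposal is correct and follows essentially the paper's route. You use the pathwise identity $\tilde Y_t=Y_{\bar t+t}$, $\tilde V_t=V_{\bar t+t}$ from the time-homogeneous backward recursion, together with the claim that, conditionally on $\clZ_{\bar t}$, the window process is an HMM$(A,H)$ with prior $\pi^\nu_{\bar t}$, and then Prop.~\ref{prop:M0-martingale} on the auxiliary space. The only variation is in the centering step: you argue pathwise via $\sigma^\nu_{\bar t+\tau}=\sigma^\nu_{\bar t}(\ones)\,\tilde\sigma^\rho_\tau$, whereas the paper transports $\pi^\nu_{\bar t+\tau}(Y_{\bar t+\tau})=1$ through the law identity; your version gives $\sigma^\rho_\tau(\tilde F-\ones)=0$ for every string directly, and you also spell out the bound on $\tilde F$, which the paper leaves implicit.
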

\begin{proof}
Appendix~\ref{appdx:corollary:prop:Markov_BSDE}.  
\end{proof}

\subsection{Proof of \Theorem{thm:filter-stability}} Let
$(Y,V)=\mathrm{BS\Delta E}_T[\gamma_T]$.  We show
$\dvar^\nu(Y_0(X_0))\to0$ as $T\to\infty$.  Filter stability then
follows from Prop.~\ref{prop:chisq_filter_stability}.

The proof is by contradiction.  Suppose there exists $\varepsilon>0$
and time-horizons $T_n\to\infty$ such that, writing $(Y^n,V^n)=\mathrm{BS\Delta
  E}_{T_n}[\gamma_{T_n}]$,  
\[
\dvar^\nu\big(Y_0^n(X_0)\big) > \varepsilon, \qquad \forall\,n.
\]
Using the~\eqref{eq:Lyapunov_equation} together with the uniform bound in~\Prop{lem:uniform_ratio_bound},
\[
\sum_{s=0}^{T_n-1} \E^\nu\big( \ell (Y_{s+1}^n,V_s^n,0;X_s) \big) \leq \dvar^\nu\big(\gamma_{T_n}(X_{T_n})\big) < C.
\]
Partition $[0,T_n]$ into $\lfloor T_n/\tau\rfloor$ disjoint windows
of length $\tau$. Since the total dissipation over these windows is
at most $C$, there exists, for each $n$, a $t_n\leq T_n-\tau$ such
that
\[
\sum_{s=t_n}^{t_n+\tau-1} \E^\nu\big( \ell (Y_{s+1}^n,V_s^n,0;X_s) \big)   <  \frac{C}{\lfloor T_n/\tau\rfloor}.
\]
Since the right-hand side tends to $0$, passing to a subsequence,
we have
\[
  \dvar^\nu\big(Y_{t_n}^n(X_{t_n})\big) > \varepsilon,\qquad \sum_{s=t_n}^{t_n+\tau-1}\E^\nu\big( \ell (Y_{s+1}^n,V_s^n,0;X_s) \big)  < \frac{1}{n},\quad \forall n.
\]
where we used $\dvar^\nu\big(Y_{0}^n(X_{0})\big) \leq
\dvar^\nu\big(Y_{t_n}^n (X_{t_n})\big)$.

Apply Defn.~\ref{defn:shifted_solution} with $(T,\bar t) = (T_n,t_n)$, and
denote the resulting objects $\rho_n$, $\tilde F_n$, $(\tilde
Y^n,\tilde V^n)$, $\tilde M^n$. From
Corollary~\ref{corollary:prop:Markov_BSDE},  
\[
\E^\nu \big( \rho_n((\tilde Y_0^n-1)^2) \big) >\epsilon,\qquad \E^\nu
\big( \tilde{E}^{\rho_n}(
\langle\tilde M^n \rangle_\tau)\big) < \frac{1}{n},\quad \forall \, n.
\]

Set $K := \clP(\bS)\times[\alpha,\alpha^{-1}]^{d(m+1)^\tau} \subset
\clP(\bS) \times \clZf{\tau}$  
and
\[
K_0 := \big\{(\rho,F)\in K \;:\; F - 1 \in
\clZfo{\tau}(\rho)\big\}=\big\{(\rho,F)\in K \;:\;
\sigma_\tau^\rho(F)= \sigma_\tau^\rho(1)
\big\}.
\]
The map $(\rho,F) \mapsto \sigma_\tau^\rho(F)$ is linear in each
argument, and hence
continuous.  So, $K_0$ is a closed subset of the compact set $K$.  By
Corollary~\ref{corollary:prop:Markov_BSDE}, $(\rho_n,\tilde F_n)$ is a
$K_0$-valued random element for every $n$.

Let $\lambda_n\in\clP(K)$ denote the law of $(\rho_n,\tilde F_n)$.  Since $K$ is
compact, $\{\lambda_n\}$ is tight, and by Prohorov's theorem there is a
subsequence such that, after relabeling, $\lambda_{n}\Rightarrow\lambda$~\cite[Thm.~9.6.2]{resnick2003probability}.  Because $K_0$ is closed,
$\lambda(K_0)=1$.  The maps $(\rho,\tilde F)\mapsto\rho\big((\sfL_\tau[\tilde F]-1)^2\big)$ and
$(\rho,\tilde F)\mapsto\rho\big(\clE_\tau[\tilde F]\big)$ are linear in $\rho$ and quadratic in $F$,
hence continuous on $K$ (\Prop{prop:technical}(1)), so
\[
\int_K \rho\big((\sfL_\tau[\tilde F]-1)^2\big)\,\ud\lambda \;\geq\; \varepsilon,
\qquad
\int_K \rho\big(\clE_\tau[\tilde F]\big)\,\ud\lambda \;=\; 0 .
\]

Since $\rho(\clE_\tau[\tilde F])\geq0$, it vanishes $\lambda$-a.e.  The conditions of
\Lemma{lemma:Lasalle} are conditions on $\mathrm{HMM}(A,H)$ and $\tau$ alone,
so the lemma applies at $\lambda$-a.e.\ $(\rho,\tilde F)$.  Because
$\tilde F-1\in\clZfo{\tau}(\rho)$ and the dissipation vanishes,
$\sfL_\tau[\tilde F]=1$ on $\mathrm{supp}(\rho)$. 
Hence $\rho\big((\sfL_\tau[\tilde F]-1)^2\big)=0$ $\lambda$-a.e., contradicting the first
integral.

\section{Rates for variance decay}\label{sec:rates}
 Suppose that one of the three conditions of
\Lemma{lemma:Lasalle} holds, with $\tau$ fixed accordingly.  The lemma
states that
\[
  \rho\big(\clE[F]\big) = 0 \quad\Longrightarrow\quad
  \rho\big(|\sfL[F]|^2\big) = 0,
  \qquad \forall\, F\in\clZfo{\tau}(\rho).
\]
That is, zero energy forces zero variance.  The purpose of this section is to
make the relationship quantitative, by identifying the largest
constant $\beta(\rho)$ for which
\begin{equation*}
  \rho\big(\clE[F]\big) \;\ge\; \beta(\rho)\,
    \rho\big(|\sfL[F]|^2\big),
  \qquad \forall\, F\in\clZfo{\tau}(\rho) .
\end{equation*}
Under the conditions of \Lemma{lemma:Lasalle}, $\beta(\rho)>0$.  In
this section, we allow the possibility that the conditions do not
hold and $\beta(\rho)=0$. For this reason, unless
explicitly noted otherwise, we do not assume \Lemma{lemma:Lasalle} or
make any assumptions on the HMM.  Throughout, we fix $\tau$ and denote $\clE_\tau[F]$ and
$\sfL_\tau[F]$ simply as $\clE[F]$ and
$\sfL[F]$, respectively.

\subsection{Poincar\'e constant} The following definition is adapted
from~\cite[Defn.~4]{kim2024variance} stated therein for the
continuous-time model.  
\begin{definition}[Poincar\'e constant for
  $F\in\clZfo{\tau}(\rho)$]\label{def:poincare}
Let $\rho\in\clP(\bS)$.  Set
\[
  \beta(\rho) := \inf\big\{ \rho(\clE[F]) \;:\;
    F\in\clZfo{\tau}(\rho), \ \ \rho\big(|\sfL[F]|^2\big) = 1 \big\},
  \qquad \inf\emptyset := +\infty,
\]
and define the Poincar\'e constant
\[
  \kappa(\rho) := \frac{1}{\tau}\log\big(1+\beta(\rho)\big) .
\]
\end{definition}

The $\inf\emptyset := +\infty$ condition is included to handle the
case when $\rho$ lies in 
\[
  \sfN_\tau := \big\{\rho\in\clP(\bS) \;:\;
    \rho\big(|\sfL[F]|^2\big) = 0, \ \
    \forall\, F\in\clZfo{\tau}(\rho)\big\},
\]
so that $\beta(\rho)=\kappa(\rho)=+\infty$ on $\sfN_\tau$. 

\begin{remark}[Poincar\'e constant is a rate]
From the~\eqref{eq:Lyapunov_equation}, using the definition of the
Poincar\'e constant, 
\[
\dvar^\rho(Y_0(X_0)) \le e^{-\kappa(\rho)\, \tau}
\dvar^\rho(F(X_\tau)),\qquad \forall\,F\in \clZfo{\tau}(\rho),
\]
because $e^{-\tau \kappa(\rho)} = (1+\beta(\rho))^{-1}$.  Hence,
$\kappa(\rho)$ is the rate of variance decay per unit time.
For $\rho\in \sfN_\tau$, the identity holds with
$\kappa(\rho)=\infty$ where the convention $e^{-\infty}=0$ is adopted.  
\end{remark}

\begin{definition}[Stratum]
For $\bS'\subseteq\bS$, denote by
$\clP^\circ(\bS') := \{\rho\in\clP(\bS) : \supp(\rho)=\bS'\}$ the
\emph{stratum} of probability measures with support $\bS'$.  The strata partition
$\clP(\bS)$ into finitely many sets; $\clP^\circ(\bS)$ is the relative
interior of the simplex. 
\end{definition}

The strata with $|\bS'|=1$ are the singletons
$\sfN_0 := \{\delta_x : x\in\bS\}$, the vertices of $\clP(\bS)$.
Taking $\rho=\delta_x$, every $F\in\clZfo{\tau}(\delta_x)$ satisfies
\[
  0 = \rho\big(\sfL[F]\big) = \sfL[F](x)
  \quad\Longrightarrow\quad
  \rho\big(|\sfL[F]|^2\big) = |\sfL[F](x)|^2 = 0 ,
\]
so that $\sfN_0\subseteq\sfN_\tau$.  The following proposition shows
that $\sfN_\tau$ is a union of strata.

\begin{proposition}\label{prop:N-strata}
Suppose $\rho\in\sfN_\tau$ then $\clP^\circ(\supp(\rho)) \subset\sfN_\tau$.  Consequently, each stratum lies either entirely inside
or entirely outside $\sfN_\tau$, and $\sfN_\tau$ is a closed subset of
$\clP(\bS)$.  
\end{proposition}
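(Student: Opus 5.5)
The plan is to turn membership in $\sfN_\tau$ into a linear-algebra condition that depends only on $\supp(\rho)$ and is inherited by subsets of the support. Both claims of the proposition follow from such a condition.

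\textbf{Coordinates.} Identify $F\in\clZf{\tau}$ with the array $\{F(\zeta,x):\zeta\in\bO^\tau,\,x\in\bS\}$, and write $a_y(\zeta,x):=\mathfrak p^{y}_\tau(\zeta,x)\ge 0$ for $y\in\bS$. By linearity of $\rho\mapsto\mathfrak p^\rho_\tau$,
\[
\sigma^\rho_\tau(F)\big|_{[Z=\zeta]}=\sum_{x}\Big(\sum_{y}\rho(y)a_y(\zeta,x)\Big)F(\zeta,x)=:\langle b^\rho_\zeta,F(\zeta,\cdot)\rangle .
\]
By the probabilistic characterization in \Prop{prop:backward-map-and-bsde} with $\rho=\delta_y$,
\[
\sfL[F](y)=\E^y(F(X_\tau))=\sum_\zeta\langle a_y(\zeta,\cdot),F(\zeta,\cdot)\rangle .
\]
So $\clZfo{\tau}(\rho)$ is the common kernel of the functionals $F\mapsto\langle b^\rho_\zeta,F(\zeta,\cdot)\rangle$, one for each $\zeta$. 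Also $\rho(|\sfL[F]|^2)=0$ iff $\sfL[F](y)=0$ for every $y\in\bS':=\supp(\rho)$.

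\textbf{Annihilator reformulation.} $\rho\in\sfN_\tau$ iff each functional $F\mapsto\sfL[F](y)$, $y\in\bS'$, vanishes on that common kernel. By finite-dimensional duality, this holds iff each such functional lies in the span of the constraint functionals. Since these functionals are supported on disjoint $\zeta$-blocks, the span condition splits block by block:
\[
\rho\in\sfN_\tau\iff \forall\,\zeta\in\bO^\tau,\ \forall\,y\in\bS':\ a_y(\zeta,\cdot)\in\mathrm{span}\{b^\rho_\zeta\}.
\]
Now use nonnegativity. Because $b^\rho_\zeta=\sum_{y\in\bS'}\rho(y)a_y(\zeta,\cdot)$ with $\rho(y)>0$ on $\bS'$, the right-hand condition is equivalent to the following.

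\textbf{Condition $(\star)$.} For each $\zeta$, the nonnegative vectors $\{a_y(\zeta,\cdot):y\in\bS'\}$ all lie on a single ray, with zero vectors allowed.

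To see the equivalence, first suppose $(\star)$ holds. Then every positive combination of these vectors lies on the same ray, and it is nonzero whenever any $a_y(\zeta,\cdot)$ is nonzero; so each $a_y(\zeta,\cdot)$ is a multiple of $b^\rho_\zeta$. Conversely, if each $a_y(\zeta,\cdot)$ is a multiple of $b^\rho_\zeta$, they are trivially collinear, and nonnegativity forces a common ray. The case $b^\rho_\zeta=0$ occurs only if every $a_y(\zeta,\cdot)$, $y\in\bS'$, vanishes, which is consistent with $(\star)$.

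\textbf{Conclusion.} Condition $(\star)$ involves only $\bS'$, not the weights $\rho$, so every $\rho'\in\clP^\circ(\bS')$ satisfies it and lies in $\sfN_\tau$. This proves the first claim and the dichotomy for strata. Moreover $(\star)$ is inherited by every nonempty $\bS''\subseteq\bS'$, so every stratum below a stratum in $\sfN_\tau$ is also in $\sfN_\tau$. The closure of $\clP^\circ(\bS')$ is exactly the union of the strata $\clP^\circ(\bS'')$ with $\emptyset\ne\bS''\subseteq\bS'$. Hence $\sfN_\tau$ is a finite union of closed faces $\overline{\clP^\circ(\bS')}$, and is therefore closed.

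\textbf{Main obstacle.} The delicate step is the annihilator reformulation. One must check carefully that the blockwise structure lets the span condition split across $\zeta$, and that the degenerate blocks with $b^\rho_\zeta=0$, the zero-probability paths counted in \Prop{Prop:rank-is-constant}, are handled consistently. Nonnegativity of the $a_y$ is what makes the condition independent of the weights.
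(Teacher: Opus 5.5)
Your proof is correct, but it follows a different route from the paper's.

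\emph{The paper's route.} The paper never writes down a membership criterion for $\sfN_\tau$. For $\rho\in\sfN_\tau$ it shows $\clZfo{\tau}(\rho)=\bigcap_{x\in\supp(\rho)}\clZfo{\tau}(\delta_x)$ by a probabilistic trick: $\clZfo{\tau}(\rho)$ is closed under multiplication by $\clZ_\tau$-measurable scalars, so taking $c=\pi^x_\tau(F)$ gives $0=\sfL[cF](x)=\E^x(|\pi^x_\tau(F)|^2)$. It then uses the dimension formula of \Prop{Prop:rank-is-constant} to get $\clZfo{\tau}(\rho)=\clZfo{\tau}(\rho')$ for every $\rho'$ with the same support. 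Closedness is proved separately by a limiting argument: for $\rho_n\to\rho$ it sets $c_n=\pi^{\rho_n}_\tau(F)$, shows $c_n\to\pi^\rho_\tau(F)=0$, and applies dominated convergence.

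\emph{Your route.} You obtain an exact two-sided characterization from annihilator duality, splitting over the blocks $\zeta\in\bO^\tau$, and nonnegativity. Namely, $\rho\in\sfN_\tau$ iff, for every $\zeta$, the vectors $\mathfrak{p}^y_\tau(\zeta,\cdot)$, $y\in\supp(\rho)$, lie on a common ray. Equivalently, the filters $\pi^y_\tau$ coincide on $\zeta$ for all $y\in\supp(\rho)$ under which $\zeta$ has positive probability. The paper's identity $\clZfo{\tau}(\rho)=\bigcap_x\clZfo{\tau}(\delta_x)$ is, block by block, this same collinearity statement, but it is left implicit there.

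\emph{What each buys.} Your criterion is visibly a property of the support alone, which proves the first claim directly. It is also hereditary under passing to subsets of the support, so closedness follows structurally: $\sfN_\tau$ is a finite union of closed faces of the simplex. That is stronger than closedness, and it needs neither a sequence argument nor the dimension count. It also gives a finite, checkable membership test. The paper's route stays in the probabilistic conditioning language used in its later regularity arguments. However, it says less about what $\sfN_\tau$ actually is.
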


\begin{proof}
Appendix~\ref{appdx:characterization_of_N}. 
\end{proof}

With the geometry of $\sfN_\tau$ in hand, the existence and regularity
of the Poincar\'e constant are described next.  Notably, parts
(1)-(3) do not require any assumptions on the HMM.  

\begin{lemma}\label{prop:beta-regularity}
The following hold:
\begin{enumerate}
\item For each $\rho\in\clP(\bS)\setminus\sfN_\tau$, the infimum
  defining $\beta(\rho)$ is attained.
\item The map $\rho\mapsto \kappa(\rho)$ is upper semicontinuous on $\clP(\bS)$.
\item For each $\bS'\subseteq\bS$, the restriction of $\kappa$ to the
  stratum $\clP^\circ(\bS')$ is continuous.
\item If one of the three conditions of \Lemma{lemma:Lasalle} holds,
  then $\kappa(\rho)>0$ for every $\rho\in\clP(\bS)$.
\end{enumerate}
\end{lemma}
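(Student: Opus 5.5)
The plan is to treat $\beta(\rho)$ as a generalized Rayleigh quotient of two quadratic forms on the finite-dimensional subspace $\clZfo{\tau}(\rho)$, whose dimension is given by \Prop{Prop:rank-is-constant}. The quadratic forms are $q_1(F)=\rho(\clE[F])\ge 0$ and $q_2(F)=\rho(|\sfL[F]|^2)\ge0$. Both are polynomial in $(\rho,F)$ by \Prop{prop:technical}(1).

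\textbf{Part 1 (attainment).} For $\rho\notin\sfN_\tau$, the form $q_2$ is not identically zero on $\clZfo{\tau}(\rho)$. Let $K_2=\ker q_2$, a subspace since $q_2$ is positive semidefinite. Both $q_1$ and $q_2$ are invariant under adding elements of $K_2$ modulo cross terms: for $q_2$ this is immediate. For $q_1$, the plan is to use the Lyapunov identity of \Prop{Prop:Lyapunov}, which expresses $q_1(F)$ as $\dvar^\rho(F(X_\tau))-q_2(F)$. That is not obviously invariant, so instead I would restrict to a complement $K_2^\perp$ (orthogonal in a fixed inner product) and minimize $q_1$ over $\{F\in K_2^\perp\cap\clZfo{\tau}(\rho): q_2(F)=1\}$. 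On $K_2^\perp$ the form $q_2$ is positive definite, so this set is compact, and the minimum over it is attained. Writing a general $F$ as $F=F_1+F_2$ with $F_1\in K_2^\perp$ and $F_2\in K_2$, we have $q_2(F)=q_2(F_1)$, since $\sfL$ is linear and $q_2(F_2)=0$ forces $\sfL[F_2]=0$ on $\supp\rho$. So the only remaining issue is whether adding $F_2$ can lower $q_1$. This is a finite-dimensional quadratic minimization over an affine set: $\inf_{F_2}q_1(F_1+F_2)$ is attained, because a convex quadratic bounded below attains its minimum. The minimizer can be chosen linearly in $F_1$, giving a reduced positive semidefinite form $\hat q_1(F_1)$. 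Then $\beta$ is the minimum of $\hat q_1$ on the compact set $\{q_2=1\}\cap K_2^\perp$, hence attained.

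\textbf{Parts 2 and 3 (regularity).} For upper semicontinuity of $\kappa$, it suffices to treat $\beta$, since $\log(1+\cdot)$ is monotone. An infimum of continuous functions is upper semicontinuous, but here the constraint set $\clZfo{\tau}(\rho)$ moves with $\rho$. Fix $\rho_0$ and a near-optimal $F_0$. I would construct $F_\rho\in\clZfo{\tau}(\rho)$ with $F_\rho\to F_0$ by correcting $F_0$ with a multiple of a fixed direction $G$ satisfying $\sigma^{\rho}_\tau(G)\ne0$ pathwise near $\rho_0$. Zero-probability paths need care here, which is exactly why the strata matter. Normalizing by $q_2(F_\rho)\to1$ gives $\limsup\beta(\rho)\le\beta(\rho_0)$. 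If $\rho_0\in\sfN_\tau$ there is nothing to prove, since $\kappa(\rho_0)=\infty$.

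For continuity on a stratum, the dimension of $\clZfo{\tau}(\rho)$ is constant there by \Prop{Prop:rank-is-constant}, and \Prop{prop:N-strata} says the stratum lies entirely in or entirely out of $\sfN_\tau$. So locally the subspaces form a continuous family, obtainable from a smoothly varying basis by linear algebra. Pulling back to a fixed space turns $\beta(\rho)$ into a generalized eigenvalue problem whose coefficients vary continuously. The lower semicontinuity direction uses compactness of the normalized set together with the uniform positivity of $q_2$ on the moving complement of its kernel. I expect this to be the main obstacle, because the kernel $K_2$ may jump in dimension within a stratum. If that happens, I would instead argue through the variational characterization: take minimizers $F_n$ normalized in sup norm, extract a limit, and control the ratio directly.

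\textbf{Part 4 (positivity).} For $\rho\in\sfN_\tau$, $\beta=\infty>0$. Otherwise, by Part 1 the infimum is attained at some $F^*$ with $q_2(F^*)=1$. If $\beta(\rho)=0$, then $\rho(\clE[F^*])=0$, so \Lemma{lemma:Lasalle} (with $c=0$) gives $\sfL[F^*]=0$ on $\supp\rho$. This contradicts $q_2(F^*)=1$. Hence $\beta(\rho)>0$, and therefore $\kappa(\rho)>0$.
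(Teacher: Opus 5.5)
Parts 1 and 4 are sound. In Part 1 you eliminate the $\ker q_2$ component by minimizing a convex quadratic (a Schur complement) and then minimize over the compact set $\{q_2=1\}\cap K_2^\perp$. This is a valid alternative to the paper, which restricts to the subspace $\clS^\rho_\tau$ of $F\in\clZfo{\tau}(\rho)$ vanishing on $[\sigma^\rho_\tau=0]$ and bounds minimizing sequences through $q_1+q_2=\E^\rho(|F(X_\tau)|^2)$.

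\textbf{Part 3: missing idea.} The genuine gap is the lower-semicontinuity half of Part 3. You correctly flag that $\ker q_2$ may jump along the stratum, but neither remedy handles it. Continuity of the coefficients of the pencil $(q_1,q_2)$ does not give continuity of $\inf q_1/q_2$ when $q_2$ can degenerate. For example, $q_2^\epsilon=x^2+\epsilon y^2$ and $q_1^\epsilon=x^2+\epsilon^2y^2$ give $\beta(\epsilon)=\epsilon\to0$ while $\beta(0)=1$; the minimizers $(0,\epsilon^{-1/2})$ blow up. Normalizing minimizers in sup norm cannot help either. If $\|F_n\|_\infty\to\infty$ under $q_2^{\rho_n}(F_n)=1$, the rescaled limit has $q_1=q_2=0$ and says nothing about $\beta(\rho)$.

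What rules this out is the identity you set aside in Part 1. For every $x$, $\E^x(|F(X_\tau)|^2)=|\sfL[F](x)|^2+\clE[F](x)$, so $q_1+q_2=\E^\rho(|F(X_\tau)|^2)=:q_0$. The argument then runs as follows.
\begin{enumerate}
\item Along a subsequence realizing $\liminf\beta(\rho_n)<\infty$, the minimizers satisfy $\E^{\rho_n}(|F_n(X_\tau)|^2)=1+\beta(\rho_n)$, which is bounded.
\item The null set $[\sigma^{\rho_n}_\tau=0]$ depends only on $\supp(\rho_n)=\bS'$. So you may take $F_n=0$ there without changing $q_1$, $q_2$ or the constraint.
\item Since $\min_{\bS'}\rho_n$ stays bounded away from $0$, the bound controls $\sum_{x\in\bS'}\E^x(|F_n(X_\tau)|^2)$. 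This is a fixed norm on such functions, so $F_n$ has a convergent subsequence.
\item The limit $G$ lies in $\clZfo{\tau}(\rho)$ by closedness of $\{(\rho,F):\sigma^\rho_\tau(F)=0\}$.
\item Joint continuity of $(\rho,F)\mapsto\rho(\clE[F])$ and $(\rho,F)\mapsto\rho(|\sfL[F]|^2)$ gives $q_2^\rho(G)=1$ and $q_1^\rho(G)=\lim\beta(\rho_n)$, hence $\beta(\rho)\le\liminf\beta(\rho_n)$.
\end{enumerate}
Equivalently, $e^{-\tau\kappa(\rho)}=(1+\beta(\rho))^{-1}=\sup q_2/q_0$ is a top eigenvalue relative to a form that is uniformly positive definite along the stratum, so kernel jumps of $q_2$ are irrelevant. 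The paper runs the same compactness argument, transporting $F_n$ into $\clS^\rho_\tau$ via $\Pi(\rho)$ and a change-of-measure estimate.

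\textbf{Part 2: fixable gap.} Your remark that ``the strata matter'' is misplaced. Upper semicontinuity is claimed on all of $\clP(\bS)$, and nearby $\rho$ may have strictly larger support than $\rho_0$. On paths that are $\sP^{\rho_0}$-null but not $\sP^{\rho}$-null, $\sigma^\rho_\tau(G)\to0$ for every fixed $G$. So the correction coefficient $\sigma^\rho_\tau(F_0)/\sigma^\rho_\tau(G)$ is a ratio of vanishing quantities that need not vanish, and $F_\rho\to F_0$ fails. The fix is to first set $F_0=0$ on $[\sigma^{\rho_0}_\tau=0]$, which changes neither form at $\rho_0$ nor membership in $\clZfo{\tau}(\rho_0)$, and to take $G=\ones$. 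The correction is then zero on those paths and $O(\|\rho-\rho_0\|_{\mathrm{TV}}\|F_0\|_\infty)$ on the others; this is essentially the paper's $\Pi(\rho)[F_0]$. Alternatively, with $G=\ones$ you can note that $\|F_\rho\|_\infty\le2\|F_0\|_\infty$ while the offending paths receive weight only through $\rho(x)\to0$ for $x\notin\supp(\rho_0)$, so both forms still converge.
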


\begin{proof}
Appendix~\Sec{appdx:prop:beta-regularity}.
\end{proof}

The lower semicontinuous envelope is denoted by,
\[
\kappa_*(\rho) := \liminf_{\rho'\to\rho} \kappa(\rho'),
\qquad \rho\in\clP(\bS),
\]
and the associated singular set is 
\[
\clD_* := \big\{\rho\in\clP(\bS) \;:\; \kappa_*(\rho) = 0\big\}.
\]
By construction, $\rho\mapsto \kappa_*(\rho)$ is lower semicontinuous,
$\kappa_*\leq\kappa$, and $\clD_*$ is a 
closed subset of $\clP(\bS)$.  From \Lemma{prop:beta-regularity}(3),
$\kappa_*$ and $\kappa$ can differ only at measures that are limits of
measures of strictly larger support.

\begin{remark}[PI for Markov chains]
It is instructive to compare Defn.~\ref{def:poincare} with the
Poincar\'e constant for Markov chains.  Suppose $\bar\mu$ is an invariant measure for
$A$. Then the Poincar\'e constant is
defined by,
\[
\bar\beta := \min\{\bar\mu(\Gamma f)\,:\,\;f\in\Re^d,\,\var^{\bar\mu}(f)=1\}.
\]
The variance decays geometrically as a function of time if and only if
$\bar\beta>0$~\cite[Prop.~4.1]{cattiaux2014long}.  In the classical
literature, which largely concerns reversible chains,
$\bar\beta$ is expressed in terms of the spectral gap; for a reversible chain,
$\bar\beta = 1-\lambda_2^2$ where $\lambda_2$ is the second largest eigenvalue
of $A$ in
modulus~\cite[Sec.~5.1.3]{stroock2005markov},~\cite[Ch.~12--13]{levin2017markov}.
\end{remark}

\subsection{Asymptotic behavior of the skeleton}

We now apply the theory for the asymptotic analysis as $T\to\infty$.
Throughout this section, Assumption~\ref{ass:equivalent-priors} is in
force.

Fix $\tau\geq1$ and consider the \emph{skeleton} $\{\pi_{n\tau}^\nu :
n\geq0\}$ of the filter~\cite[Ch.~5]{meyn2009markov}.  The associated sequence of rate constants is
$\{\kappa(\pi_{n\tau}^\nu) : n\geq 0\}$, taking values in
$[0,\infty]$, and we set 
\[
\clK_N := \sum_{n=0}^{N-1} \kappa\big(\pi_{n\tau}^\nu\big).
\]
The sequence is $\clZ$-adapted and, because $\kappa$ is upper
semicontinuous (\Lemma{prop:beta-regularity}\,(2)), hence Borel
measurable, each $\clK_N$ is a random variable.

\begin{proposition}[Variance decay]\label{prop:var_contractive}
	Consider the backward map~\eqref{eq:bmap} with $\gamma_T$
        defined according to Defn.~\ref{defn:likelihood_ratio}.  Then
	\begin{equation*}\label{eq:var_contractive}
		\dvar^\nu(y_0(X_0)) \leq \E^\nu \left( e^{-\tau \clK_N}\pi_T^\nu((\gamma_T-1)^2) \right)
		,\quad \forall \;T \ge 0
	\end{equation*}
	where $N={\lfloor T/\tau\rfloor}$. 
      \end{proposition}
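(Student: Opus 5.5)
We work with $(Y,V)=\mathrm{BS\Delta E}_T[\gamma_T]$ and $\rho=\nu$, so that $Y_0=y_0$ on $\supp(\nu)$ and $\gamma_T-\ones\in\clZfo{T}(\nu)$. The plan is to define the conditional variance process
\[
\Phi_t := \pi_t^\nu\big((Y_t-1)^2\big), \qquad t\in\mathbb T .
\]
By \Prop{prop:technical}(4), $\pi_t^\nu(Y_t)=1$, so $\Phi_t$ is the conditional variance appearing in $\Theta^0$ of \Prop{cor:U-opt-zero_elegant}. We also have $\Phi_0=\dvar^\nu(y_0(X_0))$ and $\Phi_T=\pi_T^\nu((\gamma_T-1)^2)$.

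The goal is the one-window contraction
\[
\Phi_{n\tau} \le e^{-\tau\kappa(\pi^\nu_{n\tau})}\,\E^\nu\big(\Phi_{(n+1)\tau}\mid\clZ_{n\tau}\big),\qquad 0\le n<N,
\]
together with the tail bound $\Phi_{N\tau}\le \E^\nu(\Phi_T\mid\clZ_{N\tau})$. The tail bound is the supermartingale (here martingale, $U=0$) property of \Prop{cor:U-opt-zero_elegant}, after dropping the nonnegative running cost. Iterating by the tower property, and using that $e^{-\tau\kappa(\pi^\nu_{n\tau})}$ is $\clZ_{n\tau}$-measurable (\Lemma{prop:beta-regularity}(2) gives measurability), yields
\[
\Phi_0 \le \E^\nu\Big(\prod_{n=0}^{N-1}e^{-\tau\kappa(\pi^\nu_{n\tau})}\,\Phi_T\Big)=\E^\nu\big(e^{-\tau\clK_N}\Phi_T\big).
\]
This is exactly the claim.

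For the one-window step, fix $n$ and apply the shifted-solution construction (Defn.~\ref{defn:shifted_solution}) with $\bar t=n\tau$, pointwise in $\omega$, with $\rho=\pi^\nu_{n\tau}$. Corollary~\ref{corollary:prop:Markov_BSDE} gives $\tilde F-\ones\in\clZfo{\tau}(\rho)$. It also gives that $\rho((\tilde Y_0-1)^2)$ is the conditional variance at time $n\tau$. Since $\E^\nu((Y_{n\tau}(X_{n\tau})-1)^2\mid\clZ_{n\tau})=\pi^\nu_{n\tau}((Y_{n\tau}-1)^2)$, this equals $\Phi_{n\tau}$. Finally, the corollary gives that $\tilde\E^\rho\langle\tilde M\rangle_\tau$ equals the conditional dissipation on the window.

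Applying Definition~\ref{def:poincare} to $F=\tilde F-\ones$ gives $\rho(\clE[\tilde F])\ge\beta(\rho)\,\Phi_{n\tau}$. Two small points need checking here. First, $\clE$ and $\sfL$ of $\tilde F$ and of $\tilde F-\ones$ agree up to the constant, since $\sfL[\ones]=\ones$ and $\Gamma\ones=0$. Second, the case $\rho\in\sfN_\tau$ gives $\Phi_{n\tau}=0$, consistent with the convention $e^{-\infty}=0$.

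Next apply the Lyapunov/martingale identity on the window, i.e.\ conditional \eqref{eq:Lyapunov_equation}, which is the martingale statement of \Prop{cor:U-opt-zero_elegant} with $U=0$ between $n\tau$ and $(n+1)\tau$. This gives
\[
\Phi_{n\tau}+\text{(conditional dissipation)}=\E^\nu(\Phi_{(n+1)\tau}\mid\clZ_{n\tau}).
\]
Combining, $(1+\beta(\rho))\Phi_{n\tau}\le \E^\nu(\Phi_{(n+1)\tau}\mid\clZ_{n\tau})$, and $(1+\beta)^{-1}=e^{-\tau\kappa}$.

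The main obstacle is the careful identification of conditional quantities: matching $\Phi_{n\tau}$ and the windowed conditional dissipation with the variance and energy of the shifted solution at the random prior $\pi^\nu_{n\tau}$. This is precisely what Corollary~\ref{corollary:prop:Markov_BSDE} provides, so the main care is in the measurability and a.s.\ qualifiers, and in the $\sfN_\tau$ edge case. The final step also uses $\dvar^\nu(y_0(X_0))=\nu((Y_0-1)^2)=\Phi_0$, since $\nu(y_0)=1$.
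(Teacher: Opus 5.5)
Your proposal is correct and matches the paper's proof step for step. The shared steps are: the windowed identity from the martingale $\Theta^{U}$ at $U=0$; the shifted solution together with Corollary~\ref{corollary:prop:Markov_BSDE}, which turns the conditional dissipation into $\rho(\clE[\tilde F])$ so the Poincar\'e inequality applies (with the $\sfN_\tau$ case handled separately); the tail bound on $[N\tau,T]$; and iteration by the tower property. Your explicit check that $\clE$ and $\sfL$ are unchanged, up to the constant, when passing from $\tilde F$ to $\tilde F-\ones$ is a detail the paper leaves implicit.
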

      
\begin{proof}
	Appendix~\Sec{appdx:var_contractive}. 
      \end{proof}

Under Assumption~\ref{ass:equivalent-priors},
\[
\pi_T^\nu((\gamma_T-1)^2) < C, \quad \sP^\nu\text{-a.s.}\quad \forall
T. 
\]
Therefore, variance decay property reduces to the question about the
asymptotic properties of $\clK_N$. 
Because $\{\clK_N:N=1,2,\hdots\}$ is non-negative and monotone, define
\[
\clK_{\infty}(\omega) :=\lim_{N\to\infty} \uparrow \clK_N(\omega),\quad
\omega\in\Omega.
\]

\begin{theorem}[Dichotomy]\label{thm:dichotomy}
Suppose Assumption~\ref{ass:equivalent-priors} holds.  Then either
\begin{enumerate}
\item[(i)] $\sP^\nu\big([\clK_\infty = \infty]\big) = 1$, in which case the filter
is stable in $\chisq$-divergence; or
\item[(ii)] $\sP^\nu\big([\clK_\infty < \infty]\big) > 0$, in which case
\[
\mathrm{dist}\big(\pi_{n\tau}^\nu(\omega),\, \clD_*\big)
\;\xrightarrow{(n\to\infty)}\; 0, \qquad
\sP^\nu\text{-a.e. } \omega \in [\clK_\infty < \infty].
\]
\end{enumerate}
\end{theorem}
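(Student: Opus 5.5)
The dichotomy is exhaustive: either $\clK_\infty=\infty$ almost surely, or the event $[\clK_\infty<\infty]$ has positive probability. So the work is to establish each consequence separately.

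\textbf{Case (i).} Here I would use \Prop{prop:var_contractive}. By the uniform bound of \Prop{lem:uniform_ratio_bound}, the integrand satisfies
\[
e^{-\tau\clK_N}\pi_T^\nu((\gamma_T-1)^2)\le C\,e^{-\tau\clK_N},
\]
with $C$ independent of $T$. Since $\clK_N\uparrow\infty$ $\sP^\nu$-a.s., we have $e^{-\tau\clK_N}\to0$ a.s. Dominated convergence, with dominating function $C$, then gives $\dvar^\nu(y_0(X_0))\to0$. Because $\nu(y_0)=1$, the variance is $\sum_x\nu(x)(y_0(x)-1)^2$, so $y_0(x)\to1$ on $\supp(\nu)$. \Prop{prop:chisq_filter_stability} then yields stability in $\chisq$.

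\textbf{Case (ii).} On $[\clK_\infty<\infty]$ the series $\sum_n\kappa(\pi^\nu_{n\tau})$ converges, so $\kappa(\pi^\nu_{n\tau}(\omega))\to0$. I would argue pathwise by contradiction.

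1. Suppose $\mathrm{dist}(\pi^\nu_{n\tau},\clD_*)\not\to0$ along some path. Then there are $\delta>0$ and a subsequence $n_k$ with $\mathrm{dist}(\pi^\nu_{n_k\tau},\clD_*)\ge\delta$.
2. By compactness of $\clP(\bS)$, pass to a further subsequence with $\pi^\nu_{n_k\tau}\to\bar\rho$. Then $\mathrm{dist}(\bar\rho,\clD_*)\ge\delta$, so $\bar\rho\notin\clD_*$ and hence $\kappa_*(\bar\rho)>0$.
3. By the definition of $\kappa_*$ as a liminf, $\liminf_k\kappa(\pi^\nu_{n_k\tau})\ge\kappa_*(\bar\rho)>0$. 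This contradicts $\kappa(\pi^\nu_{n\tau})\to0$.

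This holds for every $\omega$ in the event, which gives the $\sP^\nu$-a.e. statement. The argument uses only lower semicontinuity of $\kappa_*$ and closedness of $\clD_*$.

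\textbf{Subtleties.} The main obstacle is really the inputs, namely \Prop{prop:var_contractive} and the measurability of $\clK_N$ from \Lemma{prop:beta-regularity}(2). Beyond that, two points need care.

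First, $\kappa$ may equal $+\infty$, on $\sfN_\tau$. In case (ii), finiteness of $\clK_\infty$ forces $\kappa(\pi^\nu_{n\tau})<\infty$ for all $n$. In case (i), $e^{-\infty}=0$ is harmless.

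Second, the $\liminf$ in the definition of $\kappa_*$ must be taken including $\rho'=\rho$, or else one must use $\kappa_*\le\kappa$ appropriately. Either convention makes step 3 valid.
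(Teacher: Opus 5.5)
Your proposal is correct and follows the paper's proof essentially step for step. Case (i) combines Prop.~\ref{prop:var_contractive} with the uniform bound and dominated convergence, and case (ii) is the same pathwise compactness argument, extracting a convergent subsequence and using $\kappa_*(\bar\rho)\le\liminf_k\kappa(\pi^\nu_{n_k\tau})=0$ (the paper's stated $\kappa_*\le\kappa$ shows its $\liminf$ includes $\rho'=\rho$, which settles the convention point you raise).
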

\begin{proof}
Suppose (i), $\sP^\nu([K_\infty=\infty])=1$.  Under
Assumption~\ref{ass:equivalent-priors}, the variance decay formula in
\Prop{prop:var_contractive} gives,
\[
\dvar^\nu(y_0(X_0)) \le C\,\E^\nu(e^{-\tau K_N}),
\]
with $N=\lfloor T/\tau\rfloor$.  Because $e^{-\tau K_N}\leq 1$, dominated convergence
gives $\dvar^\nu(y_0(X_0))\to0$.  Because $\nu(y_0)=1$, this forces $y_0(x)\to1$ for every
$x\in\supp(\nu)$, and the filter is stable by \Prop{prop:chisq_filter_stability}.

Suppose instead (ii), and fix $\omega\in[K_\infty<\infty]$.  Then
\[
\sum_n \kappa(\pi^\nu_{n\tau}(\omega))<\infty \quad \implies \quad 
\kappa(\pi^\nu_{n\tau}(\omega))\to0.
\]
Suppose $\text{dist}(\pi^\nu_{n\tau}(\omega),\clD_*)\not\to0$.  Then
there exists a
subsequence $\{\pi^\nu_{n_k\tau}(\omega):k=1,2,\ldots\}$ and an $\epsilon>0$ such that
\[
  \text{dist}(\pi^\nu_{n_k\tau}(\omega),\clD_*)>\epsilon.  
\]
By compactness of $\clP(\bS)$ we may assume $\pi^\nu_{n_k\tau}(\omega)\to\rho$ with
$\text{dist}(\rho,\clD_*)\ge\epsilon$, so that $\kappa_*(\rho)>0$.  But
$\kappa_*(\rho)\le\liminf_k \kappa(\pi^\nu_{n_k\tau}(\omega)) = 0$, a
contradiction.
\end{proof}

The two alternatives of the dichotomy inform the future work:
\begin{enumerate}
\item Converting the first alternative into a quantitative rate requires a
lower bound on $\clK_N/N$.  Such a
bound follows whenever the skeleton remains in a compact subset of a single
stratum (cf.~\cite{tamura2011hypoellipticity}), where $\kappa$ is continuous by \Lemma{prop:beta-regularity}\,(3) and
therefore attains a positive infimum.  An example of this is as follows:
\begin{itemize}
\item If $A$ is primitive (i.e., irreducible and aperiodic, so $\bS^{\text{\tiny RECC}}=\bS$), the support does not collapse.  Noting that
  the smallest entry of the emission matrix $H$ is positive by the
  non-degeneracy Assumption~\ref{ass:non-degeneracy},
\[
\pi_t^\nu(x') \;\geq\; H_{\min{}}^{\tau_0(A)}
\min_{x\in\bS} A^{\tau_0(A)}(x,x'), \qquad \forall\,x'\in\bS,\quad t\geq\tau_0(A),
\]
so the skeleton remains in a compact subset of $\clP^\circ(\bS)$ and the bound
of part~1 applies (cf.~\cite{atar1997lyapunov}).
\end{itemize}
\item Characterizing $\clD_*$
requires understanding the behavior of $\kappa(\rho)$ as the support of $\rho$
collapses to a strictly smaller subset of $\bS$.  An example of this
is as follows:
\begin{itemize}
\item If $A$ has transient states or more than one ergodic class, then under detectability the
filter concentrates on the correct recurrent class~\cite{kim2021detectable,baxendale2004asymptotic}, and the collapse is real.  This is
the case in which $\clD_*$ needs to be characterized, and in which the
compactness argument is unavailable.
\end{itemize}
\end{enumerate}

For a given $\mathrm{HMM}(A,H)$, both questions are amenable to numerical
treatment: $\kappa(\rho)$ is the solution of a finite-dimensional generalized
eigenvalue problem.

\section*{Acknowledgments}
It is a pleasure to thank Jin Won Kim, with whom the duality theory
for nonlinear filtering was developed, and whose work on the backward
map and the variance decay property in continuous time is the direct
antecedent of this paper.

The author used LLMs to assist with
reference checking, proofreading, and numerical verification of algebraic
identities.  The mathematics and the text are the author's own.

\appendix

\section{Proofs for \Sec{sec:hmm-prelim}}\label{appdx:prelim}

\begin{proof}[Proof of \Prop{prop:B_W_Gamma_R}]
The formulae for $\Gamma$ and $R$ are derived in the proof of Prop.~8
in \cite{chang2025dual}.  In this part, we show the bound under the
non-degeneracy Assumption~\ref{ass:non-degeneracy}.

The meaning of $R(x)$ is as
 follows: 
 \begin{equation*}
 u^\tp R(x) u = H(x,0) (-1^\tp u)^2 + \sum_{k=1}^m H(x,k) (u(k))^2 -
 \left( H(x,0) (-1^\tp u) + \sum_{k=1}^m H(x,k) u(k) \right)^2,
\end{equation*}
where $1^\tp u = \sum_{k=1}^m u(k)$.  Note that for each fixed
$x\in\bS$, $H(x,\cdot)\in \clP(\bO)$.  
Therefore, $u^\tp R(x) u$ is the variance of $\xi:=\begin{bmatrix} 
  (-\ones^\tp u) \\ u\end{bmatrix}\in \Re^{m+1}$, w.r.t. 
the probability vector $H(x,\cdot)$.
Therefore,
\[
u^\tp R(x) u = \min_{c\in \Re} \sum_{z\in\bO} H(x,z) ( \xi(z) -
c)^2 \geq H_{\min{}} \min_{c\in \Re} \sum_{z\in\bO}  ( \xi(z) -
c)^2 = H_{\min{}} \sum_{z\in\bO} \xi^2(z) \geq H_{\min{}} |u|^2.
\]
where the middle equality follows because $\sum_{z\in\bO} \xi(z)=0$.
Therefore,
\[
u^\tp R(x) u \geq H_{\min{}} |u|^2,\quad \forall\, u\in \Re^m,\quad
x\in\bS.  
\]
\end{proof}

\begin{proof}[Proof of \Prop{Prop:rank-is-constant}]
Let $F\in\clZf{T}$.  By the Doob--Dynkin lemma there is a unique
deterministic function $s:\bS\times\bO^T\to\Re$ such that
\[
  F(x) = s(x;Z_1,\ldots,Z_T), \qquad x\in\bS,
\]
and therefore $\clZf{T}\cong\Re^{d(m+1)^T}$, the coordinates being
indexed by pairs $(x,\zeta)\in\bS\times \bO^T$.

In these coordinates, using the notation in~\eqref{eq:r_rho_s_rho}, the constraint $\sigma_T^\rho(F)=0$ is expressed
as
\[
  \sum_{x\in\bS} \mathfrak{p}^\rho_T(\zeta,x)\, s(x;\zeta) = 0,
  \qquad \zeta\in\bO^T,
\]
which are a set of $|\bO^T| = (m+1)^T$ linear equations.

Fix $\zeta\in\bO^T$.  There are two cases to consider:
\begin{enumerate}
\item If $\mathfrak{q}^\rho_T(\zeta) >0$ then $\mathfrak{p}^\rho_T(\zeta,x)>0$ for at
  least one $x$ and the constraint is active.  
\item If $\mathfrak{q}^\rho_T(\zeta) =0$ then $\mathfrak{p}^\rho_T(\zeta,x)=0$ for
  every $x$ and the constraint is in-active.  
\end{enumerate}
The dimension count is obtained by noting that the active constraints are
linear independent because distinct $\zeta_1,\zeta_2\in \bO^T$ are disjoint coordinates.  
\end{proof}

\section{Proof of Lyapunov equation}\label{sec:prop:Lyapunov_proof}
\begin{proof}[Proof of~\Prop{Prop:Lyapunov}]
For each $t\in \mathbb{T}$,
\[
\E^\rho \big( M_t M_0) = \E^\rho \big( \E^\rho (M_t \mid \clF_0) M_0) =\E^\rho \big( \E^\rho (Y_t(X_t) \mid \clF_0)
Y_0(X_0)) = \E^\rho\big((Y_0(X_0))^2\big).
\]
Therefore, the quadratic variation of $M$ satisfies
\[
\E^\rho \big(\langle M\rangle_t\big) = \E^\rho \big( (M_t-M_0)^2\big) =
\E^\rho \big((Y_t(X_t))^2\big) - \E^\rho\big(Y_0(X_0))^2\big).
\]
So far, we have not used the fact that $F-c\ones \in \clZfo{T}(\rho)$.  For
such $F$, $\pi_T^\rho(F) = c$, $\sP^\rho$-a.s., and 
\[
\pi_t^\rho (Y_t) = \E^\rho(Y_t(X_t) \mid \clZ_t) = \E^\rho(F(X_T) \mid
\clZ_t ) =
\E^\rho(\E^\rho(F(X_T) \mid \clZ_T) \mid \clZ_t) = c,\quad \sP^\rho\text{-a.s.},\;t\in\mathbb{T}.
\]
Therefore, 
\begin{align*}
  \E^\rho \big((Y_t(X_t))^2\big) - \E^\rho\big(Y_0(X_0))^2\big)
& = \left( \E^\rho \big((Y_t(X_t))^2\big) -c^2 \right) - \left( \E^\rho\big(Y_0(X_0))^2\big)-c^2 \right)
  \\ &= \dvar^\rho\big(Y_t(X_t)\big) -
\dvar^\rho\big(Y_0(X_0)\big).
\end{align*}
Combined with the quadratic variation identity, upon 
rearranging, we obtain the formula given in the~\eqref{eq:Lyapunov_equation}.
\end{proof}

 \section{Existence-uniqueness theory for BS$\Delta$E}\label{sec:bsde_proofs}

 The proofs in this section rely on the following simple result of
 linear algebra.  
 \begin{proposition}[Prop.~17 in \cite{chang2025dual}]\label{prop:linear_algebra}
  Let $s:\bO\to \Re$ with $s^z$ denoting the value of the function $s$ at
  $z\in\bO$.  Then there exists unique $(\bar{s},\tilde{s}) \in \Re
  \times \Re^m$ such that the following decomposition holds:
  \begin{equation*}
  s^z = \bar{s} +\tilde{s}^\tp e(z) ,\quad z\in\bO.
\end{equation*}
Explicitly, 
\begin{equation*}
\bar{s}:= \frac{1}{m+1} \sum_{z\in\bO} s^z,\quad \text{and} \quad 
\tilde{s}^k = (s^k - \bar{s}),\quad
k=1,2,\hdots,m.
\end{equation*}
\end{proposition}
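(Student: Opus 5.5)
The proof is a direct linear-algebra verification. I would view the decomposition as a linear map $\Phi:\Re\times\Re^m\to\Re^{m+1}$, $(\bar s,\tilde s)\mapsto \big(\bar s+\tilde s^\tp e(z)\big)_{z\in\bO}$. Since domain and codomain both have dimension $m+1$, uniqueness (injectivity) gives existence (surjectivity). I would still exhibit the explicit preimage, because the statement asks for the explicit formulas.

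\emph{Existence.} Define $\bar s$ and $\tilde s$ by the displayed formulas.
\begin{itemize}
\item For $z=k\in\{1,\ldots,m\}$, $e(k)$ is the $k$-th unit vector. Hence $\bar s+\tilde s^\tp e(k)=\bar s+(s^k-\bar s)=s^k$.
\item For $z=0$, $e(0)=-\sum_{k=1}^m e(k)$. Hence
\[
\bar s+\tilde s^\tp e(0)=\bar s-\sum_{k=1}^m (s^k-\bar s)=(m+1)\bar s-\sum_{k=1}^m s^k=s^0 .
\]
The last equality uses the definition $(m+1)\bar s=\sum_{z\in\bO}s^z$.
\end{itemize}

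\emph{Uniqueness.} It suffices to show $\ker\Phi=\{0\}$. Suppose $\bar s+\tilde s^\tp e(z)=0$ for all $z\in\bO$.
\begin{itemize}
\item Summing over $z\in\bO$ and using $\sum_{z\in\bO}e(z)=0$, which holds by the definition of $e(0)$, gives $(m+1)\bar s=0$. So $\bar s=0$.
\item Evaluating at $z=k\ge1$ then gives $\tilde s^k=0$ for each $k$.
\end{itemize}

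There is no real obstacle here. The only point requiring care is the identity $\sum_{z\in\bO}e(z)=0$, which drives both the $z=0$ check and the uniqueness argument.
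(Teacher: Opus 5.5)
Your proof is correct and follows the paper's argument: existence by direct substitution, using $e(0)=-\sum_{k=1}^m e(k)$ for the $z=0$ case, and uniqueness by summing the homogeneous equations to get $(m+1)\bar s=0$ and then reading off $\tilde s^k=0$ at $z=k$. Your use of $\sum_{z\in\bO}e(z)=0$ is a cleaner way of writing the paper's step of adding the first $m$ equations to the $z=0$ equation; the dimension-count remark is correct but not needed.
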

\begin{proof}[Proof of \Prop{prop:linear_algebra}]
For $z=1,\ldots,m$, $\bar s+\tilde s^\tp e(z) = \bar s+(s^z-\bar s)=s^z$
directly from the definition of $\tilde s$. For $z=0$, since
$e(0)=-\sum_{k=1}^me(k)$,
\[
\bar s+\tilde s^\tp e(0) = \bar s - \sum_{k=1}^m\tilde s^k = \bar s - \sum_{k=1}^m(s^k-\bar s) = (m+1)\bar s - \sum_{k=1}^m s^k = s^0,
\]
using $\bar s=\frac1{m+1}\sum_{z\in\bO}s^z$ in the last step. This
proves existence. For uniqueness, suppose $\bar s+\tilde s^\tp e(z)=0$
for all $z\in\bO$. Then $\bar s+\tilde s^k=0$ for $k=1,\ldots,m$, and
$\bar s-\sum_{k=1}^m\tilde s^k=0$. Summing the first $m$ equations,
$m\bar s+\sum_i\tilde s^k= (m+1)\bar s = 0$, so $\bar s=0$, and then
each $\tilde s^k=-\bar s=0$.
\end{proof}

The following corollary shows that every $\clZ_T$-measurable random
variable admits a representation of exactly this form---a fact used
repeatedly throughout the paper, from well-posedness of the
BS$\Delta$E to the characterization of the range space of its solution
operator.

\begin{corollary}\label{prop:existence_uniqueness_ST_U} 
Let $S_T\in \clZ_T$.  Then there exists a
unique $U\in \clU_T$ and a constant $c\in \Re$ such that
\[
S_T = c + \sum_{t=0}^{T-1} U_t^\tp e(Z_{t+1}).
\]
\end{corollary}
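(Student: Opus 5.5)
Existence and uniqueness will both come from iterating \Prop{prop:linear_algebra} backward in time, one observation at a time. By Doob--Dynkin, $S_T=s(Z_1,\ldots,Z_T)$ for a unique deterministic $s:\bO^T\to\Re$. This defines $s$ on every path, including zero-probability ones, so no almost-sure qualifiers arise.

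\emph{Existence.} I will construct deterministic functions $s_t:\bO^t\to\Re$ for $t=T,T-1,\ldots,0$ together with the controls $U_t$. Start with $s_T:=s$. For $t=T-1,\ldots,0$, fix $(\zeta_1,\ldots,\zeta_t)\in\bO^t$ and apply \Prop{prop:linear_algebra} to the map $z\mapsto s_{t+1}(\zeta_1,\ldots,\zeta_t,z)$. This gives $\bar s\in\Re$ and $\tilde s\in\Re^m$, both depending only on $(\zeta_1,\ldots,\zeta_t)$. Set $s_t(\zeta_1,\ldots,\zeta_t):=\bar s$, and define $U_t:=\tilde s(Z_1,\ldots,Z_t)$. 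Then $U_t$ is $\clZ_t$-measurable, so $U\in\clU_T$, and surely
\[
s_{t+1}(Z_1,\ldots,Z_{t+1}) = s_t(Z_1,\ldots,Z_t) + U_t^\tp e(Z_{t+1}).
\]
Telescoping from $t=T-1$ down to $t=0$ gives $S_T = s_0 + \sum_{t=0}^{T-1}U_t^\tp e(Z_{t+1})$. Since $\clZ_0$ is trivial, $c:=s_0$ is a constant.

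\emph{Uniqueness.} Because the representation is linear in $(c,U)$, it suffices to show that
\[
0 = c + \sum_{t=0}^{T-1}U_t^\tp e(Z_{t+1})
\]
holding on every path forces $c=0$ and $U=0$. Argue by backward induction on $t$. Fix a path prefix $(\zeta_1,\ldots,\zeta_{T-1})$ and vary only the last observation $z=\zeta_T$. The sum of the terms with $t<T-1$, plus $c$, is a number $a$ not depending on $z$, so $a+U_{T-1}^\tp e(z)=0$ for all $z\in\bO$. By the uniqueness part of \Prop{prop:linear_algebra}, $a=0$ and $U_{T-1}=0$ on that prefix. Since the prefix was arbitrary, $U_{T-1}\equiv0$, and the identity reduces to one of length $T-1$. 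Repeating this down to $t=0$ yields $U\equiv0$ and $c=0$.

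The only point needing care is that $U_t$ must be defined on all of $\bO^t$, including paths of zero $\sP^\rho$-probability. The Doob--Dynkin representation handles this, since the identities above hold surely rather than almost surely. Otherwise the argument is a routine recursion, and I do not expect a genuine obstacle.
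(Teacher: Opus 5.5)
Your proposal is correct and follows the paper's proof. Both write $S_T=s(Z_1,\ldots,Z_T)$ via Doob--Dynkin and apply Proposition~\ref{prop:linear_algebra} backward in time, first to $Z_T$, then to $Z_{T-1}$, and so on, obtaining existence by telescoping and uniqueness from the uniqueness part of that proposition. Your uniqueness step, which reduces to the zero representation by linearity and then argues by backward induction, simply spells out what the paper states in one line.
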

\begin{proof}
From Doob-Dynkin lemma, there is a unique deterministic function
$s:\bO^T \to \Re$ such that 
\begin{equation*}
S_T = s(Z_1,\hdots,Z_{T-1},Z_T).
\end{equation*}
We denote
\[
S^z:=S_T
\big|_{[Z_{T}=z]} = s(Z_1,\hdots,Z_{T-1},z),\quad z\in\bO. 
\]
From~\Prop{prop:linear_algebra},
\begin{equation*}
S_T = S^{Z_T} = S_{T-1} + (U_{T-1})^\tp e(Z_T),
\end{equation*}
where
\begin{equation*}
  S_{T-1} = \frac{1}{m+1} \sum_{z\in \bO} S^z,\qquad 
 U_{T-1}(k):=  (S^k - S_{T-1}),\quad
k=1,2,\hdots,m.
\end{equation*}
Uniqueness is from the uniqueness of the decomposition in \Prop{prop:linear_algebra}. The proof is
completed through induction by repeating the procedure for $S_{T-1}\in\clZ_{T-1}$. 
\end{proof}

\begin{remark}[Bijection]\label{rem:ST-bijection}
\Corollary{prop:existence_uniqueness_ST_U} shows that the map
\[
(U,c) \in \clU_T\times\Re \;\longmapsto\; S_T := c + \sum_{t=0}^{T-1}U_t^\tp e(Z_{t+1}) \;\in\; \clZ_T
\]
is a bijection. This fact underpins the duality between
controllability and observability described in \Prop{prop:duality}.
\end{remark}

\begin{example}[m=1] 
  Set
\[
S_T^{+} = s(Z_1,\hdots,Z_{T-1},1),\qquad S_T^{-} =
s(Z_1,\hdots,Z_{T-1},0).
\]
Then $S_T^{+}, S_T^{-}\in \clZ_{T-1}$ and
\begin{equation*}
S_T = \underbrace{\half\left( S_T^{+} + S_{T}^-\right)}_{=S_{T-1}} +
\underbrace{\half\left( S_T^{+} - S_{T}^-\right)}_{= U_{T-1}} e(Z_T),
\end{equation*}
where recall $e(1)=1$ and $e(0)=-1$ in the binary-valued case.
\end{example}

\subsection{Proof of existence and uniqueness of the solution of the BS$\Delta$E}\label{sec:prop:existence_BSDE}
\begin{proof}[Proof of \Prop{prop:existence_BSDE}] 
 Fix $x\in \bS$.  Then at time $t=T$, the BS$\Delta$E is
  \begin{equation*}
  (AF)(x) = Y_{T-1}(x) - h^\tp(x) (U_{T-1} + V_{T-1}(x)) + (V_{T-1}(x))^\tp
  e(Z_T),\quad x\in \bS.
  \end{equation*}
  Set
  \[
    F^z(x)=F(x)\big|_{[Z_{T}=z]},\qquad z\in\bO,\quad x\in\bS.
  \]
From~\Prop{prop:linear_algebra}, there exists unique
$\bar{F}_{T-1}(x), \tilde{F}_{T-1}(x) \in \clZ_{T-1}$ such that
\begin{align*}
F(x) &= \bar{F}_{T-1}(x) + (\tilde{F}_{T-1}(x))^\tp
  e(Z_T),\quad x\in \bS,\\
  \therefore,\quad (AF)(x) &= (A\bar{F}_{T-1})(x) + ((A\tilde{F}_{T-1})(x))^\tp
  e(Z_T),\quad x\in \bS.
\end{align*}
Set
\begin{align*}
  V_{T-1}(x) &=  (A\tilde{F}_{T-1})(x),\quad x\in\bS,\\
  Y_{T-1}(x) &= (A\bar{F}_{T-1})(x)  + h^\tp(x) (U_{T-1} + V_{T-1}(x)) ,\quad x\in\bS.
\end{align*}
Using the formulae for $\bar{F}_{T-1},\tilde{F}_{T-1}$, upon
simplifying,
\[
Y_{T-1}(x) = \sum_{z\in\bO} (\text{diag}(H(\cdot,z)) A F^z)(x)  + h^\tp(x) U_{T-1},\quad x\in\bS.
\]
Because $Y_{T-1}\in\clZ_{T-1}$, the proof is completed through induction.

For each $t=0,\ldots,T-1$ and $z\in\bO$, write
\[
Y_{t+1}^z = Y_{t+1}\big|_{[Z_{t+1}=z]}, \quad \bar{Y}_{t+1} = \frac{1}{m+1} \sum_{z\in\bO} Y_{t+1}^z,\quad \text{and} \quad 
\tilde{Y}_{t+1}^k = ( {Y}_{t+1}^k - \bar{Y}_{t+1}),\quad k=1,2,\hdots,m.
\]
Then
\begin{subequations}\label{eq:Y_t_soln_formula}
\begin{align}
  Y_t(x) &= (A \bar{Y}_{t+1})(x) + \sum_{k=1}^m
           \big((\mathrm{diag}(h^k)A)\tilde{Y}_{t+1}^k\big)(x) +
           h^\tp(x)\,U_t,\quad x\in\bS\label{eq:Y_t_soln_formula_a}\\
&= \sum_{z\in\bO} \big(\mathrm{diag}(H(\cdot,z))\,A\,Y_{t+1}^z\big)(x) + h^\tp(x)\,U_t, \qquad x\in\bS.
\label{eq:Y_t_soln_formula_b}
\end{align}
\end{subequations}
\end{proof}

\begin{example}[m=1]
  For each $x\in\bS$, set $h^{+}(x) = H(x,1)$, $h^{-}(x) = H(x,0)$,
  and 
\begin{align*}
F_T^{+}(x) = s(Z_1,\hdots,Z_{T-1},1;x), \quad F_T^{-}(x) =
  s(Z_1,\hdots,Z_{T-1},0;x).
\end{align*}
Then $F_T^{+}(x), F_T^{-}(x)\in \clZ_{T-1}$ and
\begin{align*}
  V_{T-1}(x) &=  \half\left(AF_T^{+} (x)- AF_{T}^-(x) \right),\quad x\in\bS,\\
  Y_{T-1}(x) &= \half\left(AF_T^{+}(x) + AF_{T}^-(x)\right) + h(x)
               (U_{T-1} + V_{T-1}(x))\\
               & = h^+(x) (AF_T^{+})(x) + h^-(x) (AF_T^{-})(x) + h(x)
                 U_{T-1},\quad x\in\bS. 
\end{align*}
\end{example}

\subsection{Analytical and probabilistic characterization of the 
  BS$\Delta$E solution}\label{sec:prop:backward-map-and-bsde}
\begin{proof}[
Proof of \Prop{prop:backward-map-and-bsde}] The existence and
uniqueness of $(Y,V)$ is the special case $U=0$ of
\Prop{prop:existence_BSDE}, and the analytical characterization is
the formula~\eqref{eq:Y_t_soln_formula_b}  with $U_t=0$.  For the
probabilistic characterization, specializing the martingale increment identity (formula~\eqref{eq:martingale_identity_appdx} in \Sec{sec:martingale_proofs}) derived in the proof of
\Prop{prop:MU-martingale} to $U=0$ gives
\[
Y_{t+1}(X_{t+1}) = Y_t(X_t) + N_{t+1}, \qquad t\in\mathbb{T}^-,
\]
where $N=\{N_t:1\le t\le T\}$ is a $(\clF,\sP^\rho)$-martingale
increment process (for any $\rho\in\clP(\bS)$). Telescoping from $t$
to $T-1$ and using $Y_T=F$,
\[
\gamma_T(X_T) = Y_t(X_t) + \sum_{s=t}^{T-1}N_{s+1}.
\]
Taking $\E^\rho(\cdot\mid\clZ_t\vee[X_t=x])$ therefore yields
\[
  Y_t(x) = \E^\rho\big(F(X_T)\mid\clZ_t\vee[X_t=x]\big),
  \qquad x\in\bS, \quad \sP^\rho\text{-a.s.}
\]
\end{proof}

\section{Proofs concerning martingales $M$ and $M^U$}
\label{sec:martingale_proofs}

\begin{proof}[Proof of \Prop{prop:MU-martingale}]

\newP{Martingale property} For the HMM, we have defined two martingale increment processes:
\begin{align*}
  B_{t+1}(f) &= f(X_{t+1}) - (Af)(X_t),\quad t\in\mathbb{T}^-,\\
  W_{t+1} &= e(Z_{t+1})- h(X_t) ,\quad t\in\mathbb{T}^-.
\end{align*}
Taking $f=Y_{t+1}$,
\begin{equation*}
 B_{t+1}(Y_{t+1}) = Y_{t+1}(X_{t+1}) - (A Y_{t+1})(X_t) ,\quad t\in\mathbb{T}^-.
\end{equation*}
Since $Y$ solves BS$\Delta$E~\eqref{eq:dual_BSDE},
\begin{equation*}
(AY_{t+1})(x) = Y_{t}(x) - h^\tp(x) (U_t + V_t(x)) + V_t^\tp(x) e(Z_{t+1}),\quad
\forall x\in \bS ,\quad t\in\mathbb{T}^-,
\end{equation*}
and therefore at $x=X_t$,
\begin{equation*}
(AY_{t+1})(X_t) = Y_{t}(X_t) -  h^\tp(X_t) (U_t + V_t(X_t)) + V_t^\tp(X_t) e(Z_{t+1}) ,\quad t\in\mathbb{T}^-.
\end{equation*}
Because $W_{t+1} = e(Z_{t+1}) - h(X_t)$, we have
\begin{align*}
  h^\tp(X_t) U_t &= U_t^\tp e(Z_{t+1}) - U_t^\tp W_{t+1},\quad t\in\mathbb{T}^-, \\
  h^\tp(X_t) V_t(X_t) &= V_t^\tp(X_t) e(Z_{t+1}) - V_t^\tp(X_t) W_{t+1} ,\quad t\in\mathbb{T}^-,
\end{align*}
and thus,
\begin{equation*}
(AY_{t+1})(X_t) = Y_{t}(X_t) - U_t^\tp e(Z_{t+1}) + (U_t + V_t(X_t))^\tp W_{t+1},\quad t\in\mathbb{T}^-.
\end{equation*}
All together,
\begin{align*}
  Y_{t+1}(X_{t+1}) & = (A Y_{t+1})(X_t) + B_{t+1}(Y_{t+1}) ,\quad t\in\mathbb{T}^- \\
  &= Y_{t}(X_t) - U_t^\tp e(Z_{t+1}) + (U_t + V_t(X_t))^\tp W_{t+1} + B_{t+1}(Y_{t+1}) ,\quad t\in\mathbb{T}^-.
\end{align*}
Denote
\begin{equation*}
N_{t+1} := (U_t + V_t(X_t))^\tp W_{t+1} + B_{t+1}(Y_{t+1}),\quad t\in\mathbb{T}^-.
\end{equation*}
to write this more succinctly as
\begin{equation}\label{eq:martingale_identity_appdx}
 Y_{t+1}(X_{t+1}) + U_t^\tp e(Z_{t+1}) = Y_{t}(X_t) + N_{t+1}, \quad t\in\mathbb{T}^-.
\end{equation}
Note $N_{t+1}$ is a martingale increment: since $U_t,V_t(X_t)\in\clF_t$,
\begin{align*}
  \E^\rho\big((U_t+V_t(X_t))^\tp W_{t+1}\mid\clF_t\big) &= (U_t+V_t(X_t))^\tp\,\E^\rho(W_{t+1}\mid\clF_t) = 0 , \quad t\in\mathbb{T}^-,\\
  \E^\rho(B_{t+1}(Y_{t+1})\mid\clF_t)&=\E^\rho(\E^\rho(B_{t+1}(Y_{t+1})\mid\clG_{t})\mid\clF_t)=0, \quad t\in\mathbb{T}^-,
\end{align*}
so $\E^\rho(N_{t+1}\mid\clF_t)=0$ for $t\in\mathbb{T}^-$. 
This shows $M^U$ is a $(\clF,\sP^\rho)$-martingale:
\begin{align*}
\E^\rho (M_{t+1}^U \mid \clF_t) &= \E^\rho \big( Y_{t+1}(X_{t+1}) + U_t^\tp
e(Z_{t+1}) + \sum_{s=0}^{t-1} U_s^\tp
  e(Z_{s+1}) \mid \clF_t)\big)\\
  &= Y_{t}(X_t) + \sum_{s=0}^{t-1} U_s^\tp
  e(Z_{s+1})  + \E^\rho (N_{t+1} \mid \clF_t) = M_t^U,\quad t\in\mathbb{T}^-.
\end{align*}

\newP{Quadratic variation} Squaring and
conditioning on $\clF_t$:
\begin{align*}
\E^\rho\big(N_{t+1}^2\mid\clF_t\big) = \E^\rho\big(B_{t+1}(Y_{t+1})^2\mid\clF_t\big)
  & + \E^\rho\big(((U_t+V_t(X_t))^\tp
                                       W_{t+1})^2\mid\clF_t\big) 
  \\
  &+ 2\,\E^\rho\big((U_t+V_t(X_t))^\tp W_{t+1}\,B_{t+1}(Y_{t+1})\mid\clF_t\big).
\end{align*}
The calculations for the three terms are as follows:
\begin{align*}
\E^\rho\big(B_{t+1}(Y_{t+1})^2\mid\clF_t\big) &=
  \E^\rho\big(\E^\rho\big(B_{t+1}(Y_{t+1})^2\mid\clG_{t}\big)\mid
                                                \clF_t) \\ & = \E^\rho\big((\Gamma
                                                    Y_{t+1})(X_t) \mid
  \clF_t \big),\\
\E^\rho\big(((U_t+V_t(X_t))^\tp W_{t+1})^2\mid\clF_t\big)  &= (U_t
                                                             +
                                                             V_t(X_t))^\tp
                                                             \E^\rho(W_{t+1}W_{t+1}^\tp|\clF_t) (U_t +  V_t(X_t)) \\
                                                   & =       (U_t
                                                     +
                                                     V_t(X_t))^\tp
                                                     R(X_t)  (U_t
                                                     + V_t(X_t)),
  \\
  \E^\rho\big((U_t+V_t(X_t))^\tp
  W_{t+1}\,B_{t+1}(Y_{t+1})\mid\clF_t\big) &=
\E^\rho\big(\E^\rho ((U_t+V_t(X_t))^\tp
                                             W_{t+1}B_{t+1}(Y_{t+1})\mid\clG_{t} )\mid \clF_t \big) \\
  & = \E^\rho\big(  (U_t+V_t(X_t))^\tp
    W_{t+1}\,\E^\rho\big(B_{t+1}(Y_{t+1})\mid\clG_{t}\big)  \mid
    \clF_t) \\& = 0.
\end{align*}
For the cross term, we used that $U_t+V_t(X_t)$ and $W_{t+1}=e(Z_{t+1})-h(X_t)$
are $\clG_{t}$-measurable. 
Combining,
\begin{align*}
\E^\rho\big(N_{t+1}^2\mid\clF_t\big) &= \E^\rho\big((\Gamma
                                                    Y_{t+1})(X_t) \mid
  \clF_t \big) + (U_t+V_t(X_t))^\tp R(X_t)(U_t+V_t(X_t)) \\ & =
  \E^\rho\big( \ell(Y_{t+1},V_t,U_t;X_t) \mid \clF_t \big),
\end{align*}
which gives the formula for quadratic variation:
\[
\langle M^U \rangle_t = \sum_{s=0}^{t-1} \E^\rho\big(\ell(Y_{s+1},V_{s},U_{s};X_{s}) \mid \clF_s\big),
\quad t\in\mathbb{T},\quad \sP^\rho\text{-a.s.}
\]

\newP{Duality principle} We have
\[
M_T^U = F(X_T)+\sum_{t=0}^{T-1}U_t^\tp e(Z_{t+1}), \qquad M_0^U=Y_0(X_0).
\]
Since $S_T=\rho(Y_0)-\sum_{t=0}^{T-1}U_t^\tp e(Z_{t+1})$,
\[
F(X_T)-S_T = \big(Y_0(X_0)-\rho(Y_0)\big) + (M_T^U-M_0^U).
\]
Squaring and taking expectation, the cross term vanishes because
$Y_0(X_0)-\rho(Y_0)$ is $\clF_0$-measurable and
$\E^\rho(M_T^U-M_0^U\mid\clF_0)=0$ by the martingale property.
Hence, 
\begin{align*}
\E^\rho\big(|F(X_T)-S_T|^2\big) &=
                                  \E^\rho\big((Y_0(X_0)-\rho(Y_0))^2\big)
                                  + \E^\rho\big((M_T^U-M_0^U)^2\big)
  \\& = \var^\rho(Y_0(X_0)) + \E^\rho(\langle M^U\rangle_T) = \sJ_T^\rho(U;F).
\end{align*}
\end{proof}

\subsection{Formula for the quadratic variation}\label{sec:martingale_proofs_M0}
\begin{proof}[Proof of \Prop{prop:M0-martingale}]
This is the special case of \Prop{prop:MU-martingale} with 
$U\equiv0$. Setting $U=0$ gives $M_t=Y_t(X_t)$
directly, and the running cost reduces to
$\ell(y,v,0;x)=(\Gamma y)(x)+v^\tp(x)R(x)v(x)$, recovering the stated
quadratic variation.

\end{proof}

\subsection{Technical estimates for solution of the BS$\Delta$E}\label{sec:prop:technical}

\begin{proof}[Proof of \Prop{prop:technical}]
  \emph{(Part 1, continuity).}
Let
\begin{align*}
(Y,V) &= \text{BS}\Delta\text{E}_\tau[F],\qquad M_t = Y_t(X_t),\quad 0\leq
        t\leq \tau,\\
(\tilde Y,\tilde V) &= \text{BS}\Delta\text{E}_\tau[\tilde F],\qquad \tilde
                      M_t = \tilde Y_t(X_t),\quad 0\leq
        t\leq \tau,  
\end{align*}
For each $x\in \bS$, we show that
\begin{align*}
|Y_0(x) - \tilde Y_0(x)|^2 &\leq \E^x \big( |F(X_\tau) - \tilde
  F(X_\tau)|^2\big) \\
\left|\E^x (\langle M \rangle_\tau) - \E^x (\langle \tilde M \rangle_\tau)
  \right|^2 &\leq \delta(F,\tilde F) \E^x \big( |F(X_\tau) - \tilde
  F(X_\tau)|^2\big),
\end{align*}
where
\[
  \delta (F,\tilde F)= 2 \left( \E^x \big( |F (X_\tau) + \tilde F(X_\tau) |^2 \big) +
    |Y_0(x) + \tilde Y_0(x)|^2\right) \leq 16\,C^2,
\]
where $C:=\max\{\|F\|_\infty,\|\tilde F\|_\infty\}$.

The first inequality is direct
using Jensen's inequality.  To show the second inequality, 
  \begin{align*}
    \E^x  (\langle M \rangle_\tau) = \E^x (F(X_\tau)^2) -
    Y_0^2(x),\quad 
    \E^x  (\langle \tilde M \rangle_\tau) = \E^x (\tilde F(X_\tau)^2)
    - \tilde Y_0^2(x)
  \end{align*}
  Therefore,
  \[
|\E^x (\langle M \rangle_\tau) - \E^x (\langle \tilde M \rangle_\tau)
  | \leq \E^x \big( |F^2(X_\tau) - \tilde
  F^2(X_\tau)|\big) + |Y_0^2(x) - \tilde Y_0^2(x)|.
\]
The estimate follows because
\begin{align*}
\left(\E^x \big( |F^2(X_\tau) - \tilde
  F^2(X_\tau)|\big) \right)^2 & \leq \E^x \big( |F(X_\tau) - \tilde
  F(X_\tau)|^2\big) \E^x \big( |F(X_\tau) + \tilde
  F(X_\tau)|^2\big),\\
  |Y_0^2(x) - \tilde Y_0^2(x)|^2 & = |Y_0(x) - \tilde Y_0(x)|^2
                                   |Y_0(x) + \tilde Y_0(x)|^2  \\ & \leq
                                                                 \E^x
                                                                 \big(
                                                                 |F(X_\tau)
                                                                -
                                                                 \tilde
                                                                 F(X_\tau)|^2\big)  |Y_0(x) + \tilde Y_0(x)|^2 .
\end{align*}

We now pass from $x$ to $\rho$.  
Let $\rho,\tilde\rho \in \clP(\bS)$ and consider 
\begin{align*}
\E^\rho (\langle M \rangle_\tau) = \sum_{x\in\bS} \rho(x) \E^x
(\langle M \rangle_\tau),\quad \dvar^\rho(Y_0(X_0)) = \sum_{x\in\bS}
  \rho(x) Y_0^2(x) - \big( \sum_{x\in\bS} \rho(x) Y_0(x) \big)^2,\\
  \E^{\tilde \rho} (\langle \tilde M \rangle_\tau) = \sum_{x\in\bS}
  \tilde \rho(x) \E^x
(\langle \tilde M \rangle_\tau),\quad \dvar^{\tilde \rho}(\tilde Y_0(X_0)) = \sum_{x\in\bS}
\tilde \rho(x) {\tilde Y}_0^2(x) - \big( \sum_{x\in\bS} \tilde \rho(x) {\tilde
  Y}_0(x) \big)^2.
\end{align*}
Therefore, 
\begin{align*}
\Big|\E^\rho\big(\langle M\rangle_\tau\big)
   - \E^{\tilde\rho}\big(\langle\tilde M\rangle_\tau\big)\Big|
\;\leq\; 2\,C^2\|\rho-\tilde\rho\|_{\text{TV}} + 4C\,\|F-\tilde F\|_\infty ,\\
  \big|\dvar^\rho(Y_0(X_0)) - \dvar^{\tilde\rho}(\tilde Y_0(X_0))\big|
\;\leq\;4 C^2\|\rho-\tilde\rho\|_{\text{TV}}  + 4C\,\|F-\tilde F\|_\infty .
\end{align*}
Both maps are therefore jointly continuous on $\clZf{\tau}\times\clP(\bS)$, and
Lipschitz on bounded sets.

\emph{(Part 2, boundedness.)}  
By the analytical characterization in
\Prop{prop:backward-map-and-bsde},
\[
Y_t(x) = \sum_{z\in\bO} H(x,z)\,\big(AY_{t+1}^z\big)(x),
\qquad Y_{t+1}^z(x) := Y_{t+1}(x)\big|_{[Z_{t+1}=z]} .
\]
Since $H$ and $A$ are row-stochastic, $Y_t(x)$ is a convex combination of the
values $\{Y_{t+1}^z(y): y\in\bS,\,z\in\bO\}$.  Hence, $
|Y_t(x)| \leq \|Y_{t+1}\|_\infty 
$
and by induction, because $Y_T=F$,
\[
  |Y_t(x)| \leq \|F\|_\infty,\qquad x\in\bS, \quad t\in \mathbb{T}.  
\]
So,
\[
  \dvar^\rho(Y_t(X_t))\leq\E^\rho(|Y_t(X_t)|^2)\leq\|F\|^2_\infty,\quad t\in \mathbb{T}.  
\]
Next, 
\[
\E^x\big(\langle M\rangle_t\big) = \E^x\big(M_t^2\big) - Y_0^2(x)
\;\leq\; \|F\|_\infty^2 , \quad t\in \mathbb{T},
\]
and therefore,
\[
  \E^\rho(\langle M\rangle_t)= \sum_{x\in\bS}
  \rho(x) \E^x\big(\langle M\rangle_t\big)  \leq\|F\|_\infty^2, \quad t\in \mathbb{T}.  
\]

\emph{(Part 3, restriction.)}  The BS$\Delta$E~\eqref{eq:dual_BSDE} couples only
consecutive times, so the equations indexed by $t<\tau$ involve
$\{(Y_t,V_t):0\leq t\leq\tau\}$ alone.  Hence this restricted pair solves the
BS$\Delta$E over $[0,\tau]$ with terminal condition $Y_\tau$, and by uniqueness
(\Prop{prop:existence_BSDE}) it is the solution. 

\emph{(Part 4, centering.)}  Let $F-c\ones \in \clZfo{T}(\rho)$.  Then $\pi_T^\rho(F) = c$, $\sP^\rho$-a.s., and 
\[
\pi_t^\rho (Y_t) = \E^\rho(Y_t(X_t) \mid \clZ_t) = \E^\rho(F(X_T) \mid
\clZ_t ) =
\E^\rho(\E^\rho(F(X_T) \mid \clZ_T) \mid \clZ_t) = c,\quad \sP^\rho\text{-a.s.}
\]
We have two cases:
\begin{enumerate}
\item If $\sigma_t^\rho(\ones)>0$ then
  \[
\pi_t^\rho (Y_t) = c \quad \implies \quad \sigma_t^\rho (Y_t) = c \sigma_t^\rho(\ones).
\]
\item  If $\sigma_t^\rho(\ones)=0$ then $\sigma_t^\rho(x)=0$, for all $x\in
  \bS$.
\end{enumerate}
In either case, $\sigma_t^\rho(Y_t-c\ones)=0$.  That is, $Y_t-c\ones\in\clZfo{t}(\rho)$.
\end{proof}

\section{Proof of Zero dissipation set under
  Assumption~\ref{ass:non-degeneracy}}
\label{appdx:prop:part-i-implication}

\begin{proof}[Proof of \Prop{prop:part-i-implication}]
As part of the proof of \Lemma{lemma:Lasalle}(i), we
showed that 
\[
  Y_\tau(x') = Y_0(x), \qquad \forall\, x' \in \mathcal
  R_\tau(x),\quad x \in \supp(\rho).
\]
That is, for each fixed $x \in \supp(\rho)$, 
restricted to the points $x'\in \mathcal R_\tau(x) \subset \bS$, $Y_\tau(\cdot)$ is a
deterministic function that moreover takes a constant value
($=Y_0(x)$).  Elsewhere on $\bS$ it need not be.  To
obtain a deterministic function on all of $\bS$, set
\[
g(x') := \E^\rho(Y_\tau(x')),\quad x'\in \bS.
\]
Then 
\[
  g(x') =\E^\rho(Y_\tau(x'))= \E^\rho(Y_0(x))=Y_0(x), \qquad \forall\, x'\in\mathcal R_\tau(x),
  \quad x\in\supp(\rho).
\]
Since $\tau\ge\tau_1(A)$, we claim
\[
  \textbf{(claim)}\qquad 
  \mathcal R_{\tau+kp}(x)\cap\bS^{\text{\tiny RECC}}
  = \mathcal R_{\tau}(x)\cap\bS^{\text{\tiny RECC}}=:\mathbb{B}_\tau^x,
  \qquad \forall\, k\ge 0,\quad  x\in\bS .
\]
Then
\[
g(X_{\tau+kp}) \ones_{[X_{\tau+kp} \in
  \mathbb{B}_\tau^x]}  = Y_0(x) \ones_{[X_{\tau+kp} \in
  \mathbb{B}_\tau^x]}  ,\quad \forall\, k\ge 0, \quad \sP^x\text{-a.s.}, \quad x \in\supp(\rho),
\]
and thus,
\[
\E^x (g(X_{\tau+kp})) = Y_0(x) \sP^x([X_{\tau+kp} \in
  \mathbb{B}_\tau^x])+  \E^x (g(X_{\tau+kp})
\ones_{[X_{\tau+kp} \notin \mathbb{B}_\tau^x]}), \quad x \in\supp(\rho).
\]
Average over $k$,
\[
\frac{1}{n} \sum_{k=0}^{n-1} \E^x (g(X_{\tau+kp})) = Y_0(x) \frac{1}{n} \sum_{k=0}^{n-1} \sP^x([X_{\tau+kp} \in
  \mathbb{B}_\tau^x])+
\frac{1}{n} \sum_{k=0}^{n-1}  \E^x (g(X_{\tau+kp}) \ones_{[X_{\tau+kp} \notin \mathbb{B}_\tau^x]})
\]
Then because $\sP^x ([X_{\tau+kp} \notin \mathbb{B}_\tau^x]) = \sP^x
([X_{\tau+kp} \notin \bS^{\text{\tiny RECC}}]) \to 0 $, 
\[
Y_0(x) = \lim_{n\to\infty} \frac{1}{n} \sum_{k=0}^{n-1} \E^x
(g(X_{\tau+kp})) , \quad x \in\supp(\rho).
\]

Since $\E^x(g(X_{\tau+kp})) = (A^{\tau+kp}g)(x)$, the limit above is
\[
\tilde f(x) := (\Pi A^\tau g)(x),\quad x\in\supp(\rho),\quad \text{where} \quad \Pi :=
\lim_{n\to\infty}\frac{1}{n}\sum_{k=0}^{n-1} A^{kp}
\]
is the Ces\`aro limit of the powers of the stochastic matrix $A^p$;
the limit exists by the mean ergodic theorem
(see~\cite[Sec.~3.2.4]{stroock2005markov}) and is the spectral
projection onto $\ker(A^p - I) = \clE^{\mathrm{PER}}(A)$. 
Hence $\tilde f\in\mathrm
E^{\mathrm{PER}}(A)$ and $\tilde f (x)= Y_0(x)$ for
$x\in\supp(\rho)$.

It remains to prove the claim. Denote $\mathbb{B}_t^x := \mathcal
R_t(x)\cap\bS^{\text{\tiny RECC}}$.  By definition,
\[
  \tau_1(A) \;=\; \min\big\{\, t \ge 1 \;:\;
    \mathbb{B}_t^x 
    = \mathbb{B}_{t+p}^x  ,
    \ \ \forall\, x \in \bS \,\big\}.  
\]
Now,
\[
  \mathbb{B}_{t+1}^x = \bigcup_{y\in\mathcal R_1(x)}
  \mathbb{B}_{t}^y.  
\]
So, the property at time $t=\tau_1(A)$, which holds for every
$x\in\bS$, is inherited at $t=\tau_1(A)+1$, and
hence by induction at every $t\ge\tau_1(A)$.  Applying it successively at
$t=\tau,\tau+p,\tau+2p,\ldots$ proves the claim.  
\end{proof}

 \section{Proofs concerning controllability and duality properties}
 \label{sec:proof:controllability}

\subsection{Proof of \Prop{prop:CT-recursion} and
  Corollary~\ref{cor:controllable_space}}
\label{sec:proof:controllability_main}
\begin{proof}[Proof of \Prop{prop:CT-recursion}]
By definition $\clC_0 = \mathrm{span}\{\ones\}$.  For $T=1$, setting
$Y_1 = c\ones$, the solution formula~\eqref{eq:Y_t_soln_formula_b} gives,
\[
Y_0 = c \ones + h^\tp U_0 \implies \clC_1 = \mathrm{span}\{\ones, h^1, h^2,\hdots, h^m\}.
\]

\emph{(Part (2), Monotonicity)}. Let $T>1$ and $(Y,V) =
\mathrm{BS}\Delta\mathrm{E}_T[c\ones,U]$ (horizon $T$). Choosing $U_{T-1}=0$
forces $V_{T-1}=0$ and $Y_{T-1}=c\ones$. Set
\begin{align*}
  \bar{Y}&:=(Y_0,Y_1,\hdots,Y_{T-2}, Y_{T-1}), \quad \text{recall } Y_{T-1} = c\ones,\\
  \bar{V}&:=(V_0,V_1,\hdots,V_{T-2}),\\
  \bar{U}&:=(U_0,U_1,\hdots,U_{T-2}).
\end{align*}
Then $\bar{U} \in \clU_{T-1}$ and $(\bar{Y},\bar{V}) =
\mathrm{BS}\Delta\mathrm{E}_{T-1}[c\ones,\bar{U}]$ (horizon $T-1$). Since this holds
for every $\bar U\in\clU_{T-1}$, it shows that
$\clC_{T-1}\subset\clC_T$: given $f=\sfL_{T-1}[c\ones,\bar U]$, the
control $U=(\bar U,0)\in\clU_T$ satisfies $\sfL_T [c\ones,U]=f$, so
$f\in\clC_T$. By the explicit form of $\clC_1$ established above,
\[
\ones, h^1, h^2,\hdots, h^m\in \clC_T,\quad \forall T\geq 1.
\]

\emph{(Part (1)}, ($\clC_T \subset A\clC_{T-1} + \sum_{k=1}^m
\big(\mathrm{diag}(h^k)\,A\big)\clC_{T-1}$). Let $f\in\clC_T$.  We
show $f$ lies in the right-hand side. By definition, there exist
$U\in\clU_T$ and $c\in\Re$ with $f=\sfL_T[c\ones,U]$.  Let $(Y,V) =
\mathrm{BS}\Delta\mathrm{E}_T[c\ones,U]$, so that
$Y_0=f$. Write $Y_1^z:=Y_1\mid_{[Z_1=z]}$ for $z\in\bO$. We claim:
\begin{equation*}
Y_1^z \in \clC_{T-1},\quad \forall\,z\in\bO. \tag{claim}
\end{equation*}

Supposing the claim to be true, the decomposition of
\Prop{prop:linear_algebra} gives
\[
 Y_1^z = \bar{Y}_1 + \tilde{Y}_1^\tp e(z),\quad z\in\bO,
\]
where $\bar{Y}_1 := \frac{1}{m+1}\sum_{z\in\bO}Y_1^z$ and
$\tilde{Y}_1^k := Y_1^k-\bar{Y}_1$ for $k=1,\ldots,m$.  Since
$\clC_{T-1}$ is a subspace, 
\[
 Y_1^z \in \clC_{T-1} \implies \bar{Y}_1 \in \clC_{T-1}, \qquad \tilde{Y}_1^k \in \clC_{T-1},\quad k=1,\ldots,m.
\]
The solution formula~\eqref{eq:Y_t_soln_formula_a} then expresses
$f$ exactly as
\[
f = A\bar{Y}_1 + \sum_{k=1}^m
\big(\mathrm{diag}(h^k)\,A\big)\tilde{Y}_1^k + h^\tp U_0,
\]
which lies in $A\clC_{T-1} + \sum_{k=1}^m
\big(\mathrm{diag}(h^k)\,A\big)\clC_{T-1}$: the first two terms
because $\bar{Y}_1 \in \clC_{T-1}$ and $\tilde{Y}_1^k \in \clC_{T-1}$,
and the remaining term $h^\tp U_0$ because $\ones\in\clC_{T-1}$.

It remains to
prove the claim.  Before doing so, we prove the reverse inclusion.

\medskip

($\clC_T \supset A\clC_{T-1} + \sum_{k=1}^m
\big(\mathrm{diag}(h^k)\,A\big)\clC_{T-1}$). Let $f\in\clC_{T-1}$,
realized as $f=\sfL_{T-1}[c\ones,\bar U]$ for some $\bar
U\in\clU_{T-1}$, $c\in\Re$. We describe, for each term on the
right-hand side, an explicit control $U\in\clU_T$ with
$(Y,V)=\mathrm{BS}\Delta\mathrm{E}_T[0,U]$ attaining
it: $Y_0=Af-c\ones$ in the first case below, $Y_0=\big(\mathrm{diag}(h^k)\,A\big)f-ch^k$
in the second. Throughout, write $Y_1^z:=Y_1\mid_{[Z_1=z]}$ for
$z\in\bO$.

\begin{itemize}
\item The term $Af$. Take $U$ so that
  \[
    Y_1^z = f - c\ones,\quad \forall\,z\in\bO,
  \]
  i.e., the same value on every branch. Then $\bar{Y}_1=f-c\ones$
  and $\tilde{Y}_1=0$, and the solution
  formula~\eqref{eq:Y_t_soln_formula_a} gives
  \[
    Y_0 = Af - c\ones.
  \]
  Since $\ones\in\clC_T$ (part (2)), $Af\in\clC_T$.

\item The term $\big(\mathrm{diag}(h^k)A\big)f$. Fix
  $k\in\{1,\ldots,m\}$, and take $U$ so that
  \[
    Y_1^z = \begin{cases} f-c\ones & Z_1=k, \\ -f+c\ones & Z_1=0, \\ 0 & Z_1\notin\{0,k\}. \end{cases}
  \]
  Then $\bar{Y}_1=0$ and
  \[
  \tilde{Y}_1^l = \begin{cases} f-c\ones & l=k, \\ 0 & l\neq k, \end{cases}
  \]
  so the solution formula~\eqref{eq:Y_t_soln_formula_a} gives
  \[
    Y_0 = \big(\mathrm{diag}(h^k)\,A\big)f - c\,h^k.
  \]
  Since $h^k\in\clC_T$ for $T\ge1$ (part (2)),
  $\big(\mathrm{diag}(h^k)\,A\big)f\in\clC_T$.
\end{itemize}

It remains to establish (i) the claim from the first direction, and
(ii) the existence of a control $U\in\clU_T$ realizing the branch
patterns $Y_1^z$ used in the second.

\newP{(i) Proof of the claim}  Since $U\in\clU_T$, the Doob--Dynkin lemma
gives a deterministic vector $\phi_0\in\Re^m$ and deterministic
functions $\phi_t:\bO^t\to\Re^m$, $t=1,\ldots,T-1$, such that
\[
U_0 = \phi_0,\quad U_t = \phi_t(Z_1,Z_2,\hdots,Z_t), \quad 1\leq t\leq T-1.
\]
Fix $z\in\bO$ and set
\[
\bar{U}_0 =  \phi_1(z),\quad \bar{U}_t = \phi_{t+1}(z,Z_1,\hdots,Z_t), \quad 1\leq t\leq T-2.
\]
Then $\bar{U} \in \clU_{T-1}$ and by uniqueness of the BS$\Delta$E solution
(\Prop{prop:existence_BSDE}), $Y_1^z =
\sfL_{T-1}[c\ones,\bar{U}]$.  

\newP{(ii) Explicit construction of control $U$} Because $f\in \clC_{T-1}$,
there exists $\bar{U}\in\clU_{T-1}$ and $c\in \Re$~s.t.~$f =
\sfL_{T-1}[c\ones,\bar{U}]$. Apply the Doob--Dynkin
lemma to $\bar U$ (fixed at the outset of this direction): there
exist a deterministic vector $\phi_0\in\Re^m$ and deterministic
functions $\phi_t:\bO^t\to\Re^m$, $t=1,\ldots,T-2$, such that
\[
\bar{U}_0 = \phi_0,\quad \bar{U}_t = \phi_t(Z_1,\ldots,Z_t), \quad 1\leq t\leq T-2.
\]
We construct $U\in\clU_T$ separately for each of the two cases
above.

\begin{itemize}
\item For $Y_0=Af$. Set
  \[
U_0 = 0,\quad U_1 = \phi_0,\quad U_{t} = \phi_{t-1}(Z_2,\ldots,Z_{t}), \quad 2\leq t\leq T-1.
\]
Then $Y_1=f-c\ones$, and hence $Y_1^z=f-c\ones$ for every
$z\in\bO$, as required.

\item For $Y_0=\big(\mathrm{diag}(h^k)A\big)f$. Set
  $U_0=0$ and
  \[
U_1 = \begin{cases} \phi_0 & Z_1=k, \\ -\phi_0 & Z_1=0, \\ 0 & Z_1\notin\{0,k\}, \end{cases}
\;
U_t = \begin{cases} \phi_{t-1}(Z_2,\ldots,Z_{t}) & Z_1=k, \\ -\phi_{t-1}(Z_2,\ldots,Z_{t}) & Z_1=0, \\ 0 & Z_1\notin\{0,k\}, \end{cases}
\quad 2\leq t\leq T-1.
\]
Then $Y_1$ takes exactly the branch pattern required above.
\end{itemize}

\emph{(Part (3), stabilization.)}
By monotonicity (part (2)), $\dim\clC_T$ is a nondecreasing sequence
of integers bounded above by $d$, with $\dim\clC_0=1$. Hence, there exists a smallest
$\tau\le d-1$ such that $\dim\clC_{\tau+1}=\dim\clC_\tau$, and since
$\clC_\tau\subset\clC_{\tau+1}$, equal dimension forces
$\clC_{\tau+1}=\clC_\tau$.   The recursion form (part (1)) gives 
\[
\clC_{\tau+1} = \clC_{\tau} \implies \clC_{T} = \clC_{\tau},\quad
\forall T\geq \tau.
\]

\end{proof}

\begin{proof}[Proof of Corollary~\ref{cor:controllable_space}]
Let $\bar{\clC}\subset \Re^d$ be any subspace satisfying the two
properties.  We claim
\begin{equation*}
\clC_T\subset \bar{\clC},\quad \forall \;T.\tag{claim}
\end{equation*}
The proof of the claim is by induction.
\begin{enumerate}
\item The base case $\clC_0=\mathrm{span}\{\ones\}\subset \bar{\clC}$
  by property 1.
\item Suppose $\clC_{T-1}\subset \bar{\clC}$.  By \Prop{prop:CT-recursion}(1), every
$f\in\clC_T$ is of form $f=Ag_0+\sum_kh^k (Ag_k)$ for
some $g_0,\ldots,g_m\in\clC_{T-1}\subset \bar{\clC}$, and property 2 (applied
to each $g_i\in \bar{\clC}$) together with $\bar{\clC}$ a subspace gives $f\in \bar{\clC}$.
Hence $\clC_T\subset \bar{\clC}$.
\end{enumerate}
From \Prop{prop:CT-recursion}(3), $\clC_{\tau+1}=\clC_\tau=\clC$, so the
recursion of part (1) gives,
\[
  \clC=A\clC+\sum_{k=1}^m \big(\text{diag}(h^k\,A\big)\clC.
\]
In particular, $\clC$
satisfies the two properties.  Since $\bar{\clC}$ was an arbitrary subspace
satisfying the two properties, $\clC$ is the smallest such subspace.
\end{proof}

\subsection{Duality between controllability and observability
  properties}
\label{sec:proof:controllability_duality}
The two results just proved (\Prop{prop:CT-recursion} and
\Corollary{cor:controllable_space}) are statements of pure linear algebra:
$\clC_T$ and its limit $\clC$ are characterized entirely in terms of
$A$ and $h$, with no reference to probability. The next result is
different in character. Defn.~\ref{def:observability} of observability
is an inherently probabilistic notion---equality of path laws. The
bridge to the control-theoretic object $\clC_T$ is supplied by the
martingale $M^U$ of \Prop{prop:MU-martingale}.

\begin{proof}[Proof of \Prop{prop:duality}]
Let $\rho\in\clP(\bS)$, $U\in\clU_T$, $F\in\clZ_T$, and
$(Y,V)=\mathrm{BS\Delta E}_T[F,U]$. Recall the associated martingale
(\Prop{prop:MU-martingale}),
\[
M_t^U := Y_t(X_t) + \sum_{s=0}^{t-1}U_s^\tp e(Z_{s+1}), \qquad 0\leq t\leq T.
\]
Since $M^U$ is a $(\clF,\sP^\rho)$-martingale, 
\[
  \E^\rho(M_T^U)=\E^\rho(M_0^U) \implies 
\rho(Y_0) = \E^\rho\big(F(X_T)\big) + \sum_{t=0}^{T-1} \E^\rho\big(U_t^\tp e(Z_{t+1})\big).
\]
Set $F=c\ones$, and let $S_T:=c+\sum_{t=0}^{T-1}U_t^\tp
e(Z_{t+1})\in\clZ_T$. Then
$
\rho(Y_0) = \E^\rho(S_T)$.  
Take $\rho=\mu,\nu$, and subtract:
\[
\mu(Y_0) - \nu(Y_0) = \E^\mu(S_T)-\E^\nu(S_T).
\]
Since
$Y_0=\sfL_T[c\ones,U]$, this shows
\[
(\mu-\nu)\in\clC_T^\perp \quad\iff\quad \E^\mu(S_T)=\E^\nu(S_T),\;\;
\forall\,(c,U)\in\Re\times \clU_T.
\]
By the bijection noted in the Remark~\ref{rem:ST-bijection},
$S_T\mapsto(c,U)$ identifies $\clZ_T$ with $\clU_T\times\Re$: ranging over all $(c,U)\in\Re\times \clU_T$ is equivalent to ranging over all
$S_T\in\clZ_T$.  Hence
\[
(\mu-\nu)\in\clC_T^\perp \quad\iff\quad \E^\mu(S_T)=\E^\nu(S_T),\;\;\forall\,S_T\in\clZ_T \quad\iff\quad \sP^\mu|_{\clZ_T}=\sP^\nu|_{\clZ_T}.
\]
That is,
\[
  (\mu-\nu) \in\clC_T^\perp \quad\iff\quad \mu \sim_T
  \nu,\quad \forall\,\mu,\nu \in \clP(\bS).
\]
This shows $\clN_T\subseteq \clC_T^\perp$.  For the reverse inclusion, let
$w\in\clC_T^\perp$.  Because $\ones\in\clC_T$,
\[
  w(\ones) = \sum_{x\in\bS} w(x) = 0 .
\]
Set $c:=\sum_{x\in \bS} w(x) \ones_{[w(x)>0]}=-\sum_{x\in \bS} w(x) \ones_{[w(x)<0]}$.  If $c=0$ then $w=0$
and there is nothing to prove.  Otherwise, define
\[
\mu(x) := \frac{1}{c} w(x) \ones_{[w(x)>0]}, \quad x\in \bS, \quad \nu(x) := -\frac{1}{c}
w(x) \ones_{[w(x)<0]},\quad x\in \bS. 
\]
Then $\mu,\nu\in \clP(\bS)$ and $(\mu-\nu) = c^{-1}w \in
\clC_T^\perp$.  The  equivalence above gives $\mu\sim_T\nu$ and hence
$w = c\,(\mu-\nu) \in \clN_T$.  Thus $\clN_T = \clC_T^\perp$.  By
\Prop{prop:CT-recursion}(3) the sequence $\{\clC_T\}$ stabilizes, and
hence so does $\{\clN_T\}$.  By definition, $\tau_2(A,H)=\min\{T :
\clN_T = \clN\}$, and thus,
\[
  \clC=\clC_T=\clN_T^\perp=\clN^\perp,\qquad \forall \, T \geq \tau_2(A,H).
\]
In particular $\clC_T = \clC$ if and only if $\clN_T = \clN$, so the two
indices coincide. 
This also shows that controllability and
observability are dual properties.  

We next show the duality between detectability and stabilizability.
Note $\clC$ is a subspace of the space of functions on $\bS$
(identified with $\Re^d$), while
$\clN$ is a subspace of its dual, the space of signed measures on
$\bS$ (also identified with $\Re^d$).  The two are related by annihilation,
\[
  \clN = \clC^\perp:= \big\{ \mu : \mu(f) = 0, \ \ \forall\, f\in\clC \big\},
\]
Consider direct sum decompositions of the function space and the
measure space as 
\[
  \Re^d = \clC \oplus \clC^c, \qquad
  \Re^d = \clN \oplus \clN^c .
\] 

The matrix $A:\Re^d\mapsto \Re^d$ is a linear operator on the space of
functions and its dual $A^\tp: \Re^d\mapsto \Re^d$ is a linear
operator on the space of measures.   By
Corollary~\ref{cor:controllable_space}, $\clC$ is $A$-invariant, and
therefore $\clN=\clC^\perp$ is $A^\tp$-invariant.  Upon choosing dual
bases for the two decompositions, 
\[
A=\begin{pmatrix}A_c&\ast\\0&A^\perp\end{pmatrix} , \qquad A^{\mathsf T}=\begin{pmatrix}A_c^{\mathsf T}&0\\
  \ast&(A^\perp)^{\mathsf T}\end{pmatrix}
\]
The $\text{spec}(A)=\text{spec}(A_c)\cup \text{spec}(A^\perp)$.

With this notation,
\begin{align*}
  \text{HMM}(A,H) \text{ detectable}
  &\iff A^\tp|_{\clN} \text{ is Schur stable} 
  \iff A^\perp \text{ is Schur stable} \\
  &\iff \mathrm{E}^{\mathrm{PER}}(A)\subseteq\clC 
  \iff \text{dual control system \eqref{eq:dual_BSDE} is stabilizable}.
\end{align*}
The first and last are Definitions~\ref{def:detectability}
and~\ref{def:controllable} for detectability and stabilizability, respectively.
For the other two:
\begin{enumerate}
\item $A^\tp|_{\clN}$ has matrix $(A^\perp)^\tp$, and a matrix and its transpose have the
  same eigenvalues.
\item The peripheral eigenvalues of $A$ are semisimple.  Since $\clC$ and
  $\mathrm{E}^{\mathrm{PER}}(A)$ are both $A$-invariant, so is
  $\clC\cap\mathrm{E}^{\mathrm{PER}}(A)\subset\clC$.  
  \begin{align*}
 \# \; \{\text{unit modulus eigenvalues of
    }A\}&=\dim(\mathrm{E}^{\mathrm{PER}}(A)),\\
    \# \; \{\text{unit modulus eigenvalues of
    }A_c\}&=\dim(\clC\cap\mathrm{E}^{\mathrm{PER}}(A)).
  \end{align*}
  Because
  $\text{spec}(A)=\text{spec}(A_c)\cup \text{spec}(A^\perp)$,
  \[
\mathrm{E}^{\mathrm{PER}}(A)\subseteq\clC \iff \dim(\mathrm{E}^{\mathrm{PER}}(A))=\dim(\clC\cap\mathrm{E}^{\mathrm{PER}}(A))\iff
A^\perp \text{ is Schur stable}. 
  \]
\end{enumerate}
\end{proof}

\section{Proof of the supermartingale property}\label{appendix:OCP_supermg}

\begin{proof}[Proof of \Prop{cor:U-opt-zero_elegant}]
  Recall the $(\clF,\sP^\rho)$-martingale from \Prop{prop:MU-martingale}(1),
  \[
  M_t^U = Y_t(X_t) + \sum_{s=0}^{t-1} U_s^\tp e(Z_{s+1}),\quad t\in \mathbb{T}.
\]
Therefore, conditional expectation and conditional variance are given by,
\begin{align*}
\pi_t^\rho(Y_t)  &= \E^\rho (M_t^U \mid \clZ_t) - \sum_{s=0}^{t-1} U_s^\tp e(Z_{s+1}),\qquad t\in \mathbb{T}.\\
\pi_t^\rho\big( (Y_t - \pi_t^\rho(Y_t))^2 \big) &=
\E^\rho\big((M^U_t)^2\mid\clZ_t\big) - \left( \E^\rho\big(M^U_t\mid\clZ_t\big) \right)^2, \qquad t\in \mathbb{T}.
\end{align*}
Therefore,
\[
\Theta^U_{t+1} = \E^\rho\big((M^U_{t+1})^2\mid\clZ_{t+1}\big) - \big(\E^\rho(M^U_{t+1}\mid\clZ_{t+1})\big)^2
             - \sum_{s=0}^{t}\E^\rho\big(\ell(Y_{s+1},V_s,U_s;X_s)
             \mid\clZ_s\big),\quad t\in \mathbb{T}^-.
           \]
The formula for the quadratic variation gives,
\begin{align*}
\E^\rho\big((M^U_{t+1})^2\mid\clF_t\big) &= (M^U_t)^2
   + \E^\rho\big(\ell(Y_{t+1},V_t,U_t;X_t)\mid\clF_t\big), \qquad t\in\mathbb{T}^-,\\
\E^\rho\big((M^U_{t+1})^2\mid\clZ_t\big) &= \E^\rho\big((M^U_t)^2\mid\clZ_t\big)
   + \E^\rho\big(\ell(Y_{t+1},V_t,U_t;X_t)\mid\clZ_t\big), \qquad t\in\mathbb{T}^-,
\end{align*}
the second by the tower property, because $\clZ_t\subseteq\clF_t$.
        
This gives
\begin{align*}
\E^\rho \big( \Theta^U_{t+1} \mid \clZ_t \big) &=
  \E^\rho\big((M^U_{t+1})^2\mid\clZ_{t}\big) -
  \E^\rho\big(\big(\E^\rho(M^U_{t+1}\mid\clZ_{t+1})\big)^2 \mid
                                               \clZ_t) \\
            & \qquad \qquad - \sum_{s=0}^{t}\E^\rho\big(\ell(Y_{s+1},V_s,U_s;X_s)
              \mid\clZ_s\big),\quad t\in \mathbb{T}^-\\
  &=  \E^\rho\big((M^U_t)^2\mid\clZ_t\big) -
  \E^\rho\big(\big(\E^\rho(M^U_{t+1}\mid\clZ_{t+1})\big)^2 \mid
    \clZ_t)  \\
   & \qquad \qquad - \sum_{s=0}^{t-1}\E^\rho\big(\ell(Y_{s+1},V_s,U_s;X_s)
     \mid\clZ_s\big),\quad t\in \mathbb{T}^-\\
  & =  \Theta^U_t +
  \big(\E^\rho(M^U_{t}\mid\clZ_{t})\big)^2  -
  \E^\rho\big(\big(\E^\rho(M^U_{t+1}\mid\clZ_{t+1})\big)^2  \mid
                                               \clZ_t), \quad t\in \mathbb{T}^-.
\end{align*}
By the Jensen's inequality, tower property, and the martingale property,
\[
  \E^\rho\Big(\big(\E^\rho(M^U_{t+1}\mid\clZ_{t+1})\big)^2 \,\Big|\, \clZ_t\Big)
    \;\ge\; \big(\E^\rho(M^U_{t+1}\mid\clZ_t)\big)^2
    \;=\; \big(\E^\rho(M^U_{t}\mid\clZ_t)\big)^2,
  \]
  with equality iff $\E^\rho(M^U_{t+1}\mid\clZ_{t+1})=\E^\rho(M^U_{t}\mid\clZ_t)$.  
Then 
\[
  \E^\rho\big(\Theta^U_{t+1}\mid\clZ_t\big)
    \;\le\; \Theta^U_t , \quad t\in \mathbb{T}^-.
\]
Hence $\Theta^U$ is a $(\clZ,\sP^\rho)$-supermartingale.  At $U =
U^\opt = 0$,
\[
  \E^\rho(M_t\mid\clZ_t) = \pi^\rho_t(Y_t) = c,\quad t\in \mathbb{T}.
\]
Since $c$ does not depend on $t$, Jensen's inequality above holds with equality for every
$t\in\mathbb{T}^-$, and $\Theta^{U^\opt}$ is a $(\clZ,\sP^\rho)$-martingale.
\end{proof}

\section{Proof of necessity}\label{appdx:thm:necessity}
We begin by recalling some background.

\newP{Notation and background}
From~\Sec{sec:model_props}, $\clE^{\mathrm{PER}}(A) = \ker\big(A^p - I\big)$, and
let $\Pi:\Re^d\to \clE^{\mathrm{PER}}(A)$ be the projection matrix
(Appendix.~\ref{appdx:prop:part-i-implication}).  Let $\{\bS_j : j=1,2,\ldots,J\}$
be the recurrent classes of $A^p$.  These are the cyclic sub-classes of the
recurrent classes of $A$~\cite[Rem.~8.4.9]{horn_johnson_1985}.  Restricted to
$\bS_j$, $A^p$ is a primitive (irreducible and aperiodic) stochastic matrix.  Let
$\rho_j\in\sP(\bS)$ be the unique invariant measure of $A^p$ supported on
$\bS_j$.

For each $x \in \bS$ and $g \in \Re^d$,
\begin{align*}
  (\Pi g)(x)
  &= \lim_{n\to\infty} \frac1n \sum_{k=0}^{n-1} \E^x\big(g(X_{kp})\big)\\
  &= \lim_{n\to\infty} \frac1n \sum_{k=0}^{n-1} \sum_{j=1}^J
     \E^x\Big(g(X_{kp})\,\ones_{\bS_j}(X_{kp})\Big)
     \qquad (\because\;\; \sP^x(X_{kp}\notin\bS^{\text{\tiny RECC}}) \to 0)\\
  &= \sum_{j=1}^J \rho_j(g)\,
     \lim_{n\to\infty} \frac1n \sum_{k=0}^{n-1} \E^x\big(\ones_{\bS_j}(X_{kp})\big)
     = \sum_{j=1}^J \rho_j(g)\,\big(\Pi\,\ones_{\bS_j}\big)(x) .
\end{align*}
In particular,
\begin{equation}\label{eq:cyclic_class}
  \Pi g \;=\; \sum_{j=1}^J \rho_j(g)\,\Pi\,\ones_{\bS_j},
  \qquad \forall\, g\in\Re^d .
\end{equation}

\begin{proof}[Proof of \Prop{thm:necessity}]
Suppose the $\mathrm{HMM}(A,H)$ is not detectable.  Fix $
\tau = p \,\tau_1(A)$. 
Our goal is to find $\rho\in\clP(\bS)$, $c\in\Re$ and
$F-c\ones\in\clZfo{\tau}(\rho)$ such that dissipation is zero while
the variance $\dvar^\rho(\sfL_\tau[F](X_0))>0$.   

We begin with a \textbf{claim}:  There exist $\rho\in\clP(\bS)$ with
$\supp(\rho)\subseteq\bS_{\mathrm{RECC}}$ and $f\in\mathrm
E^{\mathrm{PER}}(A)$ such that (a) $\rho(f)=0$, (b) $\rho(f^2)>0$, and (c) $\rho(fg)=0$ for all
$g\in\clC$.

Suppose the claim is true.  Consider
\[
(Y,V) = \mathrm{BS\Delta E}_\tau[f], \qquad M_t = Y_t(X_t),\quad 0\leq t\leq\tau .
\]
Its unique solution is given by
\[
Y_t = A^{\tau-t} f, \quad V_t = 0, \quad 0\leq t\leq \tau .
\]
Then $Y_0 = A^\tau f = f$, and the
dissipation is
\[
\E^\rho\big(\langle M\rangle_\tau\big)
= \E^\rho\big(|f(X_\tau) - f(X_0)|^2\big) = 0 ,
\]
where the last equality follows because
$\supp(\rho)\subseteq\bS_{\mathrm{RECC}}$ and $\tau$ is a multiple of
$p$, so that $X_\tau$ lies in the same cyclic subclass $\bS_j$ as
$X_0$ ($\bS_j$ is a recurrent class of $A^p$, hence closed), and $f$ is
constant on $\bS_j$ (because $f\in\text{range}(\Pi)=\mathrm
E^{\mathrm{PER}}(A)$). 
The proof is completed by showing that $f\in\clZfo{\tau}(\rho)$.  For
this purpose, consider the~\eqref{eq:dual-optimal-control} with
terminal condition $f$.  For $U\in\clU_\tau$, let
\[
(\tilde Y,\tilde V) = \mathrm{BS\Delta E}_\tau[f,U],\qquad \tilde{M}_t^U=
\tilde Y_t(X_t) + \sum_{s=0}^{t-1} U_s^\tp e(Z_{s+1}),\quad 0\leq t\leq\tau.
\]
Linearity of the BS$\Delta$E gives
\[
\tilde Y_0 = \sfL_\tau[f,U] \;=\; \sfL_\tau[f,0] + \sfL_\tau[0,U] \;=\; f + g,
\qquad g := \sfL_\tau[0,U] \in \clC_\tau \subseteq \clC .
\]
By parts (a) and (c) of the claim, $\rho(f)=0$ and $\rho(fg)=0$, so the cross
term vanishes:
\[
\dvar^\rho\big(\tilde Y_0(X_0)\big)
= \rho(f^2) + 2\rho(fg) + \dvar^\rho\big(g(X_0)\big)
= \rho(f^2) + \dvar^\rho\big(g(X_0)\big)
\;\geq\; \rho(f^2).
\]
Since $\E^\rho(\langle \tilde M^U\rangle_\tau)\geq0$ and, as shown above,
$\sfL_\tau[f,0]=f$ with $\E^\rho(\langle M\rangle_\tau)=0$,
\[
\sfJ_\tau^\rho(U;f)
= \dvar^\rho\big(\tilde Y_0(X_0)\big) + \E^\rho\big(\langle \tilde M^U\rangle_\tau\big)
\;\geq\; \rho(f^2) \;=\; \sfJ_\tau^\rho(0;f), \qquad \forall\,U\in\clU_\tau .
\]
That is, $U=0=:U^\opt$ is an optimal control for~\eqref{eq:dual-optimal-control} with optimal value
$\rho(f^2)$.  Because the MMSE estimator is unique, 
\[
\pi_\tau^\rho(f) = \rho(Y_0) - \sum_{t=0}^{\tau-1} (U^\opt)^\tp
e(Z_{t+1})=\rho(f) = 0,\quad \sP^\rho\text{-a.s.} 
\]
Therefore, $\sigma_\tau^\rho(f) =0$ and hence $f\in
\clZfo{\tau}(\rho)$.  We have thus found an $F=f$ such that,
\[
\E^\rho\big(\langle M\rangle_\tau\big)= 0, \qquad
\dvar^\rho\big(\sfL_\tau[f](X_0)\big) = \rho(f^2) > 0
\]
by part (b) of the claim, so the conclusion of \Lemma{lemma:Lasalle} fails at
$(\rho,f)$.

It remains to verify the claim.  Define
\[
 \Lambda:\Re^d\to \Re^J\qquad \text{by}\qquad \Lambda g := (\rho_1(g),\ldots,\rho_J(g)).
\]
Because the HMM is non-detectable, there exists a non-zero $b \in \Re^J$ with
\[
  b^{\mathsf T}\Lambda g=0,\quad \forall\, g\in\clC. 
\]
Indeed, suppose no such $b$ exists, i.e.,\ $\Lambda(\clC)=\Re^J$.  
Since $\clC$ is $A$-invariant, $\Pi(\clC)\subseteq\clC$.  For each
$j$, choose $g\in\clC$ with $\Lambda g = e_j$.  Then, using~\eqref{eq:cyclic_class}, 
\[
\Pi\ones_{\bS_j} = \sum_{l=1}^{J}\rho_l(g)\,\Pi\ones_{\bS_l} = \Pi g
\in \clC,\quad j=1,2,\ldots,J.
\]
So, $\mathrm
E^{\mathrm{PER}}(A)=\text{span}\{\Pi\ones_{\bS_j}:j=1,2,\ldots,J\}\subseteq\clC$.
That is, the HMM is detectable.

Set,
\[
\rho := \tfrac1J\textstyle\sum_j \rho_j, \qquad
f := J \sum_{j=1}^J b_j\,\Pi\ones_{\bS_j} .
\]
Then $\supp(\rho)\subseteq\bS_{\mathrm{RECC}}$ and
$\rho(fg)=b^{\mathsf T}\Lambda g = 0$
for every $g\in \clC$. Because $\ones\in \clC$, $\rho(f)=0$.  Finally,
$\rho(f^2)=J \sum_jb_j^2>0$.  This verifies the claim.

 
\end{proof}

\section{Proofs for \Sec{sec:filter_stability}}

\subsection{Proof of uniform bounds under Assumption~\ref{ass:equivalent-priors}}
\label{appdx:lem:uniform_ratio_bound}

\begin{proof}[Proof of \Prop{lem:uniform_ratio_bound}]
The bound for $\gamma_T$ was already derived in
\Sec{sec:gamma_T_defn}.  
By the analytical characterization in
\Prop{prop:backward-map-and-bsde},
\[
Y_t(x) = \sum_{z\in\bO} H(x,z)\,\big(AY_{t+1}^z\big)(x),
\qquad Y_{t+1}^z(x) := Y_{t+1}(x)\big|_{[Z_{t+1}=z]} .
\]
Since $H$ and $A$ are row-stochastic, $Y_t(x)$ is a convex combination of the
values $\{Y_{t+1}^z(y): y\in\bS,\,z\in\bO\}$.  Hence if $Y_{t+1}$ takes values in
$[\alpha,\alpha^{-1}]$, so does $Y_t$.  Since $Y_T=\gamma_T$, the
result follows by induction.  
\end{proof}

\subsection{Proof of the Markov property of the BS$\Delta$E}\label{sec:markov_property}

\begin{proof}[Proof of \Prop{prop:Markov_BSDE}]
The proof relies on the fact that the un-normalized measures
$\{\sigma_t^\mu:0\leq t\leq T\}$ and $\{\sigma_t^\nu:0\leq t\leq T\}$ are
Markov processes.  Indeed, the un-normalized filter recursion
is defined by the linear map $\rho\mapsto \Phi(\rho;z)$ by
\[
\Phi(\rho;z)(f):=\rho(\text{diag}(H(:,z))
    Af),\;\; z\in\bO,\; f\in \Re^d.
\]
Set $a = \pi_{\bar
  t}^\nu$ and $b = \pi_{\bar t}^\mu$.  By 
Assumption~\ref{ass:equivalent-priors}, 
$[\sigma_{\bar t}^\nu(\ones)>0]=[\sigma_{\bar t}^\mu(\ones)>0]$, and
$(\pi_{\bar
  t}^\nu, \pi_{\bar
  t}^\mu)$ are well-defined on $[\sigma_{\bar t}^\nu(\ones)>0]$.
Therefore, everything
below is understood on this event. 

Over the time-interval
$[\bar t, T]$, the recursions for the two un-normalized filters are given by,
\begin{align*}
\sigma_{t+1}^{(a,\bar{t})} &= \Phi(\sigma_{t}^{(a,\bar{t})};Z_{t+1}),\quad
  \bar t\leq t\leq T-1,\quad \sigma_{\bar t}^{(a,\bar{t})} =a, \\
\sigma_{t+1}^{(b,\bar{t})} &= \Phi(\sigma_{t}^{(b,\bar{t})};Z_{t+1}),\quad
\bar t\leq t\leq T-1,\quad \sigma_{\bar t}^{(b,\bar{t})} =b.
\end{align*}
Because $\Phi(\cdot\,;z)$ is linear,
\begin{align*}
  \sigma_T^\nu(x) &= \sigma_{\bar
  t}^\nu(\ones)\,\sigma_T^{(a,\bar t)}(x), \quad x\in \bS,\\
\sigma_T^\mu(x)&=\sigma_{\bar
  t}^\mu(\ones)\,\sigma_T^{(b,\bar t)} (x), \quad x\in \bS.
\end{align*}

Over the same time-interval $[\bar t, T]$, the BS$\Delta$E
recursion is given by,
\begin{align*}
Y_t(x) &= (AY_{t+1})(x) + h^\tp(x) V_t(x) - V_t^\transpose(x)
         e(Z_{t+1}),
         \quad x\in\bS, \quad \bar t\leq t\leq T-1, \\
  Y_T(x)&=\gamma_T(x) = \begin{cases}
    \frac{\sigma_{T}^{(b,\bar{t})}(x)}{\sigma_{T}^{(a,\bar{t})}(x)}
    \cdot \frac{\sigma_{T}^{(a,\bar{t})}(\ones)}{\sigma_{T}^{(b,\bar{t})}(\ones)}, & \sigma_T^{(a,\bar t)} (x)>0 \\
  1 & \sigma_T^{(a,\bar t)} (x)=0 , 
\end{cases}
    \qquad x\in\bS.
\end{align*}
where note $\sigma_T^{(a,\bar t)} (x)>0$ iff $\sigma_T^{\nu} (x) = \sigma_{\bar
  t}^\nu(\ones)\,\sigma_T^{(a,\bar t)}(x) >0$, and $\sigma_T^{(a,\bar
  t)} (x)>0$ implies $\sigma_{T}^{(b,\bar{t})}(\ones) \geq
\gamma_0^{\min{}} \sigma_{T}^{(a,\bar{t})}(\ones) >0 $, and 
\[
\frac{\sigma_{T}^{(b,\bar{t})}(x)}{\sigma_{T}^{(a,\bar{t})}(x)}
    \cdot
    \frac{\sigma_{T}^{(a,\bar{t})}(\ones)}{\sigma_{T}^{(b,\bar{t})}(\ones)}
    = \frac{\sigma_T^\mu(x)}{\sigma_T^\nu(x)}\cdot
      \frac{\sigma_T^\nu(\ones)}{\sigma_T^\mu(\ones)}.
    \]
    the right hand side being the formula used for defining $\gamma_T$
    (see Defn.~\ref{defn:likelihood_ratio}). 

Because of the nature of forward filter recursion, the terminal
condition $Y_T(x) = \gamma_T(x)$ is a 
deterministic function of $(a, b, Z_{\bar t+1}, Z_{\bar
t+2},\ldots,Z_T)$.  By \Prop{prop:backward-map-and-bsde}, each $Y_t$,
for $\bar t\leq t\leq T$, is a deterministic function of $(a,b,
Z_{\bar t+1}, Z_{\bar t+2},\ldots, Z_t)$ alone. At time 
$t=\bar t+\tau$, denoting this function by $\shift$,
\[
Y_{\bar t+\tau}(x) = \shift (\pi_{\bar t}^\nu,\pi_{\bar t}^\mu, Z_{\bar
t+1}, Z_{\bar t+2},\ldots, Z_{\bar t+\tau};x), \qquad x\in\bS, \quad \text{on } [\sigma^\nu_{\bar t}(\ones)>0] .
\]
The function depends upon $\bar t,T$ and HMM parameters $(A,H)$, but not upon
$(Z_1,Z_2,\ldots,Z_{\bar t})$.  
\end{proof}

\subsection{Proof of Corollary~\ref{corollary:prop:Markov_BSDE}}
\label{appdx:corollary:prop:Markov_BSDE}
\begin{proof}[Proof of Corollary~\ref{corollary:prop:Markov_BSDE}]
The two solutions,
$\{(Y_t,V_t):\bar t\leq t\leq \bar t+\tau\}$ and 
$\{(\tilde Y_t,\tilde V_t):0\leq t\leq \tau\}$, are defined using
identical recursions using identical terminal conditions, by uniqueness of the BS$\Delta$E solution
(\Prop{prop:existence_BSDE}), it follows that 
\begin{equation}\label{eq:pathwise_identity}
\tilde Y_t(x) = Y_{\bar t+t}(x), \qquad \tilde V_t(x) = V_{\bar
t+t}(x), \quad x \in \bS, \quad 0 \leq t \leq \tau.\tag{pathwise identity}
\end{equation}
Taking $t=0$ and using $\rho = \pi_{\bar t}^\nu$,
\[
\rho\big((\tilde Y_0-1)^2\big) = \sum_{x\in\bS} \pi_{\bar t}^\nu(x)
\big(Y_{\bar t}(x)-1\big)^2 = \E^\nu\big((Y_{\bar t}(X_{\bar
t})-1)^2 \mid \clZ_{\bar t}\big).
\]

To prove $\tilde F - 1 \in \clZfo{\tau}(\rho)$ and the equality for
dissipation, we first claim that the process $\{(\tilde X_t, \tilde Z_t) : 0 \leq t
\leq \tau\}$ under $\tilde\sP^\rho$ has the same law as the process
$\{(X_s, Z_s) : \bar t \leq s \leq \bar t+\tau\}$ under $\sP^\nu$,
conditioned on $\clZ_{\bar  t}$.  Suppose the claim is true.

Since $\pi_{\bar t+\tau}^\nu\big(Y_{\bar t+\tau}\big) = \E^\nu\big(\gamma_T(X_T)
\mid \clZ_{\bar t+\tau}\big) = 1$, and by construction, $\tilde F$ and
$Y_{\bar t+\tau}$ are given by the same function $\shift (\cdot)$,
evaluated at $(\tilde Z_1,\ldots,\tilde Z_\tau)$ and $(Z_{\bar t+1},\ldots,Z_{\bar t+\tau})$ respectively,
\[
\pi_\tau^\rho\big(\tilde F\big) = 1,
\]
that is, $\tilde F - 1 \in \clZfo{\tau}(\rho)$. Next, by
\Prop{prop:M0-martingale} applied to $(\tilde Y, \tilde V)$,  
\[
\tilde \E^\rho\big(\langle \tilde M \rangle_\tau\big) = \sum_{t=0}^{\tau-1}
\tilde \E^\rho\big( \ell (\tilde Y_{t+1}, \tilde V_t, 0; \tilde X_t) \big)
= \sum_{t=\bar
t}^{\bar t+\tau-1} \E^\nu\big( \ell (Y_{t+1},V_t,0;X_t) \mid \clZ_{\bar t}\big).
\]
where the last equality follows from using
the~\eqref{eq:pathwise_identity} plus the claim to identify
$\tilde\sP^\rho$-expectation with $\sP^\nu$-expectation conditional on
$\clZ_{\bar t}$. 

It remains to prove the claim.  We have
\begin{align*}
\E^\nu\big( &f(X_{\bar t},\ldots, X_{\bar t+\tau}, Z_{\bar t+1},\ldots,
  Z_{\bar t+\tau}) \mid \clZ_{\bar t} \big) \\
  & = \E^\nu\big( \E^\nu( f(X_{\bar t},\ldots, X_{\bar t+\tau}, Z_{\bar t+1},\ldots,
  Z_{\bar t+\tau}) \mid \clZ_{\bar t} \vee \sigma(X_{\bar t}))\mid \clZ_{\bar t} \big),
\end{align*}
and by the Markov property,
\begin{align*}
\E^\nu\big(& f(X_{\bar t},\ldots, X_{\bar t+\tau}, Z_{\bar t+1},\ldots,
Z_{\bar t+\tau}) \mid \clZ_{\bar t}\vee \sigma(X_{\bar t})\big) \\&= \E^\nu\big( f(X_{\bar t},\ldots, X_{\bar t+\tau}, Z_{\bar t+1},\ldots,
  Z_{\bar t+\tau}) \mid  \sigma(X_{\bar t})\big).
\end{align*}
Therefore,
\begin{align*}
\E^\nu\big( &f(X_{\bar t},\hdots, X_{\bar t+\tau}, Z_{\bar t+1},\ldots,
  Z_{\bar t+\tau}) \mid \clZ_{\bar t} \big) \\
  &= \sum_{x\in \bS}
  \E^\nu\big( f(X_{\bar t},\ldots, X_{\bar t+\tau}, Z_{\bar t+1},\ldots,
  Z_{\bar t+\tau}) \mid [X_{\bar t} =x]\big) \sP^\nu( [X_{\bar t} =x]
  \mid \clZ_{\bar t} ) \\
  &= \sum_{x\in \bS} \E^\nu\big( f(X_{\bar t},\ldots, X_{\bar t+\tau}, Z_{\bar t+1},\ldots,
  Z_{\bar t+\tau}) \mid [X_{\bar t} =x]\big) \pi_{\bar t}^\nu(x).
\end{align*}
Finally, by time-homogeneity of $\mathrm{HMM}(A,H)$,
\[
\E^\nu\big(
f(X_{\bar t},\ldots, X_{\bar t+\tau}, Z_{\bar t+1},\ldots, Z_{\bar t+\tau}) \mid X_{\bar t}=x\big) =
\tilde \E^x\big(f(\tilde X_0 ,\ldots, \tilde X_{\tau}, \tilde Z_{1},\ldots,\tilde Z_\tau)\big), \quad \forall\;x\in\bS.
\]
Substituting above and using $\rho = \pi_{\bar t}^\nu$,
\begin{align*}
\E^\nu\big( & f(X_{\bar t},\ldots, X_{\bar t+\tau}, Z_{\bar t+1},\ldots, Z_{\bar t+\tau}) \mid \clZ_{\bar
t} \big) \\&= \sum_{x\in\bS} \rho(x)\, \tilde \E^x\big(f(\tilde X_0,\ldots, \tilde X_{\tau}, \tilde Z_{1},\ldots,
\tilde Z_\tau)\big) = \tilde\E^\rho\big(f(\tilde X_0,\ldots, \tilde X_{\tau}, \tilde Z_{1},\ldots,\tilde
Z_\tau)\big),
\end{align*}
proving the claim.
\end{proof}

\section{Proofs for \Sec{sec:rates}}
\subsection{Characterization of
  $\sfN_\tau$}\label{appdx:characterization_of_N}

\begin{proof}[Proof of \Prop{prop:N-strata}]
Let $\rho\in\sfN_\tau$.  The key to the proof is the following
claim: 
\[
\textbf{(claim)}\qquad  \clZfo{\tau}(\rho')=\clZfo{\tau}(\rho),\quad
\forall \, \rho'\in\clP(\bS) \quad \text{s.t.} \quad \supp(\rho')=\supp(\rho).
\]
Suppose the claim holds.  Let $\rho'\in \clP(\bS)$ with $\supp(\rho')=\supp(\rho)$ and let
$F\in \clZfo{\tau}(\rho')$.  Then $F\in \clZfo{\tau}(\rho)$, and
therefore
\[
\sfL_\tau[F](x) = 0, \quad
\forall\,x\in \supp(\rho) = \supp(\rho').
\]
Therefore, $\rho' \in \sfN_\tau$.  That is,
$\clP^\circ(\supp(\rho)) \subset\sfN_\tau$. 

It remains to prove the claim.  Because the map $\rho\mapsto
\sigma_\tau^\rho$ is linear, 
\[
\sigma_\tau^\rho(F) = \sum_{x\in \supp(\rho)} \rho(x) \sigma_\tau^x(F).
\]
Therefore, without any assumptions on $\rho$, it always holds that
\begin{equation}\label{eq:one_direction}
\bigcap_{x\in \supp(\rho)} \clZfo{\tau}(\delta_x) \subseteq \clZfo{\tau}(\rho).
\end{equation}
We show that the reverse inclusion holds for $\rho\in \sfN_\tau$.  For
this note that the space $\clZfo{\tau}(\rho)$ is closed under multiplication by
$\clZ_\tau$-measurable scalars: if $\sigma_\tau^\rho(F)=0$ and $c$ is
$\clZ_\tau$-measurable, then $\sigma_\tau^\rho(cF)=c\,\sigma_\tau^\rho(F)=0$.

Suppose $\rho \in \sfN_\tau$, $F\in \clZfo{\tau}(\rho)$, and let $x\in
\supp(\rho)$.   Taking
\[
c= \begin{cases} \pi_\tau^x(F), &
  \sigma_\tau^x(\ones)>0 \\ 0, & \text{o.w.} \end{cases},
\]
we have
\[
  0=\sfL_\tau[cF](x) = \E^x(\pi_\tau^x(F) F(X_\tau) ) =
  \E^x(|\pi_\tau^x(F)|^2)\implies \pi_\tau^x(F)=0 ,\quad \sP^x\text{-a.s.}
\]
where we used the tower property.  We have two cases to consider:
\begin{enumerate}
\item If $\sigma^x_\tau(\ones)>0$ then $\sigma_\tau^x(F) =
  \pi_\tau^x(F) \sigma^x_\tau(\ones) = 0$.
\item If $\sigma^x_\tau(\ones)=0$ then $\sigma^x_\tau(F)=0$ even
  though $\pi_\tau^x(F)$ is undefined.
\end{enumerate}
In particular, $F\in \clZfo{\tau}(\delta_x)$.  We have thus shown
\[
\rho \in \sfN_\tau \implies \clZfo{\tau}(\rho)\subseteq\clZfo{\tau}(\delta_x),
\qquad \forall\,x\in\supp(\rho).
\]
Combined with~\eqref{eq:one_direction},
\begin{equation}\label{eq:N-pointwise}
\rho \in \sfN_\tau \implies\clZfo{\tau}(\rho)= \bigcap_{x\in \supp(\rho)} \clZfo{\tau}(\delta_x).
\end{equation}
Applying~\eqref{eq:one_direction} once again, now to $\rho'$, noting
$\supp(\rho)=\supp(\rho')$, 
\[
\rho \in \sfN_\tau \implies \clZfo{\tau}(\rho) \subseteq \clZfo{\tau}(\rho') .
\]
By \Prop{Prop:rank-is-constant}, $\dim\clZfo{\tau}(\cdot)$ depends on its argument only
through the support.  A subspace contained in another of
the same dimension coincides with it, and hence
\[
\rho \in \sfN_\tau \implies \clZfo{\tau}(\rho) = \clZfo{\tau}(\rho').
\]
This proves the claim.  

It remains to show $\sfN_\tau$ is closed.  
Let $\rho_n\to\rho$ with $\rho_n\in\sfN_\tau$.  Then $\supp(\rho)
\subseteq \supp(\rho_n)$ for $n$ large enough.  Let
$F\in\clZfo{\tau}(\rho)$ and $x\in\supp(\rho)$.  Set
\[
c_n= \begin{cases} \pi_\tau^{\rho_n}(F), &
  \sigma_\tau^{\rho_n}(\ones)>0 \\ 0, & \text{o.w.} \end{cases},
\]
so that
$\sigma_\tau^{\rho_n}(F-c_n\ones)=0$.  That is, $F-c_n\ones\in\clZfo{\tau}(\rho_n)$.  Since
$\rho_n\in\sfN_\tau$ and $x\in\supp(\rho_n)$,
\[
  0 \;=\; \sfL_\tau[F-c_n\ones](x) \;=\; \E^x(F(X_\tau)) - \E^x(c_n).
\]
We claim that $\lim_{n\to\infty} \E^x(c_n)=0$.  Then 
\[
\sfL_\tau[F](x) = 0,\quad \forall\, F\in\clZfo{\tau}(\rho),\quad \forall\, x\in \supp(\rho)\quad  \implies \quad \rho\in\sfN_\tau.
\]
To show the limit, note that $\rho(x)>0$ implies
$\sP^x\ll\sP^\rho$. So,
\[
\sigma^\rho_\tau(\ones)>0,\quad \sP^\rho\text{-a.s.} \quad \implies
\quad \sigma^\rho_\tau(\ones)>0,\quad \sP^x\text{-a.s.}
\]
Because $\rho\mapsto\sigma^\rho_\tau$ is linear,
$\sigma^{\rho_n}_\tau\to\sigma^{\rho}_\tau$.  Therefore,
\[
  c_n \;=\; \pi_\tau^{\rho_n}(F) = \frac{\sigma^{\rho_n}_\tau(F)}{\sigma^{\rho_n}_\tau(\ones)}
  \;\longrightarrow\;
  \frac{\sigma^{\rho}_\tau(F)}{\sigma^{\rho}_\tau(\ones)} \;=\; \pi_\tau^{\rho}(F)=
  0,\quad \sP^x\text{-a.s.}
\]
As $|c_n|\le\|F\|_\infty$, dominated convergence gives $\lim_{n\to\infty} \E^x(c_n) = 0$.
\end{proof}

\subsection{Proof of regularity of the Poincar\'e
  constant}\label{appdx:prop:beta-regularity} 

We begin with some preliminaries.  
Fix $\rho\in \clP(\bS)$.  The feasible set for the optimization is
\[
K(\rho):=\big\{F \in\clZfo{\tau}(\rho), \ \ \rho\big(|\sfL[F]|^2\big) = 1 \big\}.
\]
In general, the feasible set need not be compact.  The issue arises
when there exists $\tilde{F} \in \clZfo{\tau}(\rho)$ such that
$\E^\rho \big( |\tilde{F}(X_\tau)|^2 \big) = 0$.  Then
\[
\rho\big(|\sfL[\tilde F]|^2\big)   \leq \E^\rho \big(
|\tilde{F}(X_\tau)|^2 \big) =0 \implies \sfL[\tilde F](x) = 0,\quad
x\in \supp(\rho).
\]
Therefore, $\rho\big(|\sfL[F+c\tilde F]|^2\big) =
\rho\big(|\sfL[F]|^2\big)$.  That is, if $F\in K(\rho)$, 
then $F+c \tilde F \in K(\rho)$, for every $c\in
\Re$.  These unbounded directions, however, do not pose a problem,
since the energy is invariant along them,
\[
\rho (\clE[\tilde F]) \leq \E^\rho \big(
|\tilde{F}(X_\tau)|^2 \big) =0 \implies \rho (\clE[F+c\tilde F]) = \rho (\clE[F]).
\]

The problem of unbounded directions is resolved by restricting the
optimization to a suitably chosen subspace as follows:
\[
  \clS^\rho_\tau := \big\{\, F\in\clZfo{\tau}(\rho) \;:\;
    F\,\ones_{[\sigma^\rho_\tau = 0]} = 0 \,\big\}.
\]
To see that $\clS^\rho_\tau$ is a subspace, we construct an
explicit projection operator as follows.
\begin{align*}
  \text{(projection)} \quad & \Pi(\rho) : \clZf{\tau} \to
    \clS^\rho_\tau \quad\text{by}\quad
    (\Pi(\rho)[F])(x) := \big(F(x) - \pi^\rho_\tau(F)\big)\,
      \ones_{[\sigma^\rho_\tau(x) > 0]},\quad x\in \bS.
\end{align*}
Indeed, $\Pi(\rho)$ is the identity on $\clS^\rho_\tau$, and
\[
  \sigma^\rho_\tau\big(\Pi(\rho)[F]\big)
  = \sigma^\rho_\tau\big( (F-\pi^\rho_\tau(F))\,
      \ones_{[\sigma^\rho_\tau>0]}\big)
  = \sigma^\rho_\tau\big( F - \pi^\rho_\tau(F)\big) = 0 ,
\]
so that $\Pi(\rho)[F]\in\clS^\rho_\tau$ for every $F\in\clZf{\tau}$.
Therefore $\Pi(\rho)$ is a projection with
$\mathrm{range}(\Pi(\rho)) = \clS^\rho_\tau$.

\begin{remark}\label{rem:sigma-positive}
Set $\clR_\tau(\rho) := \bigcup_{x\in\supp(\rho)}\clR_\tau(x)$.  Under
the non-degeneracy Assumption~\ref{ass:non-degeneracy},
$\sigma^\rho_\tau(\ones)>0$ surely (Remark~\ref{rem:surely}) and 
\[
\sigma^\rho_\tau(x)>0 \quad \text{iff} \quad x\in
\clR_\tau(\rho),\quad x\in \bS.
\]
The indicator is then the deterministic function
$\ones_{\clR_\tau(\rho)}$ on $\bS$, and
\[
  \big(\Pi(\rho)[F]\big)(x)
  = \big(F(x)-\pi^\rho_\tau(F)\big)\,\ones_{\clR_\tau(\rho)}(x),
  \qquad x\in\bS .
\]
In this case, the dimension has a simple formula,
\[
  \dim\big(\clS^\rho_\tau\big)
  = \big(|\clR_\tau(\rho)| - 1\big)\,(m+1)^\tau.
\]
\end{remark}
The following proposition collects the properties of $\clS^\rho_\tau$
and $\Pi(\rho)$. 
\begin{proposition}\label{prop:S_rho_Pi_rho}
Let $\rho\in\clP(\bS)$.  Then:
\begin{enumerate}
\item $\clS^\rho_\tau$ is a finite-dimensional inner-product space
  under $\langle F,G\rangle_\rho := \E^\rho\big(F(X_\tau)G(X_\tau)\big)$.
\item For every $F\in\clZfo{\tau}(\rho)$, with
  $\tilde F := \Pi(\rho)[F]$,
  \[
    \rho\big(\clE[F]\big) = \rho\big(\clE[\tilde F]\big),
    \qquad
    \rho\big(|\sfL[F]|^2\big) = \rho\big(|\sfL[\tilde F]|^2\big).
  \]
  Consequently,
  \[
    \beta(\rho) = \inf\big\{ \rho(\clE[F]) \;:\;
      F\in\mathrm{range}(\Pi(\rho)), \ \ \rho\big(|\sfL[F]|^2\big) = 1 \big\},
    \qquad \inf\emptyset := +\infty .
  \]
\item Let $F\in\clS_\tau^\rho$.  Then the map $\tilde
  \rho\mapsto\Pi(\tilde \rho)[F]$ is locally Lipschitz at $\rho$.  That
  is, there exists a neighborhood $\mathscr{N}(\rho)   \subset \clP(\bS)$ such
  that
  \[
    \left\| \big(\Pi(\rho) - \Pi(\tilde \rho)\big) [F] \right\|_\infty \leq C
\|F\|_\infty \|\rho - \tilde\rho\|_{\mathrm{TV}},\quad \forall\,\tilde\rho\in \mathscr{N}(\rho).
\]
\item Consider $\tilde \rho \in \clP(\bS)$ such that
  $\supp(\rho)=\supp(\tilde\rho)=\bS'$.  For $F\in\clZf{\tau}$, let 
\[
  G:=
  \big(\Pi(\rho) - \Pi(\tilde \rho)\big) [F], \quad \text{and} \quad r(x) = \begin{cases}
    \frac{\tilde\rho(x)}{\rho(x)}, & x\in \bS' \\ 1, &
    \text{o.w.}\end{cases}, \quad x\in \bS,
\]
and $r_{\min{}}=\min\{r(x):x\in \bS\}$. 
  Then
  \[
    \E^\rho \big( |G(X_\tau)|^2 \big) \leq \frac{\|r-1\|^2_\infty}{r_{\min{}}^2} \dvar^\rho(F(X_\tau)).
  \]
  In particular, if $\|r-1\|_\infty \leq \half$ then $r_{\min{}} \geq
  \half$ and the constant is at most $4 \|r-1\|^2_\infty$.  
\end{enumerate}
\end{proposition}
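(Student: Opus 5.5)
The four parts of Proposition~\ref{prop:S_rho_Pi_rho} will be handled in order, each by a direct computation from the explicit formula for $\Pi(\rho)$.

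\textbf{Part 1.} Bilinearity and symmetry of $\langle\cdot,\cdot\rangle_\rho$ are immediate, so only definiteness on $\clS^\rho_\tau$ needs checking. Write $\E^\rho(F(X_\tau)^2)=\sum_{\zeta}\sum_x \mathfrak p^\rho_\tau(\zeta,x)s(x;\zeta)^2$. If this vanishes, then $s(x;\zeta)=0$ whenever $\sigma^\rho_\tau(x)>0$ along $\zeta$. The defining constraint $F\ones_{[\sigma^\rho_\tau=0]}=0$ forces $F=0$ on the complementary coordinates, so $F=0$.

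\textbf{Part 2.} For $F\in\clZfo{\tau}(\rho)$ we have $\pi^\rho_\tau(F)=0$ wherever it is defined. Hence $\tilde F-F=-F\ones_{[\sigma^\rho_\tau=0]}$, and $\E^\rho(|(\tilde F-F)(X_\tau)|^2)=0$, since $\sP^\rho(\sigma^\rho_\tau(X_\tau)=0)=0$. By the observation preceding the proposition, with $c=1$ and the null direction $\tilde F-F$, both the energy and $\rho(|\sfL[\cdot]|^2)$ are invariant. That discussion gives $\sfL$ vanishing on $\supp(\rho)$ and $\rho(\clE[\cdot])=0$ for the null direction. To pass from energy zero to invariance of the quadratic form, I use Cauchy--Schwarz on the nonnegative form $F\mapsto\rho(\clE[F])$, and the same for $\rho(|\sfL[\cdot]|^2)$. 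The reformulation of $\beta(\rho)$ follows because $\mathrm{range}(\Pi(\rho))=\clS^\rho_\tau\subseteq\clZfo{\tau}(\rho)$: the two infima are over sets related by a value-preserving surjection.

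\textbf{Part 3.} Here the indicator $\ones_{[\sigma^{\tilde\rho}_\tau(x)>0]}$ is the delicate term. For $\tilde\rho$ in a neighborhood, $\supp(\tilde\rho)\supseteq\supp(\rho)$, so the set where $\sigma^{\tilde\rho}_\tau>0$ contains the set where $\sigma^\rho_\tau>0$.
\begin{itemize}
\item On coordinates where $\sigma^\rho_\tau(x)>0$, the indicators agree. The difference there is $\pi^{\tilde\rho}_\tau(F)-\pi^\rho_\tau(F)$, with $\pi^\rho_\tau(F)=0$. It equals $\sigma^{\tilde\rho}_\tau(F)/\sigma^{\tilde\rho}_\tau(\ones)$, and $\sigma^{\tilde\rho}_\tau(F)=(\sigma^{\tilde\rho}_\tau-\sigma^\rho_\tau)(F)$ is $O(\|F\|_\infty\|\rho-\tilde\rho\|_{\mathrm{TV}})$ by linearity.
\item The denominator is bounded below by $\tfrac12\sigma^\rho_\tau(\ones)$ over the finitely many paths where the latter is positive, since there are finitely many coordinates.
\item On paths with $\sigma^\rho_\tau(\ones)=0$, all $\sigma^{\tilde\rho}_\tau(x)$ are $O(\|\rho-\tilde\rho\|)$. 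This is the main obstacle: $F$ already vanishes there, but $\tilde F$ may be the nonzero $-\pi^{\tilde\rho}_\tau(F)$, a ratio of two small quantities. I would handle it by noting $F=0$ on such paths, so $\sigma^{\tilde\rho}_\tau(F)=0$ there and the difference is zero.
\item The remaining case is $\sigma^\rho_\tau(x)=0<\sigma^{\tilde\rho}_\tau(x)$ on paths where $\sigma^\rho_\tau(\ones)>0$. There $F(x)=0$, and the difference is again $-\pi^{\tilde\rho}_\tau(F)$, bounded as in the first case.
\end{itemize}

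\textbf{Part 4.} Equal supports give equal null sets, so the indicators coincide. Hence $G=(\pi^{\tilde\rho}_\tau(F)-\pi^\rho_\tau(F))\ones_{[\sigma^\rho_\tau>0]}$, and $\E^\rho(|G(X_\tau)|^2)\le\E^\rho(|\pi^{\tilde\rho}_\tau(F)-\pi^\rho_\tau(F)|^2)$. Write $\sigma^{\tilde\rho}_\tau=\sum_x r(x)\rho(x)\sigma^x_\tau$, so $\pi^{\tilde\rho}_\tau(F)=\E^\rho(r(X_0)F(X_\tau)\mid\clZ_\tau)/\E^\rho(r(X_0)\mid\clZ_\tau)$. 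With $\bar F:=F-\pi^\rho_\tau(F)$, the difference equals $\E^\rho((r(X_0)-1)\bar F(X_\tau)\mid\clZ_\tau)/\E^\rho(r(X_0)\mid\clZ_\tau)$. Conditional Cauchy--Schwarz and $\E^\rho(r\mid\clZ_\tau)\ge r_{\min}$ give the bound $\|r-1\|_\infty^2\,\E^\rho(\bar F(X_\tau)^2\mid\clZ_\tau)/r_{\min}^2$. Taking expectations yields the stated bound with $\dvar^\rho(F(X_\tau))$. The final remark is immediate: $\|r-1\|_\infty\le\tfrac12$ implies $r_{\min}\ge\tfrac12$.
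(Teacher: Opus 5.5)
Your proposal is correct and takes essentially the same route as the paper in all four parts: pathwise definiteness of the norm, the null direction $-F\ones_{[\sigma^\rho_\tau=0]}$, the split on whether $\sigma^\rho_\tau(\ones)$ vanishes along a path, and the change-of-measure formula with conditional Cauchy--Schwarz. The differences are cosmetic: in Part~2 you get invariance from Cauchy--Schwarz on the nonnegative quadratic forms, whereas the paper subtracts the identities $\E^\rho(|F(X_\tau)|^2)=\rho(|\sfL[F]|^2)+\rho(\clE[F])$; in Part~3 you build the neighborhood from support containment plus a lower bound on $\sigma^{\tilde\rho}_\tau(\ones)$, whereas the paper uses its $\mathfrak{p}_{\min}$ condition.
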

\begin{proof}[Proof of \Prop{prop:S_rho_Pi_rho}]
(Proof of part 1). We need to show that $\langle F,G\rangle_\rho$ defines a norm on
$\clS_\tau^\rho$.  Suppose $\E^\rho\big(|F(X_\tau)|^2\big)=0$.  Then using
the tower property,
\[
\E^\rho\big(|F(X_\tau)|^2\big)=\E^\rho\big(\pi_\tau^\rho(F^2)\big)=0 \implies
\pi_\tau^\rho(F^2)=0,\quad \sP^\rho\text{-a.s.}. 
\]
Because $F(x)$ is $\clZ_\tau$-measurable, this gives
\[
F(x)=0, \quad \forall \; x\in \bS, \quad \text{such that},\quad
\sigma_\tau^\rho(x)>0.  
\]
Because $F\in\clS_\tau^\rho$, $F(x)=0$ for $x$ such that
$\sigma_\tau^\rho(x)=0$.  We thus have $F=0$ in $\clZf{\tau}$.

\medskip

\emph{(Proof of part 2).} Let $F\in\clZfo{\tau}(\rho)$ then
$\sigma_\tau^\rho(F) = 0$.  So,
\[
  \tilde F(x) := \Pi(\rho)[F](x) = F(x)
  \ones_{[\sigma_\tau^\rho(x)>0]} \quad \implies \quad
  \tilde{F}(X_\tau)  = F(X_\tau), \quad \sP^\rho\text{-a.s.}
\]
This implies
\begin{align*}
\E^\rho(|F(X_\tau)|^2) &= \E^\rho(|\tilde F(X_\tau)|^2)\\
  |L[F](x) - L[\tilde F](x)|^2&= |\E^x (F(X_\tau)- \tilde F(X_\tau)) |^2 \\
                       &\leq \E^x (|F(X_\tau) -\tilde
F(X_\tau)|^2) =0,
\quad x\in \supp(\rho).
\end{align*}
Therefore,
\[
  \rho\big(|\sfL[F]|^2\big)  = \rho\big(|\sfL[\tilde F]|^2\big).
\]
Because $\clS_\tau^\rho\subset \clZfo{\tau}(\rho)$,
\begin{align*}
\E^\rho(|F(X_\tau)|^2)  = \rho\big(|\sfL[F]|^2\big) +
  \rho\big(\clE[F]\big), \\
\E^\rho(|\tilde F(X_\tau)|^2)  = \rho\big(|\sfL[\tilde F]|^2\big) +
  \rho\big(\clE[\tilde F]\big).
\end{align*}
Subtracting, we obtain
\[
  \rho\big(\clE[F]\big) = \rho\big(\clE[\tilde F]\big).
\]

\emph{(Proof of part 3).}  Recall the branchwise un-normalized measure
from~\eqref{eq:r_rho_s_rho},
\[
\mathfrak{p}^\rho_\tau(\zeta,x) :=
\sP^\rho(Z_1=\zeta_1,\ldots,Z_\tau=\zeta_\tau,X_\tau=x),\quad
x\in\bS,\;\zeta=(\zeta_1,\ldots,\zeta_\tau)\in \bO^\tau.  
\]
Let $
\mathfrak{p}_{\min{}}:=\min\{\mathfrak{p}^\rho_\tau(\zeta,x) \;:\;
\mathfrak{p}^\rho_\tau(\zeta,x)>0,\;x\in \bS, \; \zeta\in \bO^\tau\}$ and set,
\[
\mathscr{N}(\rho)  := \{\tilde \rho\in \clP(\bS)\; :\; |
\mathfrak{p}^\rho_\tau(\zeta,x) - \mathfrak{p}^{\tilde\rho}_\tau(\zeta,x)|< \half \mathfrak{p}_{\min{}}, \; x\in \bS, \; \zeta\in \bO^\tau\}.
\]
Because $\rho\mapsto \mathfrak{p}^\rho_\tau$ is linear, $\mathscr{N}(\rho)$
is an open neighborhood of $\rho\in \clP(\bS)$.  Then for $\tilde \rho
\in \mathscr{N}(\rho)$,  
\[
\mathfrak{p}^{\tilde\rho}_\tau(\zeta,x) > \half \mathfrak{p}_{\min{}} \quad \text{whenever}
\quad \mathfrak{p}^{\rho}_\tau(\zeta,x) > 0, \quad x\in \bS, \quad \zeta\in \bO^\tau.
\]
Therefore,
\[
[\sigma_\tau^\rho(x) > 0] \subset [\sigma_\tau^{\tilde\rho}(x) >
0],\quad x\in \bS,\quad \forall \tilde \rho
\in \mathscr{N}(\rho).
\]
which means
\[
\ones_{[\sigma_\tau^\rho(x) > 0]} \ones_{[\sigma_\tau^{\tilde\rho}(x)
  >0]} = \ones_{[\sigma_\tau^\rho(x) > 0]}, \quad x\in \bS, \quad \tilde \rho
\in \mathscr{N}(\rho)
\]

Let $F\in \clS_\tau^\rho$.
\[
\Pi(\rho) [F] =F, \quad \big(\Pi(\tilde \rho) [F]\big)(x) = \left( F(x) - \pi_\tau^{\tilde \rho}(F) \right) \ones_{[\sigma_\tau^{\tilde\rho}(x)
  >0]},\quad x\in \bS,
\]
and by the indicator product identity just described,
\[
F(x) \ones_{[\sigma_\tau^{\tilde\rho}(x) >0]} = F(x)
  \ones_{[\sigma_\tau^\rho(x) > 0]}, \quad
  x\in \bS.  
\]
Then 
\[
\Pi(\rho) [F](x) - \Pi(\tilde \rho) [F] (x) = \pi_\tau^{\tilde \rho}(F)
    \ones_{[\sigma_\tau^{\tilde\rho}(x)>0]},\quad x\in \bS.
  \]
  To estimate $ \pi_\tau^{\tilde \rho}(F)$, we consider two cases as follows.
\begin{enumerate}
\item Suppose $\sigma_\tau^{\rho}(\ones) = 0$.  Then because $F=F
  \ones_{[\sigma_\tau^{\rho}>0]}$, 
  \[
\sigma_\tau^{\tilde
     \rho}( F )= \sigma_\tau^{\tilde
     \rho}( F \ones_{[\sigma_\tau^{\rho}>0]}) =0 \implies \pi_\tau^{\tilde
     \rho}( F ) =0.
 \]
\item Suppose $\sigma_\tau^{\rho}(\ones) > 0$.   Then, by definition
  of $\mathfrak{p}_{\min{}}$, we have $\sigma^\rho_\tau(\ones)\geq \mathfrak{p}_{\min}$.
  For $\tilde\rho\in\mathscr{N}(\rho)$, then 
  $\sigma^{\tilde\rho}_\tau(\ones)>\half \mathfrak{p}_{\min}>0$.  Next, because $F\in \clS_\tau^\rho$, we have $\sigma_\tau^{\rho}(F)=0$, and thus
\[
 |\pi_\tau^{\tilde \rho}(F)| = \frac{|\sigma_\tau^{\tilde
     \rho}(F)-\sigma_\tau^{\rho}(F)|}{ \sigma_\tau^{\tilde
     \rho}(\ones)} \leq \frac{4\|\tilde \rho- \rho\|_{\text{TV}}
\|F\|_\infty}{ \mathfrak{p}_{\min{}}}.
\]
\end{enumerate}
Combining the two cases,
\[
|(\Pi(\rho) [F] - \Pi(\tilde \rho) [F])(x)|\leq |\pi_\tau^{\tilde
  \rho}(F)| \leq \begin{cases} \frac{4\|\tilde \rho- \rho\|_{\text{TV}}
    \|F\|_\infty}{ \mathfrak{p}_{\min{}}},& \sigma_\tau^{\rho}(\ones) > 0, \\
  0, & \sigma_\tau^{\rho}(\ones) =0. 
  \end{cases}
\]
and thus
\[
  \big\| \big(\Pi(\rho)-\Pi(\tilde\rho)\big)[F] \big\|_\infty
  \;\le\; C\,
    \|\rho-\tilde\rho\|_{\mathrm{TV}}\,\|F\|_\infty ,
  \qquad \tilde\rho\in\mathscr{N}(\rho).
\]
where the constant $C:=\frac{4}{ \mathfrak{p}_{\min{}}}$.  

\emph{(Proof of part 4).} Because $\supp(\rho)=\supp(\tilde\rho)=\bS'$ and $\sigma^\rho_\tau(x) =
\sum_{x'\in\bS'}\rho(x')\sigma^{x'}_\tau(x)$ is a sum of nonnegative terms with
strictly positive weights,
\[
  [\sigma^\rho_\tau>0] \;=\; \bigcup_{x_0\in\bS'} [\sigma^{x_0}_\tau>0]
  \;=\; [\sigma^{\tilde\rho}_\tau>0] \quad \implies\quad 
  \ones_{[\sigma_\tau^\rho>0]} = \ones_{[\sigma_\tau^{\tilde\rho}>0]}
\]
Therefore,
\[
G=\big(\Pi(\rho)[F] - \Pi(\tilde\rho) [F]\big)= (\pi_\tau^{\tilde\rho}(F) - \pi_\tau^\rho(F))\ones_{[\sigma_\tau^\rho>0]}.
\]
From the change of measure formula,
\[
\pi_\tau^{\tilde\rho}(F) = \E^{\tilde\rho}(F(X_\tau) \mid \clZ_\tau) =
\frac{\E^{\rho}(r(X_0) F(X_\tau) \mid \clZ_\tau)}{D_\tau}.
\]
where $D_\tau=\E^{\rho}(r(X_0) \mid \clZ_\tau)\geq r_{\min{}}$. 
Therefore,
\begin{align*}
|G(X_\tau)|^2  \leq  |(\pi_\tau^{\tilde\rho}(F) -
              \pi_\tau^\rho(F))|^2 &= |\frac{1}{D_\tau}\E^{\rho}\big((r(X_0)
  -1) (F(X_\tau) - \pi^\rho_\tau(F))\mid \clZ_\tau\big)|^2  \\
  &\leq
    \frac{\|r-1\|_\infty^2}{ r_{\min{}}^2}\E^{\rho}\big(|F(X_\tau) - \pi^\rho_\tau(F)|^2 \mid \clZ_\tau\big).
\end{align*}
Taking expectation of both sides, 
\[
\E^\rho(|G(X_\tau)|^2) \leq  \frac{\|r-1\|^2_\infty}{r_{\min{}}^2} \dvar^\rho(F(X_\tau)).
\]
\end{proof}

Now we are ready to prove \Lemma{prop:beta-regularity}.

\begin{proof}[Proof of \Lemma{prop:beta-regularity}]
\emph{(Part 1, existence.)}  Fix $\rho\in \clP(\bS)\setminus \sfN_\tau$.  Suppose $F_n\in \clS_\tau^\rho$ is an infimizing sequence with
$\beta_n=\rho(\clE[F_n])$, $ \rho
\big(|\sfL[F_n]|^2 \big)=1$, and $\beta_n \to \beta(\rho)<\infty$
(the limit must be finite because $\beta(\rho)$ is bounded from below,
by zero, and the feasible set is not empty). From the~\eqref{eq:Lyapunov_equation},  
\[
\E^\rho \big( |F_n (X_\tau)|^2 \big) = \rho(\clE[F_n]) +\rho
\big(|\sfL[F_n]|^2 \big) =  \beta_n + 1 < C < \infty,\quad \forall \; n.
\]
By \Prop{prop:S_rho_Pi_rho}(1), $\E^\rho(|F_n(X_\tau)|^2)$ is the
squared norm of $F_n\in\clS^\rho_\tau$, a finite-dimensional inner
product space; the
bound above therefore shows $\{F_n:n=1,2,\hdots\}$ is bounded, and so
has a convergent subsequence.  Relabelling, $F_n \to F$ for some
$F\in\clS^\rho_\tau$.  Then, for each $x\in
\supp(\rho)$, 
\begin{align*}
|\sfL[F] (x) - \sfL[F_n] (x) |^2&= |\E^x(F(X_\tau) - F_n(X_\tau))|^2 \leq
                                  \E^x(|F(X_\tau) - F_n (X_\tau)|^2).
\end{align*}
Therefore,
\[
\sfL[F_n] (x)\to \sfL[F] (x), \quad x\in \supp(\rho) \implies \rho
\big(|\sfL[F]|^2 \big) = \lim_{n\to\infty} \rho
\big(|\sfL[F_n]|^2 \big)=1.
\]
Finally, we show that $\rho(\clE[F])=\beta(\rho)$. Because $F\in
\clS_\tau^\rho$, from~\eqref{eq:Lyapunov_equation},   
\begin{align*}
\rho(\clE[F])  &= \E^\rho \big( |F (X_\tau)|^2 \big) - \rho
\big(|\sfL[F]|^2 \big)\\&= \lim_{n\to\infty} \E^\rho \big( |F_n
(X_\tau)|^2 \big) - \rho
\big(|\sfL[F_n]|^2 \big)= \lim_{n\to\infty} \beta_n = \beta(\rho).
\end{align*}

\medskip

\emph{(Part 4, positivity.)}  If $\rho\in \sfN_\tau$, $\beta(\rho)=+\infty$.
Suppose $\rho\in \clP(\bS)\setminus \sfN_\tau$, and the minimum is $0$,
attained at $F \in \clS_\tau^\rho$.  Then $\rho(\clE[F]) =0$ and $\rho(|L[F]|^2)= 1$. By
\Lemma{lemma:Lasalle}, $\sfL[F](x) = 0$ on $\mathrm{supp}(\rho)$, so
$\rho(|L[F]|^2)= 0$, which provides a contradiction.

\medskip

\emph{(Part 2, u.s.c.)}  If
$\beta(\rho)=+\infty$ there is nothing to prove.  Suppose
$\beta(\rho)<\infty$.  For any sequence $\rho_n\to\rho$ we show that 
\begin{equation}\label{eq:upper-semi-cont}\tag{u.s.c.}
\limsup_{n\to\infty} \beta(\rho_n) \leq \beta(\rho).
\end{equation}
Since we are interested only in the tail of the sequence, upon
relabeling, we assume that $\{\rho_n:n=1,2,\ldots\}\subset
\mathscr{N}(\rho)$ (see \Prop{prop:S_rho_Pi_rho} (part 3)).

By (part 1), there exists a minimizer $F\in \clS_\tau^\rho$ with
$\rho(\clE[F])=\beta(\rho)$ and $\rho(|L[F]|^2)=1$.  Set
\[
F_n= \Pi(\rho_n)[F],\quad \tilde{F}_n= F_n - F = \Pi(\rho_n)[F] -
\Pi(\rho)[F].  
\]
Let $\epsilon_n:=\|\rho - \rho_n\|_{\mathrm{TV}}$.  Then,
$\epsilon_n\to 0$ and from \Prop{prop:S_rho_Pi_rho},
$\|\tilde{F}_n\|_\infty=O(\epsilon_n)$.

Consider the
BS$\Delta$E solution and the associated martingale,
\[
  (\tilde{Y}_n,\tilde{V}_n) = \mathrm{BS\Delta E}_\tau[\tilde{F}_n], \qquad \tilde{M}_{n,t} = \tilde{Y}_{n,t}(X_t),
  \quad 0\le t\le\tau .
\]
Then, upon using the bounds in \Prop{prop:technical},
\begin{align*}
\sfL[F_n](x) &= \sfL[F](x) +\sfL[\tilde F_n](x) =  \sfL[F](x) +
  O(\epsilon_n),\quad x\in \bS.\\
\clE[F_n](x) &= \clE[F](x) + 2 \E^x\big(\langle M, \tilde
               M_{n}\rangle_{\tau} 
                 \big)+ \clE[\tilde F_n](x), \quad x\in\bS\\
  & = (\clE[F])(x) + O(\epsilon_n),\quad x\in\bS .  
\end{align*}
Therefore,
\begin{align*}
\rho_n(|\sfL[F_n]|^2) & = \rho(|\sfL[F_n]|^2) +
  (\rho-\rho_n)(|\sfL[F_n]|^2) =  \rho(|\sfL[F]|^2) +
  O(\epsilon_n),\\
\rho_n(\clE[F_n]) &= \rho(\clE[F_n]) +  (\rho-\rho_n)(\clE[F_n]) =
                    \rho(\clE[F]) +  O(\epsilon_n). 
\end{align*}
Finally,
\[
\beta(\rho_n) \leq
\frac{\rho_n(\clE[F_n])}{\rho_n(|\sfL[F_n]|^2)} \to \beta(\rho).
\]
Passing to a subsequence along which $\beta(\rho_n)$ converges to
$\limsup_{n} \beta(\rho_n)$ gives~\eqref{eq:upper-semi-cont} property.  

\medskip

\emph{(Part 3, continuity.)}  Consider a sequence
$\{\rho_n:n=1,2,\ldots\}$ with limit $\rho$ such that
$\mathrm{supp}(\rho_n)=\mathrm{supp}(\rho)=:\bS'$.   
We show that
\begin{equation}\label{eq:lower-semi-cont}\tag{l.s.c.}
\beta(\rho) \leq \liminf_{n\to\infty} \beta(\rho_n) .
\end{equation}
Combined with the~\eqref{eq:upper-semi-cont} property, which holds for
arbitrary sequences, and hence also for this particular one, we have
shown continuity.

From \Prop{prop:N-strata}, we have two alternatives to consider:
\begin{enumerate}
\item Either $\clP^\circ(\bS') \subset\sfN_\tau$ in which case 
  $\beta(\rho) = \beta(\rho_n)=+\infty$ and~\eqref{eq:lower-semi-cont} holds. 
\item Or else, for each $\rho_n$, there exists a minimizer $F_n\in \clS_\tau^{\rho_n}$ with
$\rho_n(\clE[F_n])=\beta(\rho_n)$ and $\rho_n(|L[F_n]|^2)=1$.   Let
\[
\bar{\beta} = \liminf_{n\to\infty}
\beta(\rho_n), \quad \delta = \min\{\rho(x): x\in \bS'\}.
\]
From the~\eqref{eq:upper-semi-cont} property,
  \[
    \bar{\beta} \;\le\; \limsup_{n\to\infty}\beta(\rho_n) \;\le\; \beta(\rho) \;<\; \infty .
  \]
\end{enumerate}
Because $\bS'$ is finite,
$\delta>0$.  For sufficiently large $n$ then
\[
\beta(\rho_n) < \bar{\beta} + 1,\quad \rho_n(x) > 
\frac{\delta}{2},\quad \forall \; x\in \bS'.
\]
We set
\[
r_n(x) = \begin{cases}
    \frac{\rho_n(x)}{\rho(x)}, & x\in \bS' \\ 1, &
    \text{o.w.}\end{cases}, \quad x\in \bS.
\]
and choose a subsequence such that $\|r_n - 1\|_\infty <
\frac{1}{n}$.  

From~\eqref{eq:Lyapunov_equation},  
\[
\E^{\rho_n} \big( |F_n (X_\tau)|^2 \big) = \beta(\rho_n) +\rho_n
\big(|\sfL[F_n]|^2 \big) \leq \bar\beta +2 < \infty.
\]
The left hand side
\[
\E^{\rho_n} \big( |F_n (X_\tau)|^2 \big) = \sum_{x\in \bS'} \rho_n(x) \E^{x} \big( |F_n (X_\tau)|^2 \big)
\]
and using the bounds,
\[
\E^{x} \big( |F_n (X_\tau)|^2 \big) <
\frac{2}{\delta} (2+\bar\beta),\quad \forall\;x\in\bS'. 
\]
Therefore,
\begin{align*}
\E^{\rho} \big( |F_n (X_\tau)|^2 \big) &= \E^{\rho_n} \big( |F_n
(X_\tau)|^2 \big) + \sum_{x\in \bS'} (\rho(x)-\rho_n(x)) \E^{x} \big(
|F_n (X_\tau)|^2 \big)\\
&= \E^{\rho_n} \big( |F_n
(X_\tau)|^2 \big) + O(\frac{1}{n}).
\end{align*}
Similarly,
\[
\rho_n
\big(|\sfL[F_n]|^2 \big) = \rho
\big(|\sfL[F_n]|^2 \big)  + O(\frac{1}{n}).
\]
Therefore, we arrive at an approximate dissipation equation,
\begin{equation}\label{eq:approx_dissip_eqn}
\E^{\rho} \big( |F_n (X_\tau)|^2 \big) = \beta(\rho_n) +\rho
\big(|\sfL[F_n]|^2 \big) + O(\frac{1}{n}).
\end{equation}

Now set
\[
G_n= \Pi(\rho)[F_n],\quad \tilde{G}_n= G_n - F_n = \Pi(\rho)[F_n] -
\Pi(\rho_n)[F_n].  
\]
From~\Prop{prop:S_rho_Pi_rho},
\[
 \E^\rho \big( |\tilde G_n(X_\tau)|^2 \big) \leq 4 \|r_n-1\|^2_\infty \E^\rho
 \big( |F_n(X_\tau)|^2 \big) = O(\frac{1}{n^2}).
\]
Noting $G_n = F_n +\tilde G_n$, we have, 
\[
\E^\rho \big( |G_n(X_\tau)|^2 \big)  = \E^\rho \big( |F_n (X_\tau)|^2
\big)  + O(\frac{1}{n}) < \bar\beta +2 < \infty.
\]
From (part 1) of~\Prop{prop:S_rho_Pi_rho}, there exists a subsequential
limit $G\in
\clS_\tau^\rho$ such that~\eqref{eq:Lyapunov_equation} gives,  
\begin{equation}\label{eq:exact_dissip_eqn}
\E^{\rho} \big( |G(X_\tau)|^2 \big) = \rho(\clE[G]) +\rho
\big(|\sfL[G]|^2 \big).
\end{equation}
Our plan is to compare this to~\eqref{eq:approx_dissip_eqn} to show
that $\lim_{n\to\infty} \beta(\rho_n) = \rho(\clE[G])$.  

Now,
\[
\E^{\rho} \big( |G(X_\tau) - F_n(X_\tau)|^2 \big) \leq 2 \E^{\rho}
\big( |G(X_\tau) - G_n(X_\tau)|^2 \big) + 2 \E^{\rho} \big( |\tilde
G_n(X_\tau)|^2 \big)\to 0. 
\]
This has two implications as follows:
\begin{enumerate}
\item The left hand side of~\eqref{eq:approx_dissip_eqn} converges to
  the corresponding term in~\eqref{eq:exact_dissip_eqn},
\[
\lim_{n\to\infty} \E^{\rho} \big( | F_n(X_\tau)|^2 \big) = \E^{\rho} \big( |G(X_\tau)|^2 \big).
\]
\item Next, for each $x\in \supp(\rho)$,
  \[
\E^{x} \big( |G(X_\tau) - F_n(X_\tau)|^2 \big) \to 0
\]
which gives
\begin{align*}
  |\sfL[G](x) - \sfL[F_n](x)|^2 & = |\E^{x} \big( G(X_\tau) -
F_n(X_\tau) \big)  |^2 \\
& \leq \E^{x} \big( |G(X_\tau) -
F_n(X_\tau)|^2 \big) \to 0, \quad x\in \supp(\rho).
\end{align*}
Therefore,
\[
\lim_{n\to\infty} \rho
\big(|\sfL[F_n]|^2 \big) = \rho
\big(|\sfL[G]|^2 \big).
\]
\end{enumerate}
Subtracting~\eqref{eq:approx_dissip_eqn}
from~\eqref{eq:exact_dissip_eqn}, upon taking the limit, we obtain
\[
\lim_{n\to\infty} \beta(\rho_n) = \rho(\clE[G]).
\]
Finally, note
\[1=\rho_n
\big(|\sfL[F_n]|^2 \big) = \rho
\big(|\sfL[F_n]|^2 \big)  + O(\frac{1}{n})
\]
Therefore,
\[
 \rho \big(|\sfL[G]|^2 \big)= \lim_{n\to\infty} \rho
\big(|\sfL[F_n]|^2 \big) =1.
\]
That is $G\in \clS_\tau^\rho$ is in the
feasible set.  Since $\beta(\rho)$ is the infimizing constant
$\beta(\rho)\leq \rho(\clE[G])$ and this proves
the~\eqref{eq:lower-semi-cont} property.
\end{proof}

\subsection{Proof of variance decay}\label{appdx:var_contractive}
\begin{proof}[Proof of \Prop{prop:var_contractive}]
  Let $(Y,V)=\mathrm{BS\Delta E}_T[\gamma_T]$.  Set $N := \lfloor T/\tau \rfloor$,
  $t_k := k\tau$, $\clK_k= \sum_{j=0}^{k-1} \kappa(\pi_{t_j}^\nu)$ for $k=0,1,\ldots,N$.  We claim that
  \begin{align}
\pi_{t_{k}}^\nu\big( (Y_{t_{k}}-1)^2\big) &\leq e^{-\tau \kappa({\pi_{t_k}^\nu})}\E^\nu\big(\pi_{t_{k+1}}^\nu\big( (Y_{t_{k+1}}-1)^2\big)
\mid\clZ_{t_k}\big) \,
,\quad \sP^\nu\text{-a.s.},\quad
    k<N, \label{eq:claim-window}\\
    \pi_{t_{N}}^\nu\big( (Y_{t_{N}}-1)^2\big) &\leq \E^\nu\big(\pi_{T}^\nu\big( (Y_{T}-1)^2\big)
    \mid\clZ_{t_N}\big). \label{eq:claim-final}
\end{align}
Supposing the claim to be true,
\begin{align*}
	\E^\nu\big(e^{-\tau \clK_{k}}\pi_{t_{k}}^\nu\big( (Y_{t_{k}}-1)^2\big)\big)
	 &\le \E^\nu\big(e^{-\tau \clK_{k}}e^{-\tau
  \kappa({\pi_{t_k}^\nu})}\E^\nu\big(\pi_{t_{k+1}}^\nu\big(
  (Y_{t_{k+1}}-1)^2\big) \mid\clZ_{t_{k}}\big)\big)\\
	 &=\E^\nu\big(e^{-\tau \clK_{k+1}}\pi_{t_{k+1}}^\nu\big(
  (Y_{t_{k+1}}-1)^2\big) \big),\quad k<N.
\end{align*}
A recursive application of this inequality gives
	\begin{align*}
\dvar^\nu(Y_0(X_0)) \le \E^\nu\big(e^{-\tau \clK_1}\pi_{t_{1}}^\nu\big(
  (Y_{t_{1}}-1)^2\big)\big) \le \cdots \le \E^\nu\big(e^{-\tau \clK_N}\pi_{t_{N}}^\nu\big(
  (Y_{t_{N}}-1)^2\big)\big).
	\end{align*}
Using~\eqref{eq:claim-final} completes the proof:
        \begin{align*}
          \dvar^\nu(Y_0(X_0))  \leq \E^\nu\big(e^{-\tau \clK_N}\pi_{t_{N}}^\nu\big(
  (Y_{t_{N}}-1)^2\big)\big)
          \leq \E^\nu\big(e^{-\tau \clK_N}\pi_{T}^\nu\big( (Y_{T}-1)^2\big)\big).
        \end{align*}
        
It remains to prove the claim.  By
\Prop{cor:U-opt-zero_elegant} at $U = U^\opt = 0$, the following process is a
$(\clZ,\sP^\nu)$-martingale:
\[
  \Theta_t = \pi^\nu_t\big((Y_t-1)^2\big)
    - \sum_{s=0}^{t-1}\E^\nu\big(\ell(Y_{s+1},V_s,0;X_s)\mid\clZ_s\big),
  \qquad t\in\mathbb{T} .
\]
Therefore,
\begin{align*}
\E^\nu \big( \Theta_{t_{k+1}}  \mid \clZ_{t_k} \big) &=\E^\nu \big(
                                                       \pi_{t_{k+1}}^\nu\big( (Y_{t_{k+1}}-1)^2\big) \mid \clZ_{t_k} \big) -
\sum_{s=0}^{t_{k+1}-1}\E^\nu\big(\ell(Y_{s+1},V_s,0;X_s)\mid\clZ_s\big)\\
  & =\Theta_{t_{k}} = \pi_{t_k}^\nu\big((Y_{t_k}-1)^2\big) - \sum_{s=0}^{t_{k}-1}\E^\nu\big(\ell(Y_{s+1},V_s,0;X_s)\mid\clZ_s\big),\quad k<N.
\end{align*}
Because $t_{k+1} = t_k + \tau$, this gives,
\begin{align*}
\pi_{t_k}^\nu\big((Y_{t_k}-1)^2\big)+\sum_{s=t_k}^{t_k+\tau-1}
  \E^\nu\big( \ell(Y_{s+1},V_s,0;X_s) \mid \clZ_{t_k} \big) =\E^\nu \big(
\pi_{t_{k+1}}^\nu\big( (Y_{t_{k+1}}-1)^2\big) \mid \clZ_{t_k} \big),
\quad k<N.
\end{align*}
Use Defn.~\ref{defn:shifted_solution} to construct a shifted
solution over the fixed time-interval $[0,\tau]$.  We have two cases
to consider:
\begin{enumerate}
\item If $\pi_{t_k}^\nu\in \sfN_\tau$ then, upon using \Corollary{corollary:prop:Markov_BSDE}(2), $\pi_{t_k}^\nu \big(
  (Y_{t_k}-1)^2\big)=0$.  Therefore,~\eqref{eq:claim-window} holds with 
  $\kappa(\pi_{t_k}^\nu)=\infty$.
\item If $\pi_{t_k}^\nu\notin \sfN_\tau$ then
  $\beta(\pi_{t_k}^\nu) <\infty$.  
From
\Corollary{corollary:prop:Markov_BSDE} using the definition of
Poincar\'e constant,
\[
\sum_{s={t_k}}^{{t_k}+\tau-1} \E^\nu\big(\ell(Y_{s+1},V_s,0;X_s) \mid \clZ_{t_k}\big) \geq
  \beta(\pi_{t_k}^\nu) \; \pi_{t_k}^\nu \big( (Y_{t_k}-1)^2\big),\quad k<N,
\]
which, because $1 + \beta(\pi_{t_k}^\nu) = e^{\tau \kappa(\pi_{t_k}^\nu)}$,
gives~\eqref{eq:claim-window}.
\end{enumerate}
The proof of~\eqref{eq:claim-final} is similar, over the
interval $[t_N,T]$.
\end{proof}

\bibliographystyle{siam}
\bibliography{bibfiles/_master_bib_jin.bib,bibfiles/jin_papers.bib}
\end{document}

%% file: _definitions.tex
\def\Re{\mathbb{R}}

\def\Lemma#1{Lemma~\ref{#1}}

\def\Theorem#1{Thm.~\ref{#1}}
\def\Corollary#1{Cor.~\ref{#1}}
\def\Sec#1{Sec.~\ref{#1}}
\def\Fig#1{Fig.~\ref{#1}}

\def\notes#1{\marginpar{\tiny #1}\typeout{Notes!
Notes!
Notes!
}}
\renewcommand{\notes}[1]{\typeout{notes!}}
\def\FRAC#1#2#3{\genfrac{}{}{}{#1}{#2}{#3}}
\def\half{{\mathchoice{\FRAC{1}{1}{2}}%
{\FRAC{2}{1}{2}}%
{\FRAC{3}{1}{2}}%
{\FRAC{4}{1}{2}}}}

\def\Re{\field{R}}

\def\Sec#1{Sec.~\ref{#1}}

\def\transpose{{\hbox{\rm\tiny T}}}

\def\clE{{\cal E}}

\def\clP{{\cal P}}
\def\clZ{{\cal Z}}

\def\Sec#1{Sec~\ref{#1}}

\def\E{{\sf E}}

\def\clC{{\cal C}}

\def\Fig#1{Fig.~\ref{#1}}
\def\Sec#1{Sec.~\ref{#1}}

\def\clZ{{\cal Z}}

\newtheorem{example}{Example}
\newtheorem{remark}{Remark}
\newtheorem{assumption}{Assumption}

\def\beq{\begin{eqnarray}} 
\def\bc{\begin{center}} 
\def\be{\begin{enumerate}}
\def\bi{\begin{itemize}} 
\def\bs{\begin{small}}
\def\bS{\begin{slide}}
\def\ec{\end{center}} 
\def\ee{\end{enumerate}}
\def\ei{\end{itemize}}
\def\es{\end{small}}
\def\eS{\end{slide}}
\def\eeq{\end{eqnarray}}

\newcommand{\newP}[1]{\medskip\noindent{\bf #1:}}

\newcommand{\ud}{\,\mathrm{d}}

\def\Re{\mathbb{R}}
\def\E{{\sf E}}

\def\clY{{\cal Y}}

\def\Sec#1{Sec.~\ref{#1}}
\def\Thm#1{Thm.~\ref{#1}}
\def\Prop#1{Prop.~\ref{#1}}
\def\Defn#1{Defn.~\ref{#1}}

\def\clD{{\cal D}}
\def\clE{{\cal E}}

\def\clP{{\cal P}}
\def\clZ{{\cal Z}}

\renewcommand{\Re}{\mathbb{R}}

\def\FRAC#1#2#3{\genfrac{}{}{}{#1}{#2}{#3}}

\newcommand{\var}{\text{var}}

\newcommand{\sfJ}{{\sf J}}

%% file: jin_thesis_macros.tex
\def\clC{{\cal C}}

\def\clD{{\cal D}}
\def\clE{{\cal E}}
\def\clF{{\cal F}}
\def\clG{{\cal G}}

\def\clK{{\cal K}}

\def\clN{{\cal N}}

\def\clP{{\mathcal P}}

\def\clR{{\cal R}}
\def\clS{{\mathcal H}}

\def\clU{{\cal U}}
\def\clV{{\cal V}}

\def\clY{{\cal Y}}
\def\clZ{{\cal Z}}
\def\E{{\mathsf E}}
\def\bS{\mathbb{S}}
\def\bO{\mathbb{O}}
\def\sJ{{\mathsf J}}

\def\ones{{\mathsf 1}}
\def\sP{{\mathsf P}}

\def\shift{\theta}

\def\tp{{\hbox{\rm\tiny T}}}

\def\dvar{\operatorname{var}}

\def\chisq{{\chi^2}}

\def\tv{{\text{\tiny TV}}}

\def\opt{{\text{\rm (opt)}}}

\renewcommand{\limsup}{\mathop{\operatorname{limsup}}}
\renewcommand{\liminf}{\mathop{\operatorname{liminf}}}

\newcommand{\clZf}[1]{\mathfrak{Z}_{#1}}

\newcommand{\clZfo}[1]{\mathfrak{Z}^\circ_{#1}}

\def\supp{\text{supp}}
\def\sfN{{\sf N}}
\def\sfL{{\sf L}}

%% file: figure/hmm.tex








\begin{tikzpicture}[
    node distance=1.2cm,
    latent/.style={draw, circle, minimum size=1cm, inner sep=0pt, fill=gray!20},
    obs/.style={draw, circle, minimum size=1cm, inner sep=0pt},
    arrow/.style={-stealth, thick},
    dots/.style={inner sep=3pt, minimum size=0.2cm}
]

    \node (start_x) {};

    \node (x0) at ($(start_x) + (0.5cm, 0)$) [latent] {$X_0$};
    \node (x1) [latent, right=of x0] {$X_1$};
    \node (dots) [dots, right=of x1] {$\cdots$};
    \node (xT_1) [latent, right=of dots] {$X_{T-1}$};
    \node (xT) [latent, right=of xT_1] {$X_T$};

    \node (z1) [obs, above=0.1cm of x1] {$Z_1$};
    \node (z2) [obs, right=of z1] {$Z_2$};
    \node (zd) [right=of z2] {$\cdots$};
    \node (zT) [obs, right=of zd] {$Z_T$};

    \draw [arrow] (x0) -- (x1);
    \draw [arrow] (x1) -- (dots);
    \draw [arrow] (dots) -- (xT_1);
    \draw [arrow] (xT_1) -- (xT);

    \draw [arrow] (x0) -- (z1);
    \draw [arrow] (x1) -- (z2);
    \draw [arrow] (xT_1) -- (zT);

    \draw [arrow] ($(x0) + (-0.5cm, -1.0cm)$) -- ($(xT) + (0.5cm, -1.0cm)$) node[right] {time};

    \draw ($(x0) + (0, -1.0cm)$) -- ($(x0) + (0, -1.2cm)$) node[below] {$0$};
    \draw ($(x1) + (0, -1.0cm)$) -- ($(x1) + (0, -1.2cm)$) node[below] {$1$};
    \draw ($(xT_1) + (0, -1.0cm)$) -- ($(xT_1) + (0, -1.2cm)$) node[below] {$T-1$};
    \draw ($(xT) + (0, -1.0cm)$) -- ($(xT) + (0, -1.2cm)$) node[below] {$T$};

\end{tikzpicture}

%% file: figure/variance_decay.tex
\begin{tikzpicture}[x=0.85cm, y=1.35cm, >=stealth,
    dot/.style={circle, fill=black, inner sep=1.1pt}]

\draw[dashed, gray] (-0.1,0.30) -- (7.6,0.30);
\draw[dashed, gray] (-0.1,1.10) -- (7.6,1.10);
\draw[dashed, gray] (-0.1,2.20) -- (7.6,2.20);
\node[left, inner sep=3pt] at (-0.1,0.30) {$\var^\rho\big(y_0(X_0)\big)$};
\node[left, inner sep=3pt] at (-0.1,1.10) {$\var^\rho\big(Y_t(X_t)\big)$};
\node[left, inner sep=3pt] at (-0.1,2.20) {$\var^\rho\big(\gamma_T(X_T)\big)$};

\draw[->] (-0.1,0) -- (7.9,0) node[below right, inner sep=2pt] {$t$};
\draw[->] (-0.1,0) -- (-0.1,2.50);

\foreach \t/\v in {0/0.30, 1/0.39, 2/0.50, 4/1.10, 6/1.72, 7/2.20} {
  \draw (\t,0) -- (\t,\v);
  \node[dot] at (\t,\v) {};
}
\node at (3,0.78) {$\cdots$};
\node at (5,1.40) {$\cdots$};

\foreach \t/\lab in {0/{$0$}, 1/{$1$}, 2/{$2$}, 3/{$\cdots$}, 4/{$t$},
                     5/{$\cdots$}, 6/{$T\!-\!1$}, 7/{$T$}}
  \node[below, inner sep=3pt] at (\t,0) {\lab};

\draw[<->] (7.6,0.30) -- (7.6,1.10)
  node[midway, right, inner sep=3pt] {$\E^\rho\big(\langle M\rangle_t\big)$};

\end{tikzpicture}